\theoremstyle{plain}
\newtheorem{theorem}{Theorem}[section]
\newtheorem{proposition}[theorem]{Proposition}
\newtheorem{lemma}[theorem]{Lemma}
\newtheorem{corollary}[theorem]{Corollary}
\theoremstyle{remark}
\newtheorem{remark}[theorem]{Remark}
\theoremstyle{definition}
\newtheorem{definition}[theorem]{Definition}
\newtheorem{conjecture}[theorem]{Conjecture}
\DeclareMathOperator{\Gal}{Gal}
\DeclareMathOperator{\Hom}{Hom}
\DeclareSymbolFont{cyrletters}{OT2}{wncyr}{m}{n}
\DeclareMathSymbol{\Sha}{\mathalpha}{cyrletters}{"58}
\newcommand{\bF}{\mathbb{F}}
\newcommand{\bQ}{\mathbb{Q}}
\newcommand{\bL}{\mathbb{L}}
\newcommand{\bT}{\mathbb{T}}
\newcommand{\bZ}{\mathbb{Z}}
\newcommand{\cD}{\mathcal{D}}
\newcommand{\cF}{\mathcal{F}}
\newcommand{\cG}{\mathcal{G}}
\newcommand{\cM}{\mathcal{M}}
\newcommand{\cN}{\mathcal{N}}
\newcommand{\cP}{\mathcal{P}}
\newcommand{\cX}{\mathcal{X}}
\newcommand{\fl}{\mathfrak{l}}
\newcommand{\fp}{\mathfrak{p}}
\newcommand{\fm}{\mathfrak{m}}
\newcommand{\fL}{\mathfrak{L}}
\begin{document}

\title[]{$p$-Selmer group and Modular symbols}

\author{Ryotaro Sakamoto}

\begin{abstract} 
In this paper, we prove that the dimension of the $p$-Selmer group for an elliptic curve is controlled by certain analytic quantities associated with modular symbols, which is conjectured by Kurihara. 
\end{abstract}

\address{RIKEN Center for Advanced Intelligence Project\\Nihonbashi 1-chome Mitsui Building\\ 15th floor\\1-4-1 Nihonbashi\\Chuo-ku\\Tokyo
103-0027\\Japan}
\email{ryotaro.sakamoto@riken.jp}

\thanks{The author was supported by JSPS KAKENHI Grant Number 20J00456.} 

\maketitle

\tableofcontents


\section{Introduction}\label{sec:intro}

In modern number theory, it is an attractive area of research to connect $L$-values with  Selmer groups.  
In the present paper, we prove that the dimension of the (classical) $p$-Selmer group $\mathrm{Sel}(\bQ, E[p])$ for an elliptic curve $E/\bQ$ is controlled by certain analytic quantities associated with modular symbols, which is conjectured by Kurihara in \cite{Kur14b}. 

In order to explain this result in detail, we introduce some notations and hypotheses. 
Let $E/\bQ$ be an elliptic curve and 
let $S_{\rm bad}(E)$ denote the set of primes at which $E$ has bad reduction. 
For any integer $n \geq 0$, let $\bQ_{n}$ denote the $n$-th layer of the cyclotomic $\bZ_{p}$-extension of $\bQ$. 
As in the paper \cite{Kur14b} of Kurihara, we consider a prime $p \geq 3$ satisfying the following conditions: 
\begin{itemize}
\item[(a)] $p$ is a good ordinary prime for $E$. 
\item[(b)] The action of  $\Gal(\overline{\bQ}/\bQ)$ on $E[p]$ is surjective. 
\item[(c)]  $p \nmid  \# E(\bF_{p})\prod_{\ell \in S_{\rm bad}(E)}\mathrm{Tam}_{\ell}(E)$. 
\end{itemize}


Let $\cP_{1,0}$ denote the set of Kolyvagin primes, that is,  
\[
\cP_{1,0} := \{\ell \not\in S_{\rm bad}(E) \mid  E(\bF_{\ell})[p] \cong \bF_{p} \,  \textrm{ and } \, \ell \equiv 1 \pmod{p}\}. 
\]
We define $\cN_{1,0}$ to be the set of square-free products in $\cP_{1,0}$. 
We fix a generator $h_{\ell} \in \Gal(\bQ(\mu_{\ell})/\bQ)$ for each prime $\ell \in \cP_{1,0}$, and we obtain a  surjective homomorphism (induced by the discrete logarithm to the base $h_{\ell}$)  
\[
\overline{\log}_{h_{\ell}} \colon \Gal(\bQ(\mu_{\ell})/\bQ) \stackrel{\sim}{\longrightarrow} \bZ/(\ell-1) \longrightarrow \bF_{p}; \, h_{\ell}^{a} \mapsto a \bmod{p}. 
\]
Let $f_E$ denote the newform of weight $2$ associated with $E/\bQ$. 
Take an integer $d \in \cN_{1,0}$. 
For any integer $a$ with $(a,d) = 1$, 
we write $\sigma_{a} \in \Gal(\bQ(\mu_{d})/\bQ)$ for the element satisfying $\sigma_{a}(\zeta) = \zeta^{a}$ for any $\zeta \in \mu_{d}$ and put 
\[
[ a/d ] := 2 \pi \sqrt{-1} \int^{a/d}_{\sqrt{-1}\infty}f(z) \, \mathrm{d}z. 
\]
Following Kurihara in \cite{Kur14b}, we  define an analytic quantity $\widetilde{\delta}_{d}$ which relates to $L$-values by 
\[
\widetilde{\delta}_{d} := \sum_{\substack{a=1 \\ (a,d)=1}}^{d}\frac{\mathrm{Re}([a/d])}{\Omega_{E}^{+}} \cdot \prod_{\ell \mid d}\overline{\log}_{h_{\ell}}(\sigma_{a})  \in \bF_{p},   
\]
where $\Omega^{+}_{E}$ is the  N\'eron period of $E$. 
Kurihara remarked in \cite{Kur14b} that it is easy to compute the analytic quantity $\widetilde{\delta}_{d}$ (see \cite[\S 5.3]{Kur14b}), and gave the following conjecture.

\begin{conjecture}[{\cite[Conjecture 1]{Kur14b}}]\label{conj:1}
There is an integer $d \in \cN_{1,0}$ with $\widetilde{\delta}_{d} \neq 0$. 
\end{conjecture}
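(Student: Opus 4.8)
The plan is to pass from modular symbols to Kato's Euler system and then invoke the structure theory of Kolyvagin systems. First I would organise the modular symbols into the mod~$p$ theta element
\[
\theta_{d} := \sum_{\substack{a=1\\ (a,d)=1}}^{d}\frac{\mathrm{Re}([a/d])}{\Omega_{E}^{+}}\,\sigma_{a}\ \in\ \bF_{p}[\Gal(\bQ(\mu_{d})/\bQ)]
\]
for each $d\in\cN_{1,0}$, so that by construction $\widetilde{\delta}_{d}$ is the image of $\theta_{d}$ under the $\bF_{p}$-linear map $\bF_{p}[\Gal(\bQ(\mu_{d})/\bQ)]\to\bF_{p}$ determined by $\sigma_{a}\mapsto\prod_{\ell\mid d}\overline{\log}_{h_{\ell}}(\sigma_{a})$; one recognises this as a ``leading-term'' functional adapted to the augmentation filtration of the group ring, and in particular it kills the $(\nu(d)+1)$-st power of the augmentation ideal, where $\nu(d)$ is the number of prime divisors of $d$. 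The point of this reformulation is that, by the interpolation property of Kato's zeta elements and Kato's explicit reciprocity law, $\theta_{d}$ is --- up to a $p$-adic unit --- the image under the dual exponential map at $p$ of the level-$d$ Kato class $z_{d}\in H^{1}(\bQ(\mu_{d}),T_{p}E)$ (read against $\omega_{f_{E}}$); applying Kolyvagin's derivative operators $D_{d}=\prod_{\ell\mid d}D_{\ell}$ and descending to $\bQ$, one gets that $\widetilde{\delta}_{d}$ equals, up to a $p$-adic unit, the image of the Kolyvagin derivative class $\kappa^{\mathrm{Kato}}_{d}\in H^{1}(\bQ,E[p])$ --- where the fixed generators $h_{\ell}$ trivialise the usual twist --- under the composite of the localisation maps at the primes $\ell\mid d$ followed by the finite--singular comparison isomorphisms $H^{1}_{\mathrm{tr}}(\bQ_{\ell},E[p])\cong\bF_{p}$. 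Here $\kappa^{\mathrm{Kato}}=(\kappa^{\mathrm{Kato}}_{d})_{d\in\cN_{1,0}}$ is the Kolyvagin system for $E[p]$ attached to Kato's Euler system.

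Granting this dictionary, it suffices to produce a $d\in\cN_{1,0}$ whose number of prime factors is small enough that $\kappa^{\mathrm{Kato}}_{d}$ is detected by its localisations at the primes dividing $d$, and I would obtain it as follows. By hypotheses (a)--(c) the residual representation $E[p]$ satisfies the running hypotheses of Mazur--Rubin, so the $\bF_{p}$-module of Kolyvagin systems for $E[p]$ is free of rank one; thus $\kappa^{\mathrm{Kato}}$ is either zero or a generator of it. The zero case I would exclude using Kato's theorem: $z^{\mathrm{Kato}}$ is non-zero, the module $H^{1}(\bQ,T_{p}E)$ is $\bZ_{p}$-torsion-free by the surjectivity in (b), and the local conditions at $p$ and at the primes of bad reduction are ``clean'' by (c), which together guarantee that Kato's Euler system stays non-trivial after reduction modulo $p$. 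With $\kappa^{\mathrm{Kato}}\neq 0$ in hand, the structure theory of Kolyvagin systems over $\bF_{p}$ produces a $d\in\cN_{1,0}$ --- one whose number of prime factors equals the order of vanishing of $\kappa^{\mathrm{Kato}}$ and whose primes are chosen suitably generically --- for which $\kappa^{\mathrm{Kato}}_{d}\neq 0$ and is detected by its localisations; feeding this $d$ into the dictionary of the first paragraph yields $\widetilde{\delta}_{d}\neq 0$, which is the conjecture.

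The step I expect to be the main obstacle is the precise \emph{modulo $p$} form of the dictionary. Kato's explicit reciprocity law is most naturally an identity between $p$-adic objects (a $p$-adic $L$-function, or classes in $\bZ_{p}$-modules), and turning it into an exact equality in $\bF_{p}$ between the Kurihara number $\widetilde{\delta}_{d}$ and the finite--singular localisation of $\kappa^{\mathrm{Kato}}_{d}$ forces one to control every local normalisation in sight --- the dual exponential and the Bloch--Kato local condition at $p$, the Tamagawa factors at the primes of $S_{\mathrm{bad}}(E)$, and the factor $\#E(\bF_{p})$ --- and to verify that the resulting proportionality constant is a $p$-adic unit; this is exactly where hypothesis (c) is used. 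A second delicate point is the non-triviality of Kato's Kolyvagin system modulo $p$ itself: although $z^{\mathrm{Kato}}$ is non-zero over $\bZ_{p}$, its mod~$p$ reduction can a priori be influenced by the $\mu$-invariant of the $p$-adic $L$-function, so one should be prepared to supplement Kato's construction with finer input --- the Iwasawa main conjecture (Kato's divisibility, or its full form via Skinner--Urban under the present hypotheses), or a direct Chebotarev-style non-vanishing argument for the family of modular symbols $\{[a/d]\}_{d\in\cN_{1,0}}$ --- to confirm that the mod~$p$ Kolyvagin system does not vanish identically.
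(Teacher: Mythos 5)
Your route---organise the modular symbols into Mazur--Tate theta elements, pass to Kato's Euler system via the Coleman map, apply Kolyvagin's derivative operators, and identify $\widetilde{\delta}_{d}$ up to a unit with the image of the derived class under the finite--singular maps at the primes dividing $d$---is essentially the machinery of the paper (Proposition \ref{prop:euler1}, Theorem \ref{thm:main}, Lemmas \ref{lem:leading}, \ref{lem:non-vanishing} and \ref{lem:non-vanishing2}); the unit discrepancies you worry about are indeed handled by hypothesis (c) (e.g.\ $\alpha\not\equiv 1\pmod p$ in Remark \ref{rem:bottom}, and the Tamagawa conditions in Lemma \ref{lem:ur=f}). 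The genuine gap is the step you relegate to a ``delicate point'': the claim that Kato's Euler system ``stays non-trivial after reduction modulo $p$''. The non-vanishing of $z^{\mathrm{Kato}}$ in $H^{1}(\bQ,T_{p}E)$ does \emph{not} imply that the associated mod-$p$ Kolyvagin system is non-zero; the obstruction is exactly whether $\widetilde{\xi}_{\bQ_{\infty}}$ generates $\mathrm{char}_{\Lambda_{\bQ_{\infty}}}(\mathrm{Sel}(\bQ_{\infty},E[p^{\infty}])^{\vee})$. Proposition \ref{prop:equiv} shows that the non-vanishing of some $\delta(\kappa_{\xi,1,0})_{d}$ is \emph{equivalent} to the Iwasawa main conjecture for $E/\bQ$, and by Kim--Kim--Sun the existence of $d$ with $\widetilde{\delta}_{d}\neq 0$ already implies the main conjecture; so an unconditional argument along your lines would prove the main conjecture for every pair $(E,p)$ satisfying (a)--(c), which is open.

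Accordingly, the paper does not prove Conjecture \ref{conj:1} unconditionally: its resolution (Corollary \ref{cor:conj1}) is that the conjecture holds \emph{if and only if} the Iwasawa main conjecture holds, so that the conjecture is only established when the main conjecture is known (for instance under the Skinner--Urban ramification hypothesis). Your proposal becomes correct, and coincides with the paper's proof, once you replace the appeal to ``Kato's theorem'' by the hypothesis of the main conjecture---equivalently, the hypothesis that the factor $a\in\Lambda_{\bQ_{\infty}}$ with $\kappa_{\xi}=a\kappa$ for a basis $\kappa$ of $\mathrm{KS}_{0}(\bT_{\bQ_{\infty}},\cF_{\mathrm{cl}})$ is a unit. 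Note also that Kato's divisibility alone (one inclusion of the main conjecture) does not suffice here: one needs the full equality of characteristic ideals to conclude that $a$ is a unit.
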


Concerning this conjecture, Kurihara proved in \cite{Kur14b} that  the non-degeneracy of the $p$-adic height pairing and the Iwasawa main conjecture for $E/\bQ$ imply Conjecture \ref{conj:1}. 
In the paper \cite{KKS20}, Chan-Ho Kim,  Myoungil Kim, and Hae-Sang Sun called $\widetilde{\delta}_{d}$ Kurihara number at $d$ and  
gave a simple and efficient numerical criterion to verify the Iwasawa main conjecture for $E/\bQ$ by using $\widetilde{\delta}_{d}$, namely, 
they proved in \cite{KKS20} that  Conjecture \ref{conj:1} implies the Iwasawa main conjecture for $E/\bQ$. 
Moreover, Chan-Ho Kim and Nakamura in \cite{KN20} generalized this numerical criterion to the additive reduction case. 
In the present paper, we give the following answer to Conjecture \ref{conj:1}.

\begin{theorem}[Corollary \ref{cor:conj1}]\label{thm:1}
Conjecture \ref{conj:1} is equivalent to the Iwasawa main conjecture for $E/\bQ$. 
\end{theorem}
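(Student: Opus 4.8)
The plan is to prove the two implications separately, the harder direction being that the Iwasawa main conjecture for $E/\bQ$ implies Conjecture \ref{conj:1}, since the converse is exactly the criterion established in \cite{KKS20}. The starting point for the forward direction is to reinterpret the Kurihara number $\widetilde{\delta}_{d}$ cohomologically. Using the fact that the modular symbols $\mathrm{Re}([a/d])/\Omega_{E}^{+}$, suitably packaged over $d \in \cN_{1,0}$, are the images under a period map of the Mazur--Tate elements, one recognizes $\{\widetilde{\delta}_{d}\}_{d}$ as (the analytic avatar of) a Kolyvagin system, or equivalently of the collection of derivative classes attached to an Euler system for $E[p]$. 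The Iwasawa main conjecture, combined with hypotheses (a)--(c), feeds into the machinery of Kolyvagin systems (Mazur--Rubin) to force this Kolyvagin system to be nonzero: concretely, the main conjecture identifies a characteristic-ideal generator with the $p$-adic $L$-function, hypothesis (c) removes the error terms coming from Tamagawa factors and $E(\bF_p)$, and one then deduces that some derivative class is nonzero modulo $p$, which translates back into the existence of $d$ with $\widetilde{\delta}_{d} \neq 0$.

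Concretely the key steps, in order, would be: (i) establish a precise comparison between $\widetilde{\delta}_{d}$ and the reduction mod $p$ of the appropriate Mazur--Tate/Kurihara element $\theta_{d}$, using the integral formula for modular symbols and the definition of $\overline{\log}_{h_\ell}$ — this is essentially bookkeeping with the action of $\Gal(\bQ(\mu_d)/\bQ)$ and the known relation between modular symbols and special values; (ii) invoke the Euler system / Kolyvagin system built from Beilinson--Kato elements (or the Mazur--Tate elements directly) so that $\{\theta_d \bmod p\}_{d \in \cN_{1,0}}$ forms a Kolyvagin system for the Selmer structure attached to $E[p]$; (iii) apply the structure theorem for Kolyvagin systems to relate the ``order of vanishing'' of this system to $\dim_{\bF_p}\mathrm{Sel}(\bQ, E[p])$ and its leading term to a Fitting/characteristic ideal; (iv) use the Iwasawa main conjecture for $E/\bQ$ (at the prime $p$) to identify that characteristic ideal with the one generated by the $p$-adic $L$-function, and use hypotheses (a)--(c) to check that the relevant specialization is a unit times the algebraic side with no spurious zero; (v) conclude that the Kolyvagin system is nonzero, hence $\widetilde{\delta}_{d} \neq 0$ for some $d \in \cN_{1,0}$. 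For the reverse implication, cite \cite{KKS20} (and note that hypotheses (a)--(c) match their running assumptions), possibly re-deriving it as a consequence of the same structure theorem read in the opposite direction.

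The main obstacle I anticipate is step (ii)--(iii): making rigorous the assertion that the analytically-defined quantities $\widetilde{\delta}_{d}$ genuinely assemble into a Kolyvagin system in the technical sense (the correct Euler-system-type distribution relations modulo $p$ as $d$ varies over $\cN_{1,0}$), and controlling the precise normalizations so that the ``leading term'' on the analytic side is compared with the ``leading term'' on the algebraic (Selmer) side \emph{without} losing or gaining factors of $p$. The Tamagawa factors and $\# E(\bF_p)$ are exactly where such factors could creep in, which is why hypothesis (c) is imposed; verifying that (c) suffices to kill all of them is where the argument must be careful. A secondary difficulty is that the Iwasawa main conjecture is a statement over the whole cyclotomic tower, so one must descend it correctly to the bottom layer $\bQ = \bQ_0$ and match it with the finite-level object $\widetilde{\delta}_{d}$; this descent is standard but needs the control theorems, which again rely on (a)--(c). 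Once the dictionary ``$\widetilde{\delta}_{d}$ nonzero for some $d$'' $\Longleftrightarrow$ ``the Kolyvagin system is nonzero'' $\Longleftrightarrow$ ``the algebraic and analytic characteristic ideals coincide'' is in place, Theorem \ref{thm:1} follows by combining it with \cite{KKS20} in the other direction.
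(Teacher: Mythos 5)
Your plan is essentially the route the paper takes: compare $\widetilde{\delta}_{d}$ with the Kolyvagin derivative of the Mazur--Tate element modulo $p$ (Lemmas \ref{lem:non-vanishing} and \ref{lem:non-vanishing2}), package Kato's zeta elements through the Coleman map into a rank-$0$ Kolyvagin system $\kappa_{\xi,1,0}$ whose $\delta$-invariants are exactly these derivatives (Theorem \ref{thm:main}), and then use the structure theorem $R\cdot\delta(\kappa)_{d}=\Fitt^{0}_{R}(H^{1}_{\cF_{\mathrm{cl}}(d)}(G_{\bQ},T)^{\vee})$ together with the freeness of $\mathrm{KS}_{0}$ to show that nonvanishing of some $\widetilde{\delta}_{d}$ is equivalent to $\kappa_{\xi}$ being a basis, which in turn is equivalent to the Iwasawa main conjecture (Proposition \ref{prop:equiv}). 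The only cosmetic difference is that you propose importing the converse implication from \cite{KKS20}, whereas the paper obtains both directions at once from the single equivalence in Proposition \ref{prop:equiv}.
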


\begin{remark} 
Skinner and Urban proved in \cite{SU14} that  if there exists a prime $q \neq p$ such that $\mathrm{ord}_{q}(N_{E}) = 1$ and $E[p]$ is ramified at $q$, then the Iwasawa main conjecture for $E$ is valid. 
Here $N_{E}$ is the conductor of $E/\bQ$. 
\end{remark}

Next, let us explain the relation between  the structure of the $p$-Selmer group $\mathrm{Sel}(\bQ, E[p])$ and the analytic quantities $\widetilde{\delta}_{d}$. For that, we use the following terminology of Kurihara in \cite{Kur14b}.

\begin{definition}
We say that an integer $d \in \cN_{1,0}$ is $\delta$-minimal if $\widetilde{\delta}_{d} \neq  0$ and $\widetilde{\delta}_{e} = 0$ for any positive proper divisor $e$ of $d$. 
\end{definition}

Recall that, by the definition of the $p$-Selmer group, the localization map at $\ell$ induces a natural homomorphism 
\[
\mathrm{Sel}(\bQ, E[p]) \longrightarrow E(\bQ_{\ell}) \otimes_{\bZ} \bF_{p}. 
\] 
Let $d \in \cN_{1,0}$ be a $\delta$-minimal integer.  Kurihara proved in \cite{Kur14b} that the natural homomorphism  
\begin{align}\label{map}
\mathrm{Sel}(\bQ, E[p]) \longhookrightarrow \bigoplus_{\ell \mid d}E(\bQ_{\ell}) \otimes_{\bZ} \bF_{p}
\end{align}
is injective (see Remark \ref{rem:inj}), and he conjectured in \cite[Conjecture 2]{Kur14b} that the homomorphism \eqref{map} is an isomorphism. By the definition of  $\cP_{1,0}$, we have 
\[
\dim_{\bF_{p}}(E(\bQ_{\ell}) \otimes_{\bZ} \bF_{p}) = 1
\] 
for each prime divisor $\ell \mid d$, and hence  this conjecture is equivalent to that 
\[
\dim_{\bF_{p}}(\mathrm{Sel}(\bQ, E[p])) = \nu(d), 
\]
where $\nu(d)$ denotes the number of distinct prime divisors of $d$. 
Kurihara showed in \cite[Theorem 4]{Kur14b} that \eqref{map} is an isomorphism in some special cases.  
In the present paper, we solve this conjecture. 




\begin{theorem}[Theorem \ref{thm:mainmain}]\label{thm:main1}
For any $\delta$-minimal integer $d \in \cN_{1,0}$, we have the natural isomorphism 
\[
\mathrm{Sel}(\bQ, E[p]) \stackrel{\sim}{\longrightarrow} \bigoplus_{\ell \mid d}E(\bQ_{\ell}) \otimes_{\bZ} \bF_{p}, 
\]
and hence $\dim_{\bF_{p}}(\mathrm{Sel}(\bQ, E[p])) = \nu(d)$. 
\end{theorem}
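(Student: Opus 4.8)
The plan is to realize both sides of the desired isomorphism as the ``$d$-strict'' and ``$d$-relaxed'' Selmer groups attached to a suitable Euler system / Kolyvagin system, and to use the non-vanishing of $\widetilde\delta_d$ together with $\delta$-minimality to pin down the Fitting ideal (equivalently, the rank) of the relevant module. Concretely, by condition (b) the residual representation $E[p]$ is irreducible, so Kurihara's modular-symbol elements $\widetilde\delta_e$ (for $e \mid d$, $e \in \cN_{1,0}$) assemble into a Kolyvagin system for $E[p]$ over $\bQ$, whose value at $e$ lives in $\bigotimes_{\ell \mid e} E(\bQ_\ell)\otimes\bF_p$ (each factor one-dimensional by the definition of $\cP_{1,0}$). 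The injectivity in \eqref{map} is already due to Kurihara (Remark \ref{rem:inj}), so the content is surjectivity, i.e.\ the reverse inequality $\dim_{\bF_p}\mathrm{Sel}(\bQ,E[p]) \geq \nu(d)$.

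First I would set up the Kolyvagin-system machinery over $\bF_p$: for each $e \in \cN_{1,0}$ dividing $d$, the class $\kappa_e$ with image governed by $\widetilde\delta_e$ satisfies the usual local conditions (unramified away from $e$, and at each $\ell \mid e$ its localization spans a line in $H^1(\bQ_\ell, E[p])$ that is \emph{transverse} to the finite/Selmer local condition). The core structural input is the ``stability/rigidity'' theorem for Kolyvagin systems of core rank one (Mazur–Rubin), which identifies the Selmer group with the first ``error term'' in the sequence of $\widetilde\delta_e$: precisely, $\dim_{\bF_p}\mathrm{Sel}(\bQ,E[p])$ equals the smallest $\nu(e)$ for which $\widetilde\delta_e \neq 0$. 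The $\delta$-minimality of $d$ says exactly that $d$ achieves this minimum — $\widetilde\delta_e = 0$ for every proper divisor — so $\dim_{\bF_p}\mathrm{Sel}(\bQ,E[p]) = \nu(d)$, and then the injective map \eqref{map} between $\bF_p$-spaces of equal dimension $\nu(d)$ is forced to be an isomorphism.

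The main obstacle is establishing that Kurihara's analytically defined $\widetilde\delta_d$ really is (up to units) the value at $d$ of a genuine Kolyvagin system — i.e.\ reconciling the modular-symbol definition with the cohomological one. This requires an explicit reciprocity law relating the modular symbol sums $\sum_a \mathrm{Re}([a/d])\prod_{\ell\mid d}\overline{\log}_{h_\ell}(\sigma_a)$ to the images of Beilinson–Kato elements (or Mazur–Tate elements mod $p$) under the dual-exponential / finite-singular comparison maps at the primes $\ell \mid d$. Hypothesis (c) — $p \nmid \#E(\bF_p)\prod_\ell \mathrm{Tam}_\ell(E)$ — is what guarantees the relevant local Tamagawa and period factors are $p$-adic units, so no spurious zeros or poles are introduced in this identification; hypothesis (a), good ordinary reduction at $p$, is what makes the Kato Euler system usable and the Perrin-Riou style comparison clean. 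Once this dictionary is in place, the purely algebraic Kolyvagin-system argument is essentially formal, and Theorem \ref{thm:main1} follows, with Theorem \ref{thm:1} (equivalence with the Iwasawa main conjecture) coming out as the $\Lambda$-adic shadow of the same circle of ideas combined with the results of \cite{KKS20}.
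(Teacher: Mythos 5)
Your overall architecture (realize $\widetilde{\delta}_{d}$ as the value of a rank-zero Kolyvagin system built from Kato's Euler system via the Coleman/Perrin-Riou map, then use rigidity to read off the Selmer rank) matches the paper's, and your identification of the reciprocity-law step as the analytic-to-cohomological dictionary is exactly the content of \S\ref{sec:const}. But the final deduction has a genuine gap. You assert that ``$\dim_{\bF_{p}}\mathrm{Sel}(\bQ,E[p])$ equals the smallest $\nu(e)$ for which $\widetilde{\delta}_{e}\neq 0$'' and that ``$\delta$-minimality of $d$ says exactly that $d$ achieves this minimum.'' It does not: $\delta$-minimality only quantifies over \emph{divisors} of $d$, whereas your minimum ranges over all of $\cN_{1,0}$. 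A priori two $\delta$-minimal integers could have different numbers of prime factors, and ruling this out is essentially the theorem itself. Worse, even granting your structural input, $\widetilde{\delta}_{d}\neq 0$ only gives $\nu(d)\geq\min\{\nu(e)\}=\dim_{\bF_{p}}\mathrm{Sel}(\bQ,E[p])$, i.e.\ the inequality you already have from injectivity of \eqref{map}; it does not give the surjectivity direction $\dim_{\bF_{p}}\mathrm{Sel}(\bQ,E[p])\geq\nu(d)$ that you correctly identified as the real content.

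The missing ingredient is a \emph{divisor-chain} argument exploiting the self-duality of the classical Selmer structure. In the paper, Theorem \ref{thm:koly0}(2) (applied to the basis $\kappa_{\xi,1,0}$, whose primitivity is itself guaranteed by the existence of a $\delta$-minimal integer via Proposition \ref{prop:equiv}) converts $\widetilde{\delta}_{d}\neq 0$ into $H^{1}_{\cF_{\mathrm{cl}}(d)}(G_{\bQ},E[p])=0$ (Lemma \ref{lem:delta-vanish}); then Lemma \ref{lem:rank} --- which rests on $\cF_{\mathrm{cl}}(d)^{*}=\cF_{\mathrm{cl}}(d)$ and would be false without self-duality, cf.\ the ideal-class-group counterexample in Remarks \ref{rem:counter} and \ref{rem:self} --- shows that whenever $\lambda$ drops it drops by exactly one along primes where the localization is nonzero, and Proposition \ref{prop:min-div} uses this to extract a divisor $e\mid d$ with $\nu(e)=\lambda(1)$ and $\lambda(e)=0$, hence $\widetilde{\delta}_{e}\neq 0$; $\delta$-minimality then forces $e=d$. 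Your appeal to Mazur--Rubin's core-rank-one order-of-vanishing theorem does not substitute for this: that theorem computes the dual Selmer group of the canonical (core rank one, non-self-dual) structure, which is the strict-at-$p$ Selmer group rather than $\mathrm{Sel}(\bQ,E[p])$, and the passage to the classical self-dual structure is precisely where the rank-zero theory and Proposition \ref{prop:min-div} are needed.
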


\begin{remark}
Theorem \ref{thm:main1} implies that for any integer $d \in \cN_{1,0}$ with $\widetilde{\delta}_{d} \neq 0$, we have 
\[
\dim_{\bF_{p}}(\mathrm{Sel}(\bQ, E[p])) \leq \nu(d). 
\]
Note that the analytic quantity $\widetilde{\delta}_{d}$ is computable, as the author mentioned above. 
\end{remark}

\begin{remark}
After the author had got almost all the results in the present paper, Chan-Ho Kim told the author that he also proved the same result (see \cite{CH}). 
\end{remark}

\begin{remark}\label{rem:counter}
The analogue of Theorem \ref{thm:main1}  for ideal class groups does not hold as Kurihara gave in \cite[\S 5.4]{Kur14b} a counter-example. 
In Remark \ref{rem:self},  we explain what is an important property in order to prove Theorem \ref{thm:main1}. 
\end{remark}

By using the functional equation for modular symbols (see \cite[(1.6.1)]{MT}), 
Kurihara showed in \cite[Lemma 4]{Kur14b} that $w_{E} = (-1)^{\nu(d)}$ for any $\delta$-minimal integer $d \in \cN_{1,0}$. 
Here $w_{E}$ denotes the (global) root number of $E/\bQ$. 
Hence, as an application of Theorem \ref{thm:main1}, we obtain the following result concerning the parity of the order of vanishing of $L$-function $L(E/\bQ,s)$ at $s=1$: 

\begin{corollary}
\label{cor:parity}
Suppose that the Iwasawa main conjecture for $E/\bQ$ holds true. 
Then we have 
\[
\dim_{\bF_{p}}(\mathrm{Sel}(\bQ, E[p])) \equiv \mathrm{ord}_{s=1}(L(E/\bQ,s)) \pmod{2}. 
\]
Moreover, if the $p$-primary part of the Tate--Shafarevich group 
for $E/\bQ$ is finite, then we have 
\[
\mathrm{rank}_{\bZ}(E(\bQ)) \equiv \mathrm{ord}_{s=1}(L(E/\bQ,s)) \pmod{2}. 
\]
\end{corollary}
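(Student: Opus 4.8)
The plan is to deduce Corollary \ref{cor:parity} directly from Theorem \ref{thm:main1} together with the parity computation that Kurihara already carried out in \cite[Lemma 4]{Kur14b}. First I would fix a $\delta$-minimal integer $d \in \cN_{1,0}$; this exists once we know that there is \emph{some} $d \in \cN_{1,0}$ with $\widetilde{\delta}_{d} \neq 0$, because then among all such $d$ one of minimal $\nu(d)$ (equivalently, minimal in the divisibility order restricted to those non-vanishing $d$) is $\delta$-minimal by definition. The hypothesis that the Iwasawa main conjecture for $E/\bQ$ holds supplies exactly this input: by Theorem \ref{thm:1} the main conjecture is equivalent to Conjecture \ref{conj:1}, which asserts precisely the existence of $d \in \cN_{1,0}$ with $\widetilde{\delta}_{d} \neq 0$. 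So the first paragraph of the proof is: invoke Theorem \ref{thm:1} to produce such a $d$, then pass to a $\delta$-minimal divisor.

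Next I would apply Theorem \ref{thm:main1} to this $\delta$-minimal $d$ to get $\dim_{\bF_{p}}(\mathrm{Sel}(\bQ, E[p])) = \nu(d)$. Simultaneously, Kurihara's \cite[Lemma 4]{Kur14b} — proved via the functional equation for modular symbols \cite[(1.6.1)]{MT} — gives $w_{E} = (-1)^{\nu(d)}$, where $w_{E}$ is the global root number. Combining, $(-1)^{\dim_{\bF_{p}}(\mathrm{Sel}(\bQ, E[p]))} = (-1)^{\nu(d)} = w_{E}$. Now the functional equation for $L(E/\bQ,s)$ forces $w_{E} = (-1)^{\mathrm{ord}_{s=1}(L(E/\bQ,s))}$ (here one uses that $L(E/\bQ,s)$ has analytic continuation and functional equation, which is known by modularity). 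Chaining these sign identities yields $\dim_{\bF_{p}}(\mathrm{Sel}(\bQ, E[p])) \equiv \mathrm{ord}_{s=1}(L(E/\bQ,s)) \pmod 2$, the first assertion.

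For the second assertion I would recall the exact sequence relating the $p$-Selmer group, the Mordell--Weil group, and the Tate--Shafarevich group: there is a short exact sequence
\[
0 \longrightarrow E(\bQ) \otimes_{\bZ} \bF_{p} \longrightarrow \mathrm{Sel}(\bQ, E[p]) \longrightarrow \Sha(E/\bQ)[p] \longrightarrow 0.
\]
Under the assumption that the $p$-primary part $\Sha(E/\bQ)[p^{\infty}]$ is finite, $\Sha(E/\bQ)[p]$ and $\Sha(E/\bQ)/p$ have the same $\bF_{p}$-dimension, so that dimension is even (a finite abelian group admitting a non-degenerate alternating Cassels--Tate pairing has square order, hence $\dim_{\bF_{p}}\Sha(E/\bQ)[p]$ is even). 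Also $\dim_{\bF_{p}}(E(\bQ) \otimes_{\bZ} \bF_{p}) = \mathrm{rank}_{\bZ}(E(\bQ)) + \dim_{\bF_{p}}(E(\bQ)_{\mathrm{tors}} \otimes \bF_{p})$, and the torsion contribution is $0$ because hypothesis (b) (surjectivity of the Galois action on $E[p]$) forces $E(\bQ)[p] = 0$. Hence $\dim_{\bF_{p}}(\mathrm{Sel}(\bQ, E[p])) \equiv \mathrm{rank}_{\bZ}(E(\bQ)) \pmod 2$, and combining with the first assertion gives $\mathrm{rank}_{\bZ}(E(\bQ)) \equiv \mathrm{ord}_{s=1}(L(E/\bQ,s)) \pmod 2$.

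The only genuine subtlety — and the step I would flag as needing a careful citation rather than a new idea — is ensuring the existence of a $\delta$-minimal $d$ from the Iwasawa main conjecture; this is purely a matter of correctly invoking Theorem \ref{thm:1} (equivalently Corollary \ref{cor:conj1}). Everything else is assembling standard facts: the root-number interpretation of $\mathrm{ord}_{s=1}L(E/\bQ,s)$ from the functional equation, Kurihara's modular-symbol parity lemma, the descent exact sequence for $\mathrm{Sel}(\bQ, E[p])$, the evenness of $\dim_{\bF_{p}}\Sha(E/\bQ)[p]$ under finiteness via the Cassels--Tate pairing, and the vanishing of $E(\bQ)[p]$ from hypothesis (b). No step requires a new argument beyond Theorem \ref{thm:main1} itself.
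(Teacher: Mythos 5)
Your proposal is correct and follows essentially the same route as the paper: produce a $\delta$-minimal $d$ via Theorem \ref{thm:1}, combine Theorem \ref{thm:main1} with Kurihara's $w_{E}=(-1)^{\nu(d)}$ and the functional equation. The paper's proof does not spell out the second assertion, but your treatment of it (descent exact sequence, evenness of $\dim_{\bF_{p}}\Sha(E/\bQ)[p]$ via Cassels--Tate, and $E(\bQ)[p]=0$ from hypothesis (b)) is the standard argument it implicitly relies on.
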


\begin{proof}
Since we assume that the Iwasawa main conjecture for $E/\bQ$ holds true, Theorem \ref{thm:1} shows that there is a $\delta$-minimal integer $d \in \cN_{1,0}$. 
Then, Theorem \ref{thm:main1}, combined with the fact that $w_{E} = (-1)^{\nu(d)}$, implies that $w_{E} = (-1)^{\dim_{\bF_{p}}(\mathrm{Sel}(\bQ, E[p]))}$. 
Since $w_{E} = (-1)^{\mathrm{ord}_{s=1}(L(E/\bQ,s))}$, we have $\dim_{\bF_{p}}(\mathrm{Sel}(\bQ, E[p])) \equiv \mathrm{ord}_{s=1}(L(E/\bQ,s)) \pmod{2}$. 
\end{proof}

\begin{remark}
Corollary \ref{cor:parity} has already been proved by Nekov\'a\v{r} in \cite{Nek01} (see also \cite{Nek06}), assuming only the condition (a).  
However,  the proof of Corollary \ref{cor:parity} is completely different from that of \cite[Theorem A]{Nek01}. 
\end{remark}

The proof of   Theorem \ref{thm:main1} is based on the theory of  Kolyvagin systems of rank $0$ developed in \cite{sakamoto-koly0}. 
In \S\ref{sec:koly}, we introduce the theory of  Kolyvagin systems. 
In \S\ref{sec:const}, we construct a Kolyvagin system of rank 0 from modular symbols. In \S\ref{sec:main}, we discuss the relation between this Kolyvagin system and the set of the analytic quantities $\{\widetilde{\delta}_{d}\}_{d \in \cN_{1,0}}$, and we  give a proof of Theorem  \ref{thm:main1}. Moreover, by using  the Kolyvagin system constructed in \S\ref{sec:const}, 
we construct an explicit basis of the $p$-Selmer group (see Corollary \ref{cor:basis}). 

In our case, the theory of Kolyvagin systems developed in \cite{sakamoto-koly0} does not work when $p=3$. In Appendix \ref{sec:appendix}, we discuss this problem and extend the theory of Kolyvagin systems so that it can be used even when $p = 3$.

\subsection*{Acknowledgement}

The authors would like to thank Masato Kurihara for his careful reading of the paper and his many helpful suggestions. 
The author  would also like to thank Chan-Ho Kim for helpful comments.

\section{The theory of Kolyvagin system}\label{sec:koly}

In this section, we recall the theory of Kolyvagin systems. 
The contents of this section are based on \cite{MRkoly, sakamoto-koly0}. 

Let $p \geq 3$ be a primes satisfying the hypotheses (a), (b) and (c). 
For notational simplicity, we put 
\[
M/p^{m} := M/p^{m}M
\] 
for any abelian group $M$. 
 Fix integers $n \geq 0$ and $m \geq 1$. 
Let $\bQ_{n}$ denote the $n$-th layer of the cyclotomic $\bZ_{p}$-extension of $\bQ$.
 We then put 
 \[
 R := \bZ_{p}/p^{m}[\Gal(\bQ_{n}/\bQ)] \,\,\, \textrm{ and } \,\,\, T := \mathrm{Ind}_{G_{\bQ_{n}}}^{G_{\bQ}}(E[p^{m}]). 
 \] 
 Note that $T$ satisfies the hypotheses (H.0) -- (H.3) in \cite[\S3.5]{MRkoly}.  However,  $T$ does not satisfy the hypothesis (H.4) in \cite[\S3.5]{MRkoly} when $p=3$. 
 
 \subsection{Selmer structures}
 We introduce two Selmer structures on $T$. 
 Recall that a Selmer structure $\cF$ on $T$ is a collection of the following data:
\begin{itemize}
\item a finite set $S(\cF)$ of rational primes containing $S_{\mathrm{bad}}(E) \cup \{p\}$, 
\item a choice of $R$-submodule $H^{1}_{\cF}(G_{\bQ_{\ell}}, T)$ of $H^{1}(G_{\bQ_{\ell}},T)$ for each prime $\ell \in S(\cF)$. 
\end{itemize}
Here, for any field $K$, we denote by $\overline{K}$ a separable closure of $K$ and set $G_{K} := \Gal(\overline{K}/K)$. 
For each prime $\ell \not\in S(\cF)$, we set 
\[
H^{1}_{\cF}(G_{\bQ_{\ell}}, T) := H^{1}_{\rm ur}(\bQ_{\ell},T) := {\rm ker}\left(H^{1}(\bQ_{\ell}, T) \longrightarrow H^{1}(G_{\bQ_{\ell}^{\rm ur}}, T) \right), 
\] 
where $\bQ_{\ell}^{\rm ur}$ denotes the maximal unramified extension of $\bQ_{\ell}$. 
We define the Selmer module $H^{1}_{\cF}(G_{\bQ}, T)$ by 
\[
H^{1}_{\cF}(G_{\bQ}, T) := \ker\left(H^{1}(G_{\bQ}, T) \longrightarrow \bigoplus_{\ell} H^{1}(G_{\bQ_{\ell}}, T)/H^{1}_{\cF}(G_{\bQ_{\ell}}, T)\right). 
\]
Set $T^{\vee}(1) := \Hom(T, \mu_{p^{\infty}})$. 
For each prime $\ell$, we define 
\[
H^{1}_{\cF^{*}}(G_{\bQ_{\ell}}, T^{\vee}(1)) \subset H^{1}(G_{\bQ_{\ell}}, T^{\vee}(1))
\] 
to be the orthogonal complement of $H^{1}_{\cF}(G_{\bQ_{\ell}}, T)$ with respect to the local Tate pairing. 
Hence we obtain the dual Selmer structure $\cF^{*}$ on $T^{\vee}(1)$. 
Throughout this paper, we regard $\cF^{*}$ as a Selmer structure on $T$ by using the isomorphism $T \cong T^{\vee}(1)$ induced by the Weil pairing.

\begin{theorem}[{\cite[Theorem~2.3.4]{MRkoly}}]\label{pt}
Let $\cF_{1}$ and $\cF_2$ be Selmer structures on $T$ satisfying 
\[
H^{1}_{\cF_{1}}(G_{\bQ_{\ell}}, T) \subset H^{1}_{\cF_{2}}(G_{\bQ_{\ell}}, T)
\]
for all prime $\ell$. Then we have an exact sequence of $R$-modules
\begin{align*}
0 \longrightarrow H^{1}_{\cF_{1}}(G_{\bQ}, T) \longrightarrow H^{1}_{\cF_{2}}(G_{\bQ}, T) 
\longrightarrow \bigoplus_{\ell} H^{1}_{\cF_{2}}(G_{\bQ_{\ell}}, T)/H^{1}_{\cF_{1}}(G_{\bQ_{\ell}}, T) 
\\
\longrightarrow H^{1}_{\cF_{1}^{*}}(G_{\bQ}, T)^\vee \longrightarrow H^{1}_{\cF_{2}^{*}}(G_{\bQ}, T)^\vee \longrightarrow 0, 
\end{align*}
where $\ell$ runs over all the rational primes satisfying 
$H_{\cF_{1}}^{1}(G_{\bQ_{\ell}}, T) \neq H^{1}_{\cF_{2}}(G_{\bQ_{\ell}}, T)$. 
Here $(-)^{\vee} := \Hom(-,\bQ_{p}/\bZ_{p})$. 
\end{theorem}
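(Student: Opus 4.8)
The plan is to recognize this as the standard global-duality (``Greenberg--Wiles'') exact sequence for a pair of comparable Selmer structures and to deduce it from Poitou--Tate global duality by a diagram chase. The first step is a routine reduction to a finite set of primes. Choose a finite set $\Sigma$ of rational primes containing $S(\cF_{1}) \cup S(\cF_{2})$ (hence $S_{\mathrm{bad}}(E) \cup \{p,\infty\}$) together with every prime $\ell$ at which $H^{1}_{\cF_{1}}(G_{\bQ_{\ell}},T) \neq H^{1}_{\cF_{2}}(G_{\bQ_{\ell}},T)$; this set is finite because outside $S(\cF_{i})$ both local conditions are the unramified one. For $\ell \notin \Sigma$ the summand $H^{1}_{\cF_{2}}(G_{\bQ_{\ell}},T)/H^{1}_{\cF_{1}}(G_{\bQ_{\ell}},T)$ vanishes, so the direct sum in the statement is finite and indexed by $\ell \in \Sigma$; moreover every class in $H^{1}_{\cF_{1}}(G_{\bQ},T)$ or $H^{1}_{\cF_{2}}(G_{\bQ},T)$ is unramified outside $\Sigma$, so all four Selmer groups may be computed inside $H^{1}(G_{\bQ,\Sigma},T)$, where $G_{\bQ,\Sigma}$ is the Galois group of the maximal extension of $\bQ$ unramified outside $\Sigma$. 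Throughout I use the Weil pairing isomorphism $T \cong T^{\vee}(1)$ to regard the $\cF_{i}^{*}$-Selmer groups as submodules of $H^{1}(G_{\bQ,\Sigma},T)$, as in the statement.

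Next I would feed each of $\cF_{1}$ and $\cF_{2}$ into Poitou--Tate global duality. Grafting in the local conditions in the usual way yields, for $i = 1,2$, a five-term exact sequence
\[
0 \longrightarrow H^{1}_{\cF_{i}}(G_{\bQ},T) \longrightarrow H^{1}(G_{\bQ,\Sigma},T) \longrightarrow \bigoplus_{\ell \in \Sigma} \frac{H^{1}(G_{\bQ_{\ell}},T)}{H^{1}_{\cF_{i}}(G_{\bQ_{\ell}},T)} \longrightarrow H^{1}_{\cF_{i}^{*}}(G_{\bQ},T)^{\vee} \longrightarrow Y \longrightarrow 0,
\]
where $Y := \ker\big(H^{1}(G_{\bQ,\Sigma},T) \to \bigoplus_{\ell \in \Sigma} H^{1}(G_{\bQ_{\ell}},T)\big)^{\vee}$ is the dual of the strict-at-$\Sigma$ Selmer group; crucially $Y$ does not depend on $i$, and the last map is the transpose of the inclusion of that strict Selmer group into $H^{1}_{\cF_{i}^{*}}(G_{\bQ},T)$. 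The inclusions $H^{1}_{\cF_{1}}(G_{\bQ_{\ell}},T) \subseteq H^{1}_{\cF_{2}}(G_{\bQ_{\ell}},T)$ induce a surjection between the local-sum terms whose kernel is exactly $\bigoplus_{\ell \in \Sigma} H^{1}_{\cF_{2}}(G_{\bQ_{\ell}},T)/H^{1}_{\cF_{1}}(G_{\bQ_{\ell}},T)$, and, dualizing the opposite inclusions $H^{1}_{\cF_{2}^{*}}(G_{\bQ_{\ell}},T) \subseteq H^{1}_{\cF_{1}^{*}}(G_{\bQ_{\ell}},T)$ coming from local Tate duality, a surjection $H^{1}_{\cF_{1}^{*}}(G_{\bQ},T)^{\vee} \twoheadrightarrow H^{1}_{\cF_{2}^{*}}(G_{\bQ},T)^{\vee}$. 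By the functoriality of Poitou--Tate these maps, together with the identity on $H^{1}(G_{\bQ,\Sigma},T)$ and on $Y$, assemble into a commutative ladder from the $\cF_{1}$-sequence to the $\cF_{2}$-sequence, and the transpose description of the last maps shows that $H^{1}_{\cF_{1}^{*}}(G_{\bQ},T)^{\vee} \to Y$ factors through $H^{1}_{\cF_{2}^{*}}(G_{\bQ},T)^{\vee}$.

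Finally I would splice the two rows by a diagram chase. The composite $H^{1}_{\cF_{2}}(G_{\bQ},T) \to H^{1}(G_{\bQ,\Sigma},T) \to \bigoplus_{\ell} H^{1}(G_{\bQ_{\ell}},T)/H^{1}_{\cF_{1}}(G_{\bQ_{\ell}},T)$ lands in the kernel $K := \bigoplus_{\ell} H^{1}_{\cF_{2}}(G_{\bQ_{\ell}},T)/H^{1}_{\cF_{1}}(G_{\bQ_{\ell}},T)$ of the vertical surjection, since such a class is trivial in the $\cF_{2}$-quotient. One then checks in turn: its kernel into $K$ is $H^{1}_{\cF_{1}}(G_{\bQ},T)$; an element of $K$ mapping to $0$ in $H^{1}_{\cF_{1}^{*}}(G_{\bQ},T)^{\vee}$ lifts (by exactness of the $\cF_{1}$-row) to $H^{1}(G_{\bQ,\Sigma},T)$ and then, by commutativity of the ladder, to $H^{1}_{\cF_{2}}(G_{\bQ},T)$, giving exactness at $K$; and an element of $H^{1}_{\cF_{1}^{*}}(G_{\bQ},T)^{\vee}$ killed by the map to $H^{1}_{\cF_{2}^{*}}(G_{\bQ},T)^{\vee}$ is then killed in $Y$ as well (using the factorization above), hence lies in the image of $h_{1}$ from the $\cF_{1}$-row, and a further correction by an element of $\im(g_{1})$ exhibits it in the image of $K$ — this is exactly where the common tail term $Y$ is needed. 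Assembling these identifications produces the asserted exact sequence. The only non-formal ingredient is the Poitou--Tate five-term sequence of the second paragraph, which rests on global duality, i.e.\ on the self-duality of the image of $H^{1}(G_{\bQ,\Sigma},\,\cdot\,)$ inside the restricted product of the local cohomology groups; that is the step I expect to be the real obstacle, everything else being local Tate duality and bookkeeping. I note that the argument uses only the structure of $T$ as a Galois module, and in particular needs neither hypothesis (b) nor hypothesis (H.4) of \cite{MRkoly}, so it remains valid when $p = 3$.
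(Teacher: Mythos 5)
Your argument is correct and is exactly the standard derivation of the Greenberg--Wiles/Poitou--Tate exact sequence for comparable Selmer structures; the paper offers no proof of its own here, citing \cite[Theorem~2.3.4]{MRkoly}, whose proof (via the references therein, e.g.\ Rubin's \emph{Euler Systems}, Theorem 1.7.3) is the same reduction to a finite set $\Sigma$, the two five-term Poitou--Tate sequences, and the splicing diagram chase you describe. Your closing remark is also apt: the result is pure global duality and needs none of the hypotheses (b) or (H.4), which is why the paper can invoke it uniformly, including for $p=3$.
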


\begin{lemma}[{\cite[\S3.2]{bss}, \cite[Lemma~3.5.3]{MRkoly}}]\label{lemma:mult}
For any  Selmer structure $\cF$ on $T$,  the canonical map $E[p]  \longhookrightarrow T$ induces an isomorphism
\[
H^{1}_{\cF^*}(G_{\bQ}, E[p]) \stackrel{\sim}{\longrightarrow} H^{1}_{\cF^*}(G_{\bQ}, T)[\fm_{R}]. 
\]
Here $\fm_{R}$ denote the maximal ideal of $R$. In particular, $H^{1}_{\cF^*}(G_{\bQ}, E[p]) = 0$ if and only if $H^{1}_{\cF^*}(G_{\bQ}, T) = 0$. 
\end{lemma}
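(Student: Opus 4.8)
The plan is to deduce the isomorphism $H^{1}_{\cF^*}(G_{\bQ}, E[p]) \stackrel{\sim}{\longrightarrow} H^{1}_{\cF^*}(G_{\bQ}, T)[\fm_{R}]$ from the control-theoretic comparison between cohomology with coefficients in $E[p]$ and cohomology with coefficients in $T$, exploiting that $E[p]$ embeds into $T$ as the submodule killed by $\fm_R$ (or, dually, that $T[\fm_R] \cong E[p]$ since $T = \mathrm{Ind}_{G_{\bQ_n}}^{G_\bQ}(E[p^m])$ and $R = \bZ_p/p^m[\Gal(\bQ_n/\bQ)]$ is a local ring with residue field $\bF_p$). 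First I would record the short exact sequence of $G_\bQ$-modules $0 \to E[p] \to T \xrightarrow{\cdot\fm_R} T/\fm_R T \to 0$ — more precisely, one works with the multiplication-by-$p$ and the ideal structure of $R$ — and take the associated long exact sequence in Galois cohomology, both globally over $G_\bQ$ and locally over each $G_{\bQ_\ell}$. The key point is that the global cohomology injection $H^1(G_\bQ, E[p]) \hookrightarrow H^1(G_\bQ, T)$ identifies the source with $H^1(G_\bQ, T)[\fm_R]$, using hypothesis (b) (surjectivity of the Galois action on $E[p]$, hence $H^0(G_\bQ, E[p]) = H^0(G_\bQ, T/\fm_R T) = 0$), which kills the relevant connecting maps.

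The next step is to check that this global identification is compatible with the Selmer conditions defining $\cF^*$, so that it restricts to an isomorphism on the Selmer submodules. Concretely, for each prime $\ell \in S(\cF)$ one must verify that the local condition $H^{1}_{\cF^*}(G_{\bQ_\ell}, E[p])$ is exactly the preimage of $H^{1}_{\cF^*}(G_{\bQ_\ell}, T)[\fm_R]$ under $H^1(G_{\bQ_\ell}, E[p]) \hookrightarrow H^1(G_{\bQ_\ell}, T)$; for $\ell \notin S(\cF)$ the unramified condition is automatically compatible since taking $\fm_R$-torsion commutes with the kernel defining $H^1_{\mathrm{ur}}$. This is the place where one invokes the cited references \cite[\S3.2]{bss} and \cite[Lemma~3.5.3]{MRkoly}: the local conditions are defined precisely so as to propagate correctly under the quotient/submodule maps $E[p] \hookrightarrow T \twoheadrightarrow E[p]$, and this is a standard feature of the "canonical" Selmer structures (Greenberg-type or finite/singular local conditions) in this setting. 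Since both the global cohomology identification and all the local conditions match up, a diagram chase in the definition of $H^1_{\cF^*}(G_\bQ,-)$ as a kernel yields the desired isomorphism.

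The final clause — that $H^{1}_{\cF^*}(G_{\bQ}, E[p]) = 0$ if and only if $H^{1}_{\cF^*}(G_{\bQ}, T) = 0$ — follows formally: $R$ is a (not necessarily regular, but) Artinian local ring with residue field $\bF_p$, so any nonzero $R$-module has nonzero $\fm_R$-torsion by Nakayama; hence $H^1_{\cF^*}(G_\bQ, T) = 0 \iff H^1_{\cF^*}(G_\bQ, T)[\fm_R] = 0 \iff H^1_{\cF^*}(G_\bQ, E[p]) = 0$ via the isomorphism just established.

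I expect the main obstacle to be the verification that the local conditions for $\cF^*$ are stable under passing to $\fm_R$-torsion at the bad primes and at $p$ — in other words, checking the compatibility of the dual Selmer structure with the short exact sequence $E[p] \hookrightarrow T \twoheadrightarrow T/\fm_R T$. Away from $S(\cF)$ this is immediate, and at most primes in $S(\cF)$ the conditions are "full" or unramified and behave well, but one needs to be careful that the orthogonal-complement construction defining $\cF^*$ interacts correctly with the Weil-pairing self-duality $T \cong T^\vee(1)$ and with the finiteness of the local Tate pairing modulo $\fm_R$; this is exactly the content folded into the cited lemma from \cite{MRkoly}, so in the write-up I would isolate this as the one genuinely nontrivial ingredient and otherwise present the argument as a long-exact-sequence chase plus Nakayama.
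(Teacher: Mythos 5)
The paper offers no argument for this lemma at all --- it is imported verbatim from \cite[\S3.2]{bss} and \cite[Lemma~3.5.3]{MRkoly} --- so your proposal has to be measured against those sources. Your outline has the right overall shape (global comparison of cohomology, compatibility of local conditions, Nakayama for the final clause), but two of its steps would not go through as written. First, the identification $H^{1}(G_{\bQ}, E[p]) \xrightarrow{\sim} H^{1}(G_{\bQ}, T)[\fm_{R}]$ is \emph{not} a formal consequence of vanishing $H^{0}$'s killing connecting maps. Vanishing of invariants (which does follow from (b), since every Jordan--H\"older constituent of $T$ is $E[p]$) gives injectivity and shows the image lands in the $\fm_{R}$-torsion; the real content is surjectivity onto $H^{1}(G_{\bQ},T)[\fm_{R}]$, and since $\fm_{R}=(p,\gamma-1)$ is not principal when $m\geq 2$ and $n\geq 1$, there is no single short exact sequence whose long exact sequence produces it. (Your displayed sequence $0\to E[p]\to T\to T/\fm_{R}T\to 0$ is not exact: $T/T[\fm_{R}]$ has $R/\fm_{R}$-length $\mathrm{rk}_{R}(T)\cdot(\mathrm{length}(R)-1)$, whereas $T/\fm_{R}T$ has length $\mathrm{rk}_{R}(T)$.) The argument in the cited sources uses that $R$ is a zero-dimensional Gorenstein, hence self-injective, local ring: for a cocycle $c$ with $xc=(\sigma-1)t_{x}$ for each $x\in\fm_{R}$, the assignment $x\mapsto t_{x}$ is well defined because $H^{0}(G_{\bQ},T)=0$ and extends to an $R$-linear map $R\to T$ by self-injectivity, and subtracting the resulting coboundary puts $c$ into the image of $H^{1}(G_{\bQ},T[\fm_{R}])$. (Alternatively one can use Shapiro's lemma and a two-step d\'evissage, first by $p$ and then by $\gamma-1$.) Either way an input beyond ``the connecting maps vanish'' is indispensable.

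Second, the local step. You propose to verify prime by prime that $H^{1}_{\cF^{*}}(G_{\bQ_{\ell}},E[p])$ is the preimage of $H^{1}_{\cF^{*}}(G_{\bQ_{\ell}},T)$, suggesting this holds because the conditions are ``full or unramified''. But the lemma is asserted for an \emph{arbitrary} Selmer structure $\cF$, so the local conditions at primes of $S(\cF)$ are arbitrary $R$-submodules and no case-by-case inspection is available. The reason the statement holds for the dual structure $\cF^{*}$ --- and only for the dual; for $\cF$ itself one needs precisely the cartesian condition that the paper introduces afterwards --- is a formal duality fact: $H^{1}_{\cF^{*}}(G_{\bQ_{\ell}},-)$ is defined as an orthogonal complement under the local Tate pairing, and since that pairing is adjoint to the maps induced by $E[p]\hookrightarrow T$ and $T\twoheadrightarrow T/\fm_{R}T$, the orthogonal complement of an image is automatically the preimage of an orthogonal complement. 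This adjunction is the one idea your write-up is missing; once it is in place, the diagram chase you describe does close up, and your Nakayama argument for the final equivalence is correct.
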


Following Mazur and Rubin, we define the transversal local condition $H^1_{\rm tr}(G_{\bQ_{\ell}}, T)$ and 
a  Selmer structure $\cF^{a}_{b}(c)$ on $T$. 

\begin{definition}\
\begin{itemize}
\item[(1)] For any integer $d$, we write $\bQ(d)$ for the maximal $p$-subextension of $\bQ(\mu_{d})$. 
\item[(2)] For any prime $\ell$, define  
\begin{align*}
H^1_{\rm tr}(G_{\bQ_{\ell}}, T) &:= \ker \left( H^{1}(G_{\bQ_{\ell}}, T) \longrightarrow H^{1}(G_{\bQ(\ell) \otimes \bQ_{\ell}}, T) \right).   
\end{align*}
We also set $H^{1}_{/*}(G_{\bQ_{\ell}},T) := H^{1}(G_{\bQ_{\ell}},T)/H^{1}_{*}(G_{\bQ_{\ell}},T)$ for $* \in \{{\rm ur}, {\rm tr}\}$. 
\item[(3)] Let $a$, $b$, and $c$ be pairwise relatively prime (square-free) integers. 
Define the Selmer structure $\cF^{a}_{b}(c)$ on $T$ by the following data:
\begin{itemize}
\item $S(\cF^{a}_{b}(c)) := S(\cF) \cup \{ \ell \mid abc \}$,
\item  
$H^{1}_{\cF^{a}_{b}(c)}(G_{\bQ_{\ell}},T) := 
\begin{cases}
H^{1}(G_{\bQ_{\ell}}, T) & \text{if $\ell \mid a$}, 
\\
0 & \text{if $\ell \mid b$}, 
\\
H^1_{\rm tr}(G_{\bQ_{\ell}}, T) & \text{if $\ell \mid c$}, 
\\
H^{1}_{\cF}(G_{\bQ_{\ell}}, T) & \text{otherwise}. 
\end{cases} $
\end{itemize}
Note that $(\cF^{a}_{b}(c))^{*} = (\cF^{*})^{b}_{a}(c)$. 
For simplicity, we will write $\cF^{a}$, $\cF_{b}$, $\cF(c)$, ...  instead of $\cF^{a}_{1}(1)$, $\cF_{b}^{1}(1)$, $\cF^{1}_{1}(c)$, ..., respectively. 
\end{itemize} 
\end{definition}

 \begin{definition}[classical Selmer structure]
We define the classical Selmer structure $\mathrm{\cF}_{\mathrm{cl}}$ on $T$ by the following: 
\begin{itemize}
\item $S(\cF_{\mathrm{cl}}) := S_{\mathrm{bad}}(E) \cup \{p\}$, 
\item $H^{1}_{\cF_{\mathrm{cl}}}(G_{\bQ_{\ell}}, T) := \mathrm{im}\left(\bigoplus_{\fl \mid \ell} E(\bQ_{n, \fl})/p^{m} \longhookrightarrow  H^{1}(G_{\bQ_{\ell}}, T)\right)$ for each prime $\ell \in S(\cF_{\mathrm{cl}})$. 
\end{itemize}
By definition, the Selmer module $H^{1}_{\cF_{\mathrm{cl}}}(G_{\bQ}, T)$ coincides with the classical $p^{m}$-Selmer group $\mathrm{Sel}(\bQ_{n}, E[p^{m}])$ associated with  the elliptic curve $E/\bQ_{n}$. 
We also note that  $\cF_{\mathrm{cl}} = \cF^{*}_{\mathrm{cl}}$. 
 \end{definition}

 \begin{definition}[canonical Selmer structure]
We define the canonical Selmer structure $\cF_{\mathrm{can}}$ on $T$ by 
\[
\cF_{\mathrm{can}} = \cF_{\mathrm{cl}}^{p}. 
\]
 \end{definition}

\begin{lemma}\label{lem:ur=f}
For any prime $\ell \neq p$, we have 
\[
H^{1}_{\cF_{\mathrm{can}}}(G_{\bQ_{\ell}}, T) = H^{1}_{\cF_{\mathrm{cl}}}(G_{\bQ_{\ell}}, T) = H^{1}_{\mathrm{ur}}(G_{\bQ_{\ell}}, T). 
\]
\end{lemma}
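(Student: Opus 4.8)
The plan is to reduce, by Shapiro's lemma and the classification of local conditions, to a single statement at each bad prime $\ell\neq p$, and then to prove that statement using the N\'eron model. First I would dispose of the first equality for free: since $\cF_{\mathrm{can}}=\cF_{\mathrm{cl}}^{p}$, the two Selmer structures have the same local condition at every prime $\ell\neq p$. For the second equality, if $\ell\notin S(\cF_{\mathrm{cl}})=S_{\mathrm{bad}}(E)\cup\{p\}$ there is nothing to do, since the convention defining a Selmer structure puts $H^{1}_{\cF_{\mathrm{cl}}}(G_{\bQ_{\ell}},T)=H^{1}_{\mathrm{ur}}(G_{\bQ_{\ell}},T)$ there; so assume $\ell\in S_{\mathrm{bad}}(E)$ with $\ell\neq p$. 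Because the cyclotomic $\bZ_{p}$-extension is unramified away from $p$, every completion $\bQ_{n,\fl}$ (for $\fl\mid\ell$) is unramified over $\bQ_{\ell}$; hence the restriction of $T$ to $G_{\bQ_{\ell}}$ is $\bigoplus_{\fl\mid\ell}\mathrm{Ind}_{G_{\bQ_{n,\fl}}}^{G_{\bQ_{\ell}}}E[p^{m}]$, and---since inertia is unchanged in an unramified extension---Shapiro's lemma identifies $H^{1}(G_{\bQ_{\ell}},T)$, its unramified subgroup, and $H^{1}_{\cF_{\mathrm{cl}}}(G_{\bQ_{\ell}},T)$ with $\bigoplus_{\fl}H^{1}(\bQ_{n,\fl},E[p^{m}])$, $\bigoplus_{\fl}H^{1}_{\mathrm{ur}}(\bQ_{n,\fl},E[p^{m}])$, and $\bigoplus_{\fl}\mathrm{im}(E(\bQ_{n,\fl})/p^{m})$ respectively, all compatibly. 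So the lemma reduces to the local claim: for $L:=\bQ_{n,\fl}$ with residue characteristic $\ell\neq p$, the image of the Kummer map $E(L)/p^{m}\hookrightarrow H^{1}(L,E[p^{m}])$ is exactly $H^{1}_{\mathrm{ur}}(L,E[p^{m}])$.

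I would prove this local identity by an order count plus one inclusion. Because $\ell\neq p$, a finite-index subgroup of $E(L)$, namely the formal group points, is uniquely $p$-divisible, so $|E(L)/p^{m}|=|E(L)[p^{m}]|=|E[p^{m}]^{G_{L}}|$; on the other hand, inflation--restriction along inertia together with the fact that $\mathrm{Gal}(L^{\mathrm{ur}}/L)\cong\widehat{\bZ}$ is procyclic and acts on a finite module give $|H^{1}_{\mathrm{ur}}(L,E[p^{m}])|=|(E[p^{m}]^{I_{L}})^{\mathrm{Frob}}|=|E[p^{m}]^{G_{L}}|$. Since the Kummer map is injective, it then suffices to show $\mathrm{im}(E(L)/p^{m})\subseteq H^{1}_{\mathrm{ur}}(L,E[p^{m}])$; and because the restriction to inertia of the Kummer class of $x\in E(L)$ is the Kummer class of $x$ in $H^{1}(L^{\mathrm{ur}},E[p^{m}])$, which vanishes precisely when $x\in p^{m}E(L^{\mathrm{ur}})$, this inclusion is equivalent to $E(L)\subseteq p^{m}E(L^{\mathrm{ur}})$.

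To obtain $E(L)\subseteq p^{m}E(L^{\mathrm{ur}})$ I would work with the N\'eron model $\cE/\cO_{L}$ and its component group $\Phi$. The subgroup $E_{0}(L^{\mathrm{ur}})$ of points reducing into the identity component is $p$-divisible, being an extension of the $\overline{\bF}_{\ell}$-points of the special fibre of $\cE^{0}$---a smooth connected commutative group on which $[p^{m}]$ is surjective since $p^{m}$ is invertible in $\overline{\bF}_{\ell}$---by the uniquely $p$-divisible formal group. The image of $E(L)$ in $\Phi(\overline{\bF}_{\ell})$ is $\Phi(k_{L})$, and I claim $p\nmid|\Phi(k_{L})|$: hypothesis (c) gives $p\nmid\mathrm{Tam}_{\ell}(E)=|\Phi(\bF_{\ell})|$, that is, $\mathrm{Frob}$ has no nonzero fixed vector on the $\bF_{p}$-space $\Phi(\overline{\bF}_{\ell})[p]$; since $k_{L}/\bF_{\ell}$ has $p$-power degree and, in characteristic $p$, the only $p^{j}$-th root of unity is $1$, the power $\mathrm{Frob}^{[k_{L}:\bF_{\ell}]}$ has eigenvalue $1$ on $\Phi(\overline{\bF}_{\ell})[p]$ if and only if $\mathrm{Frob}$ does, whence $\Phi(k_{L})[p]=0$. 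Now for $x\in E(L)$, since $|\Phi(k_{L})|=[E(L):E_{0}(L)]$ is prime to $p$ there is $z\in E(L)$ with $p^{m}z-x\in E_{0}(L)$; then $x-p^{m}z\in E_{0}(L)\subseteq E_{0}(L^{\mathrm{ur}})=p^{m}E_{0}(L^{\mathrm{ur}})$, so $x=p^{m}z+(x-p^{m}z)\in p^{m}E(L^{\mathrm{ur}})$, as desired. Combined with the order count, this gives the required equality at $\ell$.

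The order computation and the reduction via Shapiro's lemma are routine. The step I expect to be the main obstacle is the N\'eron-model argument of the third paragraph---in particular, passing from the hypothesis $p\nmid\mathrm{Tam}_{\ell}(E)$ over $\bQ_{\ell}$ to the vanishing of the $p$-part of the component group over the possibly larger residue field $k_{L}$, and then assembling the reduction and component-group exact sequences to land inside $p^{m}E(L^{\mathrm{ur}})$.
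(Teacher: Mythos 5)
Your proof is correct and follows essentially the same route as the paper's: reduce to the statement $E(K)/p^{m}=H^{1}_{\mathrm{ur}}(G_{K},E[p^{m}])$ for an unramified $p$-extension $K/\bQ_{\ell}$, compare orders via $\#H^{1}_{\mathrm{ur}}=\#H^{0}$, and deduce the remaining inclusion from $p\nmid\mathrm{Tam}_{\ell}(E)$. The only difference is that you spell out in full the N\'eron-model/component-group argument (including the descent of $p\nmid\#\Phi$ to the residue field of $p$-power degree) that the paper compresses into the single sentence ``this follows from the assumption that $p\nmid\mathrm{Tam}_{\ell}(E)$.''
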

\begin{proof}
By definition, it suffices to show that $E(K)/p^{m} = H^{1}_{\mathrm{ur}}(G_{K}, E[p^{m}])$ 
for any unramified $p$-extension $K/\bQ_{\ell}$. Note that $\# H^{1}_{\mathrm{ur}}(G_{K}, E[p^{m}]) = \# H^{0}(G_{K}, E[p^{m}]) 
= \# E(K)/p^{m}$ since $\ell \neq p$. Hence it suffices to show that $E(K)/p^{m} \subset H^{1}_{\mathrm{ur}}(G_{K}, E[p^{m}])$, that is, $E(K) +  p^{m}E(\bQ_{\ell}^{\mathrm{ur}}) = E(\bQ_{\ell}^{\mathrm{ur}})$. This follows from the assumption that $p \nmid \mathrm{Tam}_{\ell}(E)$. 
\end{proof}

\begin{remark}
Let $K/\bQ_{\ell}$ be an unramified extension. 
The assumption that $p \nmid \mathrm{Tam}_{\ell}(E)$ implies that $E(\bQ_{\ell}^{\mathrm{ur}})[p^{\infty}]$ is divisible. 
This fact shows that 
\[
H^{1}_{\mathrm{ur}}(G_{K}, T_{p}(E)) = \ker(H^{1}(G_{K}, T_{p}(E)) \longrightarrow H^{1}(G_{\bQ_{\ell}^{\mathrm{ur}}}, T_{p}(E))\otimes \bQ_{p} )
\] 
and $\mathrm{im}\left(H^{1}_{\mathrm{ur}}(G_{K}, T_{p}(E)) \longrightarrow H^{1}(G_{K}, E[p^{m}])\right) = H^{1}_{\mathrm{ur}}(G_{K}, E[p^{m}])$. 
Therefore,  by Lemma \ref{lem:ur=f}, the canonical Selmer structure in the present paper is the same as the Selmer structure induced by the canonical Selmer structure  defined in \cite[Definition 3.2.1]{MRkoly}. 
\end{remark}

Note that we have the canonical injection $E[p] \longhookrightarrow T$. 

\begin{definition}
We say that a Selmer structure $\cF$ on $T$ is cartesian if the homomorphism 
\[
\mathrm{coker}\left(H^{1}_{\cF}(G_{\bQ_{\ell}}, T) \longrightarrow H^{1}(G_{\bQ_{\ell}}, E[p]) \right) \longrightarrow H^{1}(G_{\bQ_{\ell}}, T)/H^{1}_{\cF}(G_{\bQ_{\ell}}, T)
\]
induced by $E[p] \longhookrightarrow T$  is injective for any prime $\ell \in S(\cF)$. 
\end{definition}

\begin{proposition}\label{prop:sel-car-can}
The Selmer structure $\cF_{\mathrm{can}}$ on $T$ is cartesian. 
\end{proposition}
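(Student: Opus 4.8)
The plan is to verify the cartesian condition prime-by-prime for the canonical Selmer structure $\cF_{\mathrm{can}} = \cF_{\mathrm{cl}}^{p}$. Recall $S(\cF_{\mathrm{can}}) = S_{\mathrm{bad}}(E) \cup \{p\}$, so there are two cases: the bad primes $\ell \neq p$, where by Lemma \ref{lem:ur=f} the local condition is the unramified one $H^{1}_{\mathrm{ur}}(G_{\bQ_{\ell}}, T)$, and the prime $\ell = p$, where the local condition is all of $H^{1}(G_{\bQ_{p}}, T)$ (since $\cF_{\mathrm{can}} = \cF_{\mathrm{cl}}^{p}$ imposes the relaxed condition at $p$). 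The prime $p$ is trivial: since $H^{1}_{\cF_{\mathrm{can}}}(G_{\bQ_{p}}, T) = H^{1}(G_{\bQ_{p}}, T)$, the quotient $H^{1}(G_{\bQ_{p}}, T)/H^{1}_{\cF_{\mathrm{can}}}(G_{\bQ_{p}}, T)$ vanishes, so the map in the definition of cartesian is automatically injective (it is a map into the zero module, but its source is also zero: the cokernel of a surjection).

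For a bad prime $\ell \neq p$, I would use the standard criterion of Mazur--Rubin (their Lemma 3.7.1, or the discussion around cartesian Selmer structures) that the unramified local condition is cartesian provided $H^{0}(G_{\bQ_{\ell}^{\mathrm{ur}}}, T)$ behaves well under reduction mod $\fm_{R}$ — more concretely, that the natural map $H^{1}_{\mathrm{ur}}(G_{\bQ_{\ell}}, T)/\fm_{R} \to H^{1}_{\mathrm{ur}}(G_{\bQ_{\ell}}, E[p])$ is an isomorphism and the analogous statement holds for the singular quotients. The key input is condition (c), which gives $p \nmid \mathrm{Tam}_{\ell}(E)$; as noted in the remark after Lemma \ref{lem:ur=f}, this forces $E(\bQ_{\ell}^{\mathrm{ur}})[p^{\infty}]$ to be divisible, equivalently $H^{0}(G_{\bQ_{\ell}^{\mathrm{ur}}}, E[p^{m}]) = E(\bQ_{\ell}^{\mathrm{ur}})[p^{m}]$ surjects onto $H^{0}(G_{\bQ_{\ell}^{\mathrm{ur}}}, E[p])$ compatibly, so the relevant Frobenius-coinvariants and invariants are free of the expected rank over $R$. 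Since $H^{1}_{\mathrm{ur}}(G_{\bQ_{\ell}}, M) \cong M^{G_{\bQ_{\ell}^{\mathrm{ur}}}}/(\mathrm{Fr}_{\ell}-1)$ and $H^{1}(G_{\bQ_{\ell}}, M)/H^{1}_{\mathrm{ur}} \cong (M^{\vee}(1)^{G_{\bQ_{\ell}^{\mathrm{ur}}}})^{\vee}$-type expressions, the divisibility makes all of these commute with $-\otimes_{R} R/\fm_{R}$, and injectivity of the cokernel map follows.

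Alternatively — and this may be the cleaner route — I would invoke \cite[Lemma 3.7.1 and Proposition 3.7.2]{MRkoly} directly: Mazur and Rubin prove that if $T$ satisfies (H.0)--(H.3) then the canonical Selmer structure is cartesian, and the only place their argument uses (H.4) is elsewhere (the Kolyvagin system machinery, not the cartesian property). So the proof reduces to checking that our $T = \mathrm{Ind}_{G_{\bQ_n}}^{G_{\bQ}}(E[p^m])$ satisfies (H.0)--(H.3), which is already asserted in the text right after the definition of $T$, together with the identification (in the remark following Lemma \ref{lem:ur=f}) that our canonical Selmer structure agrees with theirs.

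The main obstacle, I expect, is the prime $\ell = p$ itself — not in the sense above (where it is trivial because the condition is relaxed), but ensuring I have not misidentified the local condition. One must be careful that $\cF_{\mathrm{can}}$ puts $H^{1}(G_{\bQ_p}, T)$ (not the Bloch--Kato or ordinary condition) at $p$, so there is genuinely nothing to check there. The real content is at the bad primes, and the subtlety is purely that $E[p^m]$ (not just $E[p]$) has the right $\mathrm{Fr}_\ell$-fixed module structure; this is exactly what condition (c) buys us via the Tamagawa number, and the divisibility remark has already done this work, so I would cite it rather than re-derive it.
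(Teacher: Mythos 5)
Your treatment of the bad primes $\ell \in S_{\mathrm{bad}}(E)$ is in line with the paper's: reduce via Lemma \ref{lem:ur=f} to the unramified condition, and use $p \nmid \mathrm{Tam}_{\ell}(E)$ (divisibility of $E(\bQ_{\ell}^{\mathrm{ur}})[p^{\infty}]$) to see that $H^{1}(G_{\bQ_{\ell}^{\mathrm{ur}}}, E[p]) \to H^{1}(G_{\bQ_{\ell}^{\mathrm{ur}}}, E[p^{m}])$ is injective, so that both the cokernel and the singular quotient embed compatibly into the $G_{\bQ_{\ell}^{\mathrm{ur}}}$-cohomology. That part is fine, modulo the caveat that Mazur--Rubin's cartesian criterion for the unramified condition is \emph{not} a consequence of (H.0)--(H.3) alone but needs precisely this extra divisibility input, which you do acknowledge.

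The genuine gap is at the prime $p$, which you dismiss as trivial. With the paper's definition of cartesian, the target $H^{1}(G_{\bQ_{p}}, T)/H^{1}_{\cF_{\mathrm{can}}}(G_{\bQ_{p}}, T)$ is indeed zero, so injectivity forces the \emph{source} to vanish, i.e.\ one must show that $H^{1}(G_{\bQ_{p}}, T) \to H^{1}(G_{\bQ_{p}}, E[p])$ is surjective. You assert this is ``the cokernel of a surjection'' without argument, but surjectivity here is not formal: the cokernel of $H^{1}(G_{\bQ_{p}}, E[p^{m}]) \to H^{1}(G_{\bQ_{p}}, E[p])$ injects into $H^{2}(G_{\bQ_{p}}, E[p^{m-1}])$, which by local Tate duality and the Weil pairing is controlled by $H^{0}(G_{\bQ_{p}}, E[p]) = E(\bQ_{p})[p]$. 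This vanishes exactly because of hypothesis (c), $p \nmid \#E(\bF_{p})$; for an anomalous prime the map would fail to be surjective and the structure would not be cartesian in this sense. So the roles are the opposite of what you describe: the bad primes are handled by the divisibility remark already in place, whereas the prime $p$ is where a real (short) argument --- $H^{2}(G_{\bQ_{p}}, E[p]) \cong H^{0}(G_{\bQ_{p}}, E[p]) = 0$ --- is required and where hypothesis (c) at $p$ enters.
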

\begin{proof}
Since we assume $p \nmid \# E(\bF_{p})$, we have $H^{2}(G_{\bQ_{p}}, E[p]) \cong H^{0}(G_{\bQ_{p}}, E[p]) = 0$. 
This fact implies $\mathrm{coker}\left(H^{1}_{\cF_{\mathrm{can}}}(G_{\bQ_{p}}, T) \longrightarrow H^{1}(G_{\bQ_{p}}, E[p]) \right) = 0$. 

Take a prime $\ell \in S_{\mathrm{bad}}(E)$. Since $\bQ_{n}/\bQ$ is unramified at $\ell$, Lemma \ref{lem:ur=f} shows that there are natural injections 
\[
\mathrm{coker}\left(H^{1}_{\cF_{\mathrm{can}}}(G_{\bQ_{\ell}}, T) \longrightarrow H^{1}(G_{\bQ_{\ell}}, E[p]) \right) \longhookrightarrow H^{1}(G_{\bQ_{\ell}^{\mathrm{ur}}}, E[p])
\]
and 
\[
H^{1}(G_{\bQ_{\ell}}, T)/H^{1}_{\cF_{\mathrm{can}}}(G_{\bQ_{\ell}}, T) \longhookrightarrow H^{1}(G_{\bQ_{\ell}^{\mathrm{ur}}}, T) \cong \bigoplus_{\fl \mid \ell} H^{1}(G_{\bQ_{\ell}^{\mathrm{ur}}}, E[p^{m}]). 
\]
Since $p \nmid \mathrm{Tam}_{\ell}(E)$, the module $E(\bQ_{\ell}^{\mathrm{ur}})[p^{\infty}]$ is divisible. 
Hence $E(\bQ_{\ell}^{\mathrm{ur}})[p^{m}] \stackrel{\times p}{\longrightarrow} E(\bQ_{\ell}^{\mathrm{ur}})[p^{m-1}]$ is surjective, and  $H^{1}(G_{\bQ_{\ell}^{\mathrm{ur}}}, E[p]) \longrightarrow H^{1}(G_{\bQ_{\ell}^{\mathrm{ur}}}, E[p^{m}])$ is injective. This completes the proof. 
\end{proof}


\subsection{Structure of local points}

Let $K/\bQ$ be a finite abelian $p$-extension and put 
\[
G := \Gal(K/\bQ). 
\]
Let $\widehat{E}$ denote the formal group associated with $E/\bQ_{p}$ and put 
\[
\widehat{E}(\fm_{K_{p}}) := \bigoplus_{\fp \mid p}\widehat{E}(\fm_{K_{v}}). 
\]
Here $\fm_{L}$ denotes the maximal ideal of the ring of integers of $L$ for any algebraic extension $L/\bQ_{p}$.

\begin{lemma}\label{lemma:coh-vanish} 
We have $\widehat{E}(\fm_{\bQ_{p}})/p = (\widehat{E}(\fm_{K_{p}})/p)^{G}$. 
\end{lemma}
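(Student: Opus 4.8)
The plan is to exploit the fact that $K/\bQ$ is a $p$-extension, so $G$ is a $p$-group, and that $\widehat{E}(\fm_{\bQ_p})$ is a finitely generated $\bZ_p$-module of rank $[\bQ_p:\bQ_p]=1$ with no $p$-torsion (since $p\geq 3$, the formal group $\widehat{E}(\fm_{\bQ_p})$ is torsion-free). First I would observe that $\widehat{E}(\fm_{K_p})=\bigoplus_{\fp\mid p}\widehat{E}(\fm_{K_{\fp}})$ carries a natural action of $G$, permuting the summands according to the action on places, and that by the normal basis theorem (or rather its integral analogue for formal groups, using that $p$ is unramified-enough: $K/\bQ$ being a $p$-extension and $p$ a good ordinary prime) $\widehat{E}(\fm_{K_p})$ is, up to finite index controlled away from $p$, an induced module — but more robustly I would argue directly with Tate cohomology of the cyclic quotients.

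The key steps, in order: (1) reduce to the case $G$ cyclic of order $p$ by dévissage, since for a $p$-group $G$ with a normal subgroup $H$, control of $\widehat H^i$ for $H$ and for $G/H$ acting on the $H$-invariants gives control for $G$; here I only need the statement about $G$-invariants mod $p$, which propagates through a composition series. (2) For $G=\Gal(K/\bQ_p)$ cyclic of order $p$ acting on the local formal group $\widehat{E}(\fm_{K_{\fp}})$, use the standard fact (Mazur, Coates--Greenberg) that $\widehat{E}(\fm_{K_{\fp}})$ is a cohomologically trivial $\bZ_p[G]$-module — equivalently $\widehat H^i(G,\widehat{E}(\fm_{K_{\fp}}))=0$ for all $i$ — which for good ordinary reduction follows because $\widehat{E}(\fm_{K_{\fp}})\otimes\bQ_p$ is a free $\bQ_p[G]$-module of the expected rank and the torsion is cohomologically trivial. (3) Given cohomological triviality, the exact sequence $0\to\widehat{E}(\fm_{K_p})\xrightarrow{p}\widehat{E}(\fm_{K_p})\to\widehat{E}(\fm_{K_p})/p\to 0$ of $G$-modules, upon taking $G$-cohomology, yields $\widehat{E}(\fm_{K_p})^G/p\xrightarrow{\sim}(\widehat{E}(\fm_{K_p})/p)^G$ because $H^1(G,\widehat{E}(\fm_{K_p}))=0$. (4) Finally identify $\widehat{E}(\fm_{K_p})^G$ with $\widehat{E}(\fm_{\bQ_p})$: this is immediate since $\widehat{E}(\fm_{K_{\fp}})^{G_{\fp}}=\widehat{E}(\fm_{\bQ_p})$ for the decomposition group $G_{\fp}$ and the global $G$-invariants of the induced-up module collapse to a single copy of $\widehat{E}(\fm_{\bQ_p})$.

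I expect the main obstacle to be step (2) — cleanly citing or re-deriving the cohomological triviality of the local formal group points $\widehat{E}(\fm_{K_{\fp}})$ as a $\bZ_p[\Gal(K_{\fp}/\bQ_p)]$-module. The subtlety is that this is really only true "away from a bounded defect," and the hypothesis $p\nmid\#E(\bF_p)$ (condition (c)) together with good ordinary reduction is exactly what makes $\widehat{E}(\fm_{\bQ_p})=E(\bQ_p)\otimes\bZ_p$ up to no correction, and ensures the relevant Herbrand quotient computation has no parasitic torsion; one must check the ramified case carefully since $K_{\fp}/\bQ_p$ can be wildly ramified. Alternatively, and perhaps more cheaply for this lemma, I would bypass full cohomological triviality and just prove $H^1(G,\widehat{E}(\fm_{K_p}))=0$ directly: since $\widehat{E}(\fm_{K_p})$ has a filtration by $\fp^n$ with graded pieces the additive group of the residue field, and the higher pieces are uniquely $p$-divisible hence $H^1$-acyclic for the $p$-group $G$, it suffices to handle the first graded piece, where one uses additive Galois cohomology $H^1(G,\cO_{K_p}/\fm)=0$ by the normal basis theorem — this keeps the argument self-contained and avoids invoking heavy machinery.
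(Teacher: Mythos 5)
Your skeleton is the same as the paper's: reduce everything to the vanishing of $H^{1}(G_{\fp}, \widehat{E}(\fm_{K_{\fp}}))$ for $G_{\fp}=\Gal(K_{\fp}/\bQ_{p})$, feed that into the long exact sequence for multiplication by $p$ (using that $\widehat{E}(\fm_{K_{\fp}})$ is $p$-torsion-free, which needs $E(\bQ_{p})[p]=0$, i.e.\ condition (c)), and then pass from one place to all of $\widehat{E}(\fm_{K_{p}})$ via the induced-module structure. Steps (3) and (4) of your plan are exactly what the paper does, and your dévissage in step (1) is unnecessary but harmless.

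The gap is precisely where you predicted it: step (2). The paper obtains $H^{1}(G_{\fp}, \widehat{E}(\fm_{K_{\fp}}))=0$ by citing Tan's theorem, which gives $H^{1}(G_{\bQ_{p}}, \widehat{E}(\fm_{\overline{\bQ}_{p}}))=0$ \emph{under the hypothesis} $p\nmid \#E(\bF_{p})$, and then uses injectivity of inflation. Neither of your two justifications works as stated. First, knowing that $\widehat{E}(\fm_{K_{\fp}})\otimes\bQ_{p}$ is free over $\bQ_{p}[G_{\fp}]$ does not give cohomological triviality of the integral lattice: $\cO_{K_{\fp}}$ for a wildly ramified $p$-extension has free rational span (normal basis theorem) but $\widehat{H}^{0}(G_{\fp},\cO_{K_{\fp}})=\cO_{\bQ_{p}}/\mathrm{Tr}(\cO_{K_{\fp}})\neq 0$. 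Second, your ``cheaper'' filtration argument is false on two counts: the graded pieces $\widehat{E}(\fm_{K_{\fp}}^{i})/\widehat{E}(\fm_{K_{\fp}}^{i+1})$ are copies of the residue field, hence killed by $p$ rather than uniquely $p$-divisible (and the deep pieces are free $\bZ_{p}$-modules via the formal logarithm, again not uniquely divisible); and $H^{1}(G_{\fp}, k)$ does \emph{not} vanish by the normal basis theorem when $K_{\fp}/\bQ_{p}$ is ramified, since the inertia subgroup $I$ is then a nontrivial $p$-group acting trivially on the characteristic-$p$ residue field $k$, so $H^{1}(I,k)=\Hom(I,k)\neq 0$. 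This matters because the lemma is applied with $K=\bQ_{n}$, where $p$ is (wildly) ramified. The vanishing of $H^{1}(G_{\fp},\widehat{E}(\fm_{K_{\fp}}))$ is genuinely where condition (c) enters, and your write-up never isolates that; as it stands the crucial step is unproved.
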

\begin{proof}
Since $p \nmid \#E(\bF_{p})$, Tan proved in \cite[Theorem 2 (a)]{Ki-Seng Tan} that 
\[
H^{1}(G_{\bQ_{p}}, \widehat{E}(\fm_{\overline{\bQ}_{p}})) = 0. 
\] 
Take a prime $\fp \mid p$ of $K$ and put $G_{\fp} := \Gal(K_{\fp}/\bQ_{p})$. 
The injectivity of the inflation map $H^{1}(G_{\fp}, \widehat{E}(\fm_{K_{\fp}})) \longrightarrow H^{1}(G_{\bQ_{p}}, \widehat{E}(\fm_{\overline{\bQ}_{p}}))$  implies  $H^{1}(G_{\fp}, \widehat{E}(\fm_{K_{\fp}})) = 0$. 
Since $K_{\fp}/\bQ_{p}$ is a $p$-extension and $E(\bQ_{p})[p] = 0$, the module $E(K_{\fp})$ is $p$-torsion-free. Hence the vanishing of $H^{1}(G_{\fp}, \widehat{E}(\fm_{K_{\fp}}))$ implies 
\[
\widehat{E}(\fm_{\bQ_{p}})/p = (\widehat{E}(\fm_{K_{\fp}})/p)^{G_{\fp}}. 
\]
Since 
\[
\widehat{E}(\fm_{K_{p}})/p \cong \widehat{E}(\fm_{K_{\fp}})/p \otimes_{\bF_{p}} \bF_{p}[G/G_{\fp}], 
\]
we see that $\widehat{E}(\fm_{\bQ_{p}})/p = (\widehat{E}(\fm_{K_{p}})/p)^{G}$. 
\end{proof}

\begin{proposition}\label{prop:free}
The $\bZ_{p}[G]$-module $\widehat{E}(\fm_{K_{p}})$ is free of rank $1$. 
\end{proposition}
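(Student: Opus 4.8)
The plan is to prove that $M:=\widehat{E}(\fm_{K_{p}})$ is free over $\bZ_{p}[G]$; the rank is then forced to be $1$ by a $\bZ_{p}$-rank count. Indeed, each $\widehat{E}(\fm_{K_{\fp}})$ is a finitely generated $\bZ_{p}$-module whose $\bQ_{p}$-span, via the formal logarithm, is $K_{\fp}$, so $\mathrm{rank}_{\bZ_{p}}M=\sum_{\fp\mid p}[K_{\fp}:\bQ_{p}]=[K:\bQ]=\#G$, whereas a free $\bZ_{p}[G]$-module of rank $r$ has $\bZ_{p}$-rank $r\cdot\#G$. Moreover $M$ is $\bZ_{p}$-free: it is finitely generated, and it is torsion-free because $\widehat{E}(\fm_{K_{\fp}})[p]\subset E(K_{\fp})[p]=0$, the latter vanishing since $K_{\fp}/\bQ_{p}$ is a $p$-extension and $E(\bQ_{p})[p]=0$ (as in the proof of Lemma \ref{lemma:coh-vanish}), so that $E(K_{\fp})[p]$, being an $\bF_{p}[\Gal(K_{\fp}/\bQ_{p})]$-module with trivial $\Gal(K_{\fp}/\bQ_{p})$-invariants, is zero. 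In particular $\dim_{\bF_{p}}(M/pM)=\#G$.

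The key step is to show $M/pM\cong\bF_{p}[G]$ as $\bF_{p}[G]$-modules. By Lemma \ref{lemma:coh-vanish} we have $(M/pM)^{G}=\widehat{E}(\fm_{\bQ_{p}})/p$, which is one-dimensional over $\bF_{p}$ since $\widehat{E}(\fm_{\bQ_{p}})\cong\bZ_{p}$ (the $K=\bQ$ case of the previous paragraph). I would then use the standard fact that, for a finite $p$-group $G$, an $\bF_{p}[G]$-module $N$ with $\dim_{\bF_{p}}N^{G}=1$ embeds into $\bF_{p}[G]$: the socle of $N$ equals $N^{G}$, which is now the one-dimensional, hence simple, trivial module, and $\bF_{p}[G]$ is self-injective with simple socle, so it is the injective hull of the trivial module and $N$ injects into it. Applying this with $N=M/pM$ and comparing $\bF_{p}$-dimensions ($\#G$ on each side) forces $M/pM\cong\bF_{p}[G]$.

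It then remains to lift this to $\bZ_{p}[G]$. Choosing $x\in M$ whose image generates $M/pM$ over $\bF_{p}[G]$, Nakayama's lemma --- applicable since $\bZ_{p}[G]$ is local and $M$ is $p$-adically separated, being finitely generated and $\bZ_{p}$-free --- shows that $x$ generates $M$ over $\bZ_{p}[G]$, giving a surjection $\bZ_{p}[G]\twoheadrightarrow M$. As this is a surjection of free $\bZ_{p}$-modules of the same finite rank $\#G$, it is an isomorphism, so $M=\widehat{E}(\fm_{K_{p}})$ is free of rank $1$ over $\bZ_{p}[G]$. The only point that is not pure bookkeeping is the $\bF_{p}[G]$-module statement in the key step, and even that is classical, so I expect no serious obstacle once Lemma \ref{lemma:coh-vanish} is in hand; an alternative route through cohomological triviality, using $H^{1}(H,\widehat{E}(\fm_{K_{\fp}}))=0$ for all subgroups $H$ (from Tan's theorem by inflation), would also work but would require controlling a second cohomological degree, so the argument through $M/pM$ is preferable.
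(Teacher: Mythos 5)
Your argument is correct and follows essentially the same route as the paper: both reduce mod $p$, use Lemma \ref{lemma:coh-vanish} to see that $(\widehat{E}(\fm_{K_{p}})/p)^{G}$ is one-dimensional, deduce $\widehat{E}(\fm_{K_{p}})/p \cong \bF_{p}[G]$ from the self-injective (Frobenius) structure of $\bF_{p}[G]$ together with the dimension count $\dim_{\bF_{p}}(\widehat{E}(\fm_{K_{p}})/p) = \#G$, and then lift via Nakayama and a comparison of $\bZ_{p}$-ranks. The only difference is cosmetic: the paper phrases the middle step dually, using reflexivity to show that $(\widehat{E}(\fm_{K_{p}})/p)^{*}$ is a cyclic $\bF_{p}[G]$-module, whereas you embed $\widehat{E}(\fm_{K_{p}})/p$ into $\bF_{p}[G]$ as the injective hull of its simple socle.
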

\begin{proof}
By Lemma \ref{lemma:coh-vanish}, we have 
$(\widehat{E}(\fm_{K_{p}})/p)^{G} = \widehat{E}(\fm_{\bQ_{p}})/p \cong \bF_{p}$. 
Since any finitely generated $\bF_{p}[G]$-module is reflexive, we have 
\begin{align*}
((\widehat{E}(\fm_{K_{p}})/p)^{*})_{G} &\cong (((\widehat{E}(\fm_{K_{p}})/p)^{*})_{G})^{**} 
\\
&\cong ((\widehat{E}(\fm_{K_{p}})/p)^{**})^{G})^{*} 
\\
&\cong  (\widehat{E}(\fm_{K_{p}})/p)^{G})^{*} 
\\
&\cong \bF_{p}. 
\end{align*}
Here $(-)^{*} := \Hom_{\bF_{p}[G]}(-, \bF_{p}[G])$. 
Hence $(\widehat{E}(\fm_{K_{p}})/p)^{*}$ is a cyclic $\bF_{p}[G]$-module. 
Furthermore, the fact that $\widehat{E}(\fm_{K_{p}}) \cong \bZ_{p}^{[K \colon \bQ]}$ as $\bZ_{p}$-modules implies that 
\[
(\widehat{E}(\fm_{K_{p}})/p)^{*} \cong \bF_{p}[G]. 
\]
Therefore, $\widehat{E}(\fm_{K_{p}})/p$ is also free of rank $1$, and the $\bZ_{p}[G]$-module $\widehat{E}(\fm_{K_{p}})$ is cyclic. 
Since $\widehat{E}(\fm_{K_{p}}) \cong \bZ_{p}^{[K \colon \bQ]}$, we conclude that 
$\widehat{E}(\fm_{K_{p}}) \cong \bZ_{p}[G]$. 
\end{proof}


\begin{definition}
For any integer $m \geq 1$, we put 
\begin{align*}
H^{1}_{f}(G_{\bQ_{p}}, \mathrm{Ind}_{G_{K}}^{G_{\bQ}}(E[p^{m}])) &:= 
\mathrm{im}\left(\widehat{E}(\fm_{K_{p}})/p^{n} \longhookrightarrow H^{1}(G_{\bQ_{p}}, \mathrm{Ind}_{G_{K}}^{G_{\bQ}}(E[p^{m}]))\right). 
\\
H^{1}_{/f}(G_{\bQ_{p}}, \mathrm{Ind}_{G_{K}}^{G_{\bQ}}(E[p^{m}])) &:= H^{1}(G_{\bQ_{p}}, \mathrm{Ind}_{G_{K}}^{G_{\bQ}}(E[p^{m}]))/H^{1}_{f}(G_{\bQ_{p}}, \mathrm{Ind}_{G_{K}}^{G_{\bQ}}(E[p^{m}])). 
\end{align*}
\end{definition}

\begin{remark}
Since we assme $p \nmid \#E(\bF_{p})$, we have $H^{1}_{f}(G_{\bQ_{p}}, T) = H^{1}_{\cF_{\mathrm{cl}}}(G_{\bQ_{p}}, T)$ when $K = \bQ_{n}$. 
\end{remark}

\begin{corollary}\label{cor:local-str}\ 
\begin{itemize}
\item[(1)] The $\bZ_{p}/p^{m}[G]$-modules 
\[
H^{1}_{f}(G_{\bQ_{p}}, \mathrm{Ind}_{G_{K}}^{G_{\bQ}}(E[p^{m}])) \,\, \textrm{ and } \,\, H^{1}_{/f}(G_{\bQ_{p}}, \mathrm{Ind}_{G_{K}}^{G_{\bQ}}(E[p^{m}]))
\] 
are free of rank $1$. 
\item[(2)] For any subfield $K' \subset K$, we have natural isomorphisms
\begin{align*}
H^{1}_{f}(G_{\bQ_{p}}, \mathrm{Ind}_{G_{K}}^{G_{\bQ}}(E[p^{m}]))_{\Gal(K/K')} &\stackrel{\sim}{\longrightarrow} H^{1}_{f}(G_{\bQ_{p}}, \mathrm{Ind}_{G_{K'}}^{G_{\bQ}}(E[p^{m}])), 
\\
H^{1}_{/f}(G_{\bQ_{p}}, \mathrm{Ind}_{G_{K}}^{G_{\bQ}}(E[p^{m}]))_{\Gal(K/K')} &\stackrel{\sim}{\longrightarrow} H^{1}_{/f}(G_{\bQ_{p}}, \mathrm{Ind}_{G_{K'}}^{G_{\bQ}}(E[p^{m}])). 
\end{align*}
\end{itemize}
\end{corollary}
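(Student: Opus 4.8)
The strategy is to deduce both assertions from Proposition~\ref{prop:free} by combining it with Shapiro's lemma and local Tate duality. For part~(1) I would first identify $H^{1}_{f}(G_{\bQ_{p}}, \mathrm{Ind}_{G_{K}}^{G_{\bQ}}(E[p^{m}]))$ with $\widehat{E}(\fm_{K_{p}})/p^{m}$. Shapiro's lemma gives $H^{1}(G_{\bQ_{p}}, \mathrm{Ind}_{G_{K}}^{G_{\bQ}}(E[p^{m}])) \cong \bigoplus_{\fp \mid p} H^{1}(G_{K_{\fp}}, E[p^{m}])$, and for each $\fp$ the residue field $\kappa_{\fp}$ is a finite $p$-extension of $\bF_{p}$, so $\#E(\kappa_{\fp}) \equiv \#E(\bF_{p}) \not\equiv 0 \pmod{p}$; hence $E(K_{\fp})/p^{m} = \widehat{E}(\fm_{K_{\fp}})/p^{m}$ and the local Kummer map $\widehat{E}(\fm_{K_{\fp}})/p^{m} \hookrightarrow H^{1}(G_{K_{\fp}}, E[p^{m}])$ is injective. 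Summing over $\fp$ one obtains $H^{1}_{f}(G_{\bQ_{p}}, \mathrm{Ind}_{G_{K}}^{G_{\bQ}}(E[p^{m}])) \cong \widehat{E}(\fm_{K_{p}})/p^{m}$ as $\bZ_{p}/p^{m}[G]$-modules, which is free of rank~$1$ by Proposition~\ref{prop:free}.

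For the module $H^{1}_{/f}$ in part~(1) I would use that $E[p^{m}]$ is Weil-self-dual and that induction commutes with $M \mapsto M^{\vee}(1)$, so that under Shapiro's lemma the local Tate pairing becomes a perfect pairing of $H^{1}(G_{\bQ_{p}}, \mathrm{Ind}_{G_{K}}^{G_{\bQ}}(E[p^{m}]))$ with itself, with respect to which $H^{1}_{f}$ is its own orthogonal complement (self-duality of the finite local condition at $p$, as in the identity $\cF_{\mathrm{cl}} = \cF_{\mathrm{cl}}^{*}$). Consequently $H^{1}_{/f} \cong (H^{1}_{f})^{\vee}$ as $\bZ_{p}/p^{m}[G]$-modules; since the Pontryagin dual of the regular representation of a finite group is again the regular representation, $(\bZ_{p}/p^{m}[G])^{\vee} \cong \bZ_{p}/p^{m}[G]$, and therefore $H^{1}_{/f}$ is free of rank~$1$.

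For part~(2), put $H := \Gal(K/K')$. By Proposition~\ref{prop:free} each $\widehat{E}(\fm_{K_{\fp}})$ is a free, hence cohomologically trivial, $\bZ_{p}[\Gal(K_{\fp}/\bQ_{p})]$-module (this is also what the proof of Lemma~\ref{lemma:coh-vanish} establishes, via injectivity of inflation into $H^{\bullet}(G_{\bQ_{p}}, \widehat{E}(\fm_{\overline{\bQ}_{p}})) = 0$); restricting to the decomposition subgroup of $H$ at $\fp$, the norm map then induces isomorphisms $(\widehat{E}(\fm_{K_{p}})/p^{m})_{H} \xrightarrow{\sim} \widehat{E}(\fm_{K'_{p}})/p^{m}$ and $(\widehat{E}(\fm_{K_{p}})/p^{m})^{H} \xrightarrow{\sim} \widehat{E}(\fm_{K'_{p}})/p^{m}$. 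Under the identification from part~(1) the first of these is exactly the natural (corestriction) map on $H^{1}_{f}$ in the statement; applying the self-duality from part~(1) converts the second into the desired isomorphism for $H^{1}_{/f}$.

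The mathematical content is entirely contained in Proposition~\ref{prop:free}, which is already available; the remaining work is bookkeeping. The step that I expect to need the most care is the matching of structures: checking that Shapiro's lemma is compatible with the formal-group descriptions of the local conditions, that the finite local condition remains self-orthogonal in the induced setting, and that the natural homomorphism occurring in part~(2) is genuinely the corestriction/norm map and not merely some abstract isomorphism. No idea beyond Proposition~\ref{prop:free} should be required.
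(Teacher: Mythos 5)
Your argument is correct, but it follows a genuinely different route from the paper's. The paper never invokes local Tate duality: it first shows that $\mathbf{R}\Gamma(G_{\bQ_{p}}, T_{K})$ is a perfect complex concentrated in degree $1$ (using $H^{0}(G_{\bQ_{p}},E[p])=H^{2}(G_{\bQ_{p}},E[p])=0$ and derived base change), so that $H^{1}(G_{\bQ_{p}}, T_{K}/IT_{K})$ is free of rank $2$ over $\bZ_{p}[G]/I$ by the local Euler characteristic formula and commutes with coinvariants; part (1) for $H^{1}_{/f}$ then follows because $\bZ_{p}/p^{m}[G]$ is self-injective, so the free rank-one submodule $H^{1}_{f}$ (Proposition \ref{prop:free}) splits off a free rank-two module, and part (2) follows by taking $\Gal(K/K')$-coinvariants of the resulting split exact sequence, where the middle terms match up by perfectness. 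You instead obtain $H^{1}_{/f}\cong (H^{1}_{f})^{\vee}$ from the self-orthogonality of the finite local condition under the local Tate pairing, and handle part (2) via norm maps on the formal group plus duality. Both are valid; the paper's route is more self-contained (it needs only $H^{0}(G_{\bQ_{p}},E[p])=0$ and standard cohomological generalities, and the base-change statement makes (2) essentially automatic), whereas yours imports the exact-annihilator property of the image of local points — a standard fact, but one the paper only uses implicitly elsewhere via $\cF_{\mathrm{cl}}=\cF_{\mathrm{cl}}^{*}$ — and requires the compatibility checks you flag (Shapiro versus Kummer and norm maps, and the fact that the dual is taken with the contragredient $G$-action, under which the regular representation is still free of rank one). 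Two small points to make explicit if you write this up: the congruence $\#E(\kappa_{\fp})\equiv \#E(\bF_{p})\pmod{p}$ holds because the residue degree is a power of $p$ (as $K/\bQ$ is a $p$-extension), and for the $H^{1}_{/f}$ half of part (2) it is cleaner to note that the map in question is a surjection between free rank-one modules over $\bZ_{p}/p^{m}[\Gal(K'/\bQ)]$ of equal cardinality, hence an isomorphism, rather than to dualize the invariants statement.
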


\begin{proof}
For simplicity, we put $T_{K} := \mathrm{Ind}_{G_{K}}^{G_{\bQ}}(T_{p}(E))$. 
We note that $T_{K}/p^{m} \cong \mathrm{Ind}_{G_{K}}^{G_{\bQ}}(E[p^{m}])$. 
Since $H^{2}(G_{\bQ_{p}}, E[p]) \cong H^{0}(G_{\bQ_{p}}, E[p]) = 0$ and ${\bf R}\Gamma(G_{\bQ_{p}}, T_{K}) \otimes^{\bL}_{\bZ_{p}[G]} \bF_{p} \cong {\bf R}\Gamma(G_{\bQ_{p}}, E[p])$, 
the  perfect complex ${\bf R}\Gamma(G_{\bQ_{p}}, T_{K})$ is of perfect amplitude in $[1,1]$. 
Hence, for any ideal $I$ of $\bZ_{p}[G]$, we have 
\[
H^{1}(G_{\bQ_{p}}, T_{K}) \otimes_{\bZ_{p}[G]} \bZ_{p}[G]/I \stackrel{\sim}{\longrightarrow} H^{1}(G_{\bQ_{p}}, T_{K}/I T_{K}). 
\] 
Furthermore, the local Euler characteristic formula implies that $H^{1}(G_{\bQ_{p}}, T_{K}/I T_{K})$ is a free $\bZ_{p}[G]/I$-module of rank $2$. 

\item[1)] By Proposition \ref{prop:free}, the $\bZ_{p}/p^{m}[G]$-module $H^{1}_{f}(G_{\bQ_{p}}, T_{K}/p^{m})$ is 
free of rank 1. Since $\bZ_{p}/p^{m}[G]$ is a self-injective ring, $H^{1}_{/f}(G_{\bQ_{p}}, T_{K}/p^{m})$ is also free of rank 1.

\item[2)] 
By claim (1), the exact sequence of $\bZ_{p}/p^{m}[G]$-modules 
\[
0 \longrightarrow H^{1}_{f}(G_{\bQ_{p}}, T_{K}/p^{m}) 
\longrightarrow H^{1}(G_{\bQ_{p}}, T_{K}/p^{m}) 
\longrightarrow H^{1}_{/f}(G_{\bQ_{p}}, T_{K}/p^{m}) 
\longrightarrow 0
\]
is split. Hence we obtain the exact sequence of free $\bZ_{p}/p^{m}[\Gal(K'/\bQ)]$-modules 
\begin{align*}
0 \longrightarrow H^{1}_{f}(G_{\bQ_{p}}, T_{K}/p^{m})_{\Gal(K/K')} \longrightarrow &H^{1}(G_{\bQ_{p}}, T_{K}/p^{m})_{\Gal(K/K')} 
\\
&\longrightarrow H^{1}_{/f}(G_{\bQ_{p}}, T_{K}/p^{m})_{\Gal(K/K')}  
\longrightarrow 0. 
\end{align*}
Since $H^{1}(G_{\bQ_{p}}, T_{K}/p^{m})_{\Gal(K/K')}  \stackrel{\sim}{\longrightarrow} H^{1}(G_{\bQ_{p}}, T_{K'}/p^{m})$, 
the homomorphism 
\[
H^{1}_{f}(G_{\bQ_{p}}, T_{K}/p^{m})_{\Gal(K/K')} \longrightarrow H^{1}_{f}(G_{\bQ_{p}}, T_{K'}/p^{m})
\] 
is injective. Hence by claim (1), we obtain isomorphisms 
\begin{align*}
H^{1}_{f}(G_{\bQ_{p}}, T_{K}/p^{m})_{\Gal(K/K')} &\stackrel{\sim}{\longrightarrow} H^{1}_{f}(G_{\bQ_{p}}, T_{K'}/p^{m})
\\
H^{1}_{/f}(G_{\bQ_{p}}, T_{K}/p^{m})_{\Gal(K/K')} &\stackrel{\sim}{\longrightarrow} H^{1}_{/f}(G_{\bQ_{p}}, T_{K'}/p^{m}). 
\end{align*}
\end{proof}

\begin{corollary}\label{cor:sel-car}
The Selmer structure $\cF_{\mathrm{cl}}$ on $T$ is cartesian. 
\end{corollary}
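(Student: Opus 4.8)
The plan is to unwind the definition of \emph{cartesian}: it is a conjunction of one assertion for each prime $\ell \in S(\cF_{\mathrm{cl}}) = S_{\mathrm{bad}}(E) \cup \{p\}$, and each assertion depends only on the submodule $H^{1}_{\cF_{\mathrm{cl}}}(G_{\bQ_{\ell}}, T) \subseteq H^{1}(G_{\bQ_{\ell}}, T)$ together with the fixed objects $H^{1}(G_{\bQ_{\ell}}, T)$ and $H^{1}(G_{\bQ_{\ell}}, E[p])$. So it suffices to treat $\ell \in S_{\mathrm{bad}}(E)$ and $\ell = p$ in turn.

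For $\ell \in S_{\mathrm{bad}}(E)$ there is nothing new to prove. By hypothesis (a) such an $\ell$ is different from $p$, and $\bQ_{n}/\bQ$ is unramified at $\ell$, so Lemma~\ref{lem:ur=f} gives $H^{1}_{\cF_{\mathrm{cl}}}(G_{\bQ_{\ell}}, T) = H^{1}_{\mathrm{ur}}(G_{\bQ_{\ell}}, T) = H^{1}_{\cF_{\mathrm{can}}}(G_{\bQ_{\ell}}, T)$. Since the $\ell$-part of the cartesian condition sees only this local module, it is literally the same statement as the $\ell$-part of the cartesian condition for $\cF_{\mathrm{can}}$, which was verified in Proposition~\ref{prop:sel-car-can}. (The relevant input there at such $\ell$ was only $p \nmid \mathrm{Tam}_{\ell}(E)$, which forces $E(\bQ_{\ell}^{\mathrm{ur}})[p^{\infty}]$ to be divisible and $H^{1}(G_{\bQ_{\ell}^{\mathrm{ur}}}, E[p]) \hookrightarrow H^{1}(G_{\bQ_{\ell}^{\mathrm{ur}}}, E[p^{m}])$ to be injective.)

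Everything therefore reduces to $\ell = p$, which is the one place where the structural results on local points enter. Since $p \nmid \#E(\bF_{p})$ we have $H^{1}_{\cF_{\mathrm{cl}}}(G_{\bQ_{p}}, T) = H^{1}_{f}(G_{\bQ_{p}}, T)$, and I would apply Corollary~\ref{cor:local-str} with $K = \bQ_{n}$, $G = \Gal(\bQ_{n}/\bQ)$ and $R = \bZ_{p}/p^{m}[G]$: it gives that $H^{1}_{f}(G_{\bQ_{p}}, T)$ and $H^{1}_{/f}(G_{\bQ_{p}}, T)$ are free $R$-modules of rank $1$, that $H^{1}(G_{\bQ_{p}}, T)$ is free of rank $2$, and (from its proof) that the exact sequence
\[
0 \longrightarrow H^{1}_{f}(G_{\bQ_{p}}, T) \longrightarrow H^{1}(G_{\bQ_{p}}, T) \longrightarrow H^{1}_{/f}(G_{\bQ_{p}}, T) \longrightarrow 0
\]
of $R$-modules is split. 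Because all three terms are free over $R$, the functor $-\otimes_{R}\bF_{p}$ keeps this sequence exact, and — using the base-change isomorphism ${\bf R}\Gamma(G_{\bQ_{p}}, T)\otimes^{\bL}_{R}\bF_{p} \cong {\bf R}\Gamma(G_{\bQ_{p}}, E[p])$ (valid because this complex has perfect amplitude $[1,1]$), together with Lemma~\ref{lemma:coh-vanish} applied with $m=1$ — one identifies its reduction with the corresponding split exact sequence for the residual representation $E[p]$, whose sub-term is the residual $f$-condition $H^{1}_{f}(G_{\bQ_{p}}, E[p])$. Consequently $H^{1}_{f}(G_{\bQ_{p}}, T)$ is a cartesian submodule of the free $R$-module $H^{1}(G_{\bQ_{p}}, T)$ in the required sense: the cokernel and the induced map occurring in the definition of cartesian at $p$ are computed correctly after $-\otimes_{R}\bF_{p}$ and the latter map is injective, exactly as in \cite[\S3.2]{bss} and \cite[Lemma~3.5.3]{MRkoly}. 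This gives the cartesian condition at $p$ and completes the proof.

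The only genuine point, then, is the one folded into the last paragraph: that forming the $p$-local condition commutes with reduction modulo $\fm_{R}$, i.e. that $H^{1}_{f}(G_{\bQ_{p}}, T)$ sits inside $H^{1}(G_{\bQ_{p}}, T)$ as a free direct summand with again free quotient, so that the various cokernels in the definition of cartesian are exact. But this is precisely what Corollary~\ref{cor:local-str} (built on Proposition~\ref{prop:free} and Lemma~\ref{lemma:coh-vanish}) was set up to deliver, so I expect no new idea to be needed; the bad primes contribute nothing beyond Proposition~\ref{prop:sel-car-can}.
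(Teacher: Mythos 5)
Your reduction is the same as the paper's: by Lemma \ref{lem:ur=f} the local conditions of $\cF_{\mathrm{cl}}$ and $\cF_{\mathrm{can}}$ coincide at every $\ell \in S_{\mathrm{bad}}(E)$, so Proposition \ref{prop:sel-car-can} settles those primes and everything comes down to the condition at $p$, namely the injectivity of $H^{1}_{/f}(G_{\bQ_{p}}, E[p]) \longrightarrow H^{1}_{/f}(G_{\bQ_{p}}, T)$ induced by the inclusion $E[p] \hookrightarrow T$. The gap is in how you treat this map. Applying $-\otimes_{R}\bF_{p}$ to the split exact sequence $0 \to H^{1}_{f}(G_{\bQ_{p}},T) \to H^{1}(G_{\bQ_{p}},T) \to H^{1}_{/f}(G_{\bQ_{p}},T) \to 0$ only controls the behaviour of the local condition under the \emph{quotient} $T \twoheadrightarrow T/\fm_{R}T \cong E[p]$; it shows that $H^{1}_{/f}(G_{\bQ_{p}},T)\otimes_{R}\bF_{p} \cong H^{1}_{/f}(G_{\bQ_{p}},E[p])$, which is the wrong direction. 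The inclusion $E[p] \hookrightarrow T$ identifies $E[p]$ with the socle $T[\fm_{R}]$, not with $T/\fm_{R}T$, and for $R=\bZ_{p}/p^{m}[\Gal(\bQ_{n}/\bQ)]$ the composite $T[\fm_{R}] \hookrightarrow T \twoheadrightarrow T/\fm_{R}T$ is zero unless $R=\bF_{p}$; so nothing about the inclusion-induced map can be read off from $-\otimes_{R}\bF_{p}$. The relevant functor is $\Hom_{R}(\bF_{p},-)=(-)[\fm_{R}]$, and the one genuinely non-formal input is that $H^{1}_{f}(G_{\bQ_{p}},E[p]) \to H^{1}_{f}(G_{\bQ_{p}},T)$ is nonzero, i.e.\ that $\widehat{E}(\fm_{\bQ_{p}})/p \to \widehat{E}(\fm_{\bQ_{n,p}})/p^{m}$ does not vanish (this is where Proposition \ref{prop:free} really enters). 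Your write-up never verifies this, and the appeal to \cite[Lemma~3.5.3]{MRkoly} cannot close the gap: that statement is Lemma \ref{lemma:mult} of this paper, which is a \emph{consequence} of the cartesian property, so citing it here is circular.

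For comparison, the paper proves the required injectivity by factoring the map as $H^{1}_{/f}(G_{\bQ_{p}},E[p]) \to H^{1}_{/f}(G_{\bQ_{p}},E[p^{m}]) \to H^{1}_{/f}(G_{\bQ_{p}},T)$: the second arrow is injective by Corollary \ref{cor:local-str}, and the first by the explicit computation $H^{1}_{f}(G_{\bQ_{p}},E[p^{m}])\otimes\bF_{p} = \widehat{E}(\fm_{\bQ_{p}})/p = H^{1}_{f}(G_{\bQ_{p}},E[p])$ together with the freeness $H^{1}_{f}(G_{\bQ_{p}},E[p^{m}])\cong\bZ_{p}/p^{m}$, which identifies the image of $H^{1}_{f}(G_{\bQ_{p}},E[p])$ with the $p$-torsion (socle) of $H^{1}_{f}(G_{\bQ_{p}},E[p^{m}])$. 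Some such socle computation is unavoidable, and it is precisely the piece your proposal omits; the freeness and splitness you assemble from Corollary \ref{cor:local-str} are the right raw material, but they must be fed into $(-)[\fm_{R}]$ rather than $-\otimes_{R}\bF_{p}$, and supplemented by the non-vanishing of $H^{1}_{f}(G_{\bQ_{p}},E[p]) \to H^{1}_{f}(G_{\bQ_{p}},T)$.
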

\begin{proof}
By Proposition \ref{prop:sel-car-can}, it suffices to show that the homomorphism 
\[
H^{1}_{/f}(G_{\bQ_{p}}, E[p]) \longrightarrow H^{1}_{/f}(G_{\bQ_{p}}, T)
\] 
is injective. 
Note that this map factors  through $H^{1}_{/f}(G_{\bQ_{p}}, E[p^{m}])$. 
By Corollary \ref{cor:local-str}, the canonical homomorphism $H^{1}_{/f}(G_{\bQ_{p}}, E[p^{m}]) \longrightarrow H^{1}_{/f}(G_{\bQ_{p}}, T)$ is injective. Let us show that $H^{1}_{/f}(G_{\bQ_{p}}, E[p]) \longrightarrow H^{1}_{/f}(G_{\bQ_{p}}, E[p^{m}])$ is injective.  
Since $H^{1}(G_{\bQ_{p}}, E[p^{m}])$ is a free $\bZ_{p}/p^{m}$-module and $H^{1}(G_{\bQ_{p}}, E[p^{m}]) \otimes \bF_{p} \cong H^{1}(G_{\bQ_{p}}, E[p])$,  the canonical homomorphism $H^{1}(G_{\bQ_{p}}, E[p]) \longrightarrow H^{1}(G_{\bQ_{p}}, E[p^{m}])$ is injective. 
By definition, we have 
\[
H^{1}_{f}(G_{\bQ_{p}}, E[p^{m}]) \otimes \bF_{p} = \widehat{E}(\fm_{\bQ_{p}})/p^{m} \otimes \bF_{p} = \widehat{E}(\fm_{\bQ_{p}})/p = H^{1}_{f}(G_{\bQ_{p}}, E[p]). 
\]
Since $H^{1}_{f}(G_{\bQ_{p}}, E[p^{m}]) \cong \bZ_{p}/p^{m}$ by Corollary \ref{cor:local-str}, we see that  the canonical homomorphism $H^{1}_{/f}(G_{\bQ_{p}}, E[p]) \longrightarrow H^{1}_{/f}(G_{\bQ_{p}}, E[p^{m}])$ is injective. 
\end{proof}


\subsection{Kolyvagin systems of rank $1$}
In this subsection, we recall  the definition of Kolyvagin systems of rank $1$ introduced by Mazur and Rubin in \cite{MRkoly}. We set 
\[
\cP_{m,n} := \{\ell \not\in S_{\mathrm{bad}}(E) \mid E(\bF_{\ell})[p^{m}] \cong \bZ/p^{m} \, \textrm{ and } \,  \ell \equiv 1 \pmod{p^{\max\{m,n+1\}}} \}. 
\]
For any prime $\ell \in \cP_{m,n}$, the $R$-module $H^{1}_{\mathrm{ur}}(G_{\bQ_{\ell}}, T) \cong T/(\mathrm{Fr}_{\ell}-1)T$ is free of rank $1$. 
Moreover,  by \cite[Lemmas~1.2.1, 1.2.3 and 1.2.4]{MRkoly}, we have 
\[
H^{1}(G_{\bQ_{\ell}},T) = H^{1}_{\rm ur}(G_{\bQ_{\ell}},T) \oplus H^{1}_{\rm tr}(G_{\bQ_{\ell}},T)
\]
and the $R$-modules $H^{1}_{\rm tr}(G_{\bQ_{\ell}},T)$, $H^{1}_{/{\rm ur}}(G_{\bQ_{\ell}},T)$, and $H^{1}_{/{\rm tr}}(G_{\bQ_{\ell}},T)$ are free of rank $1$. 
Let $\cN_{m,n}$ denote the set of square-free products in $\cP_{m,n}$.  
For each integer $d \in \cN_{m,n}$, we  put 
\[
G_{d} := \bigotimes_{\ell \mid d}\Gal(\bQ(\ell)/\bQ). 
\]
For any prime $\ell \in \cP_{m,n}$, we have two homomorphisms  
\begin{align*}
v_\ell &\colon H^1(G_{\bQ}, T) \xrightarrow{{\rm loc}_\ell} H^1(G_{\bQ_{\ell}}, T) \longrightarrow H^1_{/{\rm ur}}(G_{\bQ_{\ell}}, T), 
\\
\varphi_\ell^{\rm fs} &\colon H^1(G_{\bQ}, T) \xrightarrow{{\rm loc}_\ell}  H^1(G_{\bQ_{\ell}}, T) \xrightarrow{{\rm pr}_{\rm ur}} H^1_{\rm ur}(G_{\bQ_{\ell}}, T) 
\xrightarrow{\phi_\ell^{\rm fs}} H^1_{/{\rm ur}}(G_{\bQ_{\ell}}, T) \otimes_{\bZ} G_{\ell}. 
\end{align*}
Here $\phi^{\mathrm{fs}}_{\ell}$ is the finite-singular comparison map defined in \cite[Definition 1.2.2]{MRkoly} and 
${\rm pr}_{\rm ur}$ denotes the projection map with respect to the decomposition 
$ H^1(G_{\bQ_{\ell}}, T) = H^1_{\rm  ur}(G_{\bQ_{\ell}}, T) \oplus H^1_{\rm tr}(G_{\bQ_{\ell}}, T)$.

\begin{definition}
We define the module ${\rm KS}_1(T, \cF_{\mathrm{can}})$ of Kolyvagin systems of rank $1$ to be the set of elements 
\[
(\kappa_d)_{d \in \cN_{m,n}} \in \prod_{d \in \cN_{m,n}} H^1_{\cF_{\mathrm{can}}(d)}(G_{\bQ}, T) \otimes_{\bZ} G_{n}
\]
satisfying the finite-singular relation 
\[
v_\ell(\kappa_d)=\varphi_\ell^{\rm fs}(\kappa_{d/\ell})
\]
for any integer $d \in \cN_{m,n}$ and any prime  $\ell \mid d$. 
\end{definition}

For any integer $d$, we denote by  $\nu(d) \in \bZ_{\geq 0}$ the number of prime divisors of $d$. 

\begin{lemma}\label{lem:free-can}
Let $a,b,c \in \cN_{m,n}$ be pairwise relatively prime integers with $\nu(a) - \nu(b) \geq 1$. 
If $H^{1}_{(\cF_{\mathrm{can}}^{*})^{b}_{a}(c)}(G_{\bQ}, E[p]) = 0$, then the $R$-module  $H^{1}_{(\cF_{\mathrm{can}})^{a}_{b}(c)}(G_{\bQ}, T)$ is free of rank $\nu(a)-\nu(b)+1$. 
\end{lemma}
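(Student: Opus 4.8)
The plan is to deduce this from the Poitou--Tate sequence of Theorem~\ref{pt} together with the self-duality of the classical Selmer structure and a careful count of the local contributions. Since $\cF_{\mathrm{can}} = \cF_{\mathrm{cl}}^{p}$ and $\cF_{\mathrm{cl}} = \cF_{\mathrm{cl}}^{*}$, one computes that $(\cF_{\mathrm{can}})^{*} = \cF_{\mathrm{cl}}^{*}{}_{p} = (\cF_{\mathrm{cl}})_{p}$, so the dual Selmer structure differs from $\cF_{\mathrm{cl}}$ only by imposing the strict condition at $p$. First I would observe that it suffices to prove the statement for $T$ after establishing the corresponding vanishing/freeness facts mod $\fm_{R}$, using Lemma~\ref{lemma:mult} to pass between $E[p]$ and $T$: if $H^{1}_{(\cF_{\mathrm{can}}^{*})^{b}_{a}(c)}(G_{\bQ}, E[p]) = 0$ then the same module for $T$ vanishes, and a free $R$-module is detected by its rank over $\bF_{p}$ of the $\fm_{R}$-torsion together with a $\bZ_{p}$-rank count (or, better, by the cartesian property).

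Next I would run Theorem~\ref{pt} with $\cF_{1} = (\cF_{\mathrm{can}})_{b}(c)$ (strict at the primes dividing $b$, transversal at those dividing $c$) and $\cF_{2} = (\cF_{\mathrm{can}})^{a}_{b}(c)$ (relaxed at the primes dividing $a$). The quotient term $\bigoplus_{\ell \mid a} H^{1}(G_{\bQ_{\ell}}, T)/H^{1}_{\mathrm{ur}}(G_{\bQ_{\ell}}, T)$ is free of rank $\nu(a)$ over $R$ by the rank-one local freeness recalled for primes in $\cP_{m,n}$, while the terms $H^{1}_{(\cF_{1}^{*})}(G_{\bQ},T)^{\vee}$ and $H^{1}_{(\cF_{2}^{*})}(G_{\bQ},T)^{\vee}$ are both zero because $\cF_{1}^{*}$ and $\cF_{2}^{*}$ are each more restrictive than $(\cF_{\mathrm{can}}^{*})^{b}_{a}(c)$ (they impose strict conditions at a superset of primes and the transversal condition is self-dual), so they inject into $H^{1}_{(\cF_{\mathrm{can}}^{*})^{b}_{a}(c)}(G_{\bQ}, T) = 0$. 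Hence the Poitou--Tate sequence degenerates to a short exact sequence
\[
0 \longrightarrow H^{1}_{(\cF_{\mathrm{can}})_{b}(c)}(G_{\bQ}, T) \longrightarrow H^{1}_{(\cF_{\mathrm{can}})^{a}_{b}(c)}(G_{\bQ}, T) \longrightarrow \bigoplus_{\ell \mid a} H^{1}_{/\mathrm{ur}}(G_{\bQ_{\ell}}, T) \longrightarrow 0.
\]
To finish I would then need to identify $H^{1}_{(\cF_{\mathrm{can}})_{b}(c)}(G_{\bQ}, T)$ as a free $R$-module of rank $1 - \nu(b)$... but when $\nu(b) \geq 1$ that cannot be literally true, so in fact one should first reduce to the case $\nu(b) = 0$: running Theorem~\ref{pt} a second time with $\cF_{1}$ adding back the strict primes $\ell \mid b$ one at a time and using that the dual vanishing forces each step to strip off exactly one free rank-one summand $H^{1}_{/\mathrm{tr}}$ or $H^{1}_{\mathrm{ur}}$, cumulatively accounting for the $-\nu(b)$. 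The base case $\nu(a) \geq 1$, $\nu(b) = \nu(c) = 0$ then asserts $H^{1}_{\cF_{\mathrm{can}}^{a}}(G_{\bQ}, T)$ is free of rank $\nu(a) + 1$, which comes from the above short exact sequence once we know $H^{1}_{\cF_{\mathrm{can}}}(G_{\bQ}, T)$ is free of rank $1$ --- and that is exactly the core global input, provable by combining the cartesian property (Proposition~\ref{prop:sel-car-can}), Lemma~\ref{lemma:mult}, and the standard core-rank computation of \cite[Theorem~4.1.13]{MRkoly} adapted in \cite{sakamoto-koly0} (the core rank of $\cF_{\mathrm{can}}$ is $1$), together with the local structure results in Corollary~\ref{cor:local-str} that guarantee everything is genuinely free rather than merely of the right length.

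The main obstacle I anticipate is the bookkeeping needed to show every term in the Poitou--Tate sequence is \emph{free} of the claimed rank, not just of the claimed order: freeness of $H^{1}_{(\cF_{\mathrm{can}})^{a}_{b}(c)}(G_{\bQ}, T)$ requires the extension classes in the short exact sequences above to split $R$-linearly, which one gets from the cartesian property together with the fact that $R = \bZ_{p}/p^{m}[\Gal(\bQ_{n}/\bQ)]$ is a self-injective (Gorenstein) ring, so that the surjection onto a free module splits. The secondary subtlety is the hypothesis $p = 3$, where $T$ fails (H.4) of \cite{MRkoly}; here I would invoke the extension of the theory carried out in the appendix to justify that the core-rank-one computation and the local freeness statements still hold. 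Once freeness at each stage is in hand, the rank additivity $\nu(a) - \nu(b) + 1$ follows by induction on $\nu(a) + \nu(b)$ from the two short exact sequences.
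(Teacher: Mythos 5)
Your proposal correctly identifies the ingredients the paper uses --- the cartesian property of $\cF_{\mathrm{can}}$ (Proposition \ref{prop:sel-car-can}), the core-rank computation $\chi(\cF_{\mathrm{can}})=1$, and the self-injectivity of the local ring $R$ --- but the actual argument has a gap at its central step. The paper's proof is a three-line reduction: cartesianness passes to $(\cF_{\mathrm{can}})^{a}_{b}(c)$ by \cite[Corollary 3.18]{sakamoto}, the Euler characteristic becomes $\nu(a)-\nu(b)+1$ by \cite[Corollary 3.21]{sakamoto}, and then \cite[Lemma 4.6]{sakamoto} converts ``cartesian $+$ $\chi=r$ $+$ vanishing of the dual Selmer group over $E[p]$'' directly into freeness of rank $r$. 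You are in effect trying to reprove that last cited lemma by iterated Poitou--Tate sequences, and that is where the argument breaks.

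Concretely: with $\cF_{1}=(\cF_{\mathrm{can}})_{b}(c)$ and $\cF_{2}=(\cF_{\mathrm{can}})^{a}_{b}(c)$, you have $\cF_{1}^{*}=(\cF_{\mathrm{can}}^{*})^{b}(c)$, which at the primes $\ell\mid a$ imposes the \emph{unramified} condition, whereas $(\cF_{\mathrm{can}}^{*})^{b}_{a}(c)$ imposes the \emph{strict} condition there. So $\cF_{1}^{*}$ is \emph{less} restrictive than $(\cF_{\mathrm{can}}^{*})^{b}_{a}(c)$ at those primes, the inclusion of Selmer groups goes the opposite way to what you assert, and $H^{1}_{\cF_{1}^{*}}(G_{\bQ},T)=0$ does not follow from the hypothesis (indeed $\chi(\cF_{1}^{*})=\nu(b)-1$ can be positive). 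Hence the Poitou--Tate sequence does not degenerate to the short exact sequence you write down. Your inductive fix does repair the $b$-direction (stripping a prime from $b$ strengthens the dual-vanishing hypothesis, since $(\cF_{\mathrm{can}}^{*})^{b/\ell}_{a}(c)\subset(\cF_{\mathrm{can}}^{*})^{b}_{a}(c)$), but the same obstruction recurs unrepaired in the base case $\nu(b)=0$: to relax at the primes of $a$ one at a time you would need $H^{1}_{(\cF_{\mathrm{can}}^{*})_{a/\ell}(c)}(G_{\bQ},E[p])=0$ at each stage, and removing a prime from $a$ \emph{weakens} the hypothesis ($(\cF_{\mathrm{can}}^{*})_{a}(c)\subset(\cF_{\mathrm{can}}^{*})_{a/\ell}(c)$), so the induction does not propagate. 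This is precisely why one cannot avoid the global structural input of \cite[Lemma 4.6]{sakamoto}, whose proof controls $H^{1}_{\cF}(G_{\bQ},T)[\fm_{R}]$ via the cartesian property and then uses a length/generator count over the zero-dimensional Gorenstein ring $R$, rather than a chain of split short exact sequences.
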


\begin{proof}
Since $\cF_{\mathrm{can}}$ is cartesian by Proposition \ref{prop:sel-car-can},  so is $(\cF_{\mathrm{can}})^{a}_{b}(c)$ by \cite[Corollary 3.18]{sakamoto}. 
By \cite[Proposition 6.2.2]{MRkoly}, we have 
\[
\chi(\cF_{\mathrm{can}}) := \dim_{\bF_{p}}(H^{1}_{\cF_{\mathrm{can}}}(G_{\bQ}, E[p])) - \dim_{\bF_{p}}(H^{1}_{\cF_{\mathrm{can}}^{*}}(G_{\bQ}, E[p])) = 1, 
\]
and \cite[Corollary 3.21]{sakamoto} implies $\chi((\cF_{\mathrm{can}})^{a}_{b}(c)) = \nu(a)-\nu(b)+1$. 
Hence this lemma follows from \cite[Lemma 4.6]{sakamoto}. 
\end{proof}



\subsection{Kolyvagin systems of rank $0$}
In this subsection, we recall the definition of Kolyvagin system of rank $0$ in our previous paper \cite{sakamoto-koly0}. 
Fix an isomorphism 
\[
H^{1}_{/{\rm ur}}(G_{\bQ_{\ell}},T) \cong R
\] 
for each prime $\ell \in \cP_{m,n}$.  
We then have homomorphisms
\begin{align*}
v_\ell &\colon H^1(G_{\bQ_{\ell}}, T) \longrightarrow H^{1}_{/{\rm ur}}(G_{\bQ_{\ell}},T) \cong R, 
\\
\varphi_\ell^{\rm fs} &\colon H^1(G_{\bQ_{\ell}}, T) \longrightarrow  H^1_{/{\rm ur}}(G_{\bQ_{\ell}}, T)\otimes_{\bZ} G_\ell \cong R \otimes_{\bZ} G_\ell.  
\end{align*}
We put $\cM_{m,n} := \{(d, \ell) \in \cN_{m,n} \times \cP_{m,n} \mid \text{$\ell$ is coprime to $d$}\}$. 

\begin{definition}\label{def:koly0}
A Kolyvagin system of rank $0$ is an element
\[
(\kappa_{d,\ell})_{(d,\ell) \in \cM_{m,n}} \in \prod_{(d,\ell) \in \cM_{m,n}} H^1_{\cF_{\mathrm{cl}}^{\ell}(d)}(G_{\bQ},T) \otimes_{\bZ} G_n
\]
which satisfies the following relations for any elements $(d,\ell), (d,q), (d\ell, q) \in \cM_{m,n}$: 
\begin{align*}
v_\ell(\kappa_{d\ell, q}) &= \varphi_\ell^{\rm fs}(\kappa_{d,q}), 
\\
v_{\ell}(\kappa_{1,\ell}) &= v_{q}(\kappa_{1,q}), 
\\
v_{q}(\kappa_{d\ell, q}) &= - \varphi_{\ell}^{\rm fs}(\kappa_{d,\ell}). 
\end{align*}
We denote by ${\rm KS}_0(T, \cF_{\mathrm{cl}})$ the module of  Kolyvagin systems of rank $0$. 
For any Kolyvagin system $\kappa \in {\rm KS}_{0}(T,\cF_{\mathrm{cl}})$ and any element $(d, \ell) \in \cM_{\mathrm{ur}}$, we put 
\[
\delta(\kappa)_{d} := v_{\ell}(\kappa_{d,\ell}) \in R \otimes_{\bZ} G_{d}. 
\]
Note that, by the definition of  Kolyvagin system of rank $0$, the element $\delta(\kappa)_{d}$ is independent of the choice of the prime $\ell \nmid d$. 
Hence we obtain a homomorphism 
\[
\delta \colon {\rm KS}_{0}(T,\cF_{\mathrm{cl}}) \longrightarrow \prod_{d \in \cN_{m,n}}R \otimes_{\bZ} G_{d}. 
\]
\end{definition}

Note that $\cF_{\mathrm{cl}} = \cF_{\mathrm{cl}}^{*}$.

\begin{lemma}\label{lem:free-ur}
Let $a,b,c \in \cN_{m,n}$ be pairwise relatively prime integers with $\nu(a) \geq \nu(b)$. 
If $H^{1}_{(\cF_{\mathrm{cl}})^{b}_{a}(c)}(G_{\bQ}, E[p]) = 0$, then the $R$-module  $H^{1}_{(\cF_{\mathrm{cl}})^{a}_{b}(c)}(G_{\bQ}, T)$ is free of rank $\nu(a)-\nu(b)$. 
\end{lemma}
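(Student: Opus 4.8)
The statement is the exact analogue, for the classical (rank $0$) Selmer structure $\cF_{\mathrm{cl}}$, of Lemma \ref{lem:free-can} for the canonical structure $\cF_{\mathrm{can}}$, so the plan is to mimic that proof. First I would note that $\cF_{\mathrm{cl}}$ is cartesian by Corollary \ref{cor:sel-car}, and hence so is the modified structure $(\cF_{\mathrm{cl}})^{a}_{b}(c)$ by \cite[Corollary 3.18]{sakamoto}; this is the structural input that makes the cohomology free rather than merely of the expected length. Next, I would compute the core rank: since $\cF_{\mathrm{cl}} = \cF_{\mathrm{cl}}^{*}$, the global Euler characteristic is $\chi(\cF_{\mathrm{cl}}) = \dim_{\bF_{p}}(H^{1}_{\cF_{\mathrm{cl}}}(G_{\bQ}, E[p])) - \dim_{\bF_{p}}(H^{1}_{\cF_{\mathrm{cl}}^{*}}(G_{\bQ}, E[p])) = 0$. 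Then \cite[Corollary 3.21]{sakamoto}, which records how $\chi$ changes when one passes from $\cF$ to $\cF^{a}_{b}(c)$ (each prime dividing $a$ raising it by $1$, each prime dividing $b$ lowering it by $1$, primes dividing $c$ leaving it unchanged since the transversal condition has the same local length), gives $\chi((\cF_{\mathrm{cl}})^{a}_{b}(c)) = \nu(a) - \nu(b)$.

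With these two facts in hand, the conclusion is immediate from \cite[Lemma 4.6]{sakamoto}: that lemma says that if $\cG$ is a cartesian Selmer structure on $T$ with $H^{1}_{\cG^{*}}(G_{\bQ}, E[p]) = 0$ and core rank $\chi(\cG) = r \geq 0$, then $H^{1}_{\cG}(G_{\bQ}, T)$ is a free $R$-module of rank $r$. Applying this with $\cG = (\cF_{\mathrm{cl}})^{a}_{b}(c)$, whose dual is $(\cF_{\mathrm{cl}}^{*})^{b}_{a}(c) = (\cF_{\mathrm{cl}})^{b}_{a}(c)$ (using $\cF_{\mathrm{cl}} = \cF_{\mathrm{cl}}^{*}$ and the identity $(\cF^{a}_{b}(c))^{*} = (\cF^{*})^{b}_{a}(c)$ from the excerpt), the hypothesis $H^{1}_{(\cF_{\mathrm{cl}})^{b}_{a}(c)}(G_{\bQ}, E[p]) = 0$ is precisely the vanishing of the dual Selmer group required, and $\nu(a) \geq \nu(b)$ guarantees $r = \nu(a) - \nu(b) \geq 0$. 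Hence $H^{1}_{(\cF_{\mathrm{cl}})^{a}_{b}(c)}(G_{\bQ}, T)$ is free of rank $\nu(a) - \nu(b)$, as claimed.

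The only point requiring genuine care is the case $p = 3$: as flagged in the excerpt, $T$ fails hypothesis (H.4) of \cite{MRkoly} then, and the cited results from \cite{sakamoto} and \cite{MRkoly} are not directly applicable. I expect this to be the main obstacle, and the plan is to invoke the extended theory developed in Appendix \ref{sec:appendix}, where the relevant freeness statements are reproved without (H.4); with that in place the argument above goes through uniformly. (Alternatively, one checks directly that the core-rank and cartesian formalism only uses (H.0)--(H.3) together with the local structural results established earlier in this section, e.g. Corollary \ref{cor:local-str}, which hold for all $p \geq 3$ under hypotheses (a)--(c).)
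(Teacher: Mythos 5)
Your proposal is correct, but it takes a genuinely different route from the paper. You apply the cartesian/core-rank freeness criterion of \cite[Lemma 4.6]{sakamoto} directly to $(\cF_{\mathrm{cl}})^{a}_{b}(c)$, which requires the cartesian property of $\cF_{\mathrm{cl}}$ itself (Corollary \ref{cor:sel-car}, whose nontrivial content is the behaviour of the local condition $H^1_f$ at $p$) together with the computation $\chi(\cF_{\mathrm{cl}})=0$ from self-duality. The paper instead reduces to the already-established canonical case: the hypothesis plus Lemma \ref{lemma:mult} gives $H^{1}_{(\cF_{\mathrm{cl}})^{b}_{a}(c)}(G_{\bQ},T)=0$, so Theorem \ref{pt} applied to the pair $(\cF_{\mathrm{cl}})^{a}_{b}(c)\subset(\cF_{\mathrm{can}})^{a}_{b}(c)$ (which differ only at $p$) yields a short exact sequence
\[
0 \longrightarrow H^{1}_{(\cF_{\mathrm{cl}})^{a}_{b}(c)}(G_{\bQ}, T) \longrightarrow H^{1}_{(\cF_{\mathrm{can}})^{a}_{b}(c)}(G_{\bQ}, T) \longrightarrow H^{1}_{/f}(G_{\bQ_{p}}, T) \longrightarrow 0,
\]
whose middle term is free of rank $\nu(a)-\nu(b)+1$ by Lemma \ref{lem:free-can} and whose quotient is free of rank $1$ by Corollary \ref{cor:local-str}; over the local ring $R$ the kernel is then free of the stated rank. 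The paper's route thus only needs cartesianness of $\cF_{\mathrm{can}}$ (Proposition \ref{prop:sel-car-can}), while yours leans on Corollary \ref{cor:sel-car}. Two small points on your version: you should note separately the degenerate case $\nu(a)=\nu(b)$, where the core rank is $0$ and the conclusion is that the module vanishes, which follows from $\dim H^{1}_{\cG}(G_\bQ,E[p])=\dim H^{1}_{\cG^{*}}(G_\bQ,E[p])=0$ and Lemma \ref{lemma:mult} rather than from a literal application of \cite[Lemma 4.6]{sakamoto} (the paper's own Lemma \ref{lem:free-can} is only stated for core rank at least $2$); and your concern about $p=3$ is unnecessary here, since the appendix explicitly records that the Stark-system results of \cite{sakamoto} do not require the hypothesis (H.4) and so hold verbatim for $p=3$.
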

\begin{proof}
Since $H^{1}_{(\cF_{\mathrm{cl}})^{b}_{a}(c)}(G_{\bQ}, E[p]) = 0$, Lemma \ref{lemma:mult} shows that 
$H^{1}_{(\cF_{\mathrm{cl}})^{b}_{a}(c)}(G_{\bQ}, T) = 0$. 
Hence applying Theorem \ref{pt} with $\cF_{1} = (\cF_{\mathrm{cl}})^{a}_{b}(c)$ and $\cF_{2} = (\cF_{\mathrm{can}})^{a}_{b}(c)$, we obtain an exact sequence 
\[
0 \longrightarrow H^{1}_{(\cF_{\mathrm{cl}})^{a}_{b}(c)}(G_{\bQ}, T) \longrightarrow H^{1}_{(\cF_{\mathrm{can}})^{a}_{b}(c)}(G_{\bQ}, T) \longrightarrow H^{1}_{/f}(G_{\bQ_{p}}, T) \longrightarrow0. 
\]
Hence this lemma follows from Corollary \ref{cor:local-str} and Lemma \ref{lem:free-can}. 
\end{proof}

When $p>3$, the following theorem is proved in {\cite[Proposition 5.6, Theorem 5.8]{sakamoto-koly0}}. 
When $p=3$, it is proved in Appendix \ref{sec:appendix}.

\begin{theorem}\label{thm:koly0}\ 
\begin{itemize}
\item[(1)] For any element $(d, \ell) \in \cM_{m,n}$ satisfying $H^{1}_{(\cF_{\mathrm{cl}})_{\ell}(d)}(G_{\bQ}, E[p])=0$, the projection map 
\[
{\rm KS}_{0}(T,\cF_{\mathrm{cl}}) \longrightarrow  H^{1}_{\cF_{\mathrm{cl}}^{\ell}(d)}(G_{\bQ},T) \otimes_{\bZ} G_{d}
\]
is an isomorphism. 
In particular, the $R$-module ${\rm KS}_{0}(T,\cF_{\mathrm{cl}})$ is free of rank $1$. 
\item[(2)] For any basis $\kappa \in {\rm KS}_{0}(T,\cF_{\mathrm{cl}})$ and any integer $d \in \cN_{m,n}$, we have 
\[
R \cdot \delta(\kappa)_{d} = \mathrm{Fitt}_{R}^{0}(H^{1}_{\cF_{\mathrm{cl}}(d)}(G_{\bQ}, T)^{\vee}). 
\]
\end{itemize}
\end{theorem}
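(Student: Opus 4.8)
Since the case $p>3$ is \cite[Proposition~5.6, Theorem~5.8]{sakamoto-koly0}, the plan is to re-run that argument and repair the one ingredient that fails at $p=3$, namely hypothesis (H.4) of \cite[\S3.5]{MRkoly}; the repair is carried out in Appendix \ref{sec:appendix}. The tools I would use are: Theorem \ref{pt} (the Poitou--Tate comparison sequence for nested Selmer structures); Lemmas \ref{lem:free-can} and \ref{lem:free-ur} together with Corollary \ref{cor:local-str}, which make $H^{1}_{\cF_{\mathrm{cl}}^{\ell}(d)}(G_{\bQ},T)$ and $H^{1}_{/\mathrm{ur}}(G_{\bQ_{\ell}},T)$ free $R$-modules of rank one under the relevant vanishing hypotheses; Lemma \ref{lemma:mult}, reducing such vanishing to the $E[p]$-coefficient statement; and the fact that $\cF_{\mathrm{cl}}$ is cartesian (Corollary \ref{cor:sel-car}).

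\emph{Part (1)} is the rank-$0$ analogue of Mazur--Rubin's structure theorem. First I would work with $E[p]$-coefficients and show that the $\bF_{p}$-space of Kolyvagin systems of rank $0$ is at most one-dimensional: from a ``core'' pair $(d,\ell)\in\cM_{m,n}$ with $H^{1}_{(\cF_{\mathrm{cl}})_{\ell}(d)}(G_{\bQ},E[p])=0$, the three relations of Definition \ref{def:koly0} — in which, over a field, each of $v_{\ell},v_{q},\varphi_{\ell}^{\mathrm{fs}}$ between the relevant one-dimensional spaces is either $0$ or an isomorphism — propagate $\bar\kappa_{d,\ell}$ to every $\bar\kappa_{d',\ell'}$, so the system is determined by $\bar\kappa_{d,\ell}$. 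The harder direction is to \emph{produce} a Kolyvagin system realizing a prescribed value: I would build the $\kappa_{d,\ell}$ by induction on $\nu(d)$, using Theorem \ref{pt} applied to two Selmer structures $(\cF_{\mathrm{cl}})^{a}_{b}(c)$ differing at one prime to see that the obstruction to extending the family across a new prime lies in a one-dimensional local quotient, then a Chebotarev argument to choose auxiliary Kolyvagin primes killing that obstruction compatibly with all three relations. The passage from $E[p]$- to $T$-coefficients uses Lemma \ref{lemma:mult}, the cartesian property, and Nakayama's lemma; freeness of ${\rm KS}_{0}(T,\cF_{\mathrm{cl}})$ follows by taking any admissible $(d,\ell)$.

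\emph{Part (2).} Fix $d\in\cN_{m,n}$ and a Kolyvagin prime $\ell\nmid d$, so $\delta(\kappa)_{d}=v_{\ell}(\kappa_{d,\ell})$ (up to the harmless free rank-one twist by $G_{d}$). Applying Theorem \ref{pt} to $\cF_{\mathrm{cl}}(d)\subset\cF_{\mathrm{cl}}^{\ell}(d)$ — which differ only at $\ell$, with quotient $H^{1}_{/\mathrm{ur}}(G_{\bQ_{\ell}},T)$, and with $(\cF_{\mathrm{cl}}(d))^{*}=\cF_{\mathrm{cl}}(d)$, $(\cF_{\mathrm{cl}}^{\ell}(d))^{*}=(\cF_{\mathrm{cl}})_{\ell}(d)$ — gives
\[
0\to H^{1}_{\cF_{\mathrm{cl}}(d)}(G_{\bQ},T)\to H^{1}_{\cF_{\mathrm{cl}}^{\ell}(d)}(G_{\bQ},T)\xrightarrow{\,v_{\ell}\,}H^{1}_{/\mathrm{ur}}(G_{\bQ_{\ell}},T)\to H^{1}_{\cF_{\mathrm{cl}}(d)}(G_{\bQ},T)^{\vee}\to H^{1}_{(\cF_{\mathrm{cl}})_{\ell}(d)}(G_{\bQ},T)^{\vee}\to 0.
\]
If $\ell$ can be chosen with $H^{1}_{(\cF_{\mathrm{cl}})_{\ell}(d)}(G_{\bQ},E[p])=0$, then Lemma \ref{lemma:mult} kills the last term, so $\coker(v_{\ell})\cong H^{1}_{\cF_{\mathrm{cl}}(d)}(G_{\bQ},T)^{\vee}$; as $v_{\ell}$ is then a map of free rank-one $R$-modules and $\kappa_{d,\ell}$ generates its source by Part (1), the principal ideal $R\cdot\delta(\kappa)_{d}=\mathrm{im}(v_{\ell})$ has quotient $H^{1}_{\cF_{\mathrm{cl}}(d)}(G_{\bQ},T)^{\vee}$, i.e. equals $\mathrm{Fitt}^{0}_{R}(H^{1}_{\cF_{\mathrm{cl}}(d)}(G_{\bQ},T)^{\vee})$. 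For a general $d$ — where a one-dimensional target cannot detect a larger Selmer module — I would adjoin a product $c$ of Kolyvagin primes chosen by Chebotarev so that $(dc,\ell)$ is of this type, prove the formula for $dc$, and descend to $d$ one prime at a time, comparing $\mathrm{Fitt}^{0}$ of consecutive Selmer duals by further applications of Theorem \ref{pt} (to $\cF_{\mathrm{cl}}(d'q)$, $\cF_{\mathrm{cl}}(d')$, $(\cF_{\mathrm{cl}})_{q}(d')$) while the finite-singular relations $v_{\ell}(\kappa_{d'\ell,q})=\varphi_{\ell}^{\mathrm{fs}}(\kappa_{d',q})$ and $v_{q}(\kappa_{d'\ell,q})=-\varphi_{\ell}^{\mathrm{fs}}(\kappa_{d',\ell})$ identify the connecting maps with the $v$'s and $\varphi^{\mathrm{fs}}$'s and force $R\cdot\delta(\kappa)$ to transform compatibly; the induction then closes. (Self-injectivity of $R$ is used to identify the $0$-th Fitting ideal of a finite $R$-module with that of its Pontryagin dual where needed.)

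\emph{Main obstacle.} The only place $p>3$ enters is the Chebotarev/prime-selection step in both parts. Hypothesis (H.4) of \cite[\S3.5]{MRkoly} asks that either $\bar T\not\cong\bar T^{*}$ or $p\geq5$; for the self-dual module $\bar T=E[p]$ (Weil pairing) neither holds when $p=3$, and it is exactly this genericity — roughly, the availability of a scalar in $\bF_{p}^{\times}\setminus\{\pm1\}$ in the Galois image on $\bar T$ — that the Mazur--Rubin prime-selection argument uses when it must detect a Selmer class and a dual-Selmer class at a single prime. My plan for the appendix is to replace this prime-selection lemma by a version valid at $p=3$: exploit the extra Galois elements provided by the cyclotomic directions in $\Gal(\bQ(E[p^{m}],\mu_{p^{\infty}})/\bQ)$ (recorded in the $R$-module structure, $R=\bZ_{p}/p^{m}[\Gal(\bQ_{n}/\bQ)]$) and, where a single Kolyvagin prime no longer separates the two classes, use a pair of them; with this substitute in place, every remaining step of \cite[Proposition~5.6, Theorem~5.8]{sakamoto-koly0} applies verbatim at $p=3$. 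I expect this prime-selection lemma to be the main difficulty.
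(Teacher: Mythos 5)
Your outline correctly locates where the hypothesis $p>3$ enters, but the one new ingredient the theorem actually needs at $p=3$ --- a working replacement for Mazur--Rubin's prime-selection result \cite[Proposition 3.6.1]{MRkoly} --- is exactly what you do not supply, and the substitutes you gesture at are not the right ones. The paper's Appendix \ref{sec:appendix} fixes this with a purely combinatorial observation: four cosets $g_i\ker(\varphi_i)$ of kernels of nonzero homomorphisms $\varphi_i\colon G\to\bF_3^a$ cannot cover $G$ provided the $\varphi_i$ span a space of dimension at least $3$ (Lemma \ref{lem:coset}); consequently, for any \emph{three} nonzero classes $c_1,c_2,c_3\in H^1(G_{\bQ},\overline{T})$ there are infinitely many primes $\ell\in\cP$ with all three localizations nonzero (Corollary \ref{cor:chev}). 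The remark after Lemma \ref{lem:chev} shows that four classes can genuinely fail (e.g.\ $c_1,c_2,c_1+c_2,c_1-c_2$), so the real content of the appendix is the verification that the graph-connectivity and injectivity arguments of \cite[\S 4.3]{MRkoly} can be rearranged to need only three classes at a time, with the one unavoidable four-class instance (Lemma \ref{lem:change}) handled by a separate case analysis. Your proposed fixes --- ``extra Galois elements from the cyclotomic directions'' and ``using a pair of primes'' --- do not engage with this: the residual representation is just $E[3]$ over $\bF_3$, where $\mathrm{GL}(E[3])$ contains no scalar outside $\{\pm1\}$ no matter how many cyclotomic layers one adjoins, and replacing single Kolyvagin primes by pairs would change the very definitions of the systems and of the graph $\cX^0$. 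So the main difficulty is named but not resolved.

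Separately, your route to existence in part (1) (building $\kappa_{d,\ell}$ by induction on $\nu(d)$ and killing obstructions by Chebotarev) is the hard direct construction of \cite{MRkoly}, which the paper deliberately avoids. Both \cite{sakamoto-koly0} and the appendix obtain existence essentially for free from the Stark-system side: $\mathrm{SS}_0(T,\cF_{\mathrm{cl}})$ is free of rank one with the projection to each core $X_d^0$ an isomorphism by \cite{sakamoto} (valid at $p=3$), and the work goes into showing that $\mathrm{Reg}_1$ and $\mathrm{Reg}_0$ are isomorphisms (Theorem \ref{thm:reg,p=3}), for which the only new input is the injectivity statement Proposition \ref{prop:inj,p=3} --- itself a consequence of the three-class Chebotarev lemma and the connectedness of $\cX^0$. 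Part (2) is then inherited from the corresponding Fitting-ideal theorem for Stark systems rather than re-proved by the prime-by-prime descent you sketch, which over $R=\bZ_p/p^m[\Gal(\bQ_n/\bQ)]$ (not a field) would amount to re-deriving those structure theorems.
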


\begin{remark}
For any Selmer structure $\cF$ on $E[p]$ with $\chi(\cF) \geq 0$, there are infinitely many integers $d \in \cN_{m,n}$ satisfying $H^{1}_{\cF^{*}(d)}(G_{\bQ}, E[p]) = 0$ (see \cite[Corollary 4.1.9]{MRkoly}). 
\end{remark}

\begin{corollary}\label{cor:inj}
The homomorphism $\delta$ is injective. 
\end{corollary}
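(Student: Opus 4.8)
The plan is to reduce the injectivity of $\delta$ to the existence of a single auxiliary modulus $d \in \cN_{m,n}$ at which the $d$-component of $\delta$ ``detects a unit of $R$''. By Theorem \ref{thm:koly0}(1) the $R$-module ${\rm KS}_{0}(T,\cF_{\mathrm{cl}})$ is free of rank $1$; fix a basis $\kappa$. Every element of ${\rm KS}_{0}(T,\cF_{\mathrm{cl}})$ is then $r\kappa$ for a unique $r \in R$, and, as $\delta$ is $R$-linear and built from the $R$-linear maps $v_{\ell}$, we have $\delta(r\kappa) = r\,\delta(\kappa) = (r\,\delta(\kappa)_{d})_{d \in \cN_{m,n}}$. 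Since the kernel of $r \mapsto r\,\delta(\kappa)_{d}$ is $\Ann_{R}(\delta(\kappa)_{d})$, it suffices to find one $d$ with $\Ann_{R}(\delta(\kappa)_{d}) = 0$.

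First I would observe that $R \otimes_{\bZ} G_{d}$ is a free $R$-module of rank $1$ for every $d \in \cN_{m,n}$. For $d = 1$ this is clear, since $G_{1}$ is the empty tensor product $\bZ$. For $d \neq 1$, $G_{d} = \bigotimes_{\ell \mid d}\Gal(\bQ(\ell)/\bQ)$ is a tensor product of finite cyclic $p$-groups, hence cyclic, and each $\Gal(\bQ(\ell)/\bQ)$ has order divisible by $p^{\max\{m,n+1\}}$ because $\ell \equiv 1 \pmod{p^{\max\{m,n+1\}}}$; thus $|G_{d}| = p^{e}$ with $e \geq m$, and since $p^{m}R = 0$ we get $R \otimes_{\bZ} G_{d} \cong R/p^{e}R = R$. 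Consequently, if we can arrange $R\cdot\delta(\kappa)_{d} = R\otimes_{\bZ} G_{d}$, then $\delta(\kappa)_{d}$ generates a free rank-one $R$-module, hence is a basis of it, so $\Ann_{R}(\delta(\kappa)_{d}) = 0$ and we are done.

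To arrange this I would invoke Theorem \ref{thm:koly0}(2): for the basis $\kappa$ and every $d \in \cN_{m,n}$,
\[
R\cdot\delta(\kappa)_{d} = \mathrm{Fitt}_{R}^{0}\!\left(H^{1}_{\cF_{\mathrm{cl}}(d)}(G_{\bQ},T)^{\vee}\right),
\]
which is all of $R$ (hence all of $R\otimes_{\bZ}G_{d}$ under the identification above) as soon as $H^{1}_{\cF_{\mathrm{cl}}(d)}(G_{\bQ},T) = 0$. It remains to exhibit such a $d$. Since $\cF_{\mathrm{cl}} = \cF_{\mathrm{cl}}^{*}$, the core rank $\chi(\cF_{\mathrm{cl}}) = \dim_{\bF_{p}}(H^{1}_{\cF_{\mathrm{cl}}}(G_{\bQ},E[p])) - \dim_{\bF_{p}}(H^{1}_{\cF_{\mathrm{cl}}^{*}}(G_{\bQ},E[p]))$ equals $0$, so the remark following Theorem \ref{thm:koly0} (that is, \cite[Corollary~4.1.9]{MRkoly}) provides infinitely many $d \in \cN_{m,n}$ with $H^{1}_{\cF_{\mathrm{cl}}(d)}(G_{\bQ},E[p]) = 0$. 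For any such $d$, since $(\cF_{\mathrm{cl}}(d))^{*} = \cF_{\mathrm{cl}}^{*}(d) = \cF_{\mathrm{cl}}(d)$, Lemma \ref{lemma:mult} upgrades this to $H^{1}_{\cF_{\mathrm{cl}}(d)}(G_{\bQ},T) = 0$. Fixing such a $d$ concludes the proof.

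The whole argument is bookkeeping on top of the structural Theorem \ref{thm:koly0}; the only genuine input is the existence of an auxiliary modulus $d$ killing the classical Selmer group, which is exactly the cited Mazur--Rubin result for Selmer structures of nonnegative core rank, so I do not anticipate any real obstacle. (One could also bypass Theorem \ref{thm:koly0}(2): for $d$ as above, Theorem \ref{thm:koly0}(1) identifies ${\rm KS}_{0}(T,\cF_{\mathrm{cl}})$ with $H^{1}_{\cF_{\mathrm{cl}}^{\ell}(d)}(G_{\bQ},T)\otimes_{\bZ}G_{d}$ via $\kappa' \mapsto \kappa'_{d,\ell}$, and $v_{\ell}$ is injective on this module since its kernel is $H^{1}_{\cF_{\mathrm{cl}}(d)}(G_{\bQ},T)=0$; this variant additionally uses Lemma \ref{lem:free-ur}.)
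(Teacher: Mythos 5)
Your proposal is correct and follows essentially the same route as the paper: pick $d\in\cN_{m,n}$ with $H^{1}_{\cF_{\mathrm{cl}}(d)}(G_{\bQ},E[p])=0$ (guaranteed by the remark after Theorem \ref{thm:koly0} since $\chi(\cF_{\mathrm{cl}})=0$), deduce from Theorem \ref{thm:koly0}(2) that $\delta(\kappa)_{d}$ is a unit for a basis $\kappa$, and conclude by rank-one freeness of $\mathrm{KS}_{0}(T,\cF_{\mathrm{cl}})$. You simply spell out the details (the identification $R\otimes_{\bZ}G_{d}\cong R$ and the upgrade via Lemma \ref{lemma:mult}) that the paper leaves implicit.
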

\begin{proof}
Take an integer $d \in \cN_{m,n}$ with $H^{1}_{\cF_{\mathrm{cl}}(d)}(G_{\bQ}, E[p]) = 0$. Then by Theorem \ref{thm:koly0}, we have $\delta(\kappa)_{d} \in R^{\times}$. Since the $R$-module ${\rm KS}_{0}(T,\cF_{\mathrm{cl}})$ is free of rank $1$ by Theorem \ref{thm:koly0},  the map $\delta$ is injective. 
\end{proof}

\subsection{Map from Kolyvagin systems of rank $1$ to Kolyvagin systems of rank $0$}\label{sec:rank reduction}
Fix an isomorphism 
\[
H^{1}_{/f}(G_{\bQ_{p}}, T) \cong R. 
\]
Then we obtain a homomorphism $\varphi \colon H^{1}(G_{\bQ}, T) \longrightarrow H^{1}_{/f}(G_{\bQ_{p}}, T) \cong R$. 
We also denote by $\varphi \colon \mathrm{KS}_{1}(T, \cF_{\mathrm{can}}) \longrightarrow \prod_{d \in \cN_{m,n}}R \otimes_{\bZ} G_{d}$ 
the homomorphism induced by $\varphi$. 
In this subsection, we construct a natural map $\mathrm{KS}_{1}(T, \cF_{\mathrm{can}})  \longrightarrow \mathrm{KS}_{0}(T, \cF_{\mathrm{cl}})$ such that the diagram 
\begin{align}
\begin{split}\label{diag1}
\xymatrix{
\mathrm{KS}_{1}(T, \cF_{\mathrm{can}}) \ar[r] \ar[rd]^-{\varphi} & \mathrm{KS}_{0}(T, \cF_{\mathrm{cl}}) \ar[d]^-{\delta}
\\
& \prod_{d \in \cN_{m,n}}R \otimes_{\bZ}G_{d}
}
\end{split}
\end{align}
commutes. 
In order to construct this map, we introduce  the module of  Stark systems.


For any $R$-module $M$, we put 
\[
M^{*} := \Hom_{R}(M, R) \,\, \textrm{ and } \,\,  {\bigcap}^{r}_{R}M := \left({\bigwedge}^{r}_{R}M^{*}\right)^{*}
\] 
for any integer $r \geq 0$. Since the functor $M \mapsto M^{*}$ is exact, an $R$-homomorphism $\phi \colon M \longrightarrow F$, where $F$ is free of rank $1$, induces a natural homomorphism 
\[
\phi \colon {\bigcap}^{r+1}_{R}M \longrightarrow F \otimes_{R} {\bigcap}^{r}_{R}\ker(\phi). 
\]

\begin{definition}\label{def:stark}
Let $\cF$ be a Selmer structure on $T$. 
For any integers $d \in \cN_{m,n}$ and  $r \geq 0$, define 
\begin{align*} 
W_{d} &:=  \bigoplus_{\ell \mid d} H^{1}_{/{\rm ur}}(G_{\bQ_{\ell}}, T)^*, 
\\
X_{d}^{r}(T, \cF) &:=  {\bigcap}^{r + \nu(d)}_R H^{1}_{\cF^{d}}(G_{\bQ}, T) \otimes_{R} {\rm det}(W_{d}). 
\end{align*}
Then for any positive  divisor $e$ of $d$, the exact sequence 
\[
0 \longrightarrow H^{1}_{\cF^{e}}(G_{\bQ}, T) 
\longrightarrow H^{1}_{\cF^{d}}(G_{\bQ}, T) 
\longrightarrow \bigoplus_{\ell \mid \frac{d}{e}}H^{1}_{/{\rm ur}}(G_{\bQ_{\ell}}, T)
\]  
induces a natural homomorphism 
\[
\Phi_{d,e} \colon X_{d}^{r}(T, \cF) \longrightarrow X_{e}^{r}(T, \cF)
\]
(see \cite[Definition 2.3]{sakamoto}). 
If $f \mid e \mid d$, then we have $\Phi_{d,f} = \Phi_{e,f} \circ \Phi_{d,e}$ (see \cite[Proposition 2.4]{sakamoto}),  and we obtain the module of Stark systems of rank $r$ 
\[
{\rm SS}_{r}(T, \cF) := \varprojlim_{d \in \cN_{m,n}} X_{d}^{r}(T, \cF).  
\]
\end{definition}

Since we have the isomorphisms  
\[
H^{1}_{\rm ur}(G_{\bQ_{\ell}},T) \stackrel{\phi_{\ell}^{\mathrm{fs}}}{\longrightarrow}   H^1_{/{\rm ur}}(G_{\bQ_{\ell}}, T) \otimes_{\bZ} G_{\ell} 
\,\,\, \textrm{ and } \,\,\, 
H^{1}_{\rm ur}(G_{\bQ_{\ell}},T) \stackrel{\sim}{\longrightarrow}  H^{1}_{/{\rm tr}}(G_{\bQ_{\ell}}, T) 
\]
for any prime $\ell \mid d$, we see that the exact sequence 
\[
0 \longrightarrow H^{1}_{\cF_{\mathrm{can}}(d)}(G_{\bQ}, T) 
\longrightarrow H^{1}_{\cF_{\mathrm{can}}^{d}}(G_{\bQ}, T) 
\longrightarrow \bigoplus_{\ell \mid d}H^{1}_{/{\rm tr}}(G_{\bQ_{\ell}}, T)
\]  
induces a natural homomorphism 
\[
\Pi_{d} \colon X_{d}^{1}(T, \cF_{\mathrm{can}}) \longrightarrow {\bigcap}^{1}_{R}H^{1}_{\cF_{\mathrm{can}}(d)}(G_{\bQ}, T) \otimes_{\bZ} G_{d} = 
H^{1}_{\cF_{\mathrm{can}}(d)}(G_{\bQ}, T) \otimes_{\bZ} G_{d}, 
\]
and we obtain 
\[
\mathrm{Reg}_{1} \colon {\rm SS}_{1}(T, \cF_{\mathrm{can}}) \longrightarrow {\rm KS}_{1}(T, \cF_{\mathrm{can}}); \, (\epsilon_{d})_{d \in \cN_{m,n}} \mapsto ((-1)^{\nu(d)}\Pi_{d}(\epsilon_{d}))_{d \in \cN_{m,n}} 
\]  
(see \cite[Proposition 4.3]{sbA} or \cite[Proposition 12.3]{MRselmer}). 
The following important proposition is proved by Mazur and Rubin in \cite[Proposition 12.4]{MRselmer} when $p>3$ (see also {\cite[Theorem 5.2(i)]{bss} and \cite[Theorem 3.17]{sakamoto-bessatsu}). 
When $p=3$, this proposition is proved in Appendix \ref{sec:appendix}. 

\begin{proposition}\label{prop:reg-isom}
The map 
\[
\mathrm{Reg}_{1} \colon {\rm SS}_{1}(T, \cF_{\mathrm{can}}) \longrightarrow {\rm KS}_{1}(T, \cF_{\mathrm{can}})
\] 
is an isomorphism. 
\end{proposition}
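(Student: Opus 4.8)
The plan is to follow the classical strategy of Mazur--Rubin, reducing everything to a statement about perfect complexes and then to linear algebra over the Artinian ring $R$, while keeping careful track of the one place where the hypothesis (H.4) is used in \cite{MRselmer} (so that the argument survives when $p=3$; the actual replacement is deferred to Appendix \ref{sec:appendix}). First I would recall that for a cartesian Selmer structure the groups $H^{1}_{\cF_{\mathrm{can}}^{d}}(G_{\bQ},T)$ fit, by global duality (Theorem \ref{pt}) together with the local computations of \cite[\S1.2]{MRkoly}, into an exact sequence realizing them as cohomology of a perfect complex of $R$-modules of amplitude $[1,2]$; by Proposition \ref{prop:sel-car-can} and \cite[Corollary 3.18]{sakamoto} this applies to $\cF_{\mathrm{can}}^{d}$ for every $d\in\cN_{m,n}$. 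The point of introducing the exterior-bidual operation ${\bigcap}^{r}_{R}$ is precisely that it behaves well on such complexes: for each $d$ the module $X_{d}^{1}(T,\cF_{\mathrm{can}})$ is canonically the top exterior bidual of the degree-one cohomology twisted by $\det W_{d}$, and the transition maps $\Phi_{d,e}$ are the ones induced functorially by the localization exact sequences.

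The main steps, in order, are: (i) show $\mathrm{Reg}_{1}$ is well defined, i.e.\ that $((-1)^{\nu(d)}\Pi_{d}(\epsilon_{d}))_{d}$ does satisfy the finite--singular relation $v_{\ell}(\kappa_{d})=\varphi_{\ell}^{\mathrm{fs}}(\kappa_{d/\ell})$ whenever $(\epsilon_{d})_{d}$ lies in the inverse limit — this is a diagram chase comparing $\Phi_{d,d/\ell}$ with $\Pi_{d}$ and $\Pi_{d/\ell}$, using the two isomorphisms $H^{1}_{\mathrm{ur}}(G_{\bQ_{\ell}},T)\xrightarrow{\phi_{\ell}^{\mathrm{fs}}}H^{1}_{/\mathrm{ur}}(G_{\bQ_{\ell}},T)\otimes G_{\ell}$ and $H^{1}_{\mathrm{ur}}(G_{\bQ_{\ell}},T)\xrightarrow{\sim}H^{1}_{/\mathrm{tr}}(G_{\bQ_{\ell}},T)$ recorded just above the statement; (ii) reduce the bijectivity of $\mathrm{Reg}_{1}$ to the case of the residual representation $E[p]$ over the field $\bF_{p}$, via Nakayama/length counting — both ${\rm SS}_{1}$ and ${\rm KS}_{1}$ are built from the perfect complex by operations that commute with $\otimes^{\bL}_{R}\bF_{p}$, so surjectivity mod $\fm_{R}$ plus an equality of lengths gives the isomorphism over $R$; (iii) at the residual level, choose by \cite[Corollary 4.1.9]{MRkoly} a $d\in\cN_{m,n}$ with $H^{1}_{(\cF_{\mathrm{can}}^{*})(d)}(G_{\bQ},E[p])=0$, so that $H^{1}_{\cF_{\mathrm{can}}^{d}}(G_{\bQ},E[p])$ has the expected dimension $\nu(d)+1$ by \cite[Proposition 6.2.2]{MRkoly} and $\chi$-bookkeeping (cf.\ Lemma \ref{lem:free-can}), and check directly that both $X_{d}^{1}$ and the image $\Pi_{d}$ land it in are one-dimensional, so the stabilized pieces of the two modules are abstractly isomorphic; (iv) conclude that the inverse systems $(X_{d}^{1})$ and $(H^{1}_{\cF_{\mathrm{can}}(d)}(G_{\bQ},T)\otimes G_{d})$ are "eventually isomorphic" compatibly with the structure maps, whence $\mathrm{Reg}_{1}$ is an isomorphism.

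The hard part will be step (ii)--(iii) in the $p=3$ case: the Mazur--Rubin proof of \cite[Proposition 12.4]{MRselmer} invokes (H.4) — equivalently, that $\bF_{p}[\Gal(\bQ(E[p])/\bQ)]$-module $E[p]$ has no exceptional small quotients and that certain Chebotarev arguments produce primes realizing prescribed local conditions — at exactly the point where one needs enough primes $\ell\in\cP_{m,n}$ to "see" the whole Selmer group and to make the transition maps surjective. Since our $T$ fails (H.4) when $p=3$, I cannot quote \cite[Proposition 12.4]{MRselmer} verbatim; instead I would isolate the combinatorial/Chebotarev input as a separate lemma and reprove it under the standing hypotheses (a)--(c) (which force $\Gal(\overline{\bQ}/\bQ)\twoheadrightarrow\mathrm{GL}_{2}(\bF_{p})$ and give divisibility control on Tamagawa factors and on $\#E(\bF_{p})$). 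That is precisely what is done in Appendix \ref{sec:appendix}; granting that replacement lemma, the rest of the argument above goes through unchanged, and the commutativity of \eqref{diag1} — needed downstream — follows formally from the compatibility of $\Pi_{d}$ with the localization map at $p$ that defines $\varphi$ and $\delta$.
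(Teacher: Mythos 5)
Your overall framework (pass through Stark systems, evaluate at a ``core'' vertex $d$ with $H^{1}_{\cF_{\mathrm{can}}^{*}(d)}(G_{\bQ}, E[p])=0$, and use that $\Pi_{d}$ is an isomorphism there) is the right one, and you correctly flag that the only place (H.4) enters is a Chebotarev-type statement that must be re-proved for $p=3$; that is indeed what Appendix \ref{sec:appendix} supplies (for $p>3$ the paper simply cites Mazur--Rubin). But steps (ii)--(iv) contain a genuine gap in how surjectivity of $\mathrm{Reg}_{1}$ is obtained. First, $\mathrm{KS}_{1}(T,\cF_{\mathrm{can}})$ is not an inverse limit: the finite--singular relations are compatibility conditions inside a product, not transition maps, so the claim that the systems $(X^{1}_{d})$ and $(H^{1}_{\cF_{\mathrm{can}}(d)}(G_{\bQ},T)\otimes G_{d})$ are ``eventually isomorphic'' does not produce an isomorphism of $\mathrm{SS}_{1}$ with $\mathrm{KS}_{1}$. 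Second, the Nakayama/length-counting reduction in (ii) is not available: what one can prove is $\mathrm{KS}_{1}(\overline{T})\cong\mathrm{KS}_{1}(T)[\fm_{R}]$ (a socle statement, not a cotangent one), and the $R$-length of $\mathrm{KS}_{1}(T)$ is exactly what is unknown a priori --- computing it is essentially equivalent to the proposition you are trying to prove.

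The missing ingredient is the rigidity of Kolyvagin systems: for a core vertex $d$ the projection $\mathrm{KS}_{1}(T,\cF_{\mathrm{can}})\longrightarrow H^{1}_{\cF_{\mathrm{can}}(d)}(G_{\bQ},T)\otimes_{\bZ} G_{d}$ is \emph{injective} (Proposition \ref{prop:inj,p=3}; this is where the connectedness of the graph $\cX^{0}$, Theorem \ref{thm:connected}, is used, and that connectedness is precisely what the Chebotarev replacement Lemma \ref{lem:chev} and Corollary \ref{cor:chev} feed into when $p=3$). Once you have this, the argument closes immediately: the composite $\mathrm{SS}_{1}\to X^{1}_{d}\to H^{1}_{\cF_{\mathrm{can}}(d)}(G_{\bQ},T)\otimes_{\bZ} G_{d}$ is an isomorphism (\cite[Theorem 4.7]{sakamoto} together with the splitting of the localization sequence at the core vertex), it factors through the injective projection from $\mathrm{KS}_{1}$ by the commutative diagram \eqref{diag:reg}, hence $\mathrm{Reg}_{1}$ is injective and its image exhausts $\mathrm{KS}_{1}$. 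Without the rigidity statement, knowing that $X^{1}_{d}\cong H^{1}_{\cF_{\mathrm{can}}(d)}(G_{\bQ},T)\otimes_{\bZ} G_{d}$ at one (or every) core vertex says nothing about whether an arbitrary Kolyvagin system lies in the image of $\mathrm{Reg}_{1}$.
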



For any integer $d \in \cN_{m,n}$, the exact sequence 
\[
0 \longrightarrow H^{1}_{\cF_{\mathrm{cl}}(d)}(G_{\bQ}, T) 
\longrightarrow H^{1}_{\cF_{\mathrm{cl}}^{d}}(G_{\bQ}, T) 
\longrightarrow \bigoplus_{\ell \mid d}H^{1}_{/{\rm tr}}(G_{\bQ_{\ell}}, T)
\]  
induces a natural homomorphism 
\[
\Pi_{d}' \colon X_{d}^{0}(T, \cF_{\mathrm{cl}}) \longrightarrow {\bigcap}^{0}_{R}H^{1}_{\cF_{\mathrm{can}}(d)}(G_{\bQ}, T) \otimes_{\bZ} G_{d} = 
R \otimes_{\bZ} G_{d}.
\]
Hence we obtain a homomorphism  
\[
\psi \colon \mathrm{SS}_{0}(T, \cF_{\mathrm{cl}}) \longrightarrow \prod_{d \in \cN_{m,n}}R \otimes_{\bZ} G_{d}; (\epsilon_{d})_{d\in\cN_{m,n}} \mapsto (\Pi_{d}'(\epsilon_{d}))_{d\in\cN_{m,n}}. 
\]
In \cite[\S 5.2]{sakamoto-koly0}, we construct the canonical homomorphism 
\[
\mathrm{Reg}_{0} \colon \mathrm{SS}_{0}(T, \cF_{\mathrm{cl}} ) \longrightarrow \mathrm{KS}_{0}(T, \cF_{\mathrm{cl}})
\]
such that the diagram 
\begin{align}
\begin{split}\label{diag2}
\xymatrix{
 \mathrm{SS}_{0}(T, \cF_{\mathrm{cl}} ) \ar[r]^-{\mathrm{Reg}_{0}} \ar[rd]^-{\psi}  & \mathrm{KS}_{0}(T, \cF_{\mathrm{cl}}) \ar[d]^-{\delta}
\\
& \prod_{d \in \cN_{m,n}}R \otimes_{\bZ} G_{d}
}
\end{split}
\end{align}
commutes. 

For any integer $d \in \cN_{m,n}$, we have an exact sequece
\[
0 \longrightarrow H^{1}_{\cF^{d}_{\mathrm{cl}}}(G_{\bQ}, T) \longrightarrow H^{1}_{\cF^{d}_{\mathrm{can}}}(G_{\bQ}, T) 
\stackrel{\varphi}{\longrightarrow} R. 
\]
This exact sequence induces a homomorphism $X_{d}^{1}(T, \cF_{\mathrm{can}}) \longrightarrow X_{d}^{0}(T, \cF_{\mathrm{cl}})$, and we obtain a homomorphism $\mathrm{SS}_{1}(T, \cF_{\mathrm{can}} ) \longrightarrow \mathrm{SS}_{0}(T, \cF_{\mathrm{cl}} )$. 
By construction, the diagram 
\begin{align}
\begin{split}\label{diag3}
\xymatrix{
 \mathrm{SS}_{1}(T, \cF_{\mathrm{can}} ) \ar[r] \ar[d]^-{\mathrm{Reg}_{1}}  &   \mathrm{SS}_{0}(T, \cF_{\mathrm{cl}} ) \ar[d]^-{\psi}
 \\
 \mathrm{KS}_{1}(T, \cF_{\mathrm{can}}) \ar[r]^-{\varphi} 
& \prod_{d \in \cN_{m,n}}R \otimes_{\bZ} G_{d}
}
\end{split}
\end{align}
commutes. Since $\mathrm{Reg}_{1}$ is an isomorphism, by using the commutative diagrams \eqref{diag2} and \eqref{diag3}, we obtain  the homomorphism $\mathrm{KS}_{1}(T, \cF_{\mathrm{can}})  \longrightarrow \mathrm{KS}_{0}(T, \cF_{\mathrm{cl}})$ such that the diagram \eqref{diag1} commutes.

\section{Construction of the Kolyvagin system of rank $0$ from modular symbols}\label{sec:const}

Let $p \geq 3$ be a prime satisfying the hypotheses (a), (b), and (c). 
For any finite abelian extension $K/\bQ$, we put 
\[
R_{K} := \bZ_{p}[\Gal(K/\bQ)] \,\,\, \textrm{ and } \,\,\, T_{K} :=  \mathrm{Ind}_{G_{K}}^{G_{\bQ}}(T_{p}(E)). 
\]



\subsection{Modular sysmbols}\label{sec:modular}

We recall the definition of the Mazur--Tate elements. 
For any integer $d \geq 1$, we define  the modular element $\widetilde{\theta}_{\bQ(\mu_{d})}$ by 
\[
\widetilde{\theta}_{\bQ(\mu_{d})} := \sum^{d}_{\substack{a=1 \\ (a,d)=1}}\frac{\mathrm{Re}([a/d])}{\Omega_{E}^{+}}\sigma_{a} \in \bQ[\Gal(\bQ(\mu_{d})/\bQ)].  
\]
Here $\sigma_{a} \in \Gal(\bQ(\mu_{d})/\bQ)$ is the element satisfying $\sigma_{a}(\zeta) = \zeta^{a}$ for any $\zeta \in \mu_{d}$. 
For any integer $e \mid d$, we put 
\[
\nu_{d,e} \colon R_{\bQ(\mu_{e})} \longrightarrow R_{\bQ(\mu_{d})}; \, x \mapsto \sum_{\sigma \in \Gal(\bQ(\mu_{d})/\bQ(\mu_{e}))}\sigma x. 
\]
Define $\cP := \{\ell \neq p\mid \textrm{$E$ has good reduction at $\ell$\,}\}$ and $\cN$ denotes the set of square-free products in $\cP$. 
Since $G_{\bQ} \longrightarrow  \mathrm{GL}(E[p])$ is surjective, for any integers $d \in \cN$ and $n \geq 1$, we have 
\[
\widetilde{\theta}_{\bQ(\mu_{dp^{n}})} \in R_{\bQ(\mu_{dp^{n}})} 
\]
(see \cite{SG1989}). 
Let $\alpha \in \bZ_{p}^{\times}$ be the unit root of $x^{2}-a_{p}x+p=0$. 
We set 
\[
\vartheta_{\bQ(\mu_{dp^{n}})} := \alpha^{-n}(\widetilde{\theta}_{\bQ(\mu_{dp^{n}})}-\alpha^{-1}\nu_{dp^{n}, dp^{n-1}}(\widetilde{\theta}_{\bQ(\mu_{dp^{n-1}})})) \in R_{\bQ(\mu_{dp^{n}})}. 
\]
Then the set $\{\vartheta_{\bQ(\mu_{dp^{n}})}\}_{n \geq 1}$ is a projective system and we get an element 
\[
\vartheta_{\bQ(\mu_{dp^{\infty}})} := \varprojlim_{n}\vartheta_{\bQ(\mu_{dp^{n}})} \in \varprojlim_{n}R_{\bQ(\mu_{dp^{n}})} 
=: \Lambda_{\bQ(\mu_{dp^{\infty}})}. 
\]

\begin{remark}\label{rem:bottom}
Note that for any positive integer $d \nmid p$, we have 
\[
\vartheta_{\bQ(\mu_{d})} = \left(1 - \alpha^{-1}\sigma_{p}\right)\left(1 - \alpha^{-1}\sigma^{-1}_{p}\right) \widetilde{\theta}_{\bQ(\mu_{d})}. 
\]
The assumption (c) shows that  $\alpha \not\equiv 1 \pmod{p}$, and   $\left(1 - \alpha^{-1}\sigma_{p}\right)\left(1 - \alpha^{-1}\sigma^{-1}_{p}\right)$ is a unit in $R_{\bQ(\mu_{d})}$. 
\end{remark}

For any prime $\ell$ with $\ell \nmid d$, let $\pi_{\ell d,d} \colon \Lambda_{\bQ(\mu_{\ell dp^{\infty}})} \longrightarrow \Lambda_{\bQ(\mu_{ dp^{\infty}})}$ denote the natural projection map, and we have 
\[
\pi_{\ell d, d}(\vartheta_{\bQ(\mu_{\ell dp^{\infty}})}) = (a_{\ell}-\sigma_{\ell}-\sigma_{\ell}^{-1})\vartheta_{\bQ(\mu_{dp^{\infty}})}. 
\] 
Here $a_{\ell} := \ell + 1 - \#E(\bF_{\ell})$. 
Following Kurihara in \cite[page 324]{Kur14b}, for any positive divisor $e$ of $d$, we put 
\begin{align*}
\alpha_{d,e} &:= \left(\prod_{\ell \mid \frac{d}{e}}(-\sigma_{\ell}^{-1})\right)\vartheta_{\bQ(\mu_{ep^{\infty}})} \in \Lambda_{\bQ(\mu_{ep^{\infty}})}, 
\\
\xi_{\bQ(\mu_{dp^{\infty}})} &:= \sum_{e \mid d}\nu_{d,e}(\alpha_{d,e}) \in \Lambda_{\bQ(\mu_{dp^{\infty}})}. 
\end{align*}
Here $e$ runs over the set of positive divisors of $d$. We also put 
\begin{align*}
\widetilde{\xi}_{\bQ(\mu_{dp^{\infty}})} := \left(\prod_{\ell \mid d}(-\ell \sigma_{\ell})^{-1}\right)\xi_{\bQ(\mu_{dp^{\infty}})}. 
\end{align*}

\begin{definition}
For any prime $\ell \in \cP$, we define the Frobenius polynomial at $\ell$ by  
\[
P_{\ell}(t) := \det(1- t \sigma^{-1}_{\ell}\mid T) = t^{2} - \ell^{-1}a_{\ell}t + \ell^{-1}. 
\]
\end{definition}

\begin{proposition}\label{prop:rel}
For any integer $d \in \cN$ and any prime $\ell \in \cP$ with $\ell \nmid d$,  we have 
\[
\pi_{d\ell, d}(\widetilde{\xi}_{\bQ(\mu_{\ell dp^{\infty}})}) = P_{\ell}(\sigma^{-1}_{\ell})\widetilde{\xi}_{\bQ(\mu_{dp^{\infty}})}. 
\]
\end{proposition}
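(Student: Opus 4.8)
\textbf{Proof proposal for Proposition \ref{prop:rel}.}

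The plan is to unwind both sides of the claimed identity purely in terms of the elements $\vartheta_{\bQ(\mu_{ep^{\infty}})}$ and the projection/norm maps, reducing everything to the already-known distribution relation $\pi_{\ell d, d}(\vartheta_{\bQ(\mu_{\ell dp^{\infty}})}) = (a_{\ell}-\sigma_{\ell}-\sigma_{\ell}^{-1})\vartheta_{\bQ(\mu_{dp^{\infty}})}$ together with the elementary behaviour of $\nu_{\bullet,\bullet}$ under composition. First I would write out $\widetilde{\xi}_{\bQ(\mu_{\ell dp^{\infty}})}$ by expanding $\xi_{\bQ(\mu_{\ell dp^{\infty}})} = \sum_{e \mid \ell d}\nu_{\ell d, e}(\alpha_{\ell d, e})$ and splitting the sum over divisors $e$ of $\ell d$ into those with $\ell \nmid e$ (so $e \mid d$) and those with $\ell \mid e$ (so $e = \ell e'$ with $e' \mid d$). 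This partition is the combinatorial heart of the computation: it exhibits $\xi_{\bQ(\mu_{\ell d p^\infty})}$ as a sum of a ``$\ell$-old'' part built from $\vartheta_{\bQ(\mu_{e p^\infty})}$ with $e \mid d$ and an ``$\ell$-new'' part built from $\vartheta_{\bQ(\mu_{\ell e' p^\infty})}$ with $e' \mid d$.

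Next I would apply $\pi_{\ell d, d}$ term by term. On the $\ell$-old part, the projection $\pi_{\ell d, d}$ restricted to the image of $\nu_{\ell d, e}$ for $e \mid d$ contributes a factor coming from summing over $\Gal(\bQ(\mu_{\ell d p^\infty})/\bQ(\mu_{d p^\infty}))$, which is cyclic of order $\ell - 1$ prime to $p$; tracking the $(-\ell\sigma_\ell)^{-1}$-type normalizing factors in $\widetilde\xi$ versus $\xi$ and the $(-\sigma_\ell^{-1})$ factor hidden in $\alpha_{\ell d, e}$ versus $\alpha_{d,e}$, one sees this part reproduces $\widetilde\xi_{\bQ(\mu_{dp^\infty})}$ up to an explicit polynomial in $\sigma_\ell^{-1}$. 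On the $\ell$-new part, I would use $\pi_{\ell d, d}(\vartheta_{\bQ(\mu_{\ell e' p^\infty})}) = (a_\ell - \sigma_\ell - \sigma_\ell^{-1})\vartheta_{\bQ(\mu_{e' p^\infty})}$ (via the compatible projections $\pi_{\ell e', e'}$ and functoriality of $\nu$), again collecting the normalizing factors. The upshot should be that $\pi_{\ell d, d}(\widetilde\xi_{\bQ(\mu_{\ell d p^\infty})})$ equals $\widetilde\xi_{\bQ(\mu_{d p^\infty})}$ multiplied by a scalar in $R_{\bQ(\mu_{dp^\infty})}$ that, after substituting $a_\ell = \ell \cdot \ell^{-1} a_\ell$ and simplifying, is precisely $\ell(t^2 - \ell^{-1}a_\ell t + \ell^{-1})|_{t = \sigma_\ell^{-1}}$ divided by the $(-\ell\sigma_\ell)^{-1}$ correction — i.e. $P_\ell(\sigma_\ell^{-1})$. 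I would double-check the bookkeeping against Kurihara's original relation in \cite[Proposition 3.1 or the corresponding statement]{Kur14b}, since the $\widetilde\xi$ normalization was introduced precisely so that the messy $\ell$ and $\sigma_\ell$ factors conspire into the clean Frobenius polynomial $P_\ell(\sigma_\ell^{-1})$.

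The main obstacle I anticipate is purely bookkeeping: correctly matching the three sources of $\ell$- and $\sigma_\ell$-dependent factors — the normalization $\prod_{\ell \mid d}(-\ell\sigma_\ell)^{-1}$ defining $\widetilde\xi$, the factors $\prod_{\ell \mid \frac{d}{e}}(-\sigma_\ell^{-1})$ inside $\alpha_{d,e}$, and the Euler factor $(a_\ell - \sigma_\ell - \sigma_\ell^{-1})$ produced by the projection of $\vartheta$ — and verifying they combine to exactly $P_\ell(\sigma_\ell^{-1}) = \sigma_\ell^{-2} - \ell^{-1}a_\ell \sigma_\ell^{-1} + \ell^{-1}$ with no leftover units. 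A clean way to organize this is to first prove the identity ``one prime at a time'' (i.e. for $d = 1$ and a single new prime $\ell$, directly from the definitions and Remark \ref{rem:bottom}) and then induct, since the recursive structure of $\xi_{\bQ(\mu_{dp^\infty})}$ as a sum over divisors makes the inductive step formally identical to the base case. Aside from this combinatorial care, no deep input beyond the stated distribution relation for $\vartheta$ is needed.
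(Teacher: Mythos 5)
Your proposal is correct and is in substance the paper's argument: the paper simply cites Kurihara's identity $\pi_{d\ell,d}(\xi_{\bQ(\mu_{\ell dp^{\infty}})}) = (-\sigma_{\ell}+a_{\ell}-\ell\sigma_{\ell}^{-1})\xi_{\bQ(\mu_{dp^{\infty}})}$ (which is exactly the divisor-splitting computation you outline, the old part contributing $-(\ell-1)\sigma_{\ell}^{-1}$ and the new part $a_{\ell}-\sigma_{\ell}-\sigma_{\ell}^{-1}$), rewrites the factor as $(-\ell\sigma_{\ell})P_{\ell}(\sigma_{\ell}^{-1})$, and lets the extra $(-\ell\sigma_{\ell})^{-1}$ in the normalization of $\widetilde{\xi}$ absorb the unit. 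You re-derive the cited identity rather than quoting it, but the mathematics is the same.
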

\begin{proof}
Kurihara showed in \cite[page 325, (7)]{Kur14b} that 
\begin{align*}
\pi_{d\ell, d}(\xi_{\bQ(\mu_{\ell dp^{\infty}})}) 
&= (-\sigma_{\ell}+a_{\ell}-\ell \sigma_{\ell}^{-1})\xi_{\bQ(\mu_{dp^{\infty}})}
\\
&= (-\ell \sigma_{\ell}) P_{\ell}(\sigma^{-1}_{\ell})\xi_{\bQ(\mu_{dp^{\infty}})}, 
\end{align*}
which implies $\pi_{d\ell, d}(\widetilde{\xi}_{\bQ(\mu_{\ell dp^{\infty}})}) = P_{\ell}(\sigma^{-1}_{\ell})\widetilde{\xi}_{\bQ(\mu_{dp^{\infty}})}$. 
\end{proof}


\subsection{Coleman maps}

Let $K/\bQ$ be a $p$-abelian extension at which $p$ is unramified, and we denote by $K_{\infty}/K$ the cyclotomic $\bZ_{p}$-extension. 
Put 
\[
\Lambda_{K_{\infty}} := \bZ_{p}[[\Gal(K_{\infty}/\bQ)]] \,\,\, \textrm{ and } \,\,\, \bT_{K_{\infty}} := \varprojlim_{n} T_{K_{n}}, 
\]
where $K_{n}$ denotes the $n$-th layer of the cyclotomic $\bZ_{p}$-extension $K_{\infty}/K$. 
We note that the $\Lambda_{K_{\infty}}$-module $H^{1}_{/f}(G_{\bQ}, \bT_{K_{\infty}})$ is free of rank $1$ by Corollary \ref{cor:local-str}. 

The following theorem follows from the works of Perrin-Riou in \cite{PR94} and Kato in \cite{Kato04} 

\begin{theorem}[{\cite[Theorem 16.4, Theorem 16.6, and Proposition 17.11]{Kato04}}]\label{thm:zeta}
There exists an isomorphism  
\[
\fL_{K_{\infty}} \colon H^{1}_{/f}(G_{\bQ}, \bT_{K_{\infty}}) \otimes_{\bZ_{p}} \bQ_{p} \stackrel{\sim}{\longrightarrow} \Lambda_{K_{\infty}} \otimes_{\bZ_{p}} \bQ_{p}  
\]
such that 
\begin{itemize}
\item[(i)] tha diagram 
\[
\xymatrix{
 H^{1}_{/f}(G_{\bQ}, \bT_{K_{\infty}}) \otimes_{\bZ_{p}} \bQ_{p} \ar[d] \ar[rr]^-{\fL_{K_{\infty}}} & & \Lambda_{K_{\infty}} \otimes_{\bZ_{p}} \bQ_{p}  \ar[d]
\\
 H^{1}_{/f}(G_{\bQ}, \bT_{L_{\infty}}) \otimes_{\bZ_{p}} \bQ_{p} \ar[rr]^-{\fL_{L_{\infty}} } & & \Lambda_{L_{\infty}} \otimes_{\bZ_{p}} \bQ_{p}  
}
\]
commutes for any field $L \subset K$, where the vertical maps are the natural projections, 
\item[(ii)] $\fL_{\bQ_{\infty}}( H^{1}_{/f}(G_{\bQ}, \bT_{\bQ_{\infty}})) =  \Lambda_{\bQ_{\infty}}$, 
\item[(iii)] there is an element $z_{K_{\infty}} \in H^{1}(G_{\bQ}, \bT_{K_{\infty}})$ such that $\fL_{K_{\infty}}(\mathrm{loc}_{p}^{/f}(z_{K_{\infty}})) = \widetilde{\xi}_{K_{\infty}}$, where  $\mathrm{loc}_{p}^{/f} \colon H^{1}(G_{\bQ}, \bT_{K_{\infty}}) \longrightarrow H^{1}_{/f}(G_{\bQ_{p}}, \bT_{K_{\infty}})$ denotes the localization  homomorphism. 
\end{itemize}
\end{theorem}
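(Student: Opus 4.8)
The plan is to obtain this statement by transcribing the theory of Perrin-Riou \cite{PR94} and Kato \cite{Kato04} into the notation in force here; the real work is the matching of normalisations, not any new construction. Throughout I understand $H^{1}_{/f}(G_{\bQ},-)$ in the displayed isomorphism to mean the local object $H^{1}_{/f}(G_{\bQ_{p}},-)$ attached to the formal group as in Corollary \ref{cor:local-str} (the ``$G_{\bQ}$'' there is a slip for ``$G_{\bQ_{p}}$'', as item (iii) itself makes clear), so that $H^{1}_{/f}(G_{\bQ_{p}},\bT_{K_{\infty}})$ is free of rank $1$ over $\Lambda_{K_{\infty}}$.

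First I would build $\fL_{K_{\infty}}$. Since $p$ is good ordinary, Perrin-Riou's big logarithm over the cyclotomic $\bZ_{p}$-extension $\bQ_{\infty}/\bQ$ produces an isomorphism $\fL_{\bQ_{\infty}}\colon H^{1}_{/f}(G_{\bQ_{p}},\bT_{\bQ_{\infty}})\otimes_{\bZ_{p}}\bQ_{p}\xrightarrow{\ \sim\ }\Lambda_{\bQ_{\infty}}\otimes_{\bZ_{p}}\bQ_{p}$, and Kato's explicit description of the Coleman map (\cite[Theorem~16.4, Theorem~16.6]{Kato04}) shows that it already restricts to an isomorphism of the integral lattices; this gives (ii), using that the hypotheses (a)--(c) force $H^{1}_{/f}(G_{\bQ_{p}},\bT_{\bQ_{\infty}})$ to be $\Lambda_{\bQ_{\infty}}$-free of rank one with no denominators intervening. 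For a general $p$-abelian $K/\bQ$ unramified at $p$, Shapiro's lemma identifies $H^{1}(G_{\bQ_{p}},\bT_{K_{\infty}})$ with $\bigoplus_{v\mid p}H^{1}(K_{\infty,v},T_{p}(E))$, and each $K_{\infty,v}/\bQ_{p}$ is the cyclotomic $\bZ_{p}$-extension of the \emph{unramified} local field $K_{v}$; running Perrin-Riou's construction over each $K_{v}$ --- equivalently, twisting $\fL_{\bQ_{\infty}}$ through the characters of $\Gal(K/\bQ)$ --- and assembling the pieces defines $\fL_{K_{\infty}}$, with the compatibility (i) for $L\subset K$ being the built-in norm-compatibility of these maps.

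Next I would produce the zeta element and verify (iii). Kato's construction from Beilinson--Kato elements (Siegel units) provides a global class $z_{K_{\infty}}\in H^{1}(G_{\bQ},\bT_{K_{\infty}})$ whose Euler-system distribution relation, under the projection $\pi_{d\ell,d}$, is exactly the relation for $\widetilde{\xi}$ recorded in Proposition \ref{prop:rel}. Kato's explicit reciprocity law (\cite[Theorem~16.6, Proposition~17.11]{Kato04}) then identifies $\fL_{K_{\infty}}(\mathrm{loc}_{p}^{/f}(z_{K_{\infty}}))$ with the $p$-adic $L$-function of $E$ over $K_{\infty}$, i.e.\ with the projective limit of the modular elements $\widetilde{\theta}_{\bQ(\mu_{dp^{n}})}$ carrying the standard $p$-adic interpolation factors; comparing this with the definitions of $\vartheta$, $\xi$, and $\widetilde{\xi}$ in \S\ref{sec:modular} --- in particular invoking Remark \ref{rem:bottom} for the $p$-stabilisation and absorbing the tame normalisation $\prod_{\ell\mid d}(-\ell\sigma_{\ell})^{-1}$, and pushing forward from $\Lambda_{\bQ(\mu_{dp^{\infty}})}$ to $\Lambda_{K_{\infty}}$ when $K\subset\bQ(\mu_{dp^{\infty}})$ --- yields $\fL_{K_{\infty}}(\mathrm{loc}_{p}^{/f}(z_{K_{\infty}}))=\widetilde{\xi}_{K_{\infty}}$, which is (iii).

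I expect the main obstacle to be the normalisation bookkeeping rather than any single deep ingredient. Concretely I would need to (1) match Kato's period with the N\'eron period $\Omega_{E}^{+}$ up to a $p$-adic unit --- here the surjectivity of $G_{\bQ}\to\mathrm{GL}(E[p])$ in (b) rigidifies the Galois-stable lattice and (c) excludes a spurious factor of $p$ in the comparison (Manin) constant; (2) check that the Euler factors at $p$ (the $\alpha^{-n}$-twist and the factor $(1-\alpha^{-1}\sigma_{p})(1-\alpha^{-1}\sigma_{p}^{-1})$ of Remark \ref{rem:bottom}) and at the tame primes $\ell\mid d$ agree on the two sides; and (3) keep the Shapiro and descent identifications mutually compatible so that (i) holds on the nose. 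It is also relevant that (ii) is claimed only over $\bQ_{\infty}$: over a ramified $K_{\infty}$ the Coleman map need not be integrally surjective, which is why only the assertion after $\otimes_{\bZ_{p}}\bQ_{p}$ is available for general $K$.
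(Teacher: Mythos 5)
The paper offers no proof of this theorem beyond the citation to Perrin-Riou and to Kato's Theorem 16.4, Theorem 16.6 and Proposition 17.11 (plus the remark attributing the integrality of $z_{K_{\infty}}$ under hypothesis (b) to Kataoka), and your proposal is precisely a correct unpacking of that citation: the Coleman/Perrin-Riou map over the unramified base fields $K_v$ with integrality over $\bQ_{\infty}$ coming from $a_p \not\equiv 1 \pmod p$, Kato's zeta elements and explicit reciprocity law for (iii), and the normalisation bookkeeping against $\widetilde{\xi}$. This matches the paper's intended argument, including your (correct) reading of $H^{1}_{/f}(G_{\bQ},-)$ as the local object $H^{1}_{/f}(G_{\bQ_p},-)$.
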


\begin{remark}
Note that the integrality of the element $z_{K_{\infty}}$ follows from the assumption (b) (see \cite[Theorem 6.1]{Kataoka}).  
\end{remark}


%


\subsection{Euler systems}
In this subsection, we recall the definition of Euler systems. 

\begin{definition}\ 
\begin{itemize}
\item[(1)] Let $\Omega$ denote the set of fields $K$ in $\overline{\bQ}$ such that $K/\bQ$ is a finite abelian $p$-extension and 
$S_{\mathrm{ram}}(K/\bQ) \subset \cP$. 
Here $S_{\mathrm{ram}}(K/\bQ)$ is the set of primes at which $K/\bQ$ is ramified. 
\item[(2)] We say that $(c_{K})_{K \in \Omega} \in \prod_{K \in \Omega}H^{1}(G_{\bQ}, \bT_{K_{\infty}})$ is an Euler system of rank $1$ if, for any fields $K_{1} \subset K_{2}$ in $\Omega$, we have 
\[
\mathrm{Cor}_{K_{2}/K_{1}}(c_{K_{2}}) = \left(\prod_{\ell \in S_{\mathrm{ram}}(K_{2}/\bQ) \setminus S_{\mathrm{ram}}(K_{1}/\bQ)}P_{\ell}(\sigma_{\ell}^{-1}) \right) c_{K_{1}}. 
\] 
Here $\mathrm{Cor}_{K_{2}/K_{1}} \colon H^{1}(G_{\bQ}, \bT_{K_{2, \infty}}) \longrightarrow H^{1}(G_{\bQ}, \bT_{K_{1, \infty}})$ denotes the homomorphism  induced by $\bT_{K_{2, \infty}} \longrightarrow \bT_{K_{1, \infty}}$. 
Let $\mathrm{ES}_{1}(T)$ denote the set of Euler systems of rank $1$. 
\item[(3)] We say that $(c_{K})_{K \in \Omega} \in \prod_{K \in \Omega}\Lambda_{K_{\infty}}$ is an Euler system of rank $0$ if, for any fields $K_{1} \subset K_{2}$ in $\Omega$, we have 
\[
\pi_{K_{2}, K_{1}}(c_{K_{2}}) = \left(\prod_{\ell \in S_{\mathrm{ram}}(K_{2}/\bQ) \setminus S_{\mathrm{ram}}(K_{1}/\bQ)}P_{\ell}(\sigma_{\ell}^{-1}) \right) c_{K_{1}}. 
\] 
Here $\pi_{K_{2}, K_{1}} \colon \Lambda_{K_{2, \infty}} \longrightarrow \Lambda_{K_{1, \infty}}$ denotes the canonical projection map. 
Let $\mathrm{ES}_{0}(T)$ denote the set of Euler systems of rank $0$. 
\end{itemize}
\end{definition}

For any abelian field $K$ of conductor $d$, 
we denote by $\widetilde{\xi}_{K_{\infty}}$ the image of $\widetilde{\xi}_{\bQ(\mu_{dp^{\infty}})}$ in $\Lambda_{K_{\infty}}$. 
Then Proposition \ref{prop:rel} implies the following proposition. 
\begin{proposition}\label{prop:euler0}
We have $(\widetilde{\xi}_{K_{\infty}})_{K \in \Omega} \in \mathrm{ES}_{0}(T)$. 
\end{proposition}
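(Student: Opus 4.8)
The plan is to verify the Euler system relation directly from the definition of $\mathrm{ES}_0(T)$, reducing everything to Proposition \ref{prop:rel}. Fix fields $K_1 \subset K_2$ in $\Omega$, and let $d_1, d_2$ be their respective conductors. Since $K_1 \subset K_2$ and both are $p$-abelian with ramification set contained in $\cP$, we have $d_1 \mid d_2$ (up to $p$-power factors which are absorbed into the cyclotomic direction), and the set $S_{\mathrm{ram}}(K_2/\bQ) \setminus S_{\mathrm{ram}}(K_1/\bQ)$ consists of the primes $\ell \in \cP$ dividing $d_2$ but not $d_1$. By construction, $\widetilde{\xi}_{K_{i,\infty}}$ is the image of $\widetilde{\xi}_{\bQ(\mu_{d_i p^\infty})}$ under the projection $\Lambda_{\bQ(\mu_{d_i p^\infty})} \to \Lambda_{K_{i,\infty}}$, so it suffices to prove the relation one prime at a time in the tower of cyclotomic fields $\bQ(\mu_{d p^\infty})$ and then push forward to $K_{i,\infty}$.

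First I would reduce to the case where $d_2/d_1$ is a single prime $\ell \nmid d_1$ with $\ell \in \cP$: the general case follows by iterating, since the multiplicative factor $\prod_\ell P_\ell(\sigma_\ell^{-1})$ is multiplicative over the primes in the difference set and the projection maps $\pi$ are transitive. For the one-prime case, I would apply Proposition \ref{prop:rel}, which gives exactly
\[
\pi_{d_1\ell, d_1}(\widetilde{\xi}_{\bQ(\mu_{\ell d_1 p^\infty})}) = P_{\ell}(\sigma^{-1}_{\ell})\widetilde{\xi}_{\bQ(\mu_{d_1 p^\infty})}
\]
at the level of full cyclotomic fields $\bQ(\mu_{dp^\infty})$. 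Then I would compose with the projection $\Lambda_{\bQ(\mu_{d_1 p^\infty})} \to \Lambda_{K_{1,\infty}}$ and use the commutativity of the projection maps (the composite $\Lambda_{\bQ(\mu_{d_2 p^\infty})} \to \Lambda_{\bQ(\mu_{d_1 p^\infty})} \to \Lambda_{K_{1,\infty}}$ equals $\Lambda_{\bQ(\mu_{d_2 p^\infty})} \to \Lambda_{K_{2,\infty}} \to \Lambda_{K_{1,\infty}}$), together with the fact that $\sigma_\ell^{-1} \in \Gal(\bQ(\mu_{d_1 p^\infty})/\bQ)$ maps to the Frobenius at $\ell$ in $\Gal(K_{1,\infty}/\bQ)$, so that $P_\ell(\sigma_\ell^{-1})$ is compatible with the projection. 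This yields $\pi_{K_2,K_1}(\widetilde{\xi}_{K_{2,\infty}}) = P_\ell(\sigma_\ell^{-1}) \widetilde{\xi}_{K_{1,\infty}}$, as required.

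The main obstacle, such as it is, is bookkeeping rather than mathematics: one must be careful that the conductor of $K_i$ and the index set $\cN$ interact correctly (the $\widetilde{\xi}$ are only defined for $d \in \cN$, i.e.\ square-free products of primes in $\cP$), that the ramified primes of $K_2/\bQ$ not ramified in $K_1/\bQ$ are all good primes $\neq p$ so that $P_\ell$ is defined, and that $p$-ramification is handled by the cyclotomic $\bZ_p$-extension direction built into $\Lambda_{K_\infty}$ rather than by the finite-level conductor. Once these compatibilities are checked, the statement is an immediate consequence of Proposition \ref{prop:rel} applied prime-by-prime and then specialized. I expect the proof to be very short, essentially the single sentence ``this follows from Proposition \ref{prop:rel} by passing to the quotient $\Lambda_{\bQ(\mu_{dp^\infty})} \twoheadrightarrow \Lambda_{K_\infty}$ and iterating over the primes in $S_{\mathrm{ram}}(K_2/\bQ) \setminus S_{\mathrm{ram}}(K_1/\bQ)$.''
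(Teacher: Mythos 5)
Your argument is correct and is exactly the paper's: the paper offers no proof beyond the remark that Proposition \ref{prop:rel} implies the statement, since $\widetilde{\xi}_{K_{\infty}}$ is by definition the image of $\widetilde{\xi}_{\bQ(\mu_{dp^{\infty}})}$ under the projection to $\Lambda_{K_{\infty}}$, where $d$ is the conductor of $K$. The bookkeeping you supply (iterating Proposition \ref{prop:rel} one prime at a time, compatibility of the projection maps, and the fact that conductors of fields in $\Omega$ are square-free products of primes in $\cP$) is precisely the detail the paper leaves implicit.
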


Let $K \in \Omega$ be a field. Then, by Theorem \ref{pt},  for any integers $m \geq 1$ and $n \geq 0$, we have an exact sequence  
\begin{align*}
0 \longrightarrow \mathrm{Sel}(K_{n}, E[p^{m}]) \longrightarrow H^{1}_{\cF_{\mathrm{can}}}&(G_{\bQ}, T_{K_{n}}/p^{m}) 
\\
&\longrightarrow H^{1}_{/f}(G_{\bQ}, T_{K_{n}}/p^{m}) \longrightarrow \mathrm{Sel}(K_{n}, E[p^{m}])^{\vee}. 
\end{align*}
Here $\mathrm{Sel}(K_{n}, E[p^{m}])$ is the $p^{m}$-Selmer group of $E/K_{n}$ and 
\[
H^{1}_{\cF_{\mathrm{can}}}(G_{\bQ}, T_{K_{n}}/p^{m})  := \ker\left(H^{1}(G_{\bQ}, T_{K_{n}}/p^{m}) \longrightarrow \bigoplus_{\ell \neq p}H^{1}_{/\mathrm{ur}}(G_{\bQ_{\ell}}, T_{K_{n}}/p^{m}) \right). 
\]
We set 
\begin{align*}
H^{1}_{/f}(G_{\bQ_{p}}, \bT_{K_{\infty}}) &:= \varprojlim_{m,n} H^{1}_{/f}(G_{\bQ_{p}}, T_{K_{n}}/p^{m}), 
\\
\mathrm{Sel}(K_{\infty}, E[p^{\infty}]) &:= \varinjlim_{m,n}  \mathrm{Sel}(K_{n}, E[p^{m}]). 
\end{align*}
Since $\mathrm{Sel}(K_{\infty}, E[p^{\infty}])^{\vee}$ is a finitely generated torsion $\Lambda_{K_{\infty}}$-module, we have  
\[
\varprojlim_{m,n}  \mathrm{Sel}(K_{n}, E[p^{m}]) = 0. 
\] 
Moreover,  \cite[Proposition B.3.4]{R} implies 
\[
H^{1}(G_{\bQ}, \bT_{K_{\infty}}) = \varprojlim_{m,n} H^{1}_{\cF_{\mathrm{can}}}(G_{\bQ}, T_{K_{n}}/p^{m}). 
\]
Hence we get an exact sequence of $\Lambda_{K_{\infty}}$-modules 
\begin{align}\label{exact1}
0 \longrightarrow H^{1}(G_{\bQ}, \bT_{K_{\infty}}) 
\stackrel{\mathrm{loc}_{p}^{/f}}{\longrightarrow} H^{1}_{/f}(G_{\bQ_{p}}, \bT_{K_{\infty}}) 
\longrightarrow \mathrm{Sel}(K_{\infty}, E[p^{\infty}])^{\vee}. 
\end{align}
For each field $K \in \Omega$, we put 
\[
M_{K_{\infty}} := (\mathrm{loc}_{p}^{/f})^{-1}(H^{1}(G_{\bQ_{p}}, \bT_{K_{\infty}}) \cap \fL_{K_{\infty}}^{-1}(\Lambda_{K_{\infty}}) ), 
\]
and we obtain an injection 
\[
\fL \colon \mathrm{ES}_{1}(T) \cap \prod_{K \in \Omega}M_{K_{\infty}} \longhookrightarrow \mathrm{ES}_{0}(T); (c_{K})_{K \in \Omega} \mapsto (\mathrm{loc}_{p}^{/f}(\fL_{K_{\infty}}(c_{K})))_{K\in\Omega}. 
\]
Then Theorem \ref{thm:zeta} and the injectivity of $\mathrm{loc}_{p}^{/f} \circ \fL_{K_{\infty}}$ imply the following proposition. 

\begin{proposition}\label{prop:euler1}
There is an Euler system $z_{\xi} \in \mathrm{ES}_{1}(T) \cap \prod_{K \in \Omega}M_{K_{\infty}}$ such that $\fL(z_{\xi}) = (\widetilde{\xi}_{K_{\infty}})_{K \in \Omega}$. 
\end{proposition}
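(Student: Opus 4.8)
\textbf{Proof proposal for Proposition \ref{prop:euler1}.}

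The plan is to combine Theorem \ref{thm:zeta} with Proposition \ref{prop:euler0} and the general ``descent'' machinery for Euler systems packaged in the exact sequence \eqref{exact1}. First I would observe that the family $(\widetilde{\xi}_{K_{\infty}})_{K \in \Omega}$ lies in $\mathrm{ES}_{0}(T)$ by Proposition \ref{prop:euler0}, so for each $K \in \Omega$ the element $\widetilde{\xi}_{K_{\infty}} \in \Lambda_{K_{\infty}}$ is given; I need to produce a preimage under $\fL$, i.e. an Euler system $z_{\xi} = (z_{\xi,K})_{K \in \Omega}$ of rank $1$ with $z_{\xi,K} \in M_{K_{\infty}}$ and $\mathrm{loc}_{p}^{/f}(\fL_{K_{\infty}}(z_{\xi,K})) = \widetilde{\xi}_{K_{\infty}}$ for all $K$. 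Fix $K$. By Theorem \ref{thm:zeta}(iii) there is $z_{K_{\infty}} \in H^{1}(G_{\bQ}, \bT_{K_{\infty}})$ with $\fL_{K_{\infty}}(\mathrm{loc}_{p}^{/f}(z_{K_{\infty}})) = \widetilde{\xi}_{K_{\infty}}$; note that the direction of composition here matches the map $\fL$ as defined, namely $c_{K} \mapsto \mathrm{loc}_{p}^{/f}(\fL_{K_{\infty}}(c_{K}))$, once one checks that $\fL_{K_{\infty}}$ and $\mathrm{loc}_{p}^{/f}$ commute appropriately via the identification of $H^{1}_{/f}(G_{\bQ_{p}}, \bT_{K_{\infty}})$ with its image. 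Setting $z_{\xi,K} := z_{K_{\infty}}$, integrality of $z_{K_{\infty}}$ (the Remark after Theorem \ref{thm:zeta}, using hypothesis (b) and \cite[Theorem 6.1]{Kataoka}) together with the equality $\fL_{K_{\infty}}(\mathrm{loc}_{p}^{/f}(z_{\xi,K})) = \widetilde{\xi}_{K_{\infty}} \in \Lambda_{K_{\infty}}$ shows $\mathrm{loc}_{p}^{/f}(z_{\xi,K}) \in H^{1}(G_{\bQ_{p}}, \bT_{K_{\infty}}) \cap \fL_{K_{\infty}}^{-1}(\Lambda_{K_{\infty}})$, hence $z_{\xi,K} \in M_{K_{\infty}}$ as required.

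Next I would verify the Euler system distribution relation for $z_{\xi} = (z_{\xi,K})_{K \in \Omega}$. Here the key point is uniqueness: by the injectivity of $\mathrm{loc}_{p}^{/f} \colon H^{1}(G_{\bQ}, \bT_{K_{\infty}}) \hookrightarrow H^{1}_{/f}(G_{\bQ_{p}}, \bT_{K_{\infty}})$, which is part of the exact sequence \eqref{exact1} (and itself relies on the vanishing of $\varprojlim \mathrm{Sel}$ and \cite[Proposition B.3.4]{R}), and the injectivity of $\fL_{K_{\infty}}$, the element $z_{\xi,K}$ is the \emph{unique} class in $H^{1}(G_{\bQ}, \bT_{K_{\infty}})$ whose image under $\mathrm{loc}_{p}^{/f} \circ \fL_{K_{\infty}}$ is $\widetilde{\xi}_{K_{\infty}}$. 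Now take $K_{1} \subset K_{2}$ in $\Omega$. Applying $\mathrm{Cor}_{K_{2}/K_{1}}$ to $z_{\xi,K_{2}}$ and chasing through the commutative diagram of Theorem \ref{thm:zeta}(i) (compatibility of the $\fL_{K_{\infty}}$ with norm/projection maps) together with the compatibility of $\mathrm{loc}_{p}^{/f}$ with corestriction, one computes that $\mathrm{loc}_{p}^{/f}(\fL_{K_{1,\infty}}(\mathrm{Cor}_{K_{2}/K_{1}}(z_{\xi,K_{2}})))$ equals $\pi_{K_{2},K_{1}}(\widetilde{\xi}_{K_{2,\infty}})$, which by Proposition \ref{prop:euler0} is $\bigl(\prod_{\ell} P_{\ell}(\sigma_{\ell}^{-1})\bigr)\widetilde{\xi}_{K_{1,\infty}}$ with $\ell$ ranging over $S_{\mathrm{ram}}(K_{2}/\bQ) \setminus S_{\mathrm{ram}}(K_{1}/\bQ)$. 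On the other hand, $\bigl(\prod_{\ell} P_{\ell}(\sigma_{\ell}^{-1})\bigr) z_{\xi,K_{1}}$ has the same image under $\mathrm{loc}_{p}^{/f} \circ \fL_{K_{1,\infty}}$; by the uniqueness noted above, $\mathrm{Cor}_{K_{2}/K_{1}}(z_{\xi,K_{2}}) = \bigl(\prod_{\ell} P_{\ell}(\sigma_{\ell}^{-1})\bigr) z_{\xi,K_{1}}$, which is exactly the Euler system relation of rank $1$. Hence $z_{\xi} \in \mathrm{ES}_{1}(T) \cap \prod_{K \in \Omega} M_{K_{\infty}}$ and $\fL(z_{\xi}) = (\widetilde{\xi}_{K_{\infty}})_{K \in \Omega}$ by construction.

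I expect the main obstacle to be bookkeeping rather than conceptual: one must carefully match the two ``directions'' in the definition of $\fL$ (the map is $c \mapsto \mathrm{loc}_{p}^{/f}(\fL_{K_{\infty}}(c))$, whereas Theorem \ref{thm:zeta}(iii) is phrased as $\fL_{K_{\infty}}(\mathrm{loc}_{p}^{/f}(z_{K_{\infty}})) = \widetilde{\xi}_{K_{\infty}}$), which requires knowing that on the submodule $M_{K_{\infty}}$ the operators $\fL_{K_{\infty}}$ and $\mathrm{loc}_{p}^{/f}$ intertwine via the embedding $H^{1}_{/f}(G_{\bQ_{p}}, \bT_{K_{\infty}}) \hookrightarrow \Lambda_{K_{\infty}} \otimes \bQ_{p}$ that $\fL_{K_{\infty}}$ induces. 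The other delicate point is checking that each $z_{\xi,K}$ actually lands in the integral lattice $M_{K_{\infty}}$, not merely in the rationalized space — this is where the integrality remark (hypothesis (b), \cite{Kataoka}) is indispensable, and one should be sure that it applies uniformly over all $K \in \Omega$, not just $K = \bQ$. Once these compatibilities are pinned down, the argument is a direct diagram chase using the injectivity statements already recorded in the excerpt.
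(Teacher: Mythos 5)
Your argument is correct and is exactly the paper's (one-sentence) proof, fully unpacked: the paper deduces Proposition \ref{prop:euler1} from Theorem \ref{thm:zeta} (parts (i) and (iii) plus the integrality remark) together with the injectivity of the composite of $\mathrm{loc}_{p}^{/f}$ and $\fL_{K_{\infty}}$ coming from the exact sequence \eqref{exact1}, which is precisely the uniqueness-and-diagram-chase you carry out to transfer the rank-$0$ relations of Proposition \ref{prop:euler0} to the classes $z_{K_{\infty}}$. Your remark about the order of composition in the definition of $\fL$ correctly identifies a typographical slip in the paper rather than a mathematical issue.
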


%


\subsection{Construction of $\kappa_{\xi,m,n}$}

Fix integers $m \geq 1$ and $n \geq 0$. 
First, we introduce the Kolyvagin derivative homomorphism (defined by Mazur and Rubin in \cite{MRkoly})
\[
\cD_{m,n}^{1} \colon {\rm ES}_{1}(T)  \longrightarrow  \mathrm{KS}_{1}(T_{\bQ_{n}}/p^{m}, \cF_{\rm can}). 
\]
Recall that $\bQ(d)$ is the maximal $p$-subextension of $\bQ(\mu_{d})$, and note that $\bQ_{n} = \bQ(p^{n+1})$. 
We fix a generator $g_{\ell}$ of $G_{\ell} = \Gal(\bQ(\ell)/\bQ)$ for each prime $\ell \in \cP_{1,0}$ and denote by $D_{\ell} \in \bZ[G_{\ell}]$ the Kolyvagin's derivative operator: 
\[
D_{\ell} := \sum_{i=0}^{\#G_{\ell} -1}i g_{\ell}^{i}. 
\]
For any integer $d \in \cN_{1,0}$, we also set $D_{d} := \prod_{\ell \mid d}D_{\ell} \in \bZ[\Gal(\bQ(d)/\bQ)]$. 

Let $c \in \mathrm{ES}_{1}(T)$ be an Euler system. 
For any integer $d \in \cN_{m,n}$, we denote by $c_{dp^{n+1}} \in H^{1}(G_{\bQ}, T_{\bQ(dp^{n+1})})$ the image of $c_{\bQ(d)} \in H^{1}(G_{\bQ}, \bT_{\bQ(d)})$. 
Then it is well-known that Euler system relations imply  
\[
\kappa(c)_{d, m,n} := D_{d}c_{dp^{n+1}} \bmod p^{m} \in H^{1}(G_{\bQ}, T_{\bQ(dp^{n+1})}/p^{m})^{\Gal(\bQ(d)/\bQ)}. 
\]
(see, for example, \cite[Lemma~4.4.2]{R}). 
Since we have an isomorphism 
\[
H^{1}(G_{\bQ}, T_{\bQ_{n}}/p^{m}) \stackrel{\sim}{\longrightarrow} H^{1}(G_{\bQ}, T_{\bQ(dp^{n+1})}/p^{m})^{\Gal(\bQ(d)/\bQ)}, 
\]
we can regard  $\kappa(c)_{d, m,n}$ as an element of $H^{1}(G_{\bQ}, T_{\bQ_{n}}/p^{m})$. 
The following theorem is proved by Mazur and Rubin in \cite[Appendix A]{MRkoly}.


\begin{theorem}\label{thm:euler-koly}
For any Euler system $c \in {\rm ES}_{1}(T)$, we have 
\[
\cD_{m,n}^{1}(c) := ({\kappa}(c)_{d, m, n})_{d \in \cN_{m,n}} \in \mathrm{KS}_{1}(T_{\bQ_{n}}/p^{m}, \cF_{\rm can}). 
\]
Hence we obtain the Kolyvagin derivative homomorphism 
\[
\cD_{m,n}^{1} \colon {\rm ES}_{1}(T) \longrightarrow  \mathrm{KS}_{1}(T_{\bQ_{n}}/p^{m}, \cF_{\rm can}). 
 \]
\end{theorem}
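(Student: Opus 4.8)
The plan is to apply, essentially verbatim, the construction of Mazur and Rubin in \cite[Appendix~A]{MRkoly} that produces a Kolyvagin system out of an Euler system, checking only that it goes through for our coefficient ring $R = \bZ_p/p^m[\Gal(\bQ_n/\bQ)]$ and our module $T = T_{\bQ_n}/p^m = \mathrm{Ind}_{G_{\bQ_n}}^{G_\bQ}(E[p^m])$. As noted above, $T$ satisfies hypotheses (H.0)--(H.3) of \cite[\S3.5]{MRkoly}, and these are all that the derivative construction uses; in particular it is insensitive to the failure of (H.4) at $p = 3$. Since the $R$-module structure on $H^1(G_\bQ, T)$ is carried along functorially, the $\bZ/p^m$-construction of \cite{MRkoly} yields an $R$-linear map $\cD^1_{m,n}$. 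By the discussion preceding the statement (\cite[Lemma~4.4.2]{R}), each $\kappa(c)_{d,m,n}$ is already a well-defined class in $H^1(G_\bQ, T)$; so it remains to show, for every $c \in \mathrm{ES}_1(T)$ and every $d \in \cN_{m,n}$, that (i) $\kappa(c)_{d,m,n} \in H^1_{\cF_{\mathrm{can}}(d)}(G_\bQ, T)$ and (ii) the finite-singular relation $v_\ell(\kappa(c)_{d,m,n}) = \varphi_\ell^{\rm fs}(\kappa(c)_{d/\ell,m,n})$ holds for every prime $\ell \mid d$.

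For (i), I would verify the local conditions prime by prime. At $p$ there is nothing to check, since $\cF_{\mathrm{can}} = \cF_{\mathrm{cl}}^p$ is relaxed at $p$. At a prime $\ell \nmid dp$ the class $\kappa(c)_{d,m,n}$ is unramified, because Euler system classes are unramified outside the ramification locus of $\bQ(dp^{n+1})$ together with $S_{\mathrm{bad}}(E) \cup \{p\}$; and by Lemma \ref{lem:ur=f} the unramified condition is precisely the canonical one at every prime $\ell \neq p$, including the bad primes. At a prime $\ell \mid d$ one must show $\mathrm{loc}_\ell(\kappa(c)_{d,m,n}) \in H^1_{\mathrm{tr}}(G_{\bQ_\ell}, T)$; this is the local computation of the image of the derived class in \cite[Appendix~A]{MRkoly}, which uses the splitting $H^1(G_{\bQ_\ell},T) = H^1_{\mathrm{ur}}(G_{\bQ_\ell},T) \oplus H^1_{\mathrm{tr}}(G_{\bQ_\ell},T)$ and the fact that $D_{d/\ell} c_{dp^{n+1}}$ is already fixed modulo $p^m$ by $\Gal(\bQ(d/\ell)/\bQ)$, so that applying the remaining operator $D_\ell$ and localizing at $\ell$ lands the class in the transversal part.

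For (ii), the key point, I would fix $\ell \mid d$, write $d = \ell d'$, and compare the localizations at $\ell$ of the two derived classes $\kappa(c)_{d,m,n} = D_\ell D_{d'} c_{\ell d' p^{n+1}} \bmod p^m$ and $\kappa(c)_{d',m,n} = D_{d'} c_{d' p^{n+1}} \bmod p^m$, which differ by the presence of the operator $D_\ell$ and of the extra ramified level at $\ell$. Using the identity $(g_\ell - 1) D_\ell = \#G_\ell - N_\ell$ in $\bZ[G_\ell]$ (with $N_\ell = \sum_{i} g_\ell^i$) together with the Euler system norm relation $\mathrm{Cor}_{\bQ(d'\ell)/\bQ(d')}(c_{\bQ(d'\ell)}) = P_\ell(\sigma_\ell^{-1}) c_{\bQ(d')}$, the singular projection $v_\ell$ of $\kappa(c)_{d,m,n}$ is expressed in terms of $\kappa(c)_{d',m,n}$, and the resulting expression is identified with $\varphi_\ell^{\rm fs}(\kappa(c)_{d',m,n})$ by unwinding the definition of the finite-singular comparison isomorphism $\phi_\ell^{\rm fs}$ and the local facts recorded in \cite[\S1.2]{MRkoly}; this matching is the computation of derived classes carried out in \cite[Appendix~A]{MRkoly}. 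Granting (i) and (ii) for all $d \in \cN_{m,n}$, the collection $(\kappa(c)_{d,m,n})_{d}$ lies in $\mathrm{KS}_1(T_{\bQ_n}/p^m, \cF_{\mathrm{can}})$, which is the assertion.

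The hard part is (ii): it is the one step where one must track simultaneously, and modulo $p^m$ over the group ring $R$, the precise normalizations of Kolyvagin's derivative operator $D_\ell$, the Euler factor $P_\ell(\sigma_\ell^{-1})$, and the finite-singular comparison map $\phi_\ell^{\rm fs}$ at the Kolyvagin primes. By contrast, (i) is a routine set of local checks, and the well-definedness of $\kappa(c)_{d,m,n}$ as a cohomology class was already disposed of above. Crucially, nothing in (i) or (ii) uses hypothesis (H.4); so --- in contrast with the structure results recalled in Theorem~\ref{thm:koly0} and Proposition~\ref{prop:reg-isom}, which need a separate treatment in the appendix --- this theorem holds without modification when $p = 3$.
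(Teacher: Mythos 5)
Your proposal is correct and follows essentially the same route as the paper, which simply invokes the Mazur--Rubin construction of \cite[Appendix~A]{MRkoly} (noting, as you do implicitly in step (ii), that for $\ell \in \cP_{m,n}$ one has $P_{\ell}(t) \equiv (t-1)^{2} \pmod{p^{m}}$, so the derived classes agree with the classes $\kappa_{n}'$ of loc.\ cit.). Your observations that Lemma~\ref{lem:ur=f} reconciles the unramified and canonical conditions at bad primes and that hypothesis (H.4) plays no role here are both consistent with the paper's treatment.
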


\begin{remark}
For any $\ell \in \cP_{m,n}$, we have 
\[
P_{\ell}(t) \equiv (t-1)^{2} \pmod{p^{m}}. 
\]
Hence ${\kappa}(c)_{d, m, n}$ coincides with $\kappa_{n}'$ defined in \cite[page 80, (33)]{MRkoly}. 
\end{remark}

Next let us construct a homomorphism 
\[
\cD_{m,n}^{0} \colon {\rm ES}_{0}(T) \longrightarrow \prod_{d \in \cN_{m,n}}R_{\bQ_{n}}/p^{m} \otimes_{\bZ} G_{d}. 
 \]
Let $c \in {\rm ES}_{0}(T)$ be an Euler system and take an integer $d \in \cN_{m,n}$. 
We denote by $c_{dp^{n+1}} \in R_{\bQ(dp^{n+1})}$ the image of $c_{\bQ(d)} \in \Lambda_{\bQ(d)}$. 

\begin{lemma}\label{lem:leading}
For any integer $d \in \cN_{m,n}$, we have 
\[
\delta(c)_{d,m,n} := D_{d}c_{dp^{n+1}} \bmod{p^{m}} \in (R_{\bQ(dp^{n+1})}/p^{m})^{\Gal(\bQ(d)/\bQ)} \stackrel{\sim}{\longleftarrow} R_{\bQ_{n}}/p^{m}.  
\]
Moreover, if we write $c_{dp^{n+1}} \bmod{p^{m}} = \sum_{\sigma \in \Gal(\bQ(d)/\bQ)}a_{\sigma} \sigma$, where $a_{\sigma} \in R_{\bQ_{n}}/p^{m}$, then we have 
\begin{align*}
\delta(c)_{d,m,n}  = (-1)^{\nu(d)}\sum_{\sigma \in \Gal(\bQ(d)/\bQ) }a_{\sigma} \prod_{\ell \mid d}\overline{\log}_{g_{\ell}}(\sigma).   
\end{align*}
Here 
\[
\overline{\log}_{g_{\ell}} \colon G_{\ell} \stackrel{\sim}{\longrightarrow} \bZ/(\ell-1) \longrightarrow  \bZ/p^{m}; \, g_{\ell}^{a} \mapsto a \bmod{p^{m}}
\] 
is the surjection induced by the discrete logarithm to the base $g_{\ell}$. 
\end{lemma}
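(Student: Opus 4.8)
The plan is to carry out, on the level of group rings, the classical argument (see \cite[\S 4.4]{R} or \cite[Appendix A]{MRkoly}) showing that Kolyvagin derivatives of an Euler system are Galois-fixed. Concretely, I would first prove the membership
\[
D_{d}c_{dp^{n+1}} \bmod p^{m} \in (R_{\bQ(dp^{n+1})}/p^{m})^{\Gal(\bQ(d)/\bQ)}
\]
by induction on $\nu(d)$, the case $\nu(d)=0$ being trivial. For the inductive step it suffices to fix a prime $\ell \mid d$ and show $(g_{\ell}-1)D_{d}c_{dp^{n+1}} \equiv 0 \pmod{p^{m}}$. Writing $N_{\ell} := \sum_{g \in G_{\ell}}g$, the telescoping identity $(g_{\ell}-1)D_{\ell} = \#G_{\ell}-N_{\ell}$ in $\bZ[G_{\ell}]$ gives $(g_{\ell}-1)D_{d}c_{dp^{n+1}} = D_{d/\ell}(\#G_{\ell}-N_{\ell})c_{dp^{n+1}}$. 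Since $\ell \in \cP_{m,n}$ forces $p^{m}\mid \ell-1$, hence $p^{m}\mid \#G_{\ell}$, the contribution of $\#G_{\ell}$ vanishes modulo $p^{m}$, so it remains to treat $D_{d/\ell}N_{\ell}c_{dp^{n+1}}$.

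For this, I would use the norm-section identity $N_{\ell}c_{dp^{n+1}} = \iota_{\ell}(\pi_{\ell}(c_{dp^{n+1}}))$, where $\pi_{\ell} \colon R_{\bQ(dp^{n+1})} \to R_{\bQ((d/\ell)p^{n+1})}$ is the conductor-projection killing $G_{\ell}$ and $\iota_{\ell}$ is the norm section, a $\bZ[\Gal(\bQ(d/\ell)/\bQ)]$-equivariant injection. Combined with the rank $0$ Euler system relation for $\bQ(d)\supset\bQ(d/\ell)$ (both fields lie in $\Omega$), namely $\pi_{\ell}(c_{dp^{n+1}}) = P_{\ell}(\sigma_{\ell}^{-1})\,c_{(d/\ell)p^{n+1}}$, this yields $D_{d/\ell}N_{\ell}c_{dp^{n+1}} = \iota_{\ell}(P_{\ell}(\sigma_{\ell}^{-1})\,D_{d/\ell}c_{(d/\ell)p^{n+1}})$. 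Now the key point: since $\ell \equiv 1 \pmod{p^{n+1}}$, the Frobenius $\sigma_{\ell}$ is trivial on $\bQ_{n}$, so under the decomposition $\Gal(\bQ((d/\ell)p^{n+1})/\bQ) = \Gal(\bQ(d/\ell)/\bQ)\times\Gal(\bQ_{n}/\bQ)$ it lies in the first factor, and by the inductive hypothesis it therefore acts trivially on $D_{d/\ell}c_{(d/\ell)p^{n+1}} \bmod p^{m}$. Hence $P_{\ell}(\sigma_{\ell}^{-1})D_{d/\ell}c_{(d/\ell)p^{n+1}} \equiv P_{\ell}(1)\,D_{d/\ell}c_{(d/\ell)p^{n+1}} \pmod{p^{m}}$, and $P_{\ell}(1) = \ell^{-1}\#E(\bF_{\ell}) \equiv 0 \pmod{p^{m}}$ because $E(\bF_{\ell})[p^{m}]\cong\bZ/p^{m}$. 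This kills the last term and closes the induction. Since $R_{\bQ(dp^{n+1})}$ is a free, hence cohomologically trivial, $\bZ_{p}[\Gal(\bQ(d)/\bQ)]$-module, $(R_{\bQ(dp^{n+1})}/p^{m})^{\Gal(\bQ(d)/\bQ)}$ is canonically identified with $R_{\bQ_{n}}/p^{m}$ via multiplication by the norm element of $\Gal(\bQ(d)/\bQ)$, which gives the first assertion.

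For the explicit formula, note that under this identification $\delta(c)_{d,m,n}$ is the coefficient of $1 \in \Gal(\bQ(d)/\bQ)$ when $D_{d}c_{dp^{n+1}}\bmod p^{m}$ is expanded in the $R_{\bQ_{n}}/p^{m}$-basis $\Gal(\bQ(d)/\bQ)$ of $R_{\bQ(dp^{n+1})}/p^{m}$. Using $p^{m}\mid\#G_{\ell}$ one checks the congruence $D_{\ell}g_{\ell}^{j} \equiv D_{\ell}-jN_{\ell} \pmod{p^{m}}$, hence, reducing each exponent from $\{0,\dots,\#G_{\ell}-1\}$ modulo $p^{m}$, that $D_{d} \equiv \sum_{\tau}(\prod_{\ell\mid d}\overline{\log}_{g_{\ell}}(\tau))\,\tau \pmod{p^{m}}$ with $\tau$ running over $\Gal(\bQ(d)/\bQ)$. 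Writing $c_{dp^{n+1}}\bmod p^{m} = \sum_{\sigma}a_{\sigma}\sigma$, the coefficient of $1$ in the product is $\sum_{\sigma}a_{\sigma}\prod_{\ell\mid d}\overline{\log}_{g_{\ell}}(\sigma^{-1}) = (-1)^{\nu(d)}\sum_{\sigma}a_{\sigma}\prod_{\ell\mid d}\overline{\log}_{g_{\ell}}(\sigma)$, which is the claimed formula. The main obstacle is the inductive invariance step, and within it the correct treatment of the Frobenius factor $P_{\ell}(\sigma_{\ell}^{-1})$; the role of the congruences $\ell\equiv 1\pmod{p^{\max\{m,n+1\}}}$ and $E(\bF_{\ell})[p^{m}]\cong\bZ/p^{m}$ built into $\cP_{m,n}$ is precisely to force this term to vanish, while everything else is bookkeeping with norm maps and group-ring identities.
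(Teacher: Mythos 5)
Your proof is correct and follows essentially the same route as the paper: for the first assertion the paper simply cites Rubin's Lemma 4.4.2, and your inductive argument (the telescoping identity $(g_{\ell}-1)D_{\ell}=\#G_{\ell}-N_{\ell}$, the norm-section identity combined with the rank-zero Euler relation, and the congruences $\#G_{\ell}\equiv 0$ and $P_{\ell}(1)=\ell^{-1}\#E(\bF_{\ell})\equiv 0 \bmod{p^{m}}$, using $\sigma_{\ell}|_{\bQ_{n}}=1$ and the inductive invariance to replace $P_{\ell}(\sigma_{\ell}^{-1})$ by $P_{\ell}(1)$) is exactly the standard proof of that lemma transplanted to the group-ring setting. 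For the explicit formula the paper expands $D_{d}(\sum_{\sigma}a_{\sigma}\sigma)$ in the operators $X_{\ell}=g_{\ell}-1$ and $N_{\ell}$ and uses the invariance just established to kill every term except the coefficient of $N_{\ell_{1}}\cdots N_{\ell_{t}}$, whereas you reduce $D_{d}$ itself modulo $p^{m}$ to $\sum_{\tau}(\prod_{\ell\mid d}\overline{\log}_{g_{\ell}}(\tau))\tau$ and read off the coefficient of the identity under the norm identification; the two computations are equivalent and yield the same sign $(-1)^{\nu(d)}$.
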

\begin{proof}
The  assertion that 
\[
D_{d}c_{dp^{n+1}} \bmod{p^{m}} \in (R_{\bQ(dp^{n+1})}/p^{m})^{\Gal(\bQ(d)/\bQ)} 
\]
is well-known (see, for example, \cite[Lemma~4.4.2]{R}). 
Let us show the latter assertion. 
We write $d = \ell_{1} \cdots \ell_{t}$. 
We put 
\[
N_{\ell_{i}} :=  \sum_{\sigma \in \Gal(\bQ(\ell_{i})/\bQ)}\sigma \,\,\, \textrm{ and } \,\,\, X_{\ell_{i}} := g_{\ell_{i}} - 1. 
\]
Note that $D_{\ell_{i}} X_{\ell_{i}} = -N_{\ell_{i}}$ and $D_{\ell_{i}} X_{\ell_{i}}^{2} = 0$. 
Hence we have 
\begin{align*}
D_{d} \sum_{\sigma \in \Gal(\bQ(d)/\bQ)}a_{\sigma} \sigma 
&= \sum_{i_{1}=1}^{\#G_{\ell_{1}} - 1} \cdots \sum_{i_{t}=1}^{\#G_{\ell_{t}} - 1} 
a_{g_{\ell_{1}}^{i_{1}} \cdots g_{\ell_{t}}^{i_{t}}} D_{d} (1+X_{\ell_{1}})^{i_{1}} \cdots (1+X_{\ell_{t}})^{i_{t}}
\\
&= \sum_{i_{1}=1}^{\#G_{\ell_{1}} - 1} \cdots \sum_{i_{t}=1}^{\#G_{\ell_{t}} - 1} 
a_{g_{\ell_{1}}^{i_{1}} \cdots g_{\ell_{t}}^{i_{t}}} (1 - i_{1}N_{\ell_{1}}) \cdots (1- i_{t}N_{\ell_{t}})
\\
&=: \sum_{i=1}^{t}\sum_{j_{i} \in \{ 0, 1\}} b_{j_{1}, \ldots, j_{t}}N_{\ell_{1}}^{j_{1}} \cdots N_{\ell_{t}}^{j_{t}}. 
\end{align*}
Since 
\[
b_{1,\ldots,1} =  (-1)^{\nu(d)}\sum_{\sigma \in \Gal(\bQ(d)/\bQ) }a_{\sigma} \prod_{\ell \mid d}\overline{\log}_{g_{\ell}}(\sigma), 
\]
it suffices to show that $b_{j_{1}, \ldots, j_{t}} = 0$ for any $(j_{1}, \ldots, j_{t}) \neq (1, \ldots, 1)$. 
This follows from the facts that $X_{\ell_{i}}D_{d}c_{dp^{n+1}} \bmod{p^{m}} = 0$ and $X_{\ell_{i}}N_{\ell_{i}} = 0$ for any $1 \leq i \leq t$. 
In fact, since 
\[
0 = X_{\ell_{1}} \cdots X_{\ell_{t}}D_{d}c_{dp^{n+1}} \bmod{p^{m}} = b_{0, \ldots, 0}X_{\ell_{1}} \cdots X_{\ell_{t}}, 
\]
we have  $ b_{0, \ldots, 0} = 0$. Moreover, since 
\[
0 = X_{\ell_{2}} \cdots X_{\ell_{t}}D_{d}c_{dp^{n+1}} \bmod{p^{m}} = (b_{0,\ldots,0} + b_{1,0, \ldots, 0})X_{\ell_{2}} \cdots X_{\ell_{t}}, 
\] 
we have $b_{1,0, \ldots, 0} = 0$. 
Similary, we have $b_{0,1, \ldots, 0} = \cdots = b_{0, \ldots, 0,1} = 0$. Repeating this argument, we see that 
$b_{j_{1}, \ldots, j_{t}} = 0$ for any $(j_{1}, \ldots, j_{t}) \neq (1, \ldots, 1)$. 
\end{proof}


\begin{definition}
We define the homomorphism 
\[
\cD_{m,n}^{0} \colon {\rm ES}_{0}(T) \longrightarrow \prod_{d \in \cN_{m,n}}R_{\bQ_{n}}/p^{m} \otimes_{\bZ} G_{d}
\] 
by $\cD_{m,n}^{0}(c) := ({\delta}(c)_{d,m,n} )_{d \in \cN_{m,n}}$. 
\end{definition}

Recall that we have the isomorphism $\fL_{\bQ_{\infty}} \colon H^{1}_{/f}(G_{\bQ_{p}}, \bT_{\bQ_{\infty}}) \stackrel
{\sim}{\longrightarrow}  \Lambda_{\bQ_{\infty}}$ by Theorem \ref{thm:zeta}(ii). 
Since 
\[
H^{1}_{/f}(G_{\bQ_{p}}, \bT_{\bQ_{\infty}})  \otimes_{\Lambda_{\bQ_{\infty}}} R_{\bQ_{n}}/p^{m} \stackrel{\sim}{\longrightarrow} H^{1}_{/f}(G_{\bQ_{p}}, T_{\bQ_{n}}/p^{m}), 
\]
the isomorphism $\fL_{\bQ_{\infty}}$ induces an isomorphism 
\[
\fL_{\bQ_{n}, m} \colon H^{1}_{/f}(G_{\bQ_{p}}, T_{\bQ_{n}}/p^{m}) \stackrel{\sim}{\longrightarrow} R_{\bQ_{n}}/p^{m}, 
\]
and hence we obtain a homomorphism 
\[
\fL_{\bQ_{n}, m}  \colon \mathrm{KS}_{1}(T_{\bQ_{n}}/p^{m}, \cF_{\mathrm{can}})  \longrightarrow  \prod_{d \in \cN_{m,n}}R_{\bQ_{n}}/p^{m} \otimes_{\bZ} G_{d}. 
\]
By construction, we have the following proposition. 
\begin{proposition}\label{prop:diag}
The diagram
\[
\xymatrix{
\mathrm{ES}_{1}(T)  \cap \prod_{K \in \Omega} M_{K_{\infty}} \ar@{^{(}->}[rr]^-{\fL} \ar[d]^-{\cD^{1}_{m,n}} && \mathrm{ES}_{0}(T)  \ar[d]^-{\cD^{0}_{m,n}}
\\
\mathrm{KS}_{1}(T_{\bQ_{n}}/p^{m}, \cF_{\mathrm{can}})  \ar[rr]^-{\fL_{\bQ_{n},m}} &&  \prod_{d \in \cN_{m,n}}R_{\bQ_{n}}/p^{m} \otimes_{\bZ} G_{d}
}
\]
commutes. 
\end{proposition}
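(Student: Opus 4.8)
### Proof Proposal for Proposition \ref{prop:diag}

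The plan is to prove the commutativity by evaluating both composites on a fixed Euler system $c = (c_{K})_{K \in \Omega} \in \mathrm{ES}_{1}(T) \cap \prod_{K \in \Omega}M_{K_{\infty}}$ and comparing the resulting elements of $\prod_{d \in \cN_{m,n}}R_{\bQ_{n}}/p^{m} \otimes_{\bZ}G_{d}$ component by component, for each $d \in \cN_{m,n}$. Going down and then across, one forms $\kappa(c)_{d,m,n} = D_{d}c_{dp^{n+1}} \bmod p^{m}$, regards it as a class in $H^{1}(G_{\bQ}, T_{\bQ_{n}}/p^{m})$ via the isomorphism $H^{1}(G_{\bQ}, T_{\bQ_{n}}/p^{m}) \stackrel{\sim}{\longrightarrow} H^{1}(G_{\bQ}, T_{\bQ(dp^{n+1})}/p^{m})^{\Gal(\bQ(d)/\bQ)}$, and then applies $\fL_{\bQ_{n},m} \circ \mathrm{loc}_{p}^{/f}$. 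Going across and then down, one first forms the rank-$0$ Euler system $\fL(c)$, whose $\bQ(d)$-component is $\fL_{\bQ(dp^{\infty})}(\mathrm{loc}_{p}^{/f}(c_{\bQ(d)})) \in \Lambda_{\bQ(dp^{\infty})}$, and then takes $\delta(\fL(c))_{d,m,n} = D_{d}(\fL(c))_{dp^{n+1}} \bmod p^{m}$, where $(\fL(c))_{dp^{n+1}}$ is the image in $R_{\bQ(dp^{n+1})}$ and the outcome is viewed in $R_{\bQ_{n}}/p^{m}$ as in Lemma \ref{lem:leading}. Thus the content of the proposition is the identity
\begin{equation*}
\fL_{\bQ_{n},m}\bigl(\mathrm{loc}_{p}^{/f}(D_{d}c_{dp^{n+1}} \bmod p^{m})\bigr) = D_{d}\bigl(\fL_{\bQ(dp^{\infty})}(\mathrm{loc}_{p}^{/f}(c_{\bQ(d)}))\bigr)_{dp^{n+1}} \bmod p^{m}
\end{equation*}
modulo the level-$\bQ_{n}$ identifications on both sides (the $\otimes_{\bZ}G_{d}$-factors being matched throughout via the fixed generators $g_{\ell}$).

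To establish this I would use three compatibilities. First, $\mathrm{loc}_{p}^{/f}$ is a homomorphism of Iwasawa modules, hence commutes with the group-ring action --- in particular with $D_{d}$ --- with reduction modulo $p^{m}$, and with the maps induced by $\bT_{\bQ(dp^{\infty})} \to T_{\bQ(dp^{n+1})}$; so the left-hand side equals $\fL_{\bQ_{n},m}\bigl((D_{d}\mathrm{loc}_{p}^{/f}(c_{dp^{n+1}})) \bmod p^{m}\bigr)$, where $\mathrm{loc}_{p}^{/f}(c_{dp^{n+1}})$ is the image of $\mathrm{loc}_{p}^{/f}(c_{\bQ(d)})$ in $H^{1}_{/f}(G_{\bQ_{p}}, T_{\bQ(dp^{n+1})})$. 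Second, exactly as $\fL_{\bQ_{n},m}$ is cut out of $\fL_{\bQ_{\infty}}$, the isomorphism $\fL_{\bQ(dp^{\infty})}$ induces a reduction $\fL_{\bQ(dp^{n+1}),m} \colon H^{1}_{/f}(G_{\bQ_{p}}, T_{\bQ(dp^{n+1})}/p^{m}) \stackrel{\sim}{\longrightarrow} R_{\bQ(dp^{n+1})}/p^{m}$ with $\bigl(\fL_{\bQ(dp^{\infty})}(\mathrm{loc}_{p}^{/f}(c_{\bQ(d)}))\bigr)_{dp^{n+1}} \bmod p^{m} = \fL_{\bQ(dp^{n+1}),m}(\mathrm{loc}_{p}^{/f}(c_{dp^{n+1}}) \bmod p^{m})$, and since $\fL_{\bQ(dp^{n+1}),m}$ is $R_{\bQ(dp^{n+1})}/p^{m}$-linear, hence $\bZ_{p}[\Gal(\bQ(d)/\bQ)]$-linear, the right-hand side equals $\fL_{\bQ(dp^{n+1}),m}\bigl((D_{d}\mathrm{loc}_{p}^{/f}(c_{dp^{n+1}})) \bmod p^{m}\bigr)$, evaluated on an already $\Gal(\bQ(d)/\bQ)$-invariant class. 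Third, the functoriality square of Theorem \ref{thm:zeta}(i) for $\bQ_{\infty} \subset \bQ(dp^{\infty})$, reduced modulo $p^{m}$ and restricted to $\Gal(\bQ(d)/\bQ)$-invariants, together with the descent isomorphisms of Corollary \ref{cor:local-str}(2), identifies $\fL_{\bQ(dp^{n+1}),m}$ on $H^{1}_{/f}(G_{\bQ_{p}}, T_{\bQ(dp^{n+1})}/p^{m})^{\Gal(\bQ(d)/\bQ)}$ with $\fL_{\bQ_{n},m}$ under the level-$\bQ_{n}$ identifications of source and target. Combining the three displays yields the identity, and letting $d$ range over $\cN_{m,n}$ proves the proposition.

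The step I expect to require the most care is the last one: reconciling the Coleman map $\fL$ used to define $\fL$ (built from $\fL_{K_{\infty}}$ at the individual level $K = \bQ(d)$) with $\fL_{\bQ_{n},m}$ (built from the cyclotomic $\fL_{\bQ_{\infty}}$). One must keep careful track of which maps are in play --- the projections induced by $\bT_{\bQ(dp^{\infty})} \to \bT_{\bQ_{\infty}}$ and $\Lambda_{\bQ(dp^{\infty})} \to \Lambda_{\bQ_{\infty}}$ on one side, and the inflation/trace isomorphisms used to place $\kappa(c)_{d,m,n}$ and $\delta(c)_{d,m,n}$ at level $\bQ_{n}$ on the other --- and in particular match the coinvariants occurring in Corollary \ref{cor:local-str}(2) with the $\Gal(\bQ(d)/\bQ)$-invariants occurring in the Shapiro-type identifications; the two are reconciled by the trace map, since all the relevant modules are free over the pertinent group ring modulo $p^{m}$. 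The remaining verifications --- the functoriality of $\mathrm{loc}_{p}^{/f}$, the bookkeeping of the $G_{d}$-factors, and the unwinding of the definitions of $\cD^{1}_{m,n}$ and $\cD^{0}_{m,n}$ --- are routine.
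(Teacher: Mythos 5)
The paper offers no proof of this proposition beyond the phrase ``by construction,'' and your proposal is precisely the correct unwinding of the definitions that this phrase presupposes: the commutativity reduces to the $\Lambda_{\bQ(dp^{\infty})}$-linearity of $\mathrm{loc}_{p}^{/f}$ and of the Coleman maps, together with the projection compatibility of Theorem \ref{thm:zeta}(i) and the descent isomorphisms of Corollary \ref{cor:local-str}(2) matched against the $\Gal(\bQ(d)/\bQ)$-invariant (norm-image) identifications, exactly as you describe, with the freeness of the local modules reconciling invariants and coinvariants. The one point to state carefully is that Theorem \ref{thm:zeta} asserts integrality of $\fL_{K_{\infty}}$ only for $K=\bQ$, so the intermediate map $\fL_{\bQ(dp^{n+1}),m}$ is not literally available on all of $H^{1}_{/f}(G_{\bQ_{p}}, T_{\bQ(dp^{n+1})}/p^{m})$ from the stated results; it should be applied only to (Kolyvagin derivatives of) localizations of classes in $\prod_{K}M_{K_{\infty}}$, whose images under $\fL_{\bQ(dp^{\infty})}$ are integral by the very definition of $M_{K_{\infty}}$ --- a restriction your choice of domain already builds in.
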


\begin{theorem}\label{thm:main}
There is a Kolyvagin system $\kappa_{\xi,m,n} \in \mathrm{KS}_{0}(T_{\bQ_{n}}/p^{m}, \cF_{\mathrm{cl}})$ satisfying $\delta(\kappa_{\xi,m,n}) = \cD^{0}_{m,n}((\widetilde{\xi}_{K_{\infty}})_{K \in \Omega})$. 
\end{theorem}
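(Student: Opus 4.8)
The plan is to build $\kappa_{\xi,m,n}$ by applying the rank-$1$ Kolyvagin derivative homomorphism of Theorem \ref{thm:euler-koly} to the Euler system of rank $1$ produced in Proposition \ref{prop:euler1}, and then pushing the resulting Kolyvagin system of rank $1$ forward along the homomorphism $\mathrm{KS}_{1}(T_{\bQ_{n}}/p^{m}, \cF_{\mathrm{can}}) \longrightarrow \mathrm{KS}_{0}(T_{\bQ_{n}}/p^{m}, \cF_{\mathrm{cl}})$ constructed in \S\ref{sec:rank reduction}. The identity $\delta(\kappa_{\xi,m,n}) = \cD^{0}_{m,n}((\widetilde{\xi}_{K_{\infty}})_{K \in \Omega})$ will then follow by splicing together the commutative diagram \eqref{diag1} and the commutative diagram of Proposition \ref{prop:diag}.

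To make this precise I would first fix the choice of isomorphism $H^{1}_{/f}(G_{\bQ_{p}}, T) \cong R$ left free at the start of \S\ref{sec:rank reduction}, taking it to be the isomorphism $\fL_{\bQ_{n},m}\colon H^{1}_{/f}(G_{\bQ_{p}}, T_{\bQ_{n}}/p^{m}) \stackrel{\sim}{\longrightarrow} R_{\bQ_{n}}/p^{m}$ induced by $\fL_{\bQ_{\infty}}$. With this normalisation the two homomorphisms out of $\mathrm{KS}_{1}(T_{\bQ_{n}}/p^{m}, \cF_{\mathrm{can}})$ that must be compared, namely the map $\varphi$ appearing in \eqref{diag1} and the map $\fL_{\bQ_{n},m}$ appearing in the bottom row of the diagram of Proposition \ref{prop:diag}, become literally the same map, since each is obtained by post-composing the localisation $\mathrm{loc}_{p}^{/f}$ with the single isomorphism $\fL_{\bQ_{n},m}$ above.

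Next I would take the Euler system $z_{\xi} \in \mathrm{ES}_{1}(T) \cap \prod_{K \in \Omega}M_{K_{\infty}}$ of Proposition \ref{prop:euler1}, which satisfies $\fL(z_{\xi}) = (\widetilde{\xi}_{K_{\infty}})_{K \in \Omega}$, and define $\kappa_{\xi,m,n}$ to be the image of $\cD^{1}_{m,n}(z_{\xi}) \in \mathrm{KS}_{1}(T_{\bQ_{n}}/p^{m}, \cF_{\mathrm{can}})$ under the homomorphism $\mathrm{KS}_{1}(T_{\bQ_{n}}/p^{m}, \cF_{\mathrm{can}}) \longrightarrow \mathrm{KS}_{0}(T_{\bQ_{n}}/p^{m}, \cF_{\mathrm{cl}})$ of \S\ref{sec:rank reduction}. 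Then, using the commutativity of \eqref{diag1}, the normalisation of the previous step, Proposition \ref{prop:diag} (applicable because $z_{\xi}$ lies in $\mathrm{ES}_{1}(T) \cap \prod_{K}M_{K_{\infty}}$), and finally $\fL(z_{\xi}) = (\widetilde{\xi}_{K_{\infty}})_{K \in \Omega}$, one computes
\begin{align*}
\delta(\kappa_{\xi,m,n})
&= \varphi(\cD^{1}_{m,n}(z_{\xi}))
= \fL_{\bQ_{n},m}(\cD^{1}_{m,n}(z_{\xi})) \\
&= \cD^{0}_{m,n}(\fL(z_{\xi}))
= \cD^{0}_{m,n}((\widetilde{\xi}_{K_{\infty}})_{K \in \Omega}),
\end{align*}
which is exactly the asserted identity.

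I do not expect any genuine difficulty here: the substantive work — the construction of $z_{\xi}$, the rank-$1$-to-rank-$0$ comparison map together with the commutativity of \eqref{diag1}, and Proposition \ref{prop:diag} — has already been carried out in the preceding subsections. The one point that really needs care is the bookkeeping of the several auxiliary isomorphisms, in particular verifying that, with the chosen normalisation, the maps denoted $\varphi$ and $\fL_{\bQ_{n},m}$ on $\mathrm{KS}_{1}(T_{\bQ_{n}}/p^{m}, \cF_{\mathrm{can}})$ coincide on the nose; once this is pinned down the statement reduces to the two-line diagram chase above. (When $p=3$ one additionally invokes the versions of Theorem \ref{thm:koly0} and Proposition \ref{prop:reg-isom} proved in Appendix \ref{sec:appendix}, but these are taken as given.)
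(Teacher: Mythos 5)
Your proposal is correct and follows essentially the same route as the paper: the paper likewise defines $\kappa_{\xi,m,n} := \Phi \circ \cD^{1}_{m,n}(z_{\xi})$, where $\Phi$ is the rank-reduction map of \S\ref{sec:rank reduction} normalised by the isomorphism $\fL_{\bQ_{n},m}$, and then chains the commutativity of \eqref{diag1} with Proposition \ref{prop:diag} exactly as you do. Your explicit remark that the maps $\varphi$ and $\fL_{\bQ_{n},m}$ must be identified via this choice of normalisation is precisely the point the paper handles by declaring $\Phi$ to be "the homomorphism associated with the isomorphism $\fL_{\bQ_{n},m}$".
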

\begin{proof}
 Let $z_{\xi} \in \mathrm{ES}_{1}(T)$ be the Euler system defined in Proposition \ref{prop:euler1}. Note that $\fL(c_{\xi}) = (\widetilde{\xi}_{K_{\infty}})_{K \in \Omega}$. 
 We define 
 \[
 \kappa_{\xi,m,n} := \mathrm{\Phi} \circ \cD_{m,n}^{1}(c_{\xi}). 
 \]
 Here $\Phi \colon \mathrm{KS}_{1}(T, \cF_{\mathrm{can}}) \longrightarrow \mathrm{KS}_{0}(T, \cF_{\mathrm{cl}})$ is the homomorphism associated with the isomorphism $\fL_{\bQ_{n}, m} \colon H^{1}_{/f}(G_{\bQ_{p}}, T_{\bQ_{n}}/p^{m}) \stackrel{\sim}{\longrightarrow} R_{\bQ_{n}}/p^{m}$ (see \S\ref{sec:rank reduction}). 
 The commutative diagram \eqref{diag1} shows that $\delta \circ \Phi = \fL_{\bQ_{n}, m}$. Hence Proposition \ref{prop:diag} implies 
 \begin{align*}
 \delta(\kappa_{\xi,m,n}) &= \delta \circ \Phi \circ \cD_{m,n}^{1}(z_{\xi}) 
 \\
 &= \fL_{\bQ_{n}, m} \circ \cD_{m,n}^{1}(z_{\xi}) 
 \\
 &=  \cD_{m,n}^{0} \circ \fL(z_{\xi}) 
 \\
 &= \cD^{0}_{m,n}((\widetilde{\xi}_{K_{\infty}})_{K \in \Omega}). 
 \end{align*}
\end{proof}

\begin{remark}
The Kolyvagin system $\kappa_{\xi,m,n}$ constructed in Theorem \ref{thm:main} is a natural extension of a family of cohomology classes constructed by Kurihara in \cite{Kur14b}  (see also \cite{Kur14a}). 
More precisely, for any ``admissible''  pair $(d, \ell) \in \cM_{m,n}$, Kurihara constructed a  cohomology class $\kappa_{d,\ell}$ such that  it satisfies the relations appeared in the definition of Kolyvagin system of rank $0$ and that it relates to modular symbols via the map $\delta$. In our construction, we do not need to impose that the pair $(d, \ell) \in \cN_{m,n} \times \cP_{m,n}$ is admissible. 
\end{remark}

\subsection{Properties of $\kappa_{\xi, m,n}$}

Recall that the Iwasawa main conjecture for $E/\bQ$ says that  
\[
\widetilde{\xi}_{\bQ_{\infty}} \Lambda_{\bQ_{\infty}} = \mathrm{char}_{\Lambda_{\bQ_{\infty}}}(\mathrm{Sel}(\bQ_{\infty}, E[p^{\infty}])^{\vee}). 
\]

\begin{proposition}\label{prop:equiv}
The following are equivalent. 
\begin{itemize}
\item[(1)] The Kolyvagin system $\kappa_{\xi, m,n} \in \mathrm{KS}_{0}(T_{\bQ_{n}}/p^{m}, \cF_{\mathrm{cl}})$ is a basis for some  $m \geq 1$ and $n \geq 0$. 
\item[(2)] The Kolyvagin system $\kappa_{\xi, m,n} \in \mathrm{KS}_{0}(T_{\bQ_{n}}/p^{m}, \cF_{\mathrm{cl}})$ is a basis for any  $m \geq 1$ and $n \geq 0$. 
\item[(3)] There is an integer $d \in \cN_{1,0}$ satisfying $\delta(\kappa_{\xi, 1, 0})_{d} \neq 0$. 
\item[(4)] The Iwasawa main conjecture for $E/\bQ$ holds true. 
\end{itemize}
\end{proposition}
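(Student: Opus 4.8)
The plan is to prove the chain of equivalences $(2) \Rightarrow (1) \Rightarrow (3) \Rightarrow (4) \Rightarrow (2)$, using the structure theory of Kolyvagin systems of rank $0$ developed in \S\ref{sec:koly} together with the relation $\delta(\kappa_{\xi,m,n}) = \cD^{0}_{m,n}((\widetilde{\xi}_{K_{\infty}})_{K \in \Omega})$ from Theorem \ref{thm:main}.

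\textbf{Setup and the key dictionary.} The implication $(2) \Rightarrow (1)$ is trivial. For $(1) \Rightarrow (3)$: if $\kappa_{\xi,1,0}$ is a basis of the free rank-$1$ module $\mathrm{KS}_0(T_{\bQ}/p, \cF_{\mathrm{cl}})$, then by Theorem \ref{thm:koly0}(2) we have $R \cdot \delta(\kappa_{\xi,1,0})_d = \mathrm{Fitt}^0_R(H^1_{\cF_{\mathrm{cl}}(d)}(G_\bQ, T_\bQ/p)^\vee)$ for all $d$; choosing $d \in \cN_{1,0}$ with $H^1_{\cF_{\mathrm{cl}}(d)}(G_\bQ, E[p]) = 0$ (which exists by the remark following Theorem \ref{thm:koly0}, since $\chi(\cF_{\mathrm{cl}}) = 0 \geq 0$), Lemma \ref{lemma:mult} forces $H^1_{\cF_{\mathrm{cl}}(d)}(G_\bQ, T_\bQ/p) = 0$, so the Fitting ideal is the unit ideal and $\delta(\kappa_{\xi,1,0})_d \in (\bF_p)^\times \neq 0$. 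The reverse direction $(3) \Rightarrow (1)$ runs the same way: if $\delta(\kappa_{\xi,1,0})_d \neq 0$ for some $d$, then because $R = \bF_p$ (when $m=n=0$ — more precisely $\bZ_p/p[\Gal(\bQ/\bQ)] = \bF_p$) the element $\delta(\kappa_{\xi,1,0})_d$ is a unit, and since $\delta$ is injective (Corollary \ref{cor:inj}) on the free rank-$1$ module $\mathrm{KS}_0$, the class $\kappa_{\xi,1,0}$ must generate — indeed if it were in $\fm_R \cdot \mathrm{KS}_0$ then every component of $\delta(\kappa_{\xi,1,0})$ would lie in $\fm_R$. So $(1) \Leftrightarrow (3)$.

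\textbf{The Iwasawa-theoretic equivalences.} The substantive content is $(3) \Leftrightarrow (4)$ and $(4) \Rightarrow (2)$. For $(4) \Rightarrow (2)$: assuming the main conjecture $\widetilde{\xi}_{\bQ_\infty}\Lambda_{\bQ_\infty} = \mathrm{char}_{\Lambda_{\bQ_\infty}}(\mathrm{Sel}(\bQ_\infty, E[p^\infty])^\vee)$, I would descend to finite level. By the exact sequence \eqref{exact1} and Theorem \ref{thm:zeta}(iii), the image $\fL_{\bQ_\infty}(\mathrm{loc}_p^{/f}(z_{\bQ_\infty})) = \widetilde{\xi}_{\bQ_\infty}$ generates an ideal whose quotient in $H^1_{/f}(G_{\bQ_p}, \bT_{\bQ_\infty})$ computes $\mathrm{Sel}(\bQ_\infty, E[p^\infty])^\vee$ up to the characteristic ideal; the main conjecture makes this exact (no pseudo-null defect is lost because $H^1_{/f}$ is free of rank $1$ and there is no nonzero finite submodule on the Selmer side by the standard argument). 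Specializing via $\Lambda_{\bQ_\infty} \to R_{\bQ_n}/p^m$ and using that $\delta(\kappa_{\xi,m,n})_d$ is built from $\cD^0_{m,n}$ applied to $\widetilde{\xi}$, Theorem \ref{thm:koly0}(2) identifies $R \cdot \delta(\kappa_{\xi,m,n})_d$ with $\mathrm{Fitt}^0$ of the relevant dual Selmer module; the main conjecture then guarantees that for suitable $d$ this is all of $R$, i.e. $\kappa_{\xi,m,n}$ is a basis for all $m, n$. Conversely $(3) \Rightarrow (4)$: from $(3)$ we get $(1)$, hence $(2)$ by applying Theorem \ref{thm:koly0}(2) uniformly in $m$ and $n$; passing to the limit over $m, n$ recovers the characteristic-ideal equality $\widetilde{\xi}_{\bQ_\infty}\Lambda_{\bQ_\infty} = \mathrm{char}_{\Lambda_{\bQ_\infty}}(\mathrm{Sel}(\bQ_\infty, E[p^\infty])^\vee)$, which is exactly the main conjecture. (One inclusion of the main conjecture — the divisibility $\widetilde{\xi}_{\bQ_\infty}\Lambda_{\bQ_\infty} \subseteq \mathrm{char}(\cdots)$, i.e. Kato's bound — is already available from the existence of the Euler system $z_\xi$ and the general machinery, so really only the reverse divisibility is at stake here.)

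\textbf{Expected main obstacle.} The delicate point is the descent in $(4) \Rightarrow (2)$ and the limit in $(3) \Rightarrow (4)$: one must check that specializing the characteristic-ideal statement at finite level $R_{\bQ_n}/p^m$ does not introduce extra error terms, which requires knowing that $\mathrm{Sel}(\bQ_\infty, E[p^\infty])^\vee$ has no nonzero pseudo-null (here: finite) $\Lambda_{\bQ_\infty}$-submodule and that the complex ${\bf R}\Gamma$ controlling the situation behaves well under $\otimes^{\bL}$. The hypotheses (a), (b), (c) — in particular surjectivity of $\Gal(\overline\bQ/\bQ) \to \mathrm{GL}(E[p])$ and $p \nmid \#E(\bF_p)\prod \mathrm{Tam}_\ell$ — are precisely what rule out the pathologies (via the cartesian property of $\cF_{\mathrm{cl}}$, Corollary \ref{cor:sel-car}, and the freeness statements of Corollary \ref{cor:local-str}), so the argument should go through; but making the finite-level/infinite-level comparison clean is where the real work lies. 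Everything else is a bookkeeping translation between Fitting ideals of Selmer duals, the map $\delta$, and the modular-symbol quantities $\widetilde{\delta}_d$ via Lemma \ref{lem:leading}.
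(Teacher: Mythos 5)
Your handling of the equivalences $(1)\Leftrightarrow(2)\Leftrightarrow(3)$ is essentially right and matches the paper in spirit, though you gloss over one point: to pass between ``basis for some $(m,n)$'' and ``basis at level $(1,0)$'' you need the transition maps between the modules $\mathrm{KS}_{0}(T_{\bQ_{n}}/p^{m}, \cF_{\mathrm{cl}})$ to be surjective maps of free rank-one modules carrying $\kappa_{\xi,m,n}$ to $\kappa_{\xi,1,0}$; the paper secures this by forming $\mathrm{KS}_{0}(\bT_{\bQ_{\infty}},\cF_{\mathrm{cl}}) := \varprojlim_{m,n}\mathrm{KS}_{0}(T_{\bQ_{n}}/p^{m},\cF_{\mathrm{cl}})$ and citing \cite[Lemma 3.25]{sakamoto} and \cite[Theorem 6.3]{sakamoto-koly0} for surjectivity and freeness of rank one of the limit. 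Without that input, (1)$\Rightarrow$(3) as you state it only covers the case $(m,n)=(1,0)$.

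The genuine gap is in the equivalence with (4). Your $(4)\Rightarrow(2)$ argument is circular: you invoke Theorem \ref{thm:koly0}(2) to identify $R\cdot\delta(\kappa_{\xi,m,n})_{d}$ with $\mathrm{Fitt}^{0}_{R}$ of the dual Selmer module, but that theorem applies only to a \emph{basis} of $\mathrm{KS}_{0}$, which is exactly what you are trying to prove $\kappa_{\xi,m,n}$ is. A priori one only knows $\kappa_{\xi,m,n}=a\kappa$ for some basis $\kappa$ and some scalar $a$, giving an inclusion of ideals, and the descent through \eqref{exact1} with ``no pseudo-null defect'' that you sketch does not close this loop. The paper's mechanism is different and works entirely at the infinite level: it defines $\delta_{1}\colon\mathrm{KS}_{0}(\bT_{\bQ_{\infty}},\cF_{\mathrm{cl}})\to\Lambda_{\bQ_{\infty}}$ by taking the $d=1$ component, computes $\delta_{1}(\kappa_{\xi})=\widetilde{\xi}_{\bQ_{\infty}}$ from Theorem \ref{thm:main}, and uses the Iwasawa-theoretic form of Theorem \ref{thm:koly0}(2) (namely \cite[Theorem 6.4]{sakamoto-koly0}) to get $\delta_{1}(\kappa)\Lambda_{\bQ_{\infty}}=\mathrm{char}_{\Lambda_{\bQ_{\infty}}}(\mathrm{Sel}(\bQ_{\infty},E[p^{\infty}])^{\vee})$ for a basis $\kappa$. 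Writing $\kappa_{\xi}=a\kappa$ yields $\widetilde{\xi}_{\bQ_{\infty}}\Lambda_{\bQ_{\infty}}=a\cdot\mathrm{char}_{\Lambda_{\bQ_{\infty}}}(\mathrm{Sel}(\bQ_{\infty},E[p^{\infty}])^{\vee})$, and since the characteristic ideal is nonzero, the main conjecture holds if and only if $a$ is a unit, i.e.\ if and only if $\kappa_{\xi}$ is a basis. This single comparison at $d=1$ replaces both your $(3)\Rightarrow(4)$ limit argument and your $(4)\Rightarrow(2)$ descent, and it is the step your proposal is missing.
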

\begin{proof}
We put 
\[
\mathrm{KS}_{0}(\bT_{\bQ_{\infty}}, \cF_{\mathrm{cl}}) := \varprojlim_{m,n} \mathrm{KS}_{0}(T_{\bQ_{n}}/p^{m}, \cF_{\mathrm{cl}}). 
\]
Then Theorem \ref{thm:koly0} and \cite[Lemma 3.25]{sakamoto} (see \cite[Theorem 6.3]{sakamoto-koly0}) show that 
the canonical map $\mathrm{KS}_{0}(\bT_{\bQ_{\infty}}, \cF_{\mathrm{cl}})  \longrightarrow \mathrm{KS}_{0}(T_{\bQ_{n}}/p^{m}, \cF_{\mathrm{cl}})$ is surjective and the $\Lambda_{\bQ_{\infty}}$-module $\mathrm{KS}_{0}(\bT_{\bQ_{\infty}}, \cF_{\mathrm{cl}})$ is free of rank $1$.  
By construction, 
\[
\kappa_{\xi} := (\kappa_{\xi, m,n})_{m\geq 1,n \geq 0} \in \mathrm{KS}_{0}(\bT_{\bQ_{\infty}}, \cF_{\mathrm{cl}}). 
\]
Since $\delta \colon \mathrm{KS}_{0}(E[p], \cF_{\mathrm{cl}}) \longrightarrow \prod_{d \in \cN_{1,0}}\bF_{p} \otimes_{\bZ} G_{d}$ is injective by Corollary \ref{cor:inj},  claims (1), (2) and (3) are equivalent, and it  suffices to show that  claim (4) is equivalent to that $\kappa_{\xi}$ is a basis. We have the canonical homomorphism 
\[
\delta_{1} \colon \mathrm{KS}_{0}(\bT_{\bQ_{\infty}}, \cF_{\mathrm{cl}}) 
\longrightarrow  \Lambda_{\bQ_{\infty}}; \, (\kappa_{m,n})_{m \geq 1, n \geq 0} \mapsto \varprojlim_{m,n} \delta(\kappa_{m,n})_{1}. 
\]
By Theorem \ref{thm:main}, we have 
\[
\delta_{1}(\kappa_{\xi}) = \varprojlim_{m,n}  \widetilde{\delta}( (\widetilde{\xi}_{K_{\infty}})_{K\in\Omega})_{1,m,n} 
= \varprojlim_{m,n} \widetilde{\xi}_{p^{n+1}} \bmod p^{m} = \widetilde{\xi}_{\bQ_{\infty}}. 
\]
Let $\kappa \in \mathrm{KS}_{0}(\bT_{\bQ_{\infty}}, \cF_{\mathrm{cl}})$ be a basis and write $\kappa_{\xi} = a \kappa$ for some $a \in \Lambda_{\bQ_{\infty}}$. 
Then, by Theorem \ref{thm:koly0} (see \cite[Theorem 6.4]{sakamoto-koly0}), we have 
\[
\widetilde{\xi}_{\bQ_{\infty}}\Lambda_{\bQ_{\infty}} = a \delta_{1}(\kappa) = a \cdot \mathrm{char}_{\Lambda_{\bQ_{\infty}}}(\mathrm{Sel}(\bQ_{\infty}. E[p^{\infty}])^{\vee}).  
\]
Since the characteristic ideal $\mathrm{char}_{\Lambda_{\bQ_{\infty}}}(\mathrm{Sel}(\bQ_{\infty}. E[p^{\infty}])^{\vee})$ is non-zero, 
 claim (4) is equivalent to that $a$ is unit, i.e.,  $\kappa_{\xi}$ is a basis. 
 \end{proof}


\section{Main results}\label{sec:main}

\subsection{Proof of Theorem \ref{thm:1}}
First, let us discuss the relation between $\delta(\kappa_{\xi, 1, 0})_{d}$ and $\widetilde{\delta}_{d}$. 
As in \S\ref{sec:intro}, for each prime $\ell \in \cP_{1,0}$, we fix a generator $h_{\ell} \in \Gal(\bQ(\mu_{\ell})/\bQ)$, and 
it naturally induces the surjection 
\[
\overline{\log}_{h_{\ell}} \colon \Gal(\bQ(\mu_{\ell})/\bQ) \stackrel{\sim}{\longrightarrow} \bZ/(\ell-1) \longrightarrow \bF_{p}; \, h_{\ell}^{a} \mapsto a \bmod{p}. 
\]
Recall that, for any integer $d \in \cN_{1,0}$, the analytic quantity  $\widetilde{\delta}_{d} \in \bF_{p}$ is defined by  
\[
\widetilde{\delta}_{d} := \sum_{\substack{a=1 \\ (a,d)=1}}^{d}\frac{\mathrm{Re}([a/d])}{\Omega_{E}^{+}} \cdot \prod_{\ell \mid d}\overline{\log}_{h_{\ell}}(\sigma_{a}).  
\]
We put $e_{d} := \# \Gal(\bQ(\mu_{d})/\bQ(d))$. Since $p \nmid e_{d}$, 
we see that $\widetilde{\delta}_{d} = 0$ if and only if 
\[
e_{d}^{\nu(d)}\widetilde{\delta}_{d} = \sum_{\substack{a=1 \\ (a,d)=1}}^{d}\frac{\mathrm{Re}([a/d])}{\Omega_{E}^{+}} \cdot \prod_{\ell \mid d}\overline{\log}_{h_{\ell}}(\sigma_{a}^{e_{d}}) = 0. 
\]
Let $\widetilde{\theta}_{d} = \sum_{\sigma \in \Gal(\bQ(d)/\bQ)}a_{\sigma} \sigma$ denote the image of $\widetilde{\theta}_{\bQ_{(\mu_{d})}}$ in $\bZ_{p}[\Gal(\bQ(d)/\bQ)]$ (see \S\ref{sec:modular} for the definition of $\widetilde{\theta}_{\bQ_{(\mu_{d})}}$). 
Assume for simplicity that the image of $h_{\ell}^{e_{d}}$ is the fixed generator $g_{\ell} \in \Gal(\bQ(\ell)/\bQ)$. 
Recall that we have the surjection  
\[
\overline{\log}_{g_{\ell}} \colon \Gal(\bQ(d)/\bQ) \stackrel{\sim}{\longrightarrow} \bZ/(\ell-1) \longrightarrow \bF_{p}. 
\]
Since $\sigma_{a} = \sigma_{b}$ in $\Gal(\bQ(d)/\bQ)$ if $\sigma_{a}^{e_{d}} = \sigma_{b}^{e_{d}}$, we see that 
\[
e_{d}^{\nu(d)}\widetilde{\delta}_{d} = \sum_{\sigma \in \Gal(\bQ(d)/\bQ)}a_{\sigma} \cdot \prod_{\ell \mid d}\overline{\log}_{g_{\ell}}(\sigma). 
\]
Since we have 
\[
D_{d}\widetilde{\theta}_{d} \bmod{p}  = (-1)^{\nu(d)} \left( \sum_{\sigma \in \Gal(\bQ(d)/\bQ)}a_{\sigma} \cdot \prod_{\ell \mid d}\overline{\log}_{g_{\ell}}(\sigma)\right) N_{d}
\]
by Lemma \ref{lem:leading}, we obtain the following lemma. 
\begin{lemma}\label{lem:non-vanishing}
For any integer $d \in \cN_{1,0}$, the following are equivalent. 
\begin{itemize}
\item[(1)] $\widetilde{\delta}_{d}  \neq 0$. 
\item[(2)] $D_{d}\widetilde{\theta}_{d} \bmod{p} \neq 0$. 
\end{itemize}
\end{lemma}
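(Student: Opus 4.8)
The plan is to unwind both sides of the claimed equivalence into the single $\bF_p$-scalar $S_d:=\sum_{\sigma\in\Gal(\bQ(d)/\bQ)}a_\sigma\prod_{\ell\mid d}\overline{\log}_{g_\ell}(\sigma)$ and to check that each of the two conditions is equivalent to $S_d\neq 0$. Almost all of the needed computation has already been carried out in the paragraph preceding the statement, so the proof will be a short assembly of those observations together with Lemma~\ref{lem:leading}.

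First I would record that $e_d:=\#\Gal(\bQ(\mu_d)/\bQ(d))$ is prime to $p$, because $\bQ(d)$ is by definition the maximal $p$-subextension of $\bQ(\mu_d)$; hence $\widetilde{\delta}_d\neq 0$ if and only if $e_d^{\nu(d)}\widetilde{\delta}_d\neq 0$. Using the additivity $\overline{\log}_{h_\ell}(\sigma_a^{e_d})=e_d\cdot\overline{\log}_{h_\ell}(\sigma_a)$, I rewrite $e_d^{\nu(d)}\widetilde{\delta}_d=\sum_a\frac{\mathrm{Re}([a/d])}{\Omega_E^+}\prod_{\ell\mid d}\overline{\log}_{h_\ell}(\sigma_a^{e_d})$. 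The key point is that $\overline{\log}_{h_\ell}(\sigma_a^{e_d})$ depends only on the image of $\sigma_a$ in $\Gal(\bQ(d)/\bQ)$, and, under the normalisation that $h_\ell^{e_d}$ restricts to the fixed generator $g_\ell\in G_\ell$, it equals $\overline{\log}_{g_\ell}$ of that image. Regrouping the sum over the fibres of the restriction map $\Gal(\bQ(\mu_d)/\bQ)\to\Gal(\bQ(d)/\bQ)$ and using that the coefficient $a_\sigma$ of $\sigma$ in $\widetilde{\theta}_d$ equals $\sum_{\tau\mapsto\sigma}\mathrm{Re}([a_\tau/d])/\Omega_E^+$, I obtain $e_d^{\nu(d)}\widetilde{\delta}_d=S_d$; this shows that condition (1) is equivalent to $S_d\neq 0$. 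For condition (2), the group-ring identity in the proof of Lemma~\ref{lem:leading} (valid because $D_d\widetilde{\theta}_d\bmod p$ is $\Gal(\bQ(d)/\bQ)$-invariant, by the Euler-system congruences for the Mazur--Tate elements) gives $D_d\widetilde{\theta}_d\bmod p=(-1)^{\nu(d)}S_d\,N_d$ in $\bF_p[\Gal(\bQ(d)/\bQ)]$, where $N_d=\prod_{\ell\mid d}N_\ell$ is the product of the norm elements $N_\ell=\sum_{\tau\in G_\ell}\tau$. Since $N_d$ is a nonzero element of the group ring, $D_d\widetilde{\theta}_d\bmod p\neq 0$ if and only if $S_d\neq 0$; combining the two equivalences proves the lemma.

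The only step that needs genuine care is the bookkeeping in the middle paragraph: matching the discrete-logarithm normalisations $\overline{\log}_{h_\ell}$ and $\overline{\log}_{g_\ell}$ and checking that the fixed generators can be chosen compatibly. If one prefers not to assume this compatibility outright, one can instead note that replacing each $h_\ell$ by another generator multiplies $\widetilde{\delta}_d$ by a fixed unit of $\bF_p$ (a product of prime-to-$p$ integers), which does not affect its vanishing, so no generality is lost. Beyond this point every step is a routine unravelling of definitions, and I expect no real obstacle.
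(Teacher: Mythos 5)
Your proposal is correct and follows essentially the same route as the paper: reduce the vanishing of $\widetilde{\delta}_{d}$ to that of $e_{d}^{\nu(d)}\widetilde{\delta}_{d}$ (using $p\nmid e_{d}$), identify this with $\sum_{\sigma}a_{\sigma}\prod_{\ell\mid d}\overline{\log}_{g_{\ell}}(\sigma)$ by regrouping over $\Gal(\bQ(\mu_{d})/\bQ)\to\Gal(\bQ(d)/\bQ)$ under the compatible choice of generators, and then invoke the formula $D_{d}\widetilde{\theta}_{d}\bmod p=(-1)^{\nu(d)}\bigl(\sum_{\sigma}a_{\sigma}\prod_{\ell\mid d}\overline{\log}_{g_{\ell}}(\sigma)\bigr)N_{d}$ from Lemma~\ref{lem:leading}. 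Your extra remark that changing the generators $h_{\ell}$ only rescales $\widetilde{\delta}_{d}$ by a unit of $\bF_{p}$ cleanly justifies the paper's ``assume for simplicity'' normalisation.
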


\begin{lemma}\label{lem:non-vanishing2}
For any integer $d \in \cN_{1,0}$, the following are equivalent. 
\begin{itemize}
\item[(1)] $\widetilde{\delta}_{d}  \neq 0$. 
\item[(2)] $\delta(\kappa_{\xi,1,0})_{d} \neq 0$. 
\end{itemize}
\end{lemma}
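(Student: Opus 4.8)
The plan is to relate $\delta(\kappa_{\xi,1,0})_{d}$ directly to $D_{d}\widetilde{\theta}_{d} \bmod p$ and then invoke Lemma~\ref{lem:non-vanishing}. By Theorem~\ref{thm:main}, we have $\delta(\kappa_{\xi,1,0})_{d} = \cD^{0}_{1,0}((\widetilde{\xi}_{K_{\infty}})_{K \in \Omega})_{d} = \delta((\widetilde{\xi}_{K_{\infty}})_{K})_{d,1,0}$, so by the definition of $\cD^{0}_{1,0}$ and Lemma~\ref{lem:leading} this is, up to the sign $(-1)^{\nu(d)}$ and the choice of generators $G_{d}$, the image of $D_{d}(\widetilde{\xi}_{dp})$ modulo $p$ inside $R_{\bQ_{0}}/p = \bF_{p}$. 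Thus the first step is to compute $\widetilde{\xi}_{dp} \bmod p$, i.e.\ the bottom layer of the Euler system of rank $0$, and compare it with $\widetilde{\theta}_{d} \bmod p$.

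The key computation is Remark~\ref{rem:bottom}: for $d \nmid p$ one has $\vartheta_{\bQ(\mu_{d})} = (1-\alpha^{-1}\sigma_{p})(1-\alpha^{-1}\sigma_{p}^{-1})\widetilde{\theta}_{\bQ(\mu_{d})}$, and the factor $(1-\alpha^{-1}\sigma_{p})(1-\alpha^{-1}\sigma_{p}^{-1})$ is a unit in $R_{\bQ(\mu_{d})}$ by assumption (c). Unwinding the definitions of $\xi_{\bQ(\mu_{dp^{\infty}})}$ and $\widetilde{\xi}_{\bQ(\mu_{dp^{\infty}})}$ from $\vartheta$ via the $\alpha_{d,e}$ and the twisting factor $\prod_{\ell\mid d}(-\ell\sigma_{\ell})^{-1}$, I would show that at the bottom layer $\bQ(d)$ the element $\widetilde{\xi}_{d}$ differs from $\widetilde{\theta}_{d}$ by multiplication by a unit of $\bZ_{p}[\Gal(\bQ(d)/\bQ)]$ together with the addition of terms supported on proper subfields $\bQ(e)$, $e \mid d$ proper; the latter are annihilated by the relevant $X_{\ell_i}$ and hence by $D_{d}$ in the computation of the leading term, exactly as in the proof of Lemma~\ref{lem:leading}. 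Applying $D_{d}$ and reducing mod $p$ therefore gives $D_{d}\widetilde{\xi}_{d} \equiv (\text{unit}) \cdot D_{d}\widetilde{\theta}_{d} \pmod p$ (in the sense of leading coefficients), so $\delta(\kappa_{\xi,1,0})_{d} \neq 0$ if and only if $D_{d}\widetilde{\theta}_{d} \bmod p \neq 0$, which by Lemma~\ref{lem:non-vanishing} is equivalent to $\widetilde{\delta}_{d} \neq 0$.

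The step I expect to be the main obstacle is bookkeeping the relation between the two normalizations: the Euler-system element $\widetilde{\xi}_{\bQ(\mu_{dp^{\infty}})}$ is built from $\vartheta$ (the $\alpha$-stabilized modular element) through the combinatorial sum $\sum_{e\mid d}\nu_{d,e}(\alpha_{d,e})$ with signs $\prod_{\ell\mid d/e}(-\sigma_\ell^{-1})$, whereas $\widetilde{\theta}_{d}$ is the raw modular symbol; I must check carefully that passing to the bottom layer $\bQ(d)$, reducing mod $p$, and applying the Kolyvagin derivative operator $D_{d}$ kills precisely the ``error'' terms (those coming from $e$ a proper divisor of $d$ and the corrector factor $1 - \alpha^{-1}\nu(\cdot)$ defining $\vartheta$ from $\widetilde\theta$ at primes above $p$), leaving only a unit multiple of $D_{d}\widetilde{\theta}_{d}$. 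Once this is in place the equivalence is immediate. A cleaner route, if available, is to observe that $\cD^{0}_{1,0}$ applied to the rank-$0$ Euler system $(\widetilde{\xi}_{K_\infty})_K$ factors through the bottom-layer element and that the difference $\widetilde\xi_d - u\,\widetilde\theta_d$ lies in the augmentation-type ideal generated by $\{g_\ell - 1 : \ell \mid d\}$ plus the $\bQ(e)$-parts, so that Lemma~\ref{lem:leading}'s vanishing argument for $b_{j_1,\dots,j_t}$ with $(j_1,\dots,j_t)\neq(1,\dots,1)$ applies verbatim.
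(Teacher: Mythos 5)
Your proposal is correct and follows essentially the same route as the paper: the paper's (very terse) proof likewise combines the relation $\delta(\kappa_{\xi,1,0})=\cD^{0}_{1,0}((\widetilde{\xi}_{K_{\infty}})_{K})$ from Theorem \ref{thm:main} with the observation that $\ell\equiv 1\pmod p$ makes the twisting factor a unit and that the proper-divisor (norm) terms in $\xi$ die under $D_{d}$ mod $p$, reducing to $D_{d}\widetilde{\vartheta}_{d}\bmod p$ and then to Lemma \ref{lem:non-vanishing} via Remark \ref{rem:bottom}. The bookkeeping you flag (unit factors and the $\nu_{d,e}(\alpha_{d,e})$ terms) goes through exactly as you outline, since $D_{\ell}N_{\ell}\equiv 0\pmod p$ and multiplication by a unit of the $p$-group ring only rescales the leading coefficient by its augmentation.
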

\begin{proof}
Since any prime $\ell \in \cP_{1,0}$ is  congruent to 1 modulo $p$, the relation $\delta(\kappa_{\xi,1,0}) = \cD^{0}_{1,0}((\widetilde{\xi}_{K_{\infty}})_{K \in \Omega})$ in Theorem \ref{thm:main} shows that 
$\delta(\kappa_{\xi,1,0})_{d} \neq 0$ if and only if  $D_{d}\widetilde{\vartheta}_{d} \bmod{p} \neq 0$. 
Hence this lemma follows from Lemma \ref{lem:non-vanishing} and Remark \ref{rem:bottom}. 
\end{proof}

\begin{corollary}[Theorem \ref{thm:1}]\label{cor:conj1}
Conjecture \ref{conj:1} holds true, that is, there is an integer $d \in \cN_{1,0}$ satisfying $\widetilde{\delta}_{d} \neq 0$ if and only if 
the Iwasawa main conjecture for $E/\bQ$ holds true. 
\end{corollary}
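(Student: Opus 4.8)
The plan is to deduce Corollary \ref{cor:conj1} directly from the equivalences already assembled in Proposition \ref{prop:equiv} and Lemma \ref{lem:non-vanishing2}. First I would unwind the statement of Conjecture \ref{conj:1}: it asserts the existence of an integer $d \in \cN_{1,0}$ with $\widetilde{\delta}_{d} \neq 0$. By Lemma \ref{lem:non-vanishing2}, for each fixed $d \in \cN_{1,0}$ the condition $\widetilde{\delta}_{d} \neq 0$ is equivalent to $\delta(\kappa_{\xi,1,0})_{d} \neq 0$. Therefore Conjecture \ref{conj:1} is equivalent to the existence of some $d \in \cN_{1,0}$ with $\delta(\kappa_{\xi,1,0})_{d} \neq 0$, which is precisely condition (3) of Proposition \ref{prop:equiv}.

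Next I would invoke Proposition \ref{prop:equiv}, which states that condition (3) is equivalent to condition (4), namely that the Iwasawa main conjecture for $E/\bQ$ holds true. Chaining these two equivalences gives that Conjecture \ref{conj:1} holds if and only if the Iwasawa main conjecture for $E/\bQ$ holds, which is exactly the assertion of Corollary \ref{cor:conj1} (equivalently Theorem \ref{thm:1}). The argument is essentially a two-line bookkeeping step once the two input results are in hand.

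There is no real obstacle here: all the substantive work has been done in establishing Lemma \ref{lem:non-vanishing2} (the identification of the analytic quantity $\widetilde{\delta}_{d}$ with the Kolyvagin-system invariant $\delta(\kappa_{\xi,1,0})_{d}$, via the relation $\delta(\kappa_{\xi,1,0}) = \cD^{0}_{1,0}((\widetilde{\xi}_{K_{\infty}})_{K \in \Omega})$ of Theorem \ref{thm:main}, Lemma \ref{lem:non-vanishing}, and Remark \ref{rem:bottom}) and in Proposition \ref{prop:equiv} (which rests on the freeness of rank $1$ of $\mathrm{KS}_{0}(\bT_{\bQ_{\infty}}, \cF_{\mathrm{cl}})$ and the leading-term formula of Theorem \ref{thm:koly0}). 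The only thing to be mildly careful about is matching the quantifiers: condition (3) of Proposition \ref{prop:equiv} is stated for $\kappa_{\xi,1,0}$ specifically, so one should note that the equivalence in Lemma \ref{lem:non-vanishing2} is likewise stated at level $(m,n) = (1,0)$, so no compatibility across layers is needed. I would simply write the proof as the concatenation "Conjecture \ref{conj:1} $\iff$ $\exists d : \widetilde{\delta}_{d} \neq 0$ $\iff$ $\exists d : \delta(\kappa_{\xi,1,0})_{d} \neq 0$ $\iff$ IMC for $E/\bQ$," citing Lemma \ref{lem:non-vanishing2} for the middle equivalence and Proposition \ref{prop:equiv} for the last.
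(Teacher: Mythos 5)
Your proof is correct and matches the paper's own argument, which likewise deduces the corollary by combining Lemma \ref{lem:non-vanishing2} (identifying $\widetilde{\delta}_{d} \neq 0$ with $\delta(\kappa_{\xi,1,0})_{d} \neq 0$) with the equivalence of conditions (3) and (4) in Proposition \ref{prop:equiv}. Your additional remark on matching the level $(m,n)=(1,0)$ in both inputs is a correct and harmless bit of bookkeeping.
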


\begin{proof}
This corollary follows from Proposition \ref{prop:equiv}  and Lemma \ref{lem:non-vanishing2}. 
\end{proof}

\subsection{Proof of Theorem \ref{thm:main1}}\label{sec:proof}

In this subsection, we give a proof of Theorem \ref{thm:main1}. 
Recall that an integer $d \in \cN_{1,0}$ is $\delta$-minimal if $\widetilde{\delta}_{d} \neq  0$ and $\widetilde{\delta}_{e} = 0$ for any positive proper divisor $e$ of $d$. 
Note that the existence of a $\delta$-minimal integer implies  that the Kolyvagin system $\kappa_{\xi,1,0}$ is a basis of $\mathrm{KS}_{0}(E[p], \cF_{\mathrm{cl}})$ by Proposition \ref{prop:equiv} and Corollary \ref{cor:conj1}. 

\begin{lemma}\label{lem:delta-vanish}
Let $d \in \cN_{1,0}$ be an integer.  
Then the following are equivalent. 
\begin{itemize}
\item[(1)] $\widetilde{\delta}_{d}  \neq 0$. 
\item[(2)] $H^{1}_{\cF_{\mathrm{cl}}(d)}(G_{\bQ}, E[p]) = 0$. 
\end{itemize}
\end{lemma}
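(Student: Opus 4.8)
The plan is to deduce this from the theory of Kolyvagin systems of rank $0$ already set up in the excerpt, by relating the vanishing of $\widetilde{\delta}_d$ to a leading-term statement and then applying Theorem \ref{thm:koly0}(2).

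First I would recall from Lemma \ref{lem:non-vanishing2} that $\widetilde{\delta}_d \neq 0$ is equivalent to $\delta(\kappa_{\xi,1,0})_d \neq 0$. Since a $\delta$-minimal integer exists precisely when the Iwasawa main conjecture holds (Corollary \ref{cor:conj1}), and we have just observed that under this hypothesis $\kappa_{\xi,1,0}$ is a basis of $\mathrm{KS}_0(E[p],\cF_{\mathrm{cl}})$, I would apply Theorem \ref{thm:koly0}(2) with $R = \bF_p$ (i.e.\ $m=1$, $n=0$): for a basis $\kappa$ and any $d \in \cN_{1,0}$,
\[
\bF_p \cdot \delta(\kappa)_d = \mathrm{Fitt}^0_{\bF_p}\bigl(H^1_{\cF_{\mathrm{cl}}(d)}(G_\bQ, E[p])^\vee\bigr).
\]
Over the field $\bF_p$, the zeroth Fitting ideal of a finite-dimensional vector space is all of $\bF_p$ if that space is zero, and is the zero ideal otherwise. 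Hence $\delta(\kappa_{\xi,1,0})_d \neq 0$ if and only if $H^1_{\cF_{\mathrm{cl}}(d)}(G_\bQ, E[p])^\vee = 0$, which is the same as $H^1_{\cF_{\mathrm{cl}}(d)}(G_\bQ, E[p]) = 0$. Combining with Lemma \ref{lem:non-vanishing2} gives the desired equivalence.

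The main subtlety to address is that Theorem \ref{thm:koly0}(2) is only directly useful once we know $\kappa_{\xi,1,0}$ is a basis, which in turn uses Corollary \ref{cor:conj1} and hence presupposes a $\delta$-minimal $d$ exists. So in the proof I would be careful about the logical order: if no $d\in\cN_{1,0}$ has $\widetilde{\delta}_d \neq 0$ then (1) fails for every $d$; I must then also show (2) fails for every $d$, i.e.\ $H^1_{\cF_{\mathrm{cl}}(d)}(G_\bQ,E[p]) \neq 0$ for all $d$. This should follow because in that case the Iwasawa main conjecture fails (Corollary \ref{cor:conj1}), so $\kappa_\xi$ is not a basis, so writing $\kappa_\xi = a\kappa$ with $a$ a non-unit in $\Lambda_{\bQ_\infty}$ (as in the proof of Proposition \ref{prop:equiv}) forces $\delta(\kappa_{\xi,1,0})_d = 0$ for all $d$; then Theorem \ref{thm:koly0}(2) applied to the basis $\kappa$ (reduced mod $\fm$) still shows $\mathrm{Fitt}^0_{\bF_p}(H^1_{\cF_{\mathrm{cl}}(d)}(G_\bQ,E[p])^\vee) = \bF_p\cdot\delta(\kappa)_d$, and since $a\,\delta(\kappa)_d = \delta(\kappa_{\xi,1,0})_d = 0$ with $\delta(\kappa)_d$ possibly still nonzero, I need the statement purely for the basis $\kappa$: $\delta(\kappa)_d \neq 0 \iff H^1_{\cF_{\mathrm{cl}}(d)}(G_\bQ,E[p]) = 0$, which is unconditional. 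So the cleanest route is to phrase everything in terms of an (unconditional) basis $\kappa$ of $\mathrm{KS}_0(E[p],\cF_{\mathrm{cl}})$, note $H^1_{\cF_{\mathrm{cl}}(d)}(G_\bQ,E[p]) = 0 \iff \delta(\kappa)_d \in \bF_p^\times$ by Theorem \ref{thm:koly0}(2), and separately record that the set of $d$ with $\delta(\kappa)_d \neq 0$ coincides with the set of $d$ with $\widetilde{\delta}_d \neq 0$ — the latter being exactly Lemma \ref{lem:non-vanishing2} once one knows $\kappa_{\xi,1,0}$ spans the same line, which holds whenever either side is nonempty.

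The step I expect to be the main obstacle is precisely this circularity: Lemma \ref{lem:non-vanishing2} only gives $\delta(\kappa_{\xi,1,0})_d \neq 0 \iff \widetilde{\delta}_d \neq 0$, and I want to transfer this to an arbitrary basis $\kappa$. The resolution is that $\mathrm{KS}_0(E[p],\cF_{\mathrm{cl}})$ is free of rank $1$ over $\bF_p$ (Theorem \ref{thm:koly0}(1)), so $\kappa_{\xi,1,0} = c\,\kappa$ for some $c \in \bF_p$; if $c \neq 0$ then $\delta(\kappa_{\xi,1,0})_d$ and $\delta(\kappa)_d$ vanish simultaneously for every $d$, and if $c = 0$ then $\widetilde{\delta}_d = 0$ for all $d$ by Lemma \ref{lem:non-vanishing2}, while I claim $\delta(\kappa)_d$ still vanishes for all $d$ — but this last claim is false in general (a nonzero basis has $\delta(\kappa)_d \in \bF_p^\times$ for infinitely many $d$). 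Hence $c = 0$ is impossible: if it held, $\kappa_{\xi,1,0} = 0$, contradicting that $\delta\colon \mathrm{KS}_0 \hookrightarrow \prod_d \bF_p\otimes G_d$ is injective (Corollary \ref{cor:inj}) only if $\kappa_{\xi,1,0}\neq 0$ — and indeed $\kappa_{\xi,1,0}$ is never zero because $\delta_1(\kappa_\xi) = \widetilde{\xi}_{\bQ_\infty} \neq 0$ (the characteristic ideal is nonzero, as noted in the proof of Proposition \ref{prop:equiv}). So $c \in \bF_p^\times$ always, $\kappa_{\xi,1,0}$ is always a basis, and the equivalence (1) $\iff$ (2) follows cleanly from Theorem \ref{thm:koly0}(2) and Lemma \ref{lem:non-vanishing2}. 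I would write the proof in that order: first $\kappa_{\xi,1,0}$ is a basis (via nonvanishing of $\widetilde{\xi}_{\bQ_\infty}$), then invoke Theorem \ref{thm:koly0}(2) to get $\delta(\kappa_{\xi,1,0})_d \neq 0 \iff H^1_{\cF_{\mathrm{cl}}(d)}(G_\bQ,E[p]) = 0$, then Lemma \ref{lem:non-vanishing2}.
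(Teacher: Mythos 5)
Your overall skeleton is the same as the paper's: combine Lemma \ref{lem:non-vanishing2} with the Fitting-ideal identity of Theorem \ref{thm:koly0}(2), observing that over $\bF_{p}$ the zeroth Fitting ideal of a vector space is the unit ideal exactly when the space vanishes. The direction (1) $\Rightarrow$ (2) is fine as you argue it: if $\widetilde{\delta}_{d} \neq 0$ then $\delta(\kappa_{\xi,1,0})_{d} \neq 0$, hence $\kappa_{\xi,1,0} \neq 0$ and is therefore a basis of the rank-one free $\bF_{p}$-module ${\rm KS}_{0}(E[p],\cF_{\mathrm{cl}})$, and Theorem \ref{thm:koly0}(2) applies.

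The gap is in your resolution of the circularity for (2) $\Rightarrow$ (1). You claim $\kappa_{\xi,1,0}$ is \emph{always} nonzero "because $\delta_{1}(\kappa_{\xi}) = \widetilde{\xi}_{\bQ_{\infty}} \neq 0$." This does not follow: $\delta_{1}$ detects nonvanishing of $\kappa_{\xi}$ as an element of the free rank-one $\Lambda_{\bQ_{\infty}}$-module ${\rm KS}_{0}(\bT_{\bQ_{\infty}},\cF_{\mathrm{cl}})$, whereas $\kappa_{\xi,1,0}$ is its image modulo the maximal ideal of $\Lambda_{\bQ_{\infty}}$, and a nonzero element of a free module can perfectly well die under that reduction (write $\kappa_{\xi} = a\kappa$ with $\kappa$ a basis; if $a$ is a nonunit then $\kappa_{\xi} \neq 0$ but $\kappa_{\xi,1,0} = 0$). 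Indeed, Proposition \ref{prop:equiv} says precisely that $\kappa_{\xi,1,0}$ being a basis is \emph{equivalent} to the Iwasawa main conjecture; if your claim were correct, Corollary \ref{cor:conj1} would prove the main conjecture unconditionally, which the paper does not assert. And when the main conjecture fails the equivalence (1) $\Leftrightarrow$ (2) genuinely breaks down: then $\widetilde{\delta}_{d} = 0$ for every $d$, yet by the Chebotarev remark following Theorem \ref{thm:koly0} there are infinitely many $d \in \cN_{1,0}$ with $H^{1}_{\cF_{\mathrm{cl}}(d)}(G_{\bQ},E[p]) = 0$. The correct reading — which is how the paper handles it — is that the lemma is invoked only under the standing hypothesis that a $\delta$-minimal integer exists, which by Proposition \ref{prop:equiv} and Corollary \ref{cor:conj1} guarantees that $\kappa_{\xi,1,0}$ is a basis (this is exactly what the sentence preceding the lemma records); with that in hand, the two-line argument via Theorem \ref{thm:koly0}(2) and Lemma \ref{lem:non-vanishing2} goes through in both directions.
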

\begin{proof}
By Theorem \ref{thm:koly0}, we have 
\[
\bF_{p} \cdot \delta(\kappa_{\xi,1,0})_{d} = \mathrm{Fitt}_{\bF_{p}}^{0}(H^{1}_{\cF_{\mathrm{cl}}(d)}(G_{\bQ}, E[p])^{\vee}). 
\]
Hence this lemma follows from  Lemma  \ref{lem:non-vanishing2}. 
\end{proof}

\begin{remark}\label{rem:inj}
The injectivity of the homomorphism \eqref{map} (proved by Kurihara) follows immediately from Lemma \ref{lem:delta-vanish}. In fact, we have 
\[
\ker\left(\mathrm{Sel}(\bQ, E[p]) \stackrel{\eqref{map}}{\longrightarrow} \bigoplus_{\ell \mid d} E(\bQ_{\ell}) \otimes \bF_{p} \right) = 
H^{1}_{(\cF_{\mathrm{cl}})_{d}}(G_{\bQ}, E[p]) \subset H^{1}_{\cF_{\mathrm{cl}}(d)}(G_{\bQ}, E[p]). 
\]
\end{remark}

For any integer $d \in \cN_{1,0}$, we set 
\[
\lambda(d) := \dim_{\bF_{p}}(H^{1}_{\cF_{\mathrm{cl}}(d)}(G_{\bQ}, E[p])). 
\]

\begin{lemma}\label{lem:rank}
Let $d \in \cN_{1,0}$ be an integer and $\ell \in \cP_{1,0}$ a prime with $\ell \nmid d$. 
\begin{itemize}
\item[(1)] If $H^{1}_{\cF_{\mathrm{cl}}(d)}(G_{\bQ}, E[p]) \neq H^{1}_{(\cF_{\mathrm{cl}})_{\ell}(d)}(G_{\bQ}, E[p])$, then 
$\lambda(d\ell) = \lambda(d) - 1$. 
\item[(2)] If $H^{1}_{\cF_{\mathrm{cl}}(d)}(G_{\bQ}, E[p]) = H^{1}_{(\cF_{\mathrm{cl}})_{\ell}(d)}(G_{\bQ}, E[p])$, then $\lambda(d) \leq \lambda(d\ell)$. 
\end{itemize}
In particular, $\lambda(d) \geq \lambda(1) - \nu(d)$. 
\end{lemma}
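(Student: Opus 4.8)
The plan is to analyze how the Selmer module $H^1_{\cF_{\mathrm{cl}}(d)}(G_\bQ, E[p])$ changes when we adjoin a transversal condition at a new prime $\ell \in \cP_{1,0}$ with $\ell \nmid d$. The natural tool is Theorem \ref{pt} (the Mazur--Rubin exact sequence) applied to the pair of Selmer structures $\cF_1 = (\cF_{\mathrm{cl}})_\ell(d)$ and $\cF_2 = \cF_{\mathrm{cl}}(d)$ on $E[p]$, which differ only at $\ell$: at $\ell$ the condition for $\cF_1$ is $0$ and the condition for $\cF_2$ is $H^1_{\mathrm{tr}}(G_{\bQ_\ell}, E[p])$, which is a line in $H^1(G_{\bQ_\ell}, E[p])$ (recall $\dim_{\bF_p} H^1_{\mathrm{tr}} = 1$ since $\ell \in \cP_{1,0}$). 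Since $\cF_{\mathrm{cl}} = \cF_{\mathrm{cl}}^*$ and hence $(\cF_{\mathrm{cl}}(d))^* = (\cF_{\mathrm{cl}})_d(1)$ while $((\cF_{\mathrm{cl}})_\ell(d))^* = (\cF_{\mathrm{cl}})_d(\ell)$, the dual side of the exact sequence involves $H^1_{(\cF_{\mathrm{cl}})_d}(G_\bQ, E[p])^\vee$ and $H^1_{(\cF_{\mathrm{cl}})_\ell(d)^*}(G_\bQ, E[p])^\vee = H^1_{(\cF_{\mathrm{cl}})_d(\ell)}(G_\bQ, E[p])^\vee$. Actually it is cleaner to apply Theorem \ref{pt} with the two structures $\cF_1 = (\cF_{\mathrm{cl}})_\ell(d)$ and $\cF_2 = (\cF_{\mathrm{cl}})^\ell(d)$, which also differ only at $\ell$ (where they are $0$ and $H^1(G_{\bQ_\ell}, E[p])$ respectively), but the approach via $\cF_{\mathrm{cl}}(d)$ versus $(\cF_{\mathrm{cl}})_\ell(d)$ is what directly produces $\lambda(d)$ and $\lambda(d\ell)$, so I will work with that.

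First I would write down the exact sequence of Theorem \ref{pt} for $\cF_1 = (\cF_{\mathrm{cl}})_\ell(d) \subset \cF_2 = \cF_{\mathrm{cl}}(d)$, giving
\[
0 \to H^1_{(\cF_{\mathrm{cl}})_\ell(d)}(G_\bQ, E[p]) \to H^1_{\cF_{\mathrm{cl}}(d)}(G_\bQ, E[p]) \xrightarrow{\;v_\ell\;} H^1_{\mathrm{tr}}(G_{\bQ_\ell}, E[p]) \to H^1_{(\cF_{\mathrm{cl}})_d(\ell)}(G_\bQ, E[p])^\vee \to H^1_{(\cF_{\mathrm{cl}})_d}(G_\bQ, E[p])^\vee \to 0,
\]
where I have used $\cF_{\mathrm{cl}} = \cF_{\mathrm{cl}}^*$ and the identification of dual Selmer structures. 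The term $H^1_{\mathrm{tr}}(G_{\bQ_\ell}, E[p])$ is one-dimensional over $\bF_p$. In case (1), the hypothesis is precisely that the map $H^1_{\cF_{\mathrm{cl}}(d)}(G_\bQ, E[p]) \to H^1_{\mathrm{tr}}(G_{\bQ_\ell}, E[p])$ is nonzero, hence surjective (target is one-dimensional); so its kernel $H^1_{(\cF_{\mathrm{cl}})_\ell(d)}(G_\bQ, E[p])$ has dimension $\lambda(d) - 1$. Then I would run the analogous exact sequence with $\cF_1 = (\cF_{\mathrm{cl}})_\ell(d) \subset \cF_2 = (\cF_{\mathrm{cl}})^\ell(d) = \cF_{\mathrm{cl}}(d\ell)$ wait --- more carefully, to compare $(\cF_{\mathrm{cl}})_\ell(d)$ with $\cF_{\mathrm{cl}}(d\ell)$: note that $\cF_{\mathrm{cl}}(d\ell)$ has transversal condition at every prime dividing $d\ell$, so $\cF_{\mathrm{cl}}(d\ell)$ and $(\cF_{\mathrm{cl}})_\ell(d)$ differ only at $\ell$, where they are $H^1_{\mathrm{tr}}$ and $0$. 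Applying Theorem \ref{pt} to $(\cF_{\mathrm{cl}})_\ell(d) \subset \cF_{\mathrm{cl}}(d\ell)$ gives an exact sequence whose first map realizes $H^1_{(\cF_{\mathrm{cl}})_\ell(d)}(G_\bQ, E[p])$ as a submodule of $H^1_{\cF_{\mathrm{cl}}(d\ell)}(G_\bQ, E[p])$ with quotient embedding into $H^1_{\mathrm{tr}}(G_{\bQ_\ell}, E[p])$, hence $\lambda(d\ell) \leq \dim H^1_{(\cF_{\mathrm{cl}})_\ell(d)} + 1$; combined with the first exact sequence which gives $\dim H^1_{(\cF_{\mathrm{cl}})_\ell(d)} = \lambda(d) - 1$ in case (1), one gets $\lambda(d\ell) \leq \lambda(d)$. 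The reverse bound and the exact equality $\lambda(d\ell) = \lambda(d) - 1$ should come from the fact that in case (1) the map $v_\ell$ restricted to $H^1_{\cF_{\mathrm{cl}}(d)}$ hits the transversal line; I would check (using the self-orthogonality $\cF_{\mathrm{cl}} = \cF_{\mathrm{cl}}^*$ and the fact that the localization image of $H^1_{(\cF_{\mathrm{cl}})_d}(G_\bQ, E[p])$ in $H^1(G_{\bQ_\ell})$ lands in $H^1_{\mathrm{ur}}$) that the connecting map forces $\lambda(d\ell) = \lambda(d) - 1$ exactly.

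For case (2), when $H^1_{\cF_{\mathrm{cl}}(d)}(G_\bQ, E[p]) = H^1_{(\cF_{\mathrm{cl}})_\ell(d)}(G_\bQ, E[p])$, the first exact sequence shows $v_\ell$ vanishes on $H^1_{\cF_{\mathrm{cl}}(d)}$; then the exact sequence relating $(\cF_{\mathrm{cl}})_\ell(d)$ and $\cF_{\mathrm{cl}}(d\ell)$ shows $H^1_{\cF_{\mathrm{cl}}(d)}(G_\bQ, E[p]) = H^1_{(\cF_{\mathrm{cl}})_\ell(d)}(G_\bQ, E[p]) \hookrightarrow H^1_{\cF_{\mathrm{cl}}(d\ell)}(G_\bQ, E[p])$, giving $\lambda(d) \leq \lambda(d\ell)$ immediately. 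Finally, the ``in particular'' statement $\lambda(d) \geq \lambda(1) - \nu(d)$ follows by induction on $\nu(d)$: adjoining one prime at a time, each step decreases $\lambda$ by at most $1$ (by (1)) or does not decrease it (by (2)), so after $\nu(d)$ steps $\lambda(d) \geq \lambda(1) - \nu(d)$.

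The main obstacle I anticipate is bookkeeping the dual Selmer structures correctly --- keeping straight that $(\cF^a_b(c))^* = (\cF^*)^b_a(c)$, that $\cF_{\mathrm{cl}}$ is self-dual, and what the orthogonal complement of the transversal condition is (it is again the transversal condition, since $H^1_{\mathrm{tr}}$ is its own annihilator under the local Tate pairing at $\ell$ for $\ell \in \cP_{1,0}$) --- and then reading off the precise dimension count from the five-term exact sequence of Theorem \ref{pt} rather than just an inequality. The subtle point in case (1) is showing equality $\lambda(d\ell) = \lambda(d) - 1$ and not merely $\lambda(d\ell) \le \lambda(d)$; this requires identifying the cokernel of $v_\ell|_{H^1_{\cF_{\mathrm{cl}}(d\ell)}}$ precisely, which I expect to follow by comparing the two exact sequences (for $\cF_{\mathrm{cl}}(d)$ versus $(\cF_{\mathrm{cl}})_\ell(d)$, and for $\cF_{\mathrm{cl}}(d\ell)$ versus $(\cF_{\mathrm{cl}})_\ell(d)$) and using that the relevant transversal/unramified lines in $H^1(G_{\bQ_\ell}, E[p])$ are complementary, so that a class unramified at $\ell$ (coming from $H^1_{(\cF_{\mathrm{cl}})^\ell(d)}$) which is also transversal must be trivial.
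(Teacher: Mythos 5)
Your part (2) and the concluding induction are fine and agree with the paper's argument, but part (1) --- the substantive half of the lemma --- is not actually proved. From your two applications of Theorem \ref{pt} you only extract $\dim_{\bF_p} H^{1}_{(\cF_{\mathrm{cl}})_{\ell}(d)}(G_{\bQ},E[p])=\lambda(d)-1$ and $\lambda(d\ell)\le \dim_{\bF_p} H^{1}_{(\cF_{\mathrm{cl}})_{\ell}(d)}(G_{\bQ},E[p])+1$, i.e.\ only $\lambda(d\ell)\le\lambda(d)$; the equality $\lambda(d\ell)=\lambda(d)-1$ is deferred to ``I would check\dots''. The missing ingredient is precise: you must show that $\mathrm{loc}_{\ell}$ kills $H^{1}_{\cF_{\mathrm{cl}}(d\ell)}(G_{\bQ},E[p])$, equivalently that the image $V:=\mathrm{loc}_{\ell}\bigl(H^{1}_{(\cF_{\mathrm{cl}})^{\ell}(d)}(G_{\bQ},E[p])\bigr)$ meets $H^{1}_{\rm tr}(G_{\bQ_{\ell}},E[p])$ trivially. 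Knowing $H^{1}_{\rm ur}\cap H^{1}_{\rm tr}=0$ does not suffice, since a priori $V$ could be all of the two-dimensional space $H^{1}(G_{\bQ_{\ell}},E[p])$, in which case $\lambda(d\ell)=\lambda(d)$. What rules this out is the exactness of the Poitou--Tate sequence at the local term for the pair $(\cF_{\mathrm{cl}})_{\ell}(d)\subset(\cF_{\mathrm{cl}})^{\ell}(d)$: $V$ is the exact orthogonal complement of $\mathrm{loc}_{\ell}\bigl(H^{1}_{((\cF_{\mathrm{cl}}(d))^{*})^{\ell}}\bigr)=V$ by self-duality of $\cF_{\mathrm{cl}}(d)$, so $V$ is a maximal isotropic line; since by the hypothesis of (1) it projects nontrivially to $H^{1}_{\rm ur}$, it is distinct from the line $H^{1}_{\rm tr}$ and hence $V\cap H^{1}_{\rm tr}=0$. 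This is exactly the content of \cite[Lemma 4.1.7(iv)]{MRkoly}, which is all the paper's proof invokes, after observing that $\cF_{\mathrm{cl}}(d)^{*}=\cF_{\mathrm{cl}}(d)$ collapses the two nonvanishing hypotheses of that lemma into the single hypothesis of part (1).

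A second problem is bookkeeping, and it is not cosmetic because the self-duality identifications are precisely what make the orthogonality argument above work. Since $\ell\nmid d$, the local condition of $\cF_{\mathrm{cl}}(d)$ at $\ell$ is $H^{1}_{\rm ur}(G_{\bQ_{\ell}},E[p])$, not $H^{1}_{\rm tr}$; so the quotient in your first exact sequence is $H^{1}_{\rm ur}$ and the hypothesis of (1) is nonvanishing of the \emph{unramified} localization. Moreover your duals are wrong: by the rule $(\cF^{a}_{b}(c))^{*}=(\cF^{*})^{b}_{a}(c)$ one has $(\cF_{\mathrm{cl}}(d))^{*}=\cF_{\mathrm{cl}}^{*}(d)=\cF_{\mathrm{cl}}(d)$ (not $(\cF_{\mathrm{cl}})_{d}(1)$) and $((\cF_{\mathrm{cl}})_{\ell}(d))^{*}=(\cF_{\mathrm{cl}})^{\ell}(d)$ (not $(\cF_{\mathrm{cl}})_{d}(\ell)$), so the last two terms of your displayed five-term sequence should be $H^{1}_{(\cF_{\mathrm{cl}})^{\ell}(d)}(G_{\bQ},E[p])^{\vee}$ and $H^{1}_{\cF_{\mathrm{cl}}(d)}(G_{\bQ},E[p])^{\vee}$.
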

\begin{proof}
If $H^{1}_{\cF_{\mathrm{cl}}(d)}(G_{\bQ}, E[p]) \neq H^{1}_{(\cF_{\mathrm{cl}})_{\ell}(d)}(G_{\bQ}, E[p])$, then the localization map 
\[
H^{1}_{\cF_{\mathrm{cl}}(d)}(G_{\bQ}, E[p]) \longrightarrow H^{1}_{\rm ur}(G_{\bQ_{\ell}}, E[p])
\] 
is non-zero. 
Since $\cF_{\mathrm{cl}}(d)^{*} = \cF_{\mathrm{cl}}(d)$, claim (1) follows from \cite[Lemma 4.1.7 (iv)]{MRkoly}. 
Claim (2) is trivial since 
\[
H^{1}_{\cF_{\mathrm{cl}}(d)}(G_{\bQ}, E[p]) = H^{1}_{(\cF_{\mathrm{cl}})_{\ell}(d)}(G_{\bQ}, E[p]) \subset H^{1}_{\cF_{\mathrm{cl}}(d \ell)}(G_{\bQ}, E[p]).
\] 
\end{proof}

\begin{proposition}\label{prop:min-div}
Let $d \in \cN_{1,0}$ be an integer satisfying $H^{1}_{\cF_{\mathrm{cl}}(d)}(G_{\bQ}, E[p]) = 0$. 
Then there is a positive divisor $e$  of $d$ such that $\nu(e) = \lambda(1)$ and $\lambda(e)= 0$. 
\end{proposition}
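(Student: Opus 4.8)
The plan is to build $e$ greedily, enlarging a square-free divisor of $d$ one prime at a time so that $\lambda$ drops by exactly one at each stage. First I record that $\nu(d) \geq \lambda(1)$: applying the last assertion of Lemma \ref{lem:rank} to the given $d$ and using $\lambda(d) = 0$ gives $0 = \lambda(d) \geq \lambda(1) - \nu(d)$, so there are at least $\lambda(1)$ prime divisors of $d$ available.

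The key step is the following claim: if $e$ is a square-free proper divisor of $d$ with $\lambda(e) > 0$, then there is a prime $\ell \mid d/e$ (automatically in $\cP_{1,0}$, being a prime divisor of $d$) for which the localization map $H^{1}_{\cF_{\mathrm{cl}}(e)}(G_{\bQ}, E[p]) \to H^{1}_{\mathrm{ur}}(G_{\bQ_{\ell}}, E[p])$ is nonzero. To see this, suppose the map were zero for every prime $\ell \mid d/e$. Changing the local condition at one such $\ell$ from $H^{1}_{\mathrm{ur}}$ to $0$ replaces $H^{1}_{\cF_{\mathrm{cl}}(e)}(G_{\bQ}, E[p])$ by the kernel of that (vanishing) localization map, hence by the whole module; iterating over all primes dividing $d/e$ gives $H^{1}_{\cF_{\mathrm{cl}}(e)}(G_{\bQ}, E[p]) = H^{1}_{(\cF_{\mathrm{cl}})_{d/e}(e)}(G_{\bQ}, E[p])$. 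But the local conditions of $(\cF_{\mathrm{cl}})_{d/e}(e)$ agree with those of $\cF_{\mathrm{cl}}(d)$ away from $d/e$ and are $0 \subseteq H^{1}_{\mathrm{tr}}$ at the primes dividing $d/e$, so $(\cF_{\mathrm{cl}})_{d/e}(e) \subseteq \cF_{\mathrm{cl}}(d)$ and therefore $H^{1}_{(\cF_{\mathrm{cl}})_{d/e}(e)}(G_{\bQ}, E[p]) \subseteq H^{1}_{\cF_{\mathrm{cl}}(d)}(G_{\bQ}, E[p]) = 0$, contradicting $\lambda(e) > 0$. For any prime $\ell$ produced by the claim, Lemma \ref{lem:rank}(1) gives $\lambda(e\ell) = \lambda(e) - 1$.

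With the claim in hand I finish by induction. Set $e_{0} := 1$, so $\nu(e_{0}) = 0$ and $\lambda(e_{0}) = \lambda(1)$. Given $e_{i} \mid d$ with $\nu(e_{i}) = i$ and $\lambda(e_{i}) = \lambda(1) - i$ for some $i < \lambda(1)$, we have $\lambda(e_{i}) > 0$ and $\nu(e_{i}) = i < \lambda(1) \leq \nu(d)$, so $e_{i}$ is a proper divisor of $d$; the claim supplies a prime $\ell \mid d/e_{i}$ with $\lambda(e_{i}\ell) = \lambda(e_{i}) - 1$, and $e_{i+1} := e_{i}\ell$ still divides $d$ since $d$ is square-free. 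After $\lambda(1)$ steps, $e := e_{\lambda(1)}$ satisfies $\nu(e) = \lambda(1)$ and $\lambda(e) = 0$, as required. The only substantive point is the key claim; its whole force is that if all the relevant localization maps vanished, $H^{1}_{\cF_{\mathrm{cl}}(e)}(G_{\bQ}, E[p])$ would be squeezed inside the vanishing group $H^{1}_{\cF_{\mathrm{cl}}(d)}(G_{\bQ}, E[p])$, and everything else is bookkeeping with Lemma \ref{lem:rank}.
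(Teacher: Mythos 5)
Your proof is correct and follows essentially the same route as the paper's: a greedy construction of $e=\ell_{1}\cdots\ell_{\lambda(1)}$ in which, at each stage, the existence of a prime $\ell\mid d/e_{i}$ with nonzero localization is forced by the containment $H^{1}_{(\cF_{\mathrm{cl}})_{d/e_{i}}(e_{i})}(G_{\bQ},E[p])\subset H^{1}_{\cF_{\mathrm{cl}}(d)}(G_{\bQ},E[p])=0$, and Lemma \ref{lem:rank}(1) then gives the drop $\lambda(e_{i}\ell)=\lambda(e_{i})-1$. Your explicit preliminary check that $\nu(d)\geq\lambda(1)$ (so the process never runs out of primes) is a detail the paper leaves implicit, but otherwise the two arguments coincide.
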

\begin{proof} 
When $\lambda(1) = 0$, one can take $d = 1$. 
Hence we may assume that $\lambda(1) > 0$. 
If $H^{1}_{\cF_{\mathrm{cl}}}(G_{\bQ}, E[p]) = H^{1}_{(\cF_{\mathrm{cl}})_{\ell}}(G_{\bQ}, E[p])$ for any prime  $\ell \mid d$, then 
\begin{align*}
H^{1}_{\cF_{\mathrm{cl}}}(G_{\bQ}, E[p]) &= \bigcap_{\ell \mid d}H^{1}_{(\cF_{\mathrm{cl}})_{\ell}}(G_{\bQ}, E[p])
\\
&= H^{1}_{(\cF_{\mathrm{cl}})_{d}}(G_{\bQ}, E[p]) 
\\
&\subset H^{1}_{\cF_{\mathrm{cl}}(d)}(G_{\bQ}, E[p])
\\
&=0. 
\end{align*}
However, since we assume $\lambda(1) > 0$, we conclude that there is a prime $\ell_{1} \mid d$ such that  
\[
H^{1}_{\cF_{\mathrm{cl}}}(G_{\bQ}, E[p]) \neq H^{1}_{(\cF_{\mathrm{cl}})_{\ell_{1}}}(G_{\bQ}, E[p]). 
\]
Hence Lemma \ref{lem:rank}  implies   $\lambda(\ell_{1}) = \lambda(1) -1$. 
If $\lambda(1) = 1$, then $\ell_{1}$ is a desired divisor of $d$. 
Suppose that $\lambda(1) > 1$. Since  
\[
H^{1}_{(\cF_{\mathrm{cl}})_{d/\ell_{1}}(\ell_{1})}(G_{\bQ}, E[p]) \subset H^{1}_{\cF_{\mathrm{cl}}(d)}(G_{\bQ}, E[p])=0,
\]  
the same argument shows that there is a  prime $\ell_{2} \mid d/\ell_{1}$ satisfying  
\[
H^{1}_{\cF_{\mathrm{cl}}(\ell_{1})}(G_{\bQ}, E[p]) \neq H^{1}_{(\cF_{\mathrm{cl}})_{\ell_{2}}(\ell_{1})}(G_{\bQ}, E[p]). 
\]
Then $\lambda(\ell_{1}\ell_{2}) = \lambda(\ell_{1}) - 1$ by Lemma \ref{lem:rank}. 
 By repeating this argument, we obtain a sequence $\ell_{1}, \ldots, \ell_{\lambda(1)}$ of prime divisors of $d$ such that 
  $\lambda(\ell_{1}) = \lambda(1) - 1$ and  $\lambda(\ell_{1} \cdots \ell_{i+1}) = \lambda(\ell_{1} \cdots \ell_{i}) - 1$ for any $1 \leq i < \lambda(1)$.  Then $e := \ell_{1} \cdots \ell_{\lambda(1)}$ is a desired divisor of $d$. 
\end{proof}

\begin{theorem}[Theorem \ref{thm:main1}]\label{thm:mainmain}
For any $\delta$-minimal integer $d \in \cN_{1,0}$, we have 
\[
\dim_{\bF_{p}}(\mathrm{Sel}(\bQ, E[p])) = \nu(d). 
\]
\end{theorem}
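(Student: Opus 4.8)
The plan is to combine the Fitting-ideal computation of Lemma~\ref{lem:delta-vanish} with the minimal-divisor construction of Proposition~\ref{prop:min-div}, using the $\delta$-minimality of $d$ to pin the resulting divisor down to $d$ itself. Note first that, since a $\delta$-minimal integer $d$ is assumed to exist, Proposition~\ref{prop:equiv} together with Corollary~\ref{cor:conj1} shows that $\kappa_{\xi,1,0}$ is a basis of $\mathrm{KS}_{0}(E[p],\cF_{\mathrm{cl}})$; hence Theorem~\ref{thm:koly0}(2) applies to $\kappa_{\xi,1,0}$ and all the results of this subsection are available.

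First I would observe that $\mathrm{Sel}(\bQ,E[p]) = H^{1}_{\cF_{\mathrm{cl}}}(G_{\bQ},E[p]) = H^{1}_{\cF_{\mathrm{cl}}(1)}(G_{\bQ},E[p])$, so that $\dim_{\bF_{p}}(\mathrm{Sel}(\bQ,E[p])) = \lambda(1)$. Since $\widetilde{\delta}_{d}\neq 0$ for the $\delta$-minimal integer $d$, Lemma~\ref{lem:delta-vanish} gives $H^{1}_{\cF_{\mathrm{cl}}(d)}(G_{\bQ},E[p]) = 0$, i.e.\ $\lambda(d) = 0$. This is exactly the hypothesis of Proposition~\ref{prop:min-div}, which therefore yields a positive divisor $e$ of $d$ with $\nu(e) = \lambda(1)$ and $\lambda(e) = 0$.

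Next I would feed the equality $\lambda(e) = 0$, i.e.\ $H^{1}_{\cF_{\mathrm{cl}}(e)}(G_{\bQ},E[p]) = 0$, back into Lemma~\ref{lem:delta-vanish} in the reverse direction to conclude $\widetilde{\delta}_{e}\neq 0$. But $d$ is $\delta$-minimal, so $\widetilde{\delta}_{e'} = 0$ for every positive \emph{proper} divisor $e'$ of $d$. Since $e\mid d$ and $\widetilde{\delta}_{e}\neq 0$, this forces $e = d$. Combining, $\nu(d) = \nu(e) = \lambda(1) = \dim_{\bF_{p}}(\mathrm{Sel}(\bQ,E[p]))$, as desired.

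As for where the difficulty lies: at this stage the substantive work is already behind us --- it is concentrated in the structure theorem for Kolyvagin systems of rank $0$ (Theorem~\ref{thm:koly0}), the dictionary between the analytic numbers $\widetilde{\delta}_{d}$ and the leading terms $\delta(\kappa_{\xi,1,0})_{d}$ (Lemmas~\ref{lem:non-vanishing} and \ref{lem:non-vanishing2}), and the combinatorial descent through prime divisors in Proposition~\ref{prop:min-div}. The only point requiring genuine care in the final assembly is the logical interaction with $\delta$-minimality: one must use that $\delta$-minimality is a statement about \emph{proper} divisors, so that the divisor $e$ furnished by Proposition~\ref{prop:min-div} cannot itself be proper and must equal $d$. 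The degenerate case $\lambda(1) = 0$ fits into the same pattern, since then $e = 1$, and $1$ is a proper divisor of $d$ whenever $d>1$ (while if $d = 1$ the identity $\nu(1) = 0 = \lambda(1)$ is trivial).
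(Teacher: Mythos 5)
Your proposal is correct and follows essentially the same route as the paper: Lemma \ref{lem:delta-vanish} gives $H^{1}_{\cF_{\mathrm{cl}}(d)}(G_{\bQ},E[p])=0$, Proposition \ref{prop:min-div} produces a divisor $e\mid d$ with $\nu(e)=\lambda(1)$ and $\lambda(e)=0$, and the reverse direction of the same equivalence plus $\delta$-minimality forces $e=d$. The preliminary remark that the existence of a $\delta$-minimal integer makes $\kappa_{\xi,1,0}$ a basis is likewise exactly how the paper sets up the argument.
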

\begin{proof}
Let  $d \in \cN_{1,0}$ be a $\delta$-minimal integer. 
Then  $H^{1}_{\cF_{\mathrm{cl}}(d)}(G_{\bQ}, E[p]) = 0$ by Lemma \ref{lem:delta-vanish}. 
Hence Proposition \ref{prop:min-div} shows that there is a positive divisor $e$ of $d$ such that $\nu(e) = \lambda(1)$ and $\lambda(e) = 0$. 
Then Lemma \ref{lem:non-vanishing2} implies $\widetilde{\delta}_{e} \neq 0$, and we have $d = e$ by the definition of the $\delta$-minimality. 
Therefore, we obtain $\nu(d) = \nu(e) = \lambda(1)$. 
%
%
\end{proof}

\begin{remark}\label{rem:self}
In the multiplicative group case, under the validity of the analogue of  Lemma \ref{lem:rank}, 
one can show that the analogue of Theorem \ref{thm:main1} (\cite[Conjecture 2]{Kur14b}) holds true. 
However, as mentioned in Remark \ref{rem:counter},  there is a counter-example of the analogue of Theorem \ref{thm:main1}. 
This shows that the analogue of  Lemma \ref{lem:rank} does not hold  in general. 
In the proof of Lemma \ref{lem:rank}, we use crucially the fact that the Selmer structure $\cF_{\mathrm{cl}}$ is self-dual, and hence one can say that the self-duality of the Selmer structure $\cF_{\mathrm{cl}}$ is one of the most important ingredients in order to prove Theorem \ref{thm:main1}. 
\end{remark}

Let $\kappa_{\xi,1,0}  = (\kappa_{d,\ell})_{(d,\ell) \in \cM_{1,0}} \in \mathrm{KS}_{0}(E[p], \cF_{\mathrm{cl}})$ be the Kolyvagin system constructed in Theorem \ref{thm:main}. 
By using  the fixed generator $g_{\ell} \in G_{\ell}$, we regard $G_{\ell}$ as $\bZ/\#G_{\ell}$, and hence 
one can regard $\kappa_{d,\ell} \in H^{1}_{\cF_{\mathrm{cl}}^{\ell}(d)}(G_{\bQ}, E[p])$. 
As discussed by Kurihara in \cite[Theorem 3(2)]{Kur14b}, 
by using Theorem \ref{thm:mainmain}, one can construct a basis of the $p$-Selmer group $\mathrm{Sel}(\bQ, E[p])$ from the Kolyvagin system $\kappa_{\xi,1,0}$. 

\begin{corollary}\label{cor:basis}
For any $\delta$-minimal integer $d=\ell_{1} \cdots \ell_{t} \in \cN_{1,0}$, the set $\{\kappa_{d/\ell_{i},\ell_{i}} \mid 1 \leq i \leq t\}$ is a basis of $\mathrm{Sel}(\bQ, E[p])$. 
\end{corollary}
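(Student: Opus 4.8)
The plan is to combine the structure theory of Kolyvagin systems of rank $0$ (Theorem \ref{thm:koly0}) with the chain of primes produced in the proof of Proposition \ref{prop:min-div}, together with the fact that for a $\delta$-minimal $d$ one has $\nu(d)=\lambda(1)=\dim_{\bF_{p}}\mathrm{Sel}(\bQ,E[p])$ by Theorem \ref{thm:mainmain}. Write $d=\ell_{1}\cdots\ell_{t}$, so $t=\nu(d)=\lambda(1)$. Since $d$ is $\delta$-minimal, every positive proper divisor $e$ of $d$ satisfies $\widetilde{\delta}_{e}=0$, hence (Lemma \ref{lem:delta-vanish}) $H^{1}_{\cF_{\mathrm{cl}}(e)}(G_{\bQ},E[p])\neq 0$; in particular the proof of Proposition \ref{prop:min-div}, applied inside $d$, is forced to strip off \emph{all} $t$ prime divisors before reaching a trivial Selmer group, which means $\lambda$ strictly decreases at every step: after reordering the $\ell_{i}$ we may assume $\lambda(\ell_{1}\cdots\ell_{i})=\lambda(1)-i$ for $0\le i\le t$, and correspondingly $H^{1}_{\cF_{\mathrm{cl}}(\ell_{1}\cdots\ell_{i-1})}(G_{\bQ},E[p])\neq H^{1}_{(\cF_{\mathrm{cl}})_{\ell_{i}}(\ell_{1}\cdots\ell_{i-1})}(G_{\bQ},E[p])$ by Lemma \ref{lem:rank}(1).

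Next I would interpret the vanishing $H^{1}_{\cF_{\mathrm{cl}}(d)}(G_{\bQ},E[p])=0$ via Theorem \ref{thm:koly0}(1): since $\kappa_{\xi,1,0}$ is a basis of $\mathrm{KS}_{0}(E[p],\cF_{\mathrm{cl}})$ (this uses the existence of a $\delta$-minimal integer, Corollary \ref{cor:conj1}, Proposition \ref{prop:equiv}), the class $\kappa_{1,\ell}$ for any $\ell\nmid d$ together with the finite–singular relations pins down the whole family; more importantly, the relevant $\kappa_{d/\ell_{i},\ell_{i}}$ lie in $H^{1}_{\cF_{\mathrm{cl}}^{\ell_{i}}(d/\ell_{i})}(G_{\bQ},E[p])$, and the localization-at-$\ell_{i}$ map sends $\kappa_{d/\ell_{i},\ell_{i}}$ into $H^{1}_{\mathrm{ur}}(G_{\bQ_{\ell_{i}}},E[p])$ nontrivially exactly when $\delta(\kappa_{\xi,1,0})_{d}\neq 0$, which holds by $\delta$-minimality and Lemma \ref{lem:non-vanishing2}. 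So each $\kappa_{d/\ell_{i},\ell_{i}}$ is a class in $H^{1}(G_{\bQ},E[p])$ that is unramified outside the bad primes, $p$, and $\ell_{i}$, is \emph{locally trivial} at every $\ell_{j}$ with $j\neq i$ (since $\kappa_{d/\ell_{i},\ell_{i}}\in H^{1}_{\cF_{\mathrm{cl}}^{\ell_{i}}(d/\ell_{i})}$, whose local condition at $\ell_{j}\mid d/\ell_{i}$ is the transversal one, but the relevant ``finite'' component vanishes as $\widetilde{\delta}_{d/\ell_{i}}=0$), and is nonzero in $H^{1}_{\mathrm{ur}}(G_{\bQ_{\ell_{i}}},E[p])\cong E(\bQ_{\ell_{i}})\otimes\bF_{p}$ — hence in particular $\kappa_{d/\ell_{i},\ell_{i}}\in\mathrm{Sel}(\bQ,E[p])$ once one checks the local conditions at $\ell_{i}$ and at $p$ match the classical Selmer condition, which follows from $\kappa_{d/\ell_{i},\ell_{i}}$ being in the image of a Kolyvagin-derivative construction for the classical structure.

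Having produced $t=\dim_{\bF_{p}}\mathrm{Sel}(\bQ,E[p])$ elements of the Selmer group, I would prove they are linearly independent by exhibiting a triangular (with respect to localization at $\ell_{1},\dots,\ell_{t}$) pairing: $\mathrm{loc}_{\ell_{i}}(\kappa_{d/\ell_{i},\ell_{i}})\neq 0$ while $\mathrm{loc}_{\ell_{i}}(\kappa_{d/\ell_{j},\ell_{j}})=0$ for $j\neq i$, so the matrix $\big(\mathrm{loc}_{\ell_{i}}(\kappa_{d/\ell_{j},\ell_{j}})\big)_{i,j}$ is diagonal with nonzero entries in the one-dimensional spaces $E(\bQ_{\ell_{i}})\otimes\bF_{p}$. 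Since the Selmer group injects into $\bigoplus_{i}E(\bQ_{\ell_{i}})\otimes\bF_{p}$ (Remark \ref{rem:inj}) and both sides have dimension $t$, this diagonal matrix is an isomorphism and $\{\kappa_{d/\ell_{i},\ell_{i}}\}$ is a basis. The main obstacle I anticipate is the bookkeeping in the previous paragraph: verifying precisely that $\kappa_{d/\ell_{i},\ell_{i}}$ satisfies the \emph{classical} local conditions at $\ell_{i}$ and at $p$ (not merely the transversal/canonical ones), i.e.\ that the transversal condition imposed by the Selmer structure $\cF_{\mathrm{cl}}^{\ell_{i}}(d/\ell_{i})$ at the primes dividing $d/\ell_{i}$ is automatically the classical (unramified) one here because the ``finite part'' of $\delta(\kappa_{\xi,1,0})$ at every proper divisor vanishes — this is exactly the place where $\delta$-minimality is used a second time, and where one must be careful to distinguish $H^{1}_{\mathrm{tr}}$ from $H^{1}_{\mathrm{ur}}$ and to invoke the relations in Definition \ref{def:koly0} correctly.
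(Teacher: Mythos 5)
Your overall skeleton (exhibit $t=\nu(d)=\dim_{\bF_p}\mathrm{Sel}(\bQ,E[p])$ classes whose localization matrix at $\ell_1,\dots,\ell_t$ is diagonal with nonzero entries, then invoke the injectivity of \eqref{map}) is the same as the paper's, and your identification of the nonzero diagonal entries via $\varphi^{\mathrm{fs}}_{\ell_i}(\kappa_{d/\ell_i,\ell_i})=-\delta(\kappa_{\xi,1,0})_d\neq 0$ is correct. But the crux of the corollary is precisely the step you defer: showing that $\kappa_{d/\ell_i,\ell_i}$ actually lies in $\mathrm{Sel}(\bQ,E[p])$, and that its localizations at the $\ell_j$ with $j\neq i$ vanish (not merely lie in the transversal subspace). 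A priori $\kappa_{d/\ell_i,\ell_i}$ only lies in $H^1_{\cF_{\mathrm{cl}}^{\ell_i}(d/\ell_i)}(G_\bQ,E[p])$: relaxed at $\ell_i$ and transversal at each $\ell_j\mid d/\ell_i$. Your two justifications do not close this. ``Being in the image of a Kolyvagin-derivative construction for the classical structure'' is exactly what does \emph{not} give membership in $\mathrm{Sel}$ — the defining feature of a rank-$0$ Kolyvagin system is that $\kappa_{d,\ell}$ generally has a nontrivial singular component at $\ell$, measured by $v_\ell$. And the vanishing $\widetilde{\delta}_{d/\ell_i}=0$ only controls the component at $\ell_i$ (it gives $v_{\ell_i}(\kappa_{d/\ell_i,\ell_i})=\delta(\kappa_{\xi,1,0})_{d/\ell_i}=0$, hence unramifiedness at $\ell_i$); it says nothing about $\mathrm{loc}_{\ell_j}(\kappa_{d/\ell_i,\ell_i})$ for $j\neq i$, which by the finite--singular relation equals (up to the isomorphism $H^1_{\mathrm{tr}}\cong H^1_{/\mathrm{ur}}$) $\varphi^{\mathrm{fs}}_{\ell_j}(\kappa_{d/(\ell_i\ell_j),\ell_i})$ and has no reason to vanish from local considerations alone.

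The paper closes this gap by a global argument that your proposal is missing. Applying Theorem \ref{pt} to $(\cF_{\mathrm{cl}})_d\subset\cF_{\mathrm{cl}}$, and using $H^1_{\cF_{\mathrm{cl}}(d)}(G_\bQ,E[p])=0$ (Lemma \ref{lem:delta-vanish}) together with $\dim_{\bF_p}\mathrm{Sel}(\bQ,E[p])=\nu(d)$ (Theorem \ref{thm:mainmain}), one gets the key identity $H^1_{\cF_{\mathrm{cl}}^{d}}(G_\bQ,E[p])=\mathrm{Sel}(\bQ,E[p])$, i.e.\ relaxing the local conditions at all primes dividing $d$ does not enlarge the Selmer group. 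Since $H^1_{\cF_{\mathrm{cl}}^{\ell_i}(d/\ell_i)}(G_\bQ,E[p])\subset H^1_{\cF_{\mathrm{cl}}^{d}}(G_\bQ,E[p])$, membership in $\mathrm{Sel}(\bQ,E[p])$ follows at once, and then intersecting the transversal condition with the unramified one (which have trivial intersection, as $H^1(G_{\bQ_{\ell_j}},E[p])=H^1_{\mathrm{ur}}\oplus H^1_{\mathrm{tr}}$) forces $\mathrm{loc}_{\ell_j}(\kappa_{d/\ell_i,\ell_i})=0$ for $j\neq i$. Without this Poitou--Tate step (or some equivalent global input), your argument does not go through; the first paragraph of your proposal, rederiving the chain of primes from Proposition \ref{prop:min-div}, is not what is needed here and does not substitute for it.
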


\begin{proof}
Applying Theorem \ref{pt} with $\cF_{1} = (\cF_{\mathrm{cl}})_{d}$ and $\cF_{2} = \cF_{\mathrm{cl}}$, we obtain an exact sequence 
\begin{align*}
0 \longrightarrow H^{1}_{(\cF_{\mathrm{cl}})_{d}}(G_{\bQ}, E[p]) \longrightarrow \mathrm{Sel}(\bQ, E[p])
\longrightarrow \bigoplus_{\ell \mid d}H^{1}_{\mathrm{ur}}(G_{\bQ}, E[p])  
\\
\longrightarrow H^{1}_{\cF_{\mathrm{cl}}^{d}}(G_{\bQ}, E[p])^{\vee} 
\longrightarrow \mathrm{Sel}(\bQ, E[p])^{\vee} 
\longrightarrow 0. 
\end{align*}
Lemma \ref{lem:delta-vanish} and Theorem \ref{thm:mainmain} show that $H^{1}_{\cF_{\mathrm{cl}}^{d}}(G_{\bQ}, E[p]) =  \mathrm{Sel}(\bQ, E[p])$, and we have an isomorphism 
\[
\bigoplus_{\ell \mid d} \varphi^{\mathrm{fs}}_{\ell} \colon \mathrm{Sel}(\bQ, E[p]) 
\stackrel{\sim}{\longrightarrow} \bigoplus_{\ell \mid d}H^{1}_{\mathrm{ur}}(G_{\bQ}, E[p]) 
\stackrel{\sim}{\longrightarrow} \bF_{p}^{t}. 
\]
In particular, $\kappa_{d/\ell_{i},\ell_{i}}  \in \mathrm{Sel}(\bQ, E[p])$ for any integer $1 \leq i\leq t$.  
Take an integer $1 \leq i\leq t$. 
Since $H^{1}_{\cF_{\mathrm{cl}}^{\ell_{i}}(d/\ell_{i})}(G_{\bQ}, E[p])  \subset \mathrm{Sel}(\bQ, E[p])$, we have 
\[
H^{1}_{\cF_{\mathrm{cl}}^{\ell_{i}}(d/\ell_{i})}(G_{\bQ}, E[p]) = H^{1}_{\cF_{\mathrm{cl}}^{\ell_{i}}(d/\ell_{i})}(G_{\bQ}, E[p]) \cap H^{1}_{\cF_{\mathrm{cl}}}(G_{\bQ}, E[p])= H^{1}_{(\cF_{\mathrm{cl}})_{d/\ell_{i}}}(G_{\bQ}, E[p]). 
\]
Since $\kappa_{d/\ell_{i},\ell_{i}} \in H^{1}_{(\cF_{\mathrm{cl}})_{d/\ell_{i}}}(G_{\bQ}, E[p])$, we have $\varphi^{\mathrm{fs}}_{\ell_{j}}(\kappa_{d/\ell_{i},\ell_{i}}) = 0$ for any $j \neq i$. 
The $\delta$-minimality of $d$ and Lemma \ref{lem:non-vanishing2} imply that $\varphi^{\mathrm{fs}}_{\ell_{i}}(\kappa_{d/\ell_{i},\ell_{i}}) 
= -\delta(\kappa_{\xi,1,0})_{d} \neq 0$. This shows that the set $\{\kappa_{d/\ell_{i},\ell_{i}} \mid 1 \leq i \leq t\}$ is a basis of $\mathrm{Sel}(\bQ, E[p])$. 
\end{proof}

\appendix

\section{Remarks on $p=3$}\label{sec:appendix}

The assumption that $p>3$ is one of the standard hypotheses of the theory of Kolyvagin systems (see the hypothesis (H.4) in the page 27 of \cite{MRkoly}). 
In this appendix, we explain that Theorem \ref{thm:koly0} and Proposition \ref{prop:reg-isom} are valid even when $p=3$. 
We  note that, in the theory of Stark systems, the assumption that $p > 3$ is not needed (see \cite[Hypothesis 3.12]{sakamoto}). Hence one can use all results in \cite{sakamoto} even if $p=3$.

In this appendix, we consider the following situation. 
\begin{itemize}
\item $R$ is a zero-dimensional Gorenstein local ring  with finite residue field $\bF$ such that $p^{n}R=0$ and $\mathrm{char}(\bF) = 3$. 
\item $T$ is a free $R$-module of finite rank with a continuous $G_{\bQ}$-action satisfying the following: 
\begin{itemize}
\item $T \otimes_{R} \bF$ is an irreducible $\bF[G_{\bQ}]$-module. 
\item There is a rational prime $\ell \not\in S_{\mathrm{ram}}(T) \cup \{3\}$ such that $T/(\mathrm{Fr}_{\ell}-1)T \cong R$ and $\ell \equiv 1 \bmod{3^{n}}$. 
\item $H^{1}(\Gal(\bQ(\mu_{3^{n}}, T)/\bQ), T \otimes_{R}\bF) = 0$. Here $\bQ(\mu_{3^{n}}, T)$ is the filed corresponds to the kernel of $G_{\bQ(\mu_{3^{n}})} \longrightarrow \mathrm{Aut}(T)$. 
\item $T$ is residually self-dual, i.e., there is a $G_{\bQ}$-isomorphism $T \otimes_{R} \bF \cong (T \otimes_{R} \bF)^{\vee}(1)$. 
\end{itemize}
\end{itemize}
We put 
\begin{itemize}
\item $\overline{T} := T \otimes_{R} \bF$, 
\item $\cP := \{\ell \not\in S_{\mathrm{ram}}(T) \cup \{3\} \mid T/(\mathrm{Fr}_{\ell}-1)T \cong R, \, \ell \equiv 1 \bmod{3^{n}}\}$, 
\item $\cN$ denotes the set of square-free products in $\cP$. 
\end{itemize}

\subsection{Application of the Chebotarev density theorem}
As mentioned in the beginning of \cite[\S3.6]{MRkoly}, in the theory of Kolyvagin systems, the assumption that $p>3$ is only used for choosing useful primes. 
In this subsection, we prove a slightly weaker result than \cite[Proposition 3.6.1]{MRkoly} when $p=3$.

\begin{lemma}\label{lem:coset}
Let $a>0$ be an integer. 
Let $G$  be a group and $\varphi_{1}, \varphi_{2}, \varphi_{3}, \varphi_{4} \in \Hom(G, \bF_{3}^{a}) \setminus \{0\}$. 
Suppose that 
\[
\dim_{\bF_{3}} \left( \bF_{3}\varphi_{1} + \bF_{3} \varphi_{2} + \bF_{3} \varphi_{3} + \bF_{3} \varphi_{4} \right) \geq 3. 
\]
Then, for any $g_{1}, g_{2}, g_{3}, g_{4} \in G$, we have 
\[
\bigcup_{i=1}^{4}g_{i} \ker(\varphi_{i}) \neq G. 
\]
\end{lemma}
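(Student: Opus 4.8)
The plan is to pass to a finite quotient and then argue by elementary counting inside an $\bF_3$-vector space, isolating a single combinatorial main case. Set $H := \bigcap_{i=1}^{4}\ker(\varphi_i)$. The map $v\mapsto(\varphi_1(v),\dots,\varphi_4(v))$ identifies $\bar G := G/H$ with a subgroup of $(\bF_3^a)^4$, so $\bar G$ is a finite $\bF_3$-vector space; each $\varphi_i$ factors through a nonzero $\bar\varphi_i\in\Hom(\bar G,\bF_3^a)$; precomposition with $G\twoheadrightarrow\bar G$ embeds $\Hom(\bar G,\bF_3^a)$ into $\Hom(G,\bF_3^a)$ carrying $\bar\varphi_i$ to $\varphi_i$, so $\dim_{\bF_3}(\bF_3\bar\varphi_1+\cdots+\bF_3\bar\varphi_4)\ge 3$ as well; and $\bigcup_i g_i\ker(\varphi_i)=G$ if and only if $\bigcup_i \bar g_i\ker(\bar\varphi_i)=\bar G$, where $\bar g_i$ is the image of $g_i$. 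So it is enough to rule out a covering of $\bar G$ by the four cosets $A_i:=\bar g_i\ker(\bar\varphi_i)$, with $\bigcap_i\ker(\bar\varphi_i)=0$.

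First I would dispose of the easy cases by a union bound. Writing $e_i:=\dim_{\bF_3}\im(\bar\varphi_i)\ge 1$, the coset $A_i$ has size $|\bar G|\cdot 3^{-e_i}$, so a covering forces $\sum_{i=1}^{4}3^{-e_i}\ge 1$. This fails as soon as at least two of the $e_i$ are $\ge 2$, since then $\sum_i 3^{-e_i}\le\tfrac23+\tfrac29<1$; it also fails for every prime $p\ge5$ in place of $3$, since $4/p<1$, which is precisely why the lemma is needed only for $p=3$. Hence I may assume at least three of the maps, say $\bar\varphi_1,\bar\varphi_2,\bar\varphi_3$, have one-dimensional image. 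Choosing, for each such $i$, a linear functional on $\bF_3^a$ witnessing this, I get $\psi_i\in\bar G^{*}\setminus\{0\}$ with $\ker\psi_i\supseteq\ker\bar\varphi_i$, so $A_i\subseteq\{\psi_i=c_i\}$ for suitable $c_i$; choosing any functional detecting $\bar\varphi_4$ gives likewise $A_4\subseteq\{\psi_4=c_4\}$. Using the rank hypothesis one arranges this selection so that $\dim_{\bF_3}(\bF_3\psi_1+\cdots+\bF_3\psi_4)\ge 3$, and it then suffices to show that these four affine hyperplanes do not cover $\bar G$.

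For this remaining affine-hyperplane case I would argue by explicit coordinates. Pick three of the $\psi_i$, say $\psi_1,\psi_2,\psi_3$, that are linearly independent, and use them (extending to a coordinate system if $\dim\bar G=4$) as the first three coordinate functions on $\bar G$; then $\psi_4=\alpha_1\psi_1+\alpha_2\psi_2+\alpha_3\psi_3$ together with possibly further coordinates, in which case the argument is only easier, and the subcase where $\psi_4$ is proportional to a single $\psi_j$ (so one of the four hyperplanes merely repeats another) is handled directly. The complement of $\{\psi_1=c_1\}\cup\{\psi_2=c_2\}\cup\{\psi_3=c_3\}$ is the nonempty ``box'' $B=\{x:\psi_j(x)\ne c_j,\ j=1,2,3\}$; if $\{\psi_4=c_4\}$ contained all of $B$ then, fixing two of the three free coordinates and letting the third range over its two permitted values, I would be forced to conclude $\alpha_1=\alpha_2=\alpha_3=0$, contradicting $\psi_4\ne0$ together with $\psi_4$ being non-proportional to each $\psi_j$. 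Hence $B\setminus\{\psi_4=c_4\}\ne\emptyset$ exhibits a point of $\bar G$ lying in none of the $A_i$, completing the proof.

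The main obstacle will be the bookkeeping in the middle step: extracting from $\dim_{\bF_3}(\bF_3\varphi_1+\cdots+\bF_3\varphi_4)\ge 3$ a family of four functionals on $\bar G$ that still spans dimension $\ge 3$ while having kernels containing the relevant $\ker(\bar\varphi_i)$, and organizing the degenerate configurations (some $e_i\ge 2$, some $\psi_i$ proportional, $\dim\bar G$ equal to $3$ versus $4$) so that the clean box argument applies uniformly. Once the reduction to affine hyperplanes carrying a rank-$\ge 3$ family of functionals is in place, the conclusion is immediate from the displayed counting.
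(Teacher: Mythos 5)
Your reduction to the finite $\bF_{3}$-vector space $\bar G$, your union-bound elimination of the case where two of the $\bar\varphi_{i}$ have image of dimension $\geq 2$, and your final ``box'' argument are all fine (the last is essentially the paper's own endgame). The genuine gap is exactly the step you flag as the main obstacle: from $\dim_{\bF_{3}}(\bF_{3}\bar\varphi_{1}+\cdots+\bF_{3}\bar\varphi_{4})\geq 3$ one cannot in general extract nonzero functionals $\psi_{i}$ on $\bar G$ with $\ker\psi_{i}\supseteq\ker\bar\varphi_{i}$ and $\dim_{\bF_{3}}(\bF_{3}\psi_{1}+\cdots+\bF_{3}\psi_{4})\geq 3$. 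Take $\bar G=\bF_{3}^{2}$ and $\bar\varphi_{1}(x,y)=(x,0)$, $\bar\varphi_{2}(x,y)=(y,0)$, $\bar\varphi_{3}(x,y)=(0,x-y)$, $\bar\varphi_{4}(x,y)=(0,x+y)$: each has one-dimensional image, and the four are linearly independent in $\Hom(\bF_{3}^{2},\bF_{3}^{2})$, so their span has dimension $4$; yet any admissible $\psi_{i}$ is forced to be a nonzero multiple of $x$, $y$, $x-y$, $x+y$ respectively, and these span only a $2$-dimensional space. Worse, this is not a repairable bookkeeping issue: the four kernels are precisely the four lines through the origin of $\bF_{3}^{2}$, whose union is all of $\bF_{3}^{2}$, so with $g_{1}=\cdots=g_{4}=1$ the conclusion of the lemma itself fails. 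The statement as printed is therefore false for every $a\geq 2$ (pad with zero coordinates to reach larger $a$); only the case $a=1$ is correct, and that is the case the body of the paper actually needs, since there the residue field is $\bF_{3}$.

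You should also know that the paper's own proof stumbles at the corresponding point: it reduces to $a=1$ by replacing each $\varphi_{i}$ with a coordinate projection $\varphi_{i,j_{i}}$, but never checks that some tuple $(j_{1},\ldots,j_{4})$ keeps all four projections nonzero while preserving the rank hypothesis, and the example above shows that no such tuple need exist. For $a=1$ your argument is complete and coincides with the paper's: three of the functionals are linearly independent, the complement of the corresponding three affine hyperplanes is a product of two-element subsets of $\bF_{3}$, which after translation spans the ambient space and hence cannot be contained in the remaining affine hyperplane (your coordinate-wise cancellation is the same computation). A correct statement for general residue fields would have to impose the rank condition on the induced $\bF_{3}$-valued functionals (equivalently, on the kernels), not on the span of the $\varphi_{i}$ inside $\Hom(G,\bF_{3}^{a})$.
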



\begin{proof}
Put $\varphi_{i, j} := \mathrm{pr}_{j} \circ \varphi_{i} \colon G \longrightarrow  \bF_{3}$. Then  
\begin{align*}
\bigcup_{i=1}^{4}g_{i} \ker(\varphi_{i}) &= \bigcup_{i=1}^{4}  \bigcap_{ ( j_{1}, j_{2}, j_{3}, j_{4}) \in \{1, \ldots, a\}^{4} }  g_{i}\ker(\varphi_{i, j_{i}}) 
\\
&\subset  \bigcap_{ ( j_{1}, j_{2}, j_{3}, j_{4}) \in \{1, \ldots, a\}^{4} }  \bigcup_{i=1}^{4}  g_{i}\ker(\varphi_{i, j_{i}}). 
\end{align*}
Hence we may assume that $a=1$. 

Suppose that $\dim_{\bF_{3}} \left( \bF_{3}\varphi_{1} + \bF_{3} \varphi_{2} + \bF_{3} \varphi_{3} + \bF_{3} \varphi_{4} \right) = 4$. 
Since the kernel of the surjection 
\[
G \longrightarrow \bF_{3}^{4}; \, g \mapsto (\varphi_{1}(g), \varphi_{2}(g), \varphi_{3}(g), \varphi_{4}(g))
\] 
is contained in $\ker(\varphi_{i})$ for any $1 \leq i \leq 4$,  
 we may assume that $G = \bF_{3}^{4}$ and $\varphi_{i} = \mathrm{pr}_{i}$ for each $1 \leq i\leq 4$. 
In this case, an explicit calculation shows that 
\begin{align*}
G \setminus (g_{1} \ker(\varphi_{1}) &\cup g_{2} \ker(\varphi_{2}) \cup g_{3} \ker(\varphi_{3})  \cup g_{4} \ker(\varphi_{4}) ) 
\\
&= \{(h_{1}, h_{2}, h_{3}, h_{4}) \in \bF_{3}^{4} \mid \mathrm{pr}_{i}(g_{i}) \neq h_{i} \textrm{ for any $1 \leq i \leq 4$}\} \neq \emptyset. 
\end{align*}

Suppose that $\dim_{\bF_{3}} \left( \bF_{3}\varphi_{1} + \bF_{3} \varphi_{2} + \bF_{3} \varphi_{3} + \bF_{3} \varphi_{4} \right) = 3$. We may then assume that $\varphi_{4} \in \bF_{3}\varphi_{1} + \bF_{3} \varphi_{2} + \bF_{3} \varphi_{3}$. 
Moreover, since the kernel of the surjection 
\[
G \longrightarrow \bF_{3}^{3}; \, g \mapsto (\varphi_{1}(g), \varphi_{2}(g), \varphi_{3}(g))
\] 
is contained in $\ker(\varphi_{i})$ for any $1 \leq i \leq 4$, 
 we may also assume that $G = \bF_{3}^{3}$ and $\varphi_{i} = \mathrm{pr}_{i}$ for each $1 \leq i\leq 3$. 
 Then we have 
 \begin{align*}
G \setminus (g_{1} \ker(\varphi_{1}) \cup g_{2} &\ker(\varphi_{2}) \cup g_{3} \ker(\varphi_{3}) ) 
\\
&= \{(h_{1}, h_{2}, h_{3}) \in \bF_{3}^{3} \mid \mathrm{pr}_{i}(g_{i}) \neq h_{i} \textrm{ for any $1 \leq i \leq 3$}\}. 
 \end{align*}
Since the set $-g_{4} +  \{(h_{1}, h_{2}, h_{3}) \in \bF_{3}^{3} \mid \mathrm{pr}_{i}(g_{i}) \neq h_{i} \textrm{ for any $1 \leq i \leq 3$}\}$ contains a basis of $\bF_{3}^{3}$ and $\varphi_{4} \neq 0$, we have 
 \[
\{(h_{1}, h_{2}, h_{3}) \in \bF_{3}^{3} \mid \mathrm{pr}_{i}(g_{i}) \neq h_{i} \textrm{ for any $1 \leq i \leq 3$}\} \not\subset g_{4}\ker(\varphi_{4}), 
 \]
 which completes the proof. 
\end{proof}

The following is the result which corresponds to \cite[Proposition 3.6.1]{MRkoly}.

\begin{lemma}\label{lem:chev}
Let $c_{1}, c_{2}, c_{3}, c_{4} \in H^{1}(G_{\bQ}, \overline{T})$ be non-zero elements. 
Suppose that 
\[
\dim_{\bF_{3}} \left( \bF_{3}c_{1} + \bF_{3} c_{2} + \bF_{3} c_{3} + \bF_{3} c_{4} \right) \geq 3. 
\]
Then there are infinitely many primes $\ell \in \cP$ satisfying $\mathrm{loc}_{\ell}(c_{i}) \neq 0$ for any $1 \leq i \leq 4$. 
Here, $\mathrm{loc}_{\ell} \colon H^{1}(G_{\bQ}, \overline{T}) \longrightarrow H^{1}(G_{\bQ_{\ell}}, \overline{T})$ denotes the localization map at $\ell$. 
\end{lemma}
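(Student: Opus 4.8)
The plan is to run the Chebotarev argument of Mazur--Rubin (\cite{MRkoly}), with the elementary fact ``two proper cosets do not cover a group'' --- which is where the hypothesis $p>3$ usually enters --- replaced by Lemma \ref{lem:coset}. Set $K := \bQ(\mu_{3^{n}}, T)$, so that $G_{K}$ acts trivially on $T$ (hence on $\overline{T}$) and $\mu_{3^{n}} \subseteq K$. By inflation--restriction and the hypothesis $H^{1}(\Gal(K/\bQ), \overline{T}) = 0$, the restriction map
\[
\mathrm{res} \colon H^{1}(G_{\bQ}, \overline{T}) \longhookrightarrow \Hom_{\Gal(K/\bQ)}(G_{K}, \overline{T}) =: \Phi
\]
is injective; let $\bar{c}_{i} := \mathrm{res}(c_{i})$, so the $\bar{c}_{i}$ are nonzero and $\dim_{\bF_{3}}(\bF_{3}\bar{c}_{1} + \bF_{3}\bar{c}_{2} + \bF_{3}\bar{c}_{3} + \bF_{3}\bar{c}_{4}) \geq 3$.

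Using the hypothesis $\cP \neq \emptyset$, fix $\ell_{0} \in \cP$ and a Frobenius $\tau \in G_{\bQ}$ at $\ell_{0}$; then $\tau$ fixes $\mu_{3^{n}}$ and $\overline{T}/(\tau - 1)\overline{T} \cong \bF$, and $(\tau - 1)\overline{T}$ is $\tau$-stable. Let $q \colon \overline{T} \twoheadrightarrow \overline{T}/(\tau - 1)\overline{T} \cong \bF$ and $\psi_{i} := q \circ \bar{c}_{i} \colon G_{K} \to \bF$. \emph{The key point} is that the map $\Phi \to \Hom(G_{K}, \bF)$, $f \mapsto q \circ f$, is injective: if $q \circ f = 0$, then $\mathrm{im}(f)$ is a $\Gal(K/\bQ)$-submodule of $\overline{T}$ contained in the proper subspace $(\tau - 1)\overline{T}$, hence $\mathrm{im}(f) = 0$ by the irreducibility of $\overline{T}$. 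It follows that each $\psi_{i}$ is nonzero --- in fact surjective, since $\mathrm{im}(\bar{c}_{i}) = \overline{T}$ again by irreducibility --- and that $\dim_{\bF_{3}}(\bF_{3}\psi_{1} + \bF_{3}\psi_{2} + \bF_{3}\psi_{3} + \bF_{3}\psi_{4}) = \dim_{\bF_{3}}(\bF_{3}\bar{c}_{1} + \bF_{3}\bar{c}_{2} + \bF_{3}\bar{c}_{3} + \bF_{3}\bar{c}_{4}) \geq 3$. Identifying $\bF$ with $\bF_{3}^{a}$ for $a := [\bF : \bF_{3}]$, the maps $\psi_{1}, \dots, \psi_{4}$ satisfy the hypotheses of Lemma \ref{lem:coset}.

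Next I would translate the condition $\mathrm{loc}_{\ell}(c_{i}) \neq 0$ into a coset condition. Choose a finite Galois extension $M/\bQ$ containing $K$ through which every cocycle $c_{i}$ factors; then $M/\bQ$ is unramified outside $S_{\mathrm{ram}}(T) \cup \{3\}$. For $h \in G_{K}$, put $\gamma_{h} := h\tau \in G_{\bQ}$; then $\gamma_{h}$ and $\tau$ have the same image in $\Gal(K/\bQ)$, the cocycle relation gives $c_{i}(\gamma_{h}) = \bar{c}_{i}(h) + c_{i}(\tau)$, and $(\gamma_{h} - 1)\overline{T} = (\tau - 1)\overline{T}$. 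If $\ell$ is a prime unramified in $M$ whose Frobenius in $\Gal(M/\bQ)$ is conjugate to $\gamma_{h}$, then $\ell \in \cP$ (because $\mathrm{Fr}_{\ell}|_{K}$ is conjugate to $\tau|_{K}$), and a direct cocycle computation --- conjugating $\gamma_{h}$ to $\mathrm{Fr}_{\ell}$ and using $H^{1}_{\mathrm{ur}}(G_{\bQ_{\ell}}, \overline{T}) \cong \overline{T}/(\mathrm{Fr}_{\ell} - 1)\overline{T}$ --- shows that $\mathrm{loc}_{\ell}(c_{i}) \neq 0$ if and only if $c_{i}(\gamma_{h}) \notin (\tau - 1)\overline{T}$, i.e.\ if and only if $\psi_{i}(h) \neq \beta_{i}$, where $\beta_{i} := -q(c_{i}(\tau)) \in \bF$. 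Picking $h_{i} \in \psi_{i}^{-1}(\beta_{i})$, Lemma \ref{lem:coset} gives $h \in G_{K} \setminus \bigcup_{i=1}^{4} h_{i}\ker(\psi_{i})$, i.e.\ with $\psi_{i}(h) \neq \beta_{i}$ for all $i$; applying the Chebotarev density theorem to $\gamma_{h}$ in $\Gal(M/\bQ)$ then produces infinitely many primes $\ell \in \cP$ with $\mathrm{loc}_{\ell}(c_{i}) \neq 0$ for every $i$.

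The cocycle bookkeeping in the last step (passing between $\mathrm{Fr}_{\ell}$ and a conjugate $\gamma_{h}$, and identifying the unramified local cohomology) is routine, as is the translation to cosets; the only genuinely new ingredient compared with the case $p > 3$ is the combinatorial Lemma \ref{lem:coset}, and the one thing that must be verified before invoking it --- that the dimension bound $\geq 3$ survives the reduction $\bar{c}_{i} \mapsto \psi_{i}$ --- is exactly the injectivity of $f \mapsto q \circ f$ on $\Phi$, which I regard as the crux of the argument.
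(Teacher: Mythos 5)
Your proposal is correct and follows essentially the same route as the paper's proof: restrict to $G_{K}$ using $H^{1}(\Gal(K/\bQ),\overline{T})=0$, compose with the quotient $\overline{T}\to\overline{T}/(\tau-1)\overline{T}$ (whose injectivity on $\Hom(G_{K},\overline{T})^{G_{\bQ}}$ follows from irreducibility, exactly the ``crux'' you identify), express the failure locus as a union of four cosets, and combine Lemma \ref{lem:coset} with Chebotarev. The only differences from the paper's argument are cosmetic (you evaluate at $h\tau$ rather than $\tau g$, and you spell out the unramified-class bookkeeping slightly more explicitly).
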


\begin{remark}
When $\bF = \bF_{3}$ and $\dim_{\bF_{3}} \left( \bF_{3}c_{1} + \bF_{3} c_{2} + \bF_{3} c_{3} + \bF_{3} c_{4} \right) = 2$, the conclusion of Lemma \ref{lem:chev} is not valid. 
In fact, if $c_{3} = c_{1} + c_{2}$ and $c_{4} = c_{1} - c_{2}$, then one of the elements $\mathrm{loc}_{\ell}(c_{1})$, $\mathrm{loc}_{\ell}(c_{2})$, $\mathrm{loc}_{\ell}(c_{3})$, and $\mathrm{loc}_{\ell}(c_{4})$ are zero for all but finitely many primes $\ell \in \cP$ since $H^{1}_{\mathrm{ur}}(G_{\bQ_{\ell}}, \overline{T}) \cong \bF_{3}$. 
\end{remark}

\begin{remark}
Lemma \ref{lem:chev} is only used for proving Lemma \ref{lem:change}. 
\end{remark}

\begin{proof}
The proof of this lemma is based on that of \cite[Proposition 3.6.1]{MRkoly}. 
Fix an element $\tau \in G_{\bQ(\mu_{3^{n}})}$ such that $T/(\tau - 1)T \cong R$. 
Put $F := \bQ(\mu_{3^{n}}, T)$. 
Since we assume that 
\[
H^{1}(\Gal(F/\bQ), \overline{T}) = 0, 
\]
the restriction map induces  an injection  
\[
H^{1}(G_{\bQ}, \overline{T}) \longhookrightarrow H^{1}(G_{F}, \overline{T})^{G_{\bQ}} = \Hom(G_{F}, \overline{T})^{G_{\bQ}}. 
\]
Since $\overline{T}$ is an irreducible $G_{\bQ}$-module, the map 
\begin{align}\label{map:inj}
\Hom(G_{F}, \overline{T})^{G_{\bQ}} \longhookrightarrow \Hom(G_{F}, \overline{T}/(\tau - 1)\overline{T})
\end{align}
is injective. Let $\overline{c}_{i} \in \Hom(G_{F}, \overline{T}/(\tau - 1)\overline{T})$ denote the image of $c_{i}$ under the injection \eqref{map:inj}. 
We also put 
\[
H_{i} := \{g \in G_{F} \mid c_{i}(\tau g) = 0 \textrm{ in } \overline{T}/(\tau-1)\overline{T}\}. 
\]
As mentioned in the proof of \cite[Proposition 3.6.1]{MRkoly}, the value  $c_{i}(\tau g) \bmod{(\tau-1)\overline{T}}$ is well-defined since $g \in G_{F}$ acts trivially on $\overline{T}$. 
Note that $\overline{c}_{i}$ is surjective since $\overline{c}_{i} \neq 0$. 
Hence we see that there is an element $g_{i} \in G_{F}$ such that $H_{i} = g_{i} \ker(\overline{c}_{i})$. 
Since the map \eqref{map:inj} is injective, we have 
$\dim_{\bF_{3}} \left( \bF_{3} \overline{c}_{1} + \bF_{3} \overline{c}_{2} + \bF_{3} \overline{c}_{3} + \bF_{3} \overline{c}_{4} \right) \geq 3$ by assumption. 
Hence Lemma \ref{lem:coset} shows that there is an element $g \in G_{F} \setminus (H_{1} \cup H_{2} \cup H_{3} \cup H_{4})$. 

For each $1 \leq i \leq 4$, we put $F_{i} := \overline{F}^{\ker(c_{i})}$. Note that $F/\bQ$ is a Galois extension since $c_{i} \in \Hom(G_{F}, \overline{T})^{G_{\bQ}}$. 
Let $S$ be the set of rational primes whose Frobenius conjugacy class in $\Gal(F_{1}F_{2}F_{3}F_{4}/\bQ)$ is the class of $\tau g$. 
Note that for any prime $\ell \in S$, we have 
\[
H^{1}_{\mathrm{ur}}(G_{\bQ_{\ell}}, \overline{T}) \cong \overline{T}/(\mathrm{Fr}_{\ell}-1)\overline{T} = \overline{T}/(\tau-1)\overline{T} \cong \bF. 
\]
Hence $S$ is an infinite set and $\mathrm{loc}_{\ell}(c_{i}) \neq 0$ for any $1 \leq i \leq 4$ and $\ell \in S$. 
Since the image of $\tau g$ in $\Gal(\bQ(\mu_{p^{n}})/\bQ)$ is trivial, we have $\ell \equiv 1 \bmod{ p^{n} }$, and so $S \subset \cP$. 
\end{proof}

\begin{corollary}\label{cor:chev}
Let $c_{1}, c_{2}, c_{3} \in H^{1}(G_{\bQ}, \overline{T})$ be non-zero elements. 
Then there are infinitely many primes $\ell \in \cP$ satisfying $\mathrm{loc}_{\ell}(c_{i}) \neq 0$ for any $1 \leq i \leq 3$. 
\end{corollary}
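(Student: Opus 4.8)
The plan is to reduce to Lemma~\ref{lem:chev} together with a simplified version of its proof. First, if $c_1,c_2,c_3$ are $\bF_3$-linearly independent in $H^1(G_{\bQ},\overline{T})$, then the quadruple $(c_1,c_2,c_3,c_1)$ consists of non-zero classes spanning a $3$-dimensional $\bF_3$-subspace, so Lemma~\ref{lem:chev} applied to this quadruple produces infinitely many $\ell\in\cP$ with $\mathrm{loc}_{\ell}(c_i)\neq 0$ for $1\le i\le 4$, in particular for $1\le i\le 3$. Hence from now on we may assume $\dim_{\bF_3}(\bF_3 c_1+\bF_3 c_2+\bF_3 c_3)\le 2$.

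In that case I would rerun the argument in the proof of Lemma~\ref{lem:chev} essentially verbatim: pass to $F=\bQ(\mu_{3^{n}},T)$, use $H^1(\Gal(F/\bQ),\overline{T})=0$ and the irreducibility of $\overline{T}$ to identify each $c_i$ with a surjective homomorphism $\overline{c}_i\colon G_F\longrightarrow\overline{T}/(\tau-1)\overline{T}$, set $H_i:=\{g\in G_F\mid c_i(\tau g)=0 \text{ in } \overline{T}/(\tau-1)\overline{T}\}$ (a coset of $\ker(\overline{c}_i)$, or empty), and note that the corollary will follow once one produces $g\in G_F\setminus(H_1\cup H_2\cup H_3)$ and then takes $\ell$ whose Frobenius in $\Gal(F_1F_2F_3/\bQ)$ is the class of $\tau g$. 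The only modification needed is to replace the appeal to Lemma~\ref{lem:coset}, which required four homomorphisms spanning a $3$-dimensional space. If $\dim_{\bF_3}(\bF_3 c_1+\bF_3 c_2+\bF_3 c_3)=1$, write $c_2=\lambda_2 c_1$ and $c_3=\lambda_3 c_1$ with $\lambda_2,\lambda_3\in\bF_3^{\times}$; it then suffices to find $g\notin H_1$, which exists because $\overline{c}_1\neq 0$ forces $H_1\subsetneq G_F$, and for the resulting prime $\ell$ one has $\mathrm{loc}_{\ell}(c_1)\neq 0$, hence $\mathrm{loc}_{\ell}(c_2)=\lambda_2\,\mathrm{loc}_{\ell}(c_1)\neq 0$ and likewise $\mathrm{loc}_{\ell}(c_3)\neq 0$. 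If $\dim_{\bF_3}(\bF_3 c_1+\bF_3 c_2+\bF_3 c_3)=2$, I claim that $H_1\cup H_2\cup H_3\neq G_F$, and then the argument concludes exactly as in Lemma~\ref{lem:chev}.

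The one genuinely new ingredient is this coset claim in the $\dim=2$ case, which I expect to be the main (though entirely routine) obstacle. Each non-empty $H_i$ is a coset of $\ker(\overline{c}_i)$, and by surjectivity of $\overline{c}_i$ that subgroup has index $|\overline{T}/(\tau-1)\overline{T}|=|\bF|$, a power of $3$. A short counting argument shows that three cosets of proper subgroups of $3$-power index can cover a group only if the three subgroups coincide and have index exactly $3$: otherwise the sum of the cardinalities is $<|G_F|$, and when all three have index $3$ two distinct index-$3$ normal subgroups already have product equal to the whole group, so the corresponding cosets cannot be disjoint. In the present situation that degenerate case would force $|\bF|=3$ and $\ker(\overline{c}_1)=\ker(\overline{c}_2)=\ker(\overline{c}_3)$, hence the $\overline{c}_i$ pairwise $\bF_3$-proportional, hence the $c_i$ pairwise $\bF_3$-proportional (the identification $H^1(G_{\bQ},\overline{T})\hookrightarrow\Hom(G_F,\overline{T}/(\tau-1)\overline{T})$ is $\bF_3$-linear and injective), contradicting $\dim_{\bF_3}(\bF_3 c_1+\bF_3 c_2+\bF_3 c_3)=2$; thus $H_1\cup H_2\cup H_3\neq G_F$. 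Conceptually, the genuine obstruction exhibited in the Remark after Lemma~\ref{lem:chev} requires a configuration of \emph{four} classes such as $\{c_1,c_2,c_1+c_2,c_1-c_2\}$ with a common kernel over $\bF_3$, and this simply cannot arise from three classes that are not all proportional, so no Chebotarev input beyond that already used in Lemma~\ref{lem:chev} is needed.
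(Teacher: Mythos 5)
Your proof is correct, but it handles the crucial $\dim_{\bF_{3}}\left(\bF_{3}c_{1}+\bF_{3}c_{2}+\bF_{3}c_{3}\right)=2$ case by a genuinely different route from the paper. The paper's proof exploits the fact that $\dim_{\bF_{3}}(H^{1}(G_{\bQ},\overline{T}))=\infty$: when the span of $c_{1},c_{2},c_{3}$ has dimension $\geq 2$, one simply chooses an auxiliary non-zero class $c$ outside that span, so that $(c_{1},c_{2},c_{3},c)$ spans a space of dimension $\geq 3$, and then quotes Lemma \ref{lem:chev} verbatim; no new Chebotarev or coset argument is needed. You instead re-open the proof of Lemma \ref{lem:chev} and establish a three-coset analogue of Lemma \ref{lem:coset}: three cosets of subgroups of $3$-power index can cover $G_{F}$ only if all three kernels coincide and have index exactly $3$, which would force the $\overline{c}_{i}$ (hence the $c_{i}$) to be pairwise proportional, contradicting $\dim=2$. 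Your counting argument is sound (modulo the harmless informality of speaking of cardinalities of subsets of the profinite group $G_{F}$ rather than indices in a finite quotient), and your treatments of the independent case via the degenerate quadruple $(c_{1},c_{2},c_{3},c_{1})$ and of the $\dim=1$ case match the paper's in substance. The trade-off is that the paper's reduction is shorter and keeps all coset combinatorics confined to Lemma \ref{lem:coset}, whereas your argument is self-contained in the low-dimensional cases and does not invoke the infinite-dimensionality of $H^{1}(G_{\bQ},\overline{T})$ there, at the cost of proving one additional covering lemma.
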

\begin{proof}
Note that $\dim_{\bF_{3}}(H^{1}(G_{\bQ}, \overline{T})) = \infty$. 
When $\dim_{\bF_{3}} \left( \bF_{3}c_{1} + \bF_{3} c_{2} + \bF_{3} c_{3} \right) \geq 2$, there exists an element $c \in H^{1}(G_{\bQ}, \overline{T})$ satisfying 
\[
\dim_{\bF_{3}} \left( \bF_{3}c_{1} + \bF_{3} c_{2} + \bF_{3} c_{3} + \bF_{3} c \right) \geq 3. 
\]
Hence this corollary follows from Lemma \ref{lem:chev}. 
When $\dim_{\bF_{3}} \left( \bF_{3}c_{1} + \bF_{3} c_{2} + \bF_{3} c_{3} \right) = 1$, we may assume that $c_{1} = c_{2} = c_{3}$. 
Then the same argument shows that there are infinitely many primes $\ell \in \cP$ satisfying $\mathrm{loc}_{\ell}(c_{i}) \neq 0$ for any $1 \leq i \leq 3$. 
\end{proof}

\subsection{Connectedness of the graph $\cX^0$}
Let $\cG$ be a Selmer structure on $T$. We denote by $\overline{\cG}$ the Selmer structure on $\overline{T}$ induced by $\cG$, that is, 
\[
H^{1}_{\overline{\cG}}(G_{\bQ_{\ell}}, \overline{T}) := \mathrm{im}\left( H^{1}_{\cG}(G_{\bQ_{\ell}}, T) \longrightarrow H^{1}(G_{\bQ_{\ell}}, \overline{T})\right)
\]
for any rational prime $\ell$. 
Since we assume that $\overline{T}$ is self-dual, one can regard $\overline{\cG}^{*}$ as a Selmer structure on $\overline{T}$. 
Suppose that 
\begin{itemize}
\item 
$\cG$ is cartesian and  residually self-dual (i.e., $\overline{\cG} = \overline{\cG}^{*}$). 
\end{itemize} 
Note that residual self-duality implies that $\chi(\cG) = 0$. 
In this subsection, we fix a rational prime $r$ such that 
\begin{itemize}
\item $\dim_{\bF}(H^{1}(G_{\bQ_{r}}, \overline{T})/H^{1}_{\overline{\cG}}(G_{\bQ_{r}}, \overline{T})) = 1$,   
\item $H^{1}(G_{\bQ_{r}}, T) \longrightarrow H^{1}(G_{\bQ_{r}}, \overline{T})$ is surjective. 
\end{itemize}
We put $\cF := \cG^{r}$. 

\begin{remark}
When $T = \mathrm{Ind}_{G_{\bQ_{n}}}^{G_{\bQ}}(E[p^{m}])$, 
$\cG = \cF_{\mathrm{cl}}$ and $r = 3$, all assumptions in this appendix are satisfied and we have $\cF = \cF_{\mathrm{can}}$.  
\end{remark}

We  set $\cP(\cG, r) = \cP \setminus (S(\cG) \cup \{r\})$ and $\cN(\cG, r)$ denotes the set of square products in $\cP(\cG, r)$.  
For notational simplicity, we also write $\cF$ for the Selmer structure on $\overline{T}$ induced by $\cF$. 
For any square-free integer $d$, we define 
\begin{align*}
\lambda(d) &:= \dim_{\bF}(H^{1}_{\cF(d)}(G_{\bQ}, \overline{T})), 
\\
\lambda^{*}(d) &:= \dim_{\bF}(H^{1}_{\cF^{*}(d)}(G_{\bQ}, \overline{T})).  
\end{align*}
Following \cite[Definition 4.3.6]{MRkoly}, we define the graph $\cX^{0} := \cX^0(\cF)$ as follows.
\begin{itemize}
\item The vertices of $\cX^0$ are integers $d \in \cN(\cG,r)$ with $\lambda^{*}(d) = 0$. 
\item For any vertices $d, d\ell \in \cX^{0}$ with $\ell \in \cP(\cG,r)$, we join $d$ and $d\ell$ by an edge in $\cX^0$
if and only if $H^1_{\cF(d)}(G_{\bQ}, \overline{T}) \neq  H^1_{\cF_{\ell}(d)}(G_{\bQ}, \overline{T})$. 
\end{itemize}
In this subsection, we prove the connectedness of the graph $\cX^0$ which is one of the most important facts in the theory of Kolyvagin systems.  

\begin{lemma}
The Selmer structure $\cF$ is cartesian and $\chi(\cF) = 1$. 
\end{lemma}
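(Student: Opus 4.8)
The plan is to verify the two assertions separately, working prime by prime for cartesianness and via the exact sequence of Theorem \ref{pt} for the Euler characteristic. Recall that $\cF = \cG^{r}$, so that $S(\cF) = S(\cG) \cup \{r\}$ and the local conditions of $\cF$ and $\cG$ agree at every prime except $r$, where $H^{1}_{\cF}(G_{\bQ_{r}}, T) = H^{1}(G_{\bQ_{r}}, T)$.

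For cartesianness, at a prime $\ell \in S(\cG)$ with $\ell \neq r$ the relevant cokernel map for $\cF$ at $\ell$ coincides with the one for $\cG$, hence is injective since $\cG$ is cartesian by hypothesis. At $\ell = r$ the local condition of $\cF$ is the full group $H^{1}(G_{\bQ_{r}}, T)$, so the target quotient $H^{1}(G_{\bQ_{r}}, T)/H^{1}_{\cF}(G_{\bQ_{r}}, T)$ is zero; therefore the cartesian condition at $r$ reduces to the vanishing of $\coker\bigl(H^{1}(G_{\bQ_{r}}, T) \longrightarrow H^{1}(G_{\bQ_{r}}, \overline{T})\bigr)$, that is, to the surjectivity of the reduction map $H^{1}(G_{\bQ_{r}}, T) \longrightarrow H^{1}(G_{\bQ_{r}}, \overline{T})$, which is exactly the second standing assumption on $r$. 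The same surjectivity moreover gives $H^{1}_{\overline{\cF}}(G_{\bQ_{r}}, \overline{T}) = H^{1}(G_{\bQ_{r}}, \overline{T})$, while $H^{1}_{\overline{\cF}}(G_{\bQ_{\ell}}, \overline{T}) = H^{1}_{\overline{\cG}}(G_{\bQ_{\ell}}, \overline{T})$ for all $\ell \neq r$.

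For $\chi(\cF) = 1$, I would apply Theorem \ref{pt} to the residual representation $\overline{T}$ with $\cF_{1} = \overline{\cG} \subseteq \cF_{2} = \overline{\cF}$; these differ only at $r$, where the quotient $H^{1}(G_{\bQ_{r}}, \overline{T})/H^{1}_{\overline{\cG}}(G_{\bQ_{r}}, \overline{T})$ has $\bF$-dimension $1$ by the first standing assumption on $r$. Counting $\bF$-dimensions in the resulting five-term exact sequence
\begin{align*}
0 \longrightarrow H^{1}_{\overline{\cG}}(G_{\bQ}, \overline{T}) \longrightarrow H^{1}_{\overline{\cF}}(G_{\bQ}, \overline{T}) &\longrightarrow H^{1}(G_{\bQ_{r}}, \overline{T})/H^{1}_{\overline{\cG}}(G_{\bQ_{r}}, \overline{T}) \\
&\longrightarrow H^{1}_{\overline{\cG}^{*}}(G_{\bQ}, \overline{T})^{\vee} \longrightarrow H^{1}_{\overline{\cF}^{*}}(G_{\bQ}, \overline{T})^{\vee} \longrightarrow 0,
\end{align*}
and using $\dim_{\bF}(M^{\vee}) = \dim_{\bF}(M)$, the alternating sum being zero yields $\chi(\overline{\cF}) - \chi(\overline{\cG}) = 1$. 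Since $\cG$ is residually self-dual, $\overline{\cG} = \overline{\cG}^{*}$, and hence $\chi(\overline{\cG}) = 0$, as already noted above; therefore $\chi(\cF) = \chi(\overline{\cF}) = 1$.

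The argument involves no genuine difficulty; the one point deserving care is that the two conditions imposed on $r$ are precisely what is needed --- surjectivity of the reduction map at $r$ for cartesianness (and for identifying the propagated local condition with the full cohomology group), and one-dimensionality of the local quotient at $r$ for the Euler characteristic count --- and that Theorem \ref{pt}, although stated for $T$, applies verbatim to $\overline{T}$, since the latter is a free module over the residue field $\bF$ and satisfies the same running hypotheses.
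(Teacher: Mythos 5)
Your proposal is correct and follows essentially the same route as the paper: cartesianness is checked prime by prime, with the condition at $r$ reducing (because $H^{1}_{\cF}(G_{\bQ_{r}},T)$ is the full local cohomology) to the assumed surjectivity of $H^{1}(G_{\bQ_{r}},T)\rightarrow H^{1}(G_{\bQ_{r}},\overline{T})$, and the Euler characteristic is computed by applying Theorem \ref{pt} to the residual structures $\overline{\cG}\subset\overline{\cF}$, which differ only at $r$ where the local quotient is one-dimensional, together with $\chi(\overline{\cG})=0$ from residual self-duality. The paper compresses all of this into two sentences; your write-up just makes the same argument explicit.
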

\begin{proof}
Since $H^{1}(G_{\bQ_{r}}, T) \longrightarrow H^{1}(G_{\bQ_{r}}, \overline{T})$ is surjective and $\cG$ is cartesian, we see that $\cF = \cG^{r}$ is cartesian. 
Applying Theorem \ref{pt} with $\cF_{1} = \overline{\cG}$ and $\cF_{2} = \cF$, we obtain 
\[
\chi(\cF) = \chi(\cG) + \dim_{\bF}(H^{1}(G_{\bQ_{r}}, \overline{T})/H^{1}_{\overline{\cG}}(G_{\bQ_{r}}, \overline{T})) = 1. 
\]
\end{proof}

The following  lemma is an  applications of Theorem \ref{pt}. 

\begin{lemma}\label{lem:core1}
Let $d \in \cN(\cG,r)$ be an integer. Then the following claims are valid. 
\begin{itemize}
\item[(1)] $\lambda(d) = \lambda^{*}(d) + 1$. 
\item[(2)] $|\lambda(d) - \lambda(d\ell)| \leq 1$ for any prime $\ell \in \cP(\cG,r)$ with $\ell \nmid d$. 
\item[(3)] $|\lambda^{*}(d) - \lambda^{*}(d\ell)| \leq 1$  for any prime $\ell \in \cP(\cG,r)$ with $\ell \nmid d$. 
\item[(4)] If $H^1_{\cF(d)}(G_{\bQ}, \overline{T}) \neq H^1_{\cF_{\ell}(d)}(G_{\bQ}, \overline{T})$, then 
$\lambda^{*}(d\ell) \leq \lambda^{*}(d) $. 
\item[(5)] If $H^1_{\cF^{*}(d)}(G_{\bQ}, \overline{T}) \neq H^1_{\cF^{*}_{\ell}(d)}(G_{\bQ}, \overline{T})$, then 
$\lambda(d\ell) = \lambda(d) - 1$ and $\lambda^{*}(d\ell) = \lambda^{*}(d)  -  1$. 
\end{itemize}
In particular, $\nu(d) \geq \lambda^{*}(1)$ for any integer $d \in \cN(\cG,r)$ with $\lambda^{*}(d) = 0$. 
\end{lemma}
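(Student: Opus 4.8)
The plan is to prove Lemma~\ref{lem:core1} by systematic application of the Poitou--Tate sequence (Theorem~\ref{pt}), comparing the Selmer structure $\cF(d)$ (respectively $\cF_{\ell}(d)$, etc.) with its modifications obtained by changing the local condition at a single prime, and tracking the resulting dimension changes. First I would record the one fact that drives everything: since $\cF$ is cartesian and $\overline{\cF}$ is residually self-dual with $\chi(\cF)=1$, and cartesian-ness and the core rank are stable under the operations $a,b,c \mapsto \cF^a_b(c)$ (by \cite[Corollary 3.18, Corollary 3.21]{sakamoto}), we have $\chi(\cF(d)) = \chi(\cF) = 1$, i.e. $\lambda(d) - \lambda^*(d) = 1$. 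Note also that $(\cF(d))^* = \cF^*(d)$, so "$*$" is an involution compatible with adding $d$ to the conductor; this is what lets claim~(1) be symmetric in the way it is used later.

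Next I would treat claims (2)--(5) by comparing $\cF(d)$ with $\cF(d\ell)$ (and the dual versions). The key is that $\cF(d\ell) = (\cF(d))(\ell)$ differs from $\cF(d)$ only at $\ell$, where the transversal condition $H^1_{\mathrm{tr}}(G_{\bQ_\ell},\overline{T})$ replaces the unramified one $H^1_{\mathrm{ur}}(G_{\bQ_\ell},\overline{T})$; both are lines, but they may or may not coincide inside $H^1(G_{\bQ_\ell},\overline{T})$ (which is two-dimensional). To get clean exact sequences I would instead compare through the common refinement/coarsening: apply Theorem~\ref{pt} with $\cF_1 = \cF_\ell(d)$ (strict at $\ell$) and $\cF_2 = \cF(d)$, and again with $\cF_1 = \cF_\ell(d)$ and $\cF_2 = \cF^\ell(d)$ (relaxed at $\ell$). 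Since $H^1_{\cF_\ell(d)} \subset H^1_{\cF(d)} \subset H^1_{\cF^\ell(d)}$ and each successive quotient of local conditions at $\ell$ is at most one-dimensional (it is $H^1_{\mathrm{ur}}/0$ or $H^1_{/\mathrm{ur}}$, each $\cong \bF$ since $\ell \in \cP$), the Poitou--Tate sequence forces all the inequalities $|\lambda(d)-\lambda(d\ell)| \le 1$ and $|\lambda^*(d)-\lambda^*(d\ell)|\le 1$. For claim~(4), if $H^1_{\cF(d)} \neq H^1_{\cF_\ell(d)}$ then $v_\ell$ is nonzero on $H^1_{\cF(d)}$, so in the exact sequence the map to $H^1_{/\mathrm{ur}}(G_{\bQ_\ell},\overline{T})$ is surjective, which by exactness kills the contribution to $H^1_{\cF^*_\ell(d)}(G_{\bQ},\overline{T})^\vee$, giving $\lambda^*(d\ell) \le \lambda^*(d)$; feeding this back through claim~(1) and claim~(2) then also controls $\lambda(d\ell)$. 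Claim~(5) is the "$*$"-dual of the sharp case: if the dual localization is surjective then both $\lambda$ and $\lambda^*$ drop by exactly one, which follows by running the sequence on the $\cF^*$ side and using $\lambda(d\ell) = \lambda^*(d\ell)+1$.

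Finally, for the "in particular" clause: suppose $d \in \cN(\cG,r)$ has $\lambda^*(d)=0$. Writing $d = \ell_1\cdots\ell_t$ and peeling off one prime at a time, each step $d' \mapsto d'/\ell$ changes $\lambda^*$ by at most one (claim~(3)), so $\lambda^*(1) \le \lambda^*(d) + \nu(d) = \nu(d)$. I expect the main obstacle to be bookkeeping: making sure that at each invocation of Theorem~\ref{pt} the set of "bad" primes is exactly $\{\ell\}$ and that the local quotients are correctly identified (using \cite[Lemmas 1.2.1, 1.2.3, 1.2.4]{MRkoly} that $H^1_{\mathrm{tr}}$, $H^1_{/\mathrm{ur}}$, $H^1_{/\mathrm{tr}}$ are all free of rank one over the residue field for $\ell \in \cP$), together with checking the self-duality relation $(\cF^a_b(c))^* = (\cF^*)^b_a(c)$ so that the dual statements line up — rather than any conceptual difficulty, since all of this is modeled directly on \cite[\S4.1, \S4.3]{MRkoly} and the only new input (that $p=3$ is allowed) has been isolated into the Chebotarev statements of the previous subsection, which are not needed here.
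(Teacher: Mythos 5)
Your proposal is correct and follows essentially the same route as the paper: claim (1) comes from the invariance of the core rank $\chi(\cF)=1$ under modification of local conditions, and claims (2)--(5) come from the Poitou--Tate sequence (Theorem \ref{pt}) applied at the single prime $\ell$ via the comparisons $\cF_{\ell}(d)\subset\cF(d)\subset\cF^{\ell}(d)$ and their duals --- the paper simply cites \cite[Proposition 4.1.4 and Lemma 4.1.7]{MRkoly}, which encapsulate exactly these arguments. One minor notational slip: in claim (4) the relevant one-dimensional quotient of local conditions is $H^{1}_{\mathrm{ur}}(G_{\bQ_{\ell}},\overline{T})$ (not $H^{1}_{/\mathrm{ur}}$, and the map that is nonzero is $\mathrm{loc}_{\ell}$ rather than $v_{\ell}$), and the dual of the strict structure $\cF_{\ell}(d)$ is the relaxed one $(\cF^{*})^{\ell}(d)$, but the conclusion you draw, $H^{1}_{\cF^{*}(d\ell)}\subset H^{1}_{(\cF^{*})^{\ell}(d)}=H^{1}_{\cF^{*}(d)}$, is the right one.
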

\begin{proof}
Claim (1) follows from \cite[Proposition 4.1.4]{MRkoly} and the fact that $\lambda(1) - \lambda^{*}(1) = \chi(\cF) = 1$. 
Claims (2) and (3) follow from \cite[Lemma 4.1.7(i)]{MRkoly}. 

Suppose that $H^1_{\cF(d)}(G_{\bQ}, \overline{T}) \neq H^1_{\cF_{\ell}(d)}(G_{\bQ}, \overline{T})$. 
Since $H^{1}_{\mathrm{ur}}(G_{\bQ_{\ell}}, \overline{T}) \cong \bF$, 
applying Theorem \ref{pt} with $\cF_{1} = \cF_{\ell}(d)$ and $\cF_{2} = \cF(d)$, we see that 
$H^1_{\cF^{*}(d)}(G_{\bQ}, \overline{T}) = H^1_{(\cF^{*})^{\ell}(d)}(G_{\bQ}, \overline{T}) \supset H^1_{\cF^{*}(d\ell)}(G_{\bQ}, \overline{T})$, which implies claim (4). 

Since $\cF^{*} \subset \cF$ by definition, if $H^1_{\cF^{*}(d)}(G_{\bQ}, \overline{T}) \neq H^1_{\cF^{*}_{\ell}(d)}(G_{\bQ}, \overline{T})$, then we have $H^1_{\cF(d)}(G_{\bQ}, \overline{T}) \neq H^1_{\cF_{\ell}(d)}(G_{\bQ}, \overline{T})$. Hence claim (5)  follows from \cite[Lemma 4.1.7(iv)]{MRkoly}
\end{proof}


\begin{lemma}\label{lem:koly-join1}
For any vertices $d, d\ell\in \cX^{0}$ with $\ell \in \cP(\cG,r)$,  there is a path in $\cX^{0}$ from $d$ to $d\ell$. 
\end{lemma}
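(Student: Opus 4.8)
The plan is to produce an explicit path of length at most three. If $H^{1}_{\cF(d)}(G_{\bQ}, \overline{T}) \neq H^{1}_{\cF_{\ell}(d)}(G_{\bQ}, \overline{T})$, then by the definition of $\cX^{0}$ the vertices $d$ and $d\ell$ are already joined by an edge and there is nothing to do; so the substance is the opposite case, in which the localization $\mathrm{loc}_{\ell}$ vanishes on $H^{1}_{\cF(d)}(G_{\bQ}, \overline{T})$. In that case I would first extract two fixed cohomology classes that do not depend on any auxiliary prime. Since $d$ and $d\ell$ are vertices, $\lambda^{*}(d) = \lambda^{*}(d\ell) = 0$, hence $\lambda(d) = \lambda(d\ell) = 1$ by Lemma \ref{lem:core1}(1); and the vanishing of $\mathrm{loc}_{\ell}$ on $H^{1}_{\cF(d)}(G_{\bQ}, \overline{T})$ forces $H^{1}_{\cF(d)}(G_{\bQ}, \overline{T}) \subseteq H^{1}_{\cF(d\ell)}(G_{\bQ}, \overline{T})$, so these two one-dimensional lines coincide, and I fix a generator $c$. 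Next, applying Theorem \ref{pt} to the pair $\cF^{*}(d) \subset (\cF^{*})^{\ell}(d)$, using $H^{1}_{\cF^{*}(d)}(G_{\bQ}, \overline{T}) = 0$ together with the identification $H^{1}_{\cF_{\ell}(d)}(G_{\bQ}, \overline{T}) = H^{1}_{\cF(d)}(G_{\bQ}, \overline{T})$ forced by the same vanishing, I would conclude that $H^{1}_{(\cF^{*})^{\ell}(d)}(G_{\bQ}, \overline{T})$ is one-dimensional, and fix a generator $c^{*}$. Both $c$ and $c^{*}$ are nonzero classes in $H^{1}(G_{\bQ}, \overline{T})$.

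Then I would choose the auxiliary prime: by Corollary \ref{cor:chev} (applied with the three classes $c, c^{*}, c$) there are infinitely many primes $q \in \cP$ with $\mathrm{loc}_{q}(c) \neq 0$ and $\mathrm{loc}_{q}(c^{*}) \neq 0$, and after deleting the finitely many that lie in $S(\cG) \cup \{r\}$ or divide $d\ell$ I obtain such a $q \in \cP(\cG, r)$ coprime to $d\ell$. The claim will be that $d - dq - dq\ell - d\ell$ is a path in $\cX^{0}$, where $dq\ell = d\ell q$ is the same integer, so this has three edges. The outer two edges are immediate: $\mathrm{loc}_{q}(c) \neq 0$ gives $H^{1}_{\cF(d)}(G_{\bQ},\overline{T}) \neq H^{1}_{\cF_{q}(d)}(G_{\bQ},\overline{T})$ and, since $c$ also generates $H^{1}_{\cF(d\ell)}(G_{\bQ},\overline{T})$, also $H^{1}_{\cF(d\ell)}(G_{\bQ},\overline{T}) \neq H^{1}_{\cF_{q}(d\ell)}(G_{\bQ},\overline{T})$; Lemma \ref{lem:core1}(4) then gives $\lambda^{*}(dq) = \lambda^{*}(d\ell q) = 0$, so $dq$ and $d\ell q$ are vertices, joined by edges to $d$ and to $d\ell$ respectively.

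The remaining point, the middle edge $dq - dq\ell$, is the one I expect to be the main obstacle, since the group $H^{1}_{\cF(dq)}(G_{\bQ}, \overline{T})$ varies with $q$. The device is to rephrase the required non-vanishing in terms of the $q$-independent class $c^{*}$. Applying Theorem \ref{pt} to $\cF_{\ell}(dq) \subset \cF(dq)$ and using $\lambda^{*}(dq) = 0$, the non-vanishing of $\mathrm{loc}_{\ell}$ on $H^{1}_{\cF(dq)}(G_{\bQ}, \overline{T})$ is equivalent to $H^{1}_{(\cF^{*})^{\ell}(dq)}(G_{\bQ}, \overline{T}) = 0$. Next, applying Theorem \ref{pt} to $(\cF^{*})^{\ell}(d) \subset (\cF^{*})^{\ell q}(d)$ and using $\mathrm{loc}_{q}(c) \neq 0$ (which kills the relevant dual Selmer group $H^{1}_{\cF_{\ell q}(d)}(G_{\bQ},\overline{T}) \subseteq \bF c$), I would get $H^{1}_{(\cF^{*})^{\ell q}(d)}(G_{\bQ}, \overline{T}) = H^{1}_{(\cF^{*})^{\ell}(d)}(G_{\bQ}, \overline{T}) = \bF c^{*}$; and since $(\cF^{*})^{\ell}(dq)$ differs from $(\cF^{*})^{\ell q}(d)$ only at $q$ (transversal versus full), $H^{1}_{(\cF^{*})^{\ell}(dq)}(G_{\bQ}, \overline{T}) \subseteq \bF c^{*}$. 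Because $\mathrm{loc}_{q}(c^{*})$ lies in $H^{1}_{\mathrm{ur}}(G_{\bQ_{q}}, \overline{T})$, this subgroup equals $\bF c^{*}$ when $\mathrm{loc}_{q}(c^{*}) = 0$ and is $0$ otherwise; we have arranged $\mathrm{loc}_{q}(c^{*}) \neq 0$, so it is $0$. Hence $dq$ and $dq\ell$ are joined, and Lemma \ref{lem:core1}(4) applied to $dq$ and $\ell$ gives $\lambda^{*}(dq\ell) = 0$, so $dq\ell$ is a vertex; this completes the path. The whole construction is thus arranged so that the only thing needed from the Chebotarev input is that $\mathrm{loc}_{q}(c)$ and $\mathrm{loc}_{q}(c^{*})$ can be made nonzero simultaneously at one prime $q$, which is precisely what Corollary \ref{cor:chev} provides.
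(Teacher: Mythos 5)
Your proof is correct and is essentially the argument the paper invokes: the paper's proof of this lemma is simply a citation of Mazur--Rubin \cite[Lemma 4.3.9]{MRkoly} together with the remark that Corollary \ref{cor:chev} can replace \cite[Proposition 3.6.1]{MRkoly}, and your path $d - dq - dq\ell - d\ell$, with $q$ chosen so that the generator $c$ of $H^{1}_{\cF(d)}(G_{\bQ},\overline{T}) = H^{1}_{\cF(d\ell)}(G_{\bQ},\overline{T})$ and the generator $c^{*}$ of $H^{1}_{(\cF^{*})^{\ell}(d)}(G_{\bQ},\overline{T})$ are both locally nonzero at $q$, is exactly that argument spelled out. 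In particular your reconstruction confirms the paper's claim that only a simultaneous non-vanishing of two (hence at most three) classes is needed, which is what Corollary \ref{cor:chev} supplies at $p=3$.
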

\begin{proof}
This lemma is proved by Mazur and Rubin in {\cite[Lemma 4.3.9]{MRkoly}}. 
Note that \cite[Proposition 3.6.1]{MRkoly} is used in the proof of \cite[Lemma 4.3.9]{MRkoly}. 
However, exactly the same argument as in \cite[Lemma 4.3.9]{MRkoly} works even if we use Corollary \ref{cor:chev} instead of \cite[Proposition 3.6.1]{MRkoly}. 
\end{proof}

\begin{lemma}\label{lem:change}
For each integer  $1 \leq i \leq 2$, let $d_{i} \in \cX^{0}$  and $\ell_{i} \in \cP(\cG,r)$ with $\ell_{i} \mid d_{i}$. 
Suppose that $\nu(d_{1}) = \nu(d_{2}) = \lambda^{*}(1)$ and $\ell_{1} \neq \ell_{2}$. 
Then there exists a prime $q \in \cP(\cG,r)$ with $q \nmid d_{1}d_{2}$ such that 
there is a path in $\cX^{0}$ from $d_{i}$ to $d_{i}q/\ell_{i}$  for each integer $1 \leq i \leq 2$.  
\end{lemma}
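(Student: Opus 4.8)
The plan is to replace the two primes $\ell_1,\ell_2$ by a single auxiliary prime $q$ and then to bridge the resulting vertices by repeated use of the connectedness Lemma \ref{lem:koly-join1}. I would begin by recording the local structure at $\ell_i$. Put $d_i':=d_i/\ell_i$, so $\nu(d_i')=\lambda^*(1)-1$. Since $d_i\in\cX^{0}$ we have $\lambda^*(d_i)=0$; were $\lambda^*(d_i')=0$ the inequality $\nu(\cdot)\geq\lambda^*(1)$ of Lemma \ref{lem:core1} would fail, so $\lambda^*(d_i')\geq 1$, and Lemma \ref{lem:core1}(3) then forces $\lambda^*(d_i')=1$. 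Hence by Lemma \ref{lem:core1}(1) the $\bF$-spaces $H^{1}_{\cF(d_i)}(G_\bQ,\overline{T})$ and $H^{1}_{\cF^*(d_i')}(G_\bQ,\overline{T})$ are one-dimensional; I would fix nonzero generators $a_i$ and $b_i$.

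The key reduction is that it suffices to produce a prime $q\in\cP(\cG,r)$ with $q\nmid d_1d_2$ and $\mathrm{loc}_q(a_i)\neq 0$, $\mathrm{loc}_q(b_i)\neq 0$ for $i=1,2$. For such a $q$ the condition $\mathrm{loc}_q(a_i)\neq 0$ says $H^{1}_{\cF(d_i)}(G_\bQ,\overline{T})\neq H^{1}_{\cF_q(d_i)}(G_\bQ,\overline{T})$ (the latter being the intersection of the former with $\ker(\mathrm{loc}_q)$, since $\cF(d_i)$ is unramified at $q$ while $\cF_q(d_i)$ is strict there), so Lemma \ref{lem:core1}(4) gives $\lambda^*(d_iq)=0$ and $d_iq$ is a vertex of $\cX^{0}$; likewise $\mathrm{loc}_q(b_i)\neq 0$ gives $H^{1}_{\cF^*(d_i')}(G_\bQ,\overline{T})\neq H^{1}_{\cF^*_q(d_i')}(G_\bQ,\overline{T})$, whence by Lemma \ref{lem:core1}(5) $\lambda^*(d_i'q)=0$ and $d_iq/\ell_i=d_i'q$ is a vertex of $\cX^{0}$. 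Since $d_i$ and $d_iq$ are vertices differing by the prime $q$, and $d_i'q$ and $d_iq$ are vertices differing by the prime $\ell_i$, two applications of Lemma \ref{lem:koly-join1} yield a path in $\cX^{0}$ from $d_i$ through $d_iq$ to $d_iq/\ell_i$, which is what is wanted.

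It then remains to find $q$, and this is a Chebotarev problem: I need a prime in $\cP(\cG,r)$, prime to $d_1d_2$, on which $\mathrm{loc}_q$ is simultaneously nonzero on the four classes $a_1,a_2,b_1,b_2\in H^{1}(G_\bQ,\overline{T})$ (only finitely many primes must be excluded, so the membership constraints cost nothing). If $\dim_{\bF_3}(\bF_3a_1+\bF_3a_2+\bF_3b_1+\bF_3b_2)\geq 3$, Lemma \ref{lem:chev} supplies infinitely many such $q$. Otherwise the four classes span at most a plane over $\bF_3$, and the plan is to show that this degenerate situation always collapses to at most three distinct $\bF$-lines among $\bF a_1,\bF a_2,\bF b_1,\bF b_2$, so that Corollary \ref{cor:chev}, applied to one representative per line, still produces $q$. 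The mechanism should be the rigidity of the Selmer conditions: $\cF^*(d_i')$ agrees with $\cF(d_i')$ at every prime except $r$, where it is strict, while $a_i$ is transversal at $\ell_i$ and $b_i$ is unramified at $\ell_i$; combined with $\lambda(d_i')=2$, with $\ell_1\neq\ell_2$, and with the minimality $\nu(d_1)=\nu(d_2)=\lambda^*(1)$, this should constrain the lines $\bF a_i$ and $\bF b_i$ enough to rule out four pairwise non-proportional classes lying inside a common $\bF_3$-plane. (If $\bF\supsetneq\bF_3$ the target $H^{1}_{\mathrm{ur}}(G_{\bQ_q},\overline{T})\cong\overline{T}/(\mathrm{Fr}_q-1)\overline{T}$ has $\bF_3$-dimension $\geq 2$, and a direct variant of the argument in Lemma \ref{lem:chev} gives a $q$ with $\mathrm{loc}_q$ injective on the whole span, so no case analysis is needed there.)

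The main obstacle will be exactly this last, degenerate Chebotarev case, which is the analogue of the failure described in the remark after Lemma \ref{lem:chev}. Excluding the forbidden configuration of four pairwise non-proportional classes inside an $\bF_3$-plane is the only step that genuinely uses the self-duality of $\cG$ together with the interplay between the transversal condition at $\ell_i$ and the strict condition at $r$; once that is in hand, the rest is bookkeeping with Theorem \ref{pt} and Lemma \ref{lem:core1}.
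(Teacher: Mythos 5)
Your overall architecture coincides with the paper's: set $e_i := d_i/\ell_i$ (your $d_i'$), fix generators $a_i$ of the line $H^1_{\cF(d_i)}(G_\bQ,\overline{T})$ and $b_i$ of the line $H^1_{\cF^*(e_i)}(G_\bQ,\overline{T})$, find a single prime $q\in\cP(\cG,r)$, coprime to $d_1d_2$, at which all four classes localize nontrivially, deduce from Lemma \ref{lem:core1}(4) and (5) that $d_iq$ and $e_iq$ are vertices of $\cX^0$, and conclude by two applications of Lemma \ref{lem:koly-join1}. All of that is correct and is exactly the paper's proof.

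The genuine gap is the step you yourself flag as the main obstacle: the degenerate case in which $\bF_3a_1+\bF_3a_2+\bF_3b_1+\bF_3b_2$ has $\bF_3$-dimension at most $2$, where you only assert that "the rigidity of the Selmer conditions \dots should constrain the lines enough." The ingredients you list there (transversality of $a_i$ at $\ell_i$, $\ell_1\neq\ell_2$, minimality of $\nu(d_i)$) are not the ones that do the work; the relevant mechanism is localization at $r$. Concretely: since $\lambda^*(d_i)=0$ one has $H^1_{\cF^*(e_i)}\cap H^1_{\cF(d_i)}\subset H^1_{\cF^*(d_i)}=0$, so by dimension count $H^1_{\cF(e_i)}=H^1_{\cF(d_i)}\oplus H^1_{\cF^*(e_i)}$; moreover $H^1_{\cF(e_i)}\cap\ker(\mathrm{loc}_r)=H^1_{\cF^*(e_i)}$, because $\cF=\cG^r$ and $\cF^*$ differ only at $r$, where $\cF^*$ is strict. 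Hence $\mathrm{loc}_r(a_i)\neq0$ while $\mathrm{loc}_r(b_i)=0$. Now if the hypothesis of Lemma \ref{lem:chev} fails, then since $a_1,b_1$ are already $\bF_3$-independent we get $b_2\in\bF_3a_1+\bF_3b_1\subset H^1_{\cF(e_1)}$, say $b_2=\alpha a_1+\beta b_1$; applying $\mathrm{loc}_r$ yields $\alpha\,\mathrm{loc}_r(a_1)=0$, so $\alpha=0$ and $b_2$ is proportional to $b_1$. One is then reduced to the three classes $a_1,a_2,b_1$, and Corollary \ref{cor:chev} produces the desired $q$. (Conversely, if $b_2\notin H^1_{\cF(e_1)}$, then $a_1,b_1,b_2$ are $\bF$-independent and Lemma \ref{lem:chev} applies directly; so the correct case division is simply on whether $b_2$ lies in $H^1_{\cF(e_1)}$.) Your parenthetical claim for $\bF\supsetneq\bF_3$ is likewise unsubstantiated and unnecessary: the argument just described is uniform in $\bF$.
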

\begin{proof}
Let $1 \leq i \leq 2$ and put $e_{i} = d_{i}/\ell_{i}$.
Since $\nu(e_{i}) = \lambda^{*}(1) - 1$,  we have $\lambda(e_{i}) = 2$ and $\lambda^{*}(e_{i}) = 1$ by Lemma \ref{lem:core1}.  
By definition, we have 
\[
H^{1}_{\cF^{*}(e_{i})}(G_{\bQ}, \overline{T}) = H^{1}_{\overline{\cG}_{r}(e_{i})}(G_{\bQ}, \overline{T}) \subset 
 H^{1}_{\cF(e_{i})}(G_{\bQ}, \overline{T}). 
\]
Moreover, since $\lambda^{*}(d_{i}) = 0$, we also have 
\[
H^{1}_{\cF^{*}(e_{i})}(G_{\bQ}, \overline{T})  \cap H^{1}_{\cF(d_{i})}(G_{\bQ}, \overline{T}) \subset H^{1}_{\cF^{*}(d_{i})}(G_{\bQ}, \overline{T}) = 0. 
\] 
Since $\lambda(e_{i}) = 2$ and $\lambda(d_{i}) = \lambda^{*}(e_{i}) = 1$, we obtain a decomposition 
\[
H^{1}_{\cF(e_{i})}(G_{\bQ}, \overline{T}) = H^{1}_{\cF(d_{i})}(G_{\bQ}, \overline{T}) \oplus H^{1}_{\cF^{*}(e_{i})}(G_{\bQ}, \overline{T}). 
\]
Take non-zero elements $c^{(i)}_{1} \in H^{1}_{\cF(d_{i})}(G_{\bQ}, \overline{T})$ and $c^{(i)}_{2} \in H^{1}_{\cF^{*}(e_{i})}(G_{\bQ}, \overline{T})$. 
By definition, we have $H^{1}_{\cF(e_{i})}(G_{\bQ}, \overline{T}) \cap \ker(\mathrm{loc}_{r}) = H^{1}_{\cF^{*}(e_{i})}(G_{\bQ}, \overline{T})$. 
Hence we see that $\mathrm{loc}_{r}(c^{(1)}_{1}) \neq 0 \neq \mathrm{loc}_{r}(c^{(2)}_{1})$ and $\mathrm{loc}_{r}(c^{(1)}_{2}) = 0 = \mathrm{loc}_{r}(c^{(2)}_{2})$. 

Let us show that there is a prime $q \in \cP(\cG,r)$ such that $\mathrm{loc}_{q}(c^{(i)}_{j}) \neq 0$ for any $i,j \in \{1,2\}$. 
If $c^{(2)}_{2} \not\in H^{1}_{\cF(e_{1})}(G_{\bQ}, \overline{T})$, then this claim follows from Lemma \ref{lem:chev}. 
Suppose that $c^{(2)}_{2} \in H^{1}_{\cF(e_{1})}(G_{\bQ}, \overline{T})$, that is, $c^{(2)}_{2} = a c^{(1)}_{1} + b c^{(1)}_{2}$ for some $a,b \in \bF$. 
Then 
\[
0 = \mathrm{loc}_{q}(c^{(2)}_{2}) =  \mathrm{loc}_{q}(ac^{(1)}_{1}) +  \mathrm{loc}_{q}(bc^{(1)}_{2}) =  \mathrm{loc}_{q}(ac^{(1)}_{1}). 
\]
Since $\mathrm{loc}_{q}(c^{(1)}_{1}) \neq 0$, we may assume that $c^{(1)}_{2} = c^{(2)}_{2}$. 
Then Corollary \ref{cor:chev} shows that 
there is a prime $q \in \cP(\cG,r)$ such that $\mathrm{loc}_{q}(c^{(i)}_{j}) \neq 0$ for any $i,j \in \{1,2\}$. 

Let us prove that $q$ is a desired prime. Lemma \ref{lem:core1} and the fact that $\mathrm{loc}_{q}(c^{(i)}_{1}) \neq 0$ imply  
$\lambda^{*}(d_{i}q) \leq \lambda^{*}(d_{i}) = 0$, that is, $d_{i}q \in \cX^{0}$. 
Since $\mathrm{loc}_{q}(c^{(i)}_{2}) \neq 0$, we have 
\[
H^{1}_{\cF_{q}^{*}(e_{i})}(G_{\bQ}, \overline{T}) \neq H^{1}_{\cF^{*}(e_{i})}(G_{\bQ}, \overline{T}). 
\]
Hence Lemma \ref{lem:core1} shows that  $\lambda^{*}(e_{i}q) = \lambda^{*}(e_{i}) -1 = 0$, 
that is, $e_{i}q \in \cX^{0}$. 
Since $d_{i}, d_{i}q, e_{i}q \in \cX^{0}$, Lemma \ref{lem:koly-join1} shows that there is a path in $\cX^{0}$ from  $d_{i}$ to $e_{i}q$. 
\end{proof}

\begin{corollary}[{\cite[Proposition 4.3.11]{MRkoly}}]\label{cor:koly-join3}
For any vertices $d_{1}, d_{2} \in \cX^{0}$ satisfying $\nu(d_{1}) = \nu(d_{2}) = \lambda^{*}(1)$,  
there is a path in $\cX^0$ from $d_{1}$ to $d_{2}$. 
\end{corollary}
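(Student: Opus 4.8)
The plan is to run the induction of Mazur and Rubin from the proof of \cite[Proposition 4.3.11]{MRkoly}, which now goes through unchanged because Lemmas \ref{lem:koly-join1} and \ref{lem:change} supply exactly the two ingredients that argument uses. Concretely, I would induct on the non-negative integer
\[
k(d_1,d_2) := \frac{1}{2}\bigl(\nu(d_1) + \nu(d_2) - 2\nu(\gcd(d_1,d_2))\bigr),
\]
that is, half the number of primes dividing exactly one of $d_1$ and $d_2$. This is an integer since $\nu(d_1) = \nu(d_2)$, and $k(d_1,d_2) = 0$ exactly when $d_1 = d_2$, in which case there is nothing to prove.

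For the inductive step, assume $d_1 \neq d_2$. Because $\nu(d_1) = \nu(d_2)$ while $d_1 \neq d_2$, there is a prime $\ell_1 \in \cP(\cG,r)$ with $\ell_1 \mid d_1$ and $\ell_1 \nmid d_2$, and a prime $\ell_2 \in \cP(\cG,r)$ with $\ell_2 \mid d_2$ and $\ell_2 \nmid d_1$; in particular $\ell_1 \neq \ell_2$. I would then apply Lemma \ref{lem:change} to $d_1, d_2, \ell_1, \ell_2$ to obtain a prime $q \in \cP(\cG,r)$ with $q \nmid d_1 d_2$ together with, for each $i \in \{1,2\}$, a path in $\cX^0$ from $d_i$ to $d_i' := d_i q / \ell_i$. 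Each $d_i'$ is then a vertex of $\cX^0$ with $\nu(d_i') = \lambda^*(1)$, both $d_1'$ and $d_2'$ are divisible by $q$, and since the swap deletes $\ell_1$ and $\ell_2$ from the set of primes dividing exactly one of the two integers while enlarging the common part by $q$, we get $k(d_1', d_2') = k(d_1, d_2) - 1$. By the inductive hypothesis there is a path in $\cX^0$ from $d_1'$ to $d_2'$; concatenating the path from $d_1$ to $d_1'$, this path, and the reverse of the path from $d_2$ to $d_2'$ (the graph $\cX^0$ is undirected) produces the desired path from $d_1$ to $d_2$.

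The only step that needs care is the strict decrease of the weight $k$, which is the elementary computation with symmetric differences of prime factorizations indicated above; everything substantive --- the Chebotarev input in the form valid at $p = 3$ (Lemma \ref{lem:chev} and Corollary \ref{cor:chev}), the core-rank bookkeeping of Lemma \ref{lem:core1}, and the two path-gluing statements Lemmas \ref{lem:koly-join1} and \ref{lem:change} --- is already established, so no further appeal to the running hypotheses on $T$ and $\cG$ is needed. I do not anticipate a real obstacle here: the point of this appendix is precisely that those earlier lemmas survive the weakening of the standard hypothesis forced by allowing $p = 3$, and the connectedness of $\cX^0$ then follows formally from them exactly as in \cite{MRkoly}.
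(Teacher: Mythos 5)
Your proof is correct and follows essentially the same route as the paper: the paper inducts on $\lambda^{*}(1) - \nu(\gcd(d_1,d_2))$, which coincides with your weight $k(d_1,d_2)$ under the standing hypothesis $\nu(d_1)=\nu(d_2)=\lambda^{*}(1)$, and both arguments feed a pair of primes $\ell_1 \mid d_1/\gcd(d_1,d_2)$, $\ell_2 \mid d_2/\gcd(d_1,d_2)$ into Lemma \ref{lem:change} and concatenate the resulting paths. Your explicit verification that $\ell_1 \neq \ell_2$ and that the weight drops by exactly one is a welcome bit of added care but not a departure from the paper's argument.
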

\begin{proof}
Put $d := \mathrm{gcd}(d_{1}, d_{2})$. 
Let us show this corollary by induction on $\lambda^{*}(1) - \nu(d)$. 
When $ \nu(d) = \lambda^{*}(1)$, then $d_{1} = d_{2}$, and there is nothing to prove. 
When $\nu(d) < \lambda^{*}(1)$, there are primes $\ell_{1}, \ell_{2} \in \cP(\cG,r)$ with $\ell_{1} \mid d_{1}/d$ and $\ell_{2} \mid d_{2}/d$. 
Then by Lemma \ref{lem:change}, we have a prime $q \in \cP(\cG,r)$ with $q \nmid d_{1}d_{2}$ such that $d_{1}q/\ell_{1}, d_{2}q/\ell_{2} \in \cX^{0}$ and 
that there is a path in $\cX^{0}$ from  $d_{i}$ to $d_{i}q/\ell_{i}$  for any $1 \leq i \leq 2$. 
Since $\nu(d) < \nu(\mathrm{gcd}(d_{1}q/\ell_{1}, d_{2}q/\ell_{2}))$, the induction hypothesis shows that there is a path in $\cX^0$ from $d_{1}q/\ell_{1}$ to $d_{2}q/\ell_{2}$, and hence we obtain  a path in $\cX^0$ from $d_{1}$ to $d_{2}$. 
\end{proof}

\begin{lemma}\label{lem:koly-join2}
For any vertex  $d \in \cX^{0}$ with $\nu(d) > \lambda^*(1)$, 
there is a vertex $e \in \cX^{0}$ with $\nu(e) < \nu(d)$ such that there is  a path in $\cX^{0}$ from $d$ to $e$. 
\end{lemma}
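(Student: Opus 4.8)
The plan is to follow the structure of the connectedness argument of \cite[Theorem~4.3.12]{MRkoly}, reducing everything to Lemma \ref{lem:koly-join1} together with a Chebotarev prime swap powered by Corollary \ref{cor:chev}. Write $g:=\lambda^*(1)$. Since $d\in\cX^0$ we have $\lambda^*(d)=0$, hence $\lambda(d)=1$ by Lemma \ref{lem:core1}(1); fix a generator $c$ of the line $H^1_{\cF(d)}(G_\bQ,\overline T)$. Note that for any $e\in\cN(\cG,r)$ and prime $\ell\mid e$ one has $H^1_{\cF(e)}(G_\bQ,\overline T)\neq H^1_{\cF_\ell(e)}(G_\bQ,\overline T)$ if and only if $\mathrm{loc}_\ell$ is nonzero on $H^1_{\cF(e)}(G_\bQ,\overline T)$ (its kernel being $H^1_{\cF_\ell(e)}(G_\bQ,\overline T)$), and similarly for $\cF^*$; these observations let one read Lemma \ref{lem:core1}(4)--(5) as statements about the nonvanishing of localizations of Selmer classes.

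First I would dispose of the easy case: if some prime $\ell\mid d$ satisfies $\lambda^*(d/\ell)=0$, then $d/\ell$ is a vertex of $\cX^0$ with $\nu(d/\ell)<\nu(d)$, and Lemma \ref{lem:koly-join1} applied to the vertices $d/\ell$ and $d=(d/\ell)\ell$ yields a path in $\cX^0$ from $d$ to $e:=d/\ell$. So assume henceforth that $\lambda^*(d/\ell)\geq 1$ for every $\ell\mid d$; by Lemma \ref{lem:core1}(3) this forces $\lambda^*(d/\ell)=1$ and $\lambda(d/\ell)=2$ for all such $\ell$. Fix a prime $\ell_0\mid d$ and set $e_0:=d/\ell_0$. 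A short corank computation with Theorem \ref{pt} (in the style of the proof of Lemma \ref{lem:core1}) shows that $\mathrm{loc}_{\ell_0}(c)=0$, that $H^1_{\cF(e_0)}(G_\bQ,\overline T)=\bF c\oplus\bF c'$ with $\mathrm{loc}_{\ell_0}(c')\neq 0$, and that the nonzero generator $c^*$ of $H^1_{\cF^*(e_0)}(G_\bQ,\overline T)$ satisfies $\mathrm{loc}_{\ell_0}(c^*)\neq 0$. Feeding the three nonzero classes $c,c',c^*$ into Corollary \ref{cor:chev} produces a prime $q\in\cP(\cG,r)$ with $q\nmid d$ on which $\mathrm{loc}_q$ is nonzero. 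Then Lemma \ref{lem:core1}(4) (with $\mathrm{loc}_q(c)\neq 0$) gives $\lambda^*(dq)=0$, while Lemma \ref{lem:core1}(5) (with $\mathrm{loc}_q(c^*)\neq 0$) gives $\lambda^*(e_0q)=0$; thus $dq$ and $e_0q$ are vertices of $\cX^0$, and two applications of Lemma \ref{lem:koly-join1} string together a path in $\cX^0$ from $d$ through $dq$ to $d_1:=e_0q$, a vertex with $\nu(d_1)=\nu(d)$.

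The remaining task — and this is the main obstacle — is to run the swap so that after finitely many steps one lands in the easy case, and hence descends. I would do this by enlarging the Chebotarev input: along with $c$ and $c^*$, require $q$ to localize nontrivially also on generators of $H^1_{\cF(d/\ell_0\cdots\ell_j)}(G_\bQ,\overline T)$ or of $H^1_{\cF^*(d/\ell_0\cdots\ell_j)}(G_\bQ,\overline T)$ (according to whether $\lambda^*(d/\ell_0\cdots\ell_j)$ is $0$ or $1$) for an initial segment $\ell_0,\ell_1,\dots,\ell_k$ of primes dividing $d$ chosen so that the coranks $\lambda^*(d/\ell_0\cdots\ell_j)$ step down to $0$; applying Lemma \ref{lem:core1}(4)--(5) repeatedly then shows that $dq,\ dq/\ell_0,\ dq/\ell_0\ell_1,\ \dots,\ dq/(\ell_0\cdots\ell_k)$ all lie in $\cX^0$, consecutive terms differ by a single prime, and Lemma \ref{lem:koly-join1} strings them into a path in $\cX^0$ ending at the vertex $(d/\ell_0\cdots\ell_k)q$, whose number of prime divisors is $\nu(d)-k<\nu(d)$. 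Verifying that the corank bookkeeping of Lemma \ref{lem:core1} and the freedom in Corollary \ref{cor:chev} always permit such a choice of the chain $\ell_0,\dots,\ell_k$ and the single swap prime $q$ — in particular treating the case where $d$ is a \emph{minimal} corank-$0$ modulus with $\nu(d)>g$, where one must instead iterate the swap and invoke Corollary \ref{cor:koly-join3} (via Lemma \ref{lem:change}) at the minimal level — is the combinatorial heart of the proof, and is carried out exactly as in \cite[Theorem~4.3.12]{MRkoly}, with Corollary \ref{cor:chev} used in place of \cite[Proposition~3.6.1]{MRkoly}.
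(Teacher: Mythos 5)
Your first two paragraphs are fine: the easy case ($\lambda^*(d/\ell)=0$ for some $\ell\mid d$) is handled correctly, and the corank computation showing $\mathrm{loc}_{\ell_0}(c)=0$, $\mathrm{loc}_{\ell_0}(c^*)\neq 0$ and the resulting swap $d\rightsquigarrow (d/\ell_0)q$ via Corollary \ref{cor:chev} and Lemma \ref{lem:core1}(4)--(5) is sound (this is essentially Lemma \ref{lem:change} without the restriction $\nu(d)=\lambda^*(1)$). But a swap preserves $\nu$, so at the end of your second paragraph you have produced only a vertex $d_1$ with $\nu(d_1)=\nu(d)$; the statement of the lemma --- the descent --- has not been proved, and your third paragraph does not close this. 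The strategy you propose there, namely choosing a \emph{single} prime $q$ that localizes nontrivially on generators attached to an entire initial segment $\ell_0,\dots,\ell_k$, requires the simultaneous nonvanishing of up to $k+2$ classes at one prime. For $p=3$ this is exactly what the appendix cannot supply in general: Corollary \ref{cor:chev} handles only three classes, Lemma \ref{lem:chev} handles four only under the hypothesis that they span a space of dimension at least $3$ (a hypothesis you never verify for your classes), and the Remark following Lemma \ref{lem:chev} shows the four-class statement genuinely fails without it. You also never show that a chain with $\lambda^*(d/\ell_0\cdots\ell_j)\le 1$ throughout exists; if some intermediate modulus has $\lambda^*=2$, adjoining one prime $q$ cannot bring it into $\cX^0$ and your path breaks.

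Deferring ``the combinatorial heart'' to \cite[Theorem~4.3.12]{MRkoly} does not repair this: that theorem is the connectedness statement downstream of the present lemma (the relevant source is \cite[Proposition~4.3.10]{MRkoly}, which is what the paper cites), and the entire purpose of this appendix is to check, step by step, that the Mazur--Rubin arguments never invoke Chebotarev for more classes than Corollary \ref{cor:chev} or Lemma \ref{lem:chev} can deliver when $p=3$. That verification for the descent step is precisely what is missing from your write-up, so as it stands the proposal does not prove the lemma.
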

\begin{proof}
Exactly the same argument as in \cite[Proposition 4.3.10]{MRkoly} works even if we use Corollary \ref{cor:chev} instead of \cite[Proposition 3.6.1]{MRkoly}. 
Hence this lemma is proved by Mazur and Rubin in \cite[Proposition 4.3.10]{MRkoly}. 
\end{proof}

Since $\lambda^{*}(1) \leq \nu(d)$ for any vertex $d \in \cX^{0}$, Corollary \ref{cor:koly-join3} and Lemma \ref{lem:koly-join2} imply the following 

\begin{theorem}[{\cite[Theorem 4.3.12]{MRkoly}}]\label{thm:connected}
The graph $\cX^0$ is connected.
\end{theorem}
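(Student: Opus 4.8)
The goal is to show that the graph $\cX^0$ is connected, given the two ingredients just established: Corollary \ref{cor:koly-join3} (any two vertices of ``minimal size'' $\nu(d)=\lambda^*(1)$ are joined by a path in $\cX^0$) and Lemma \ref{lem:koly-join2} (from any vertex $d$ with $\nu(d)>\lambda^*(1)$ one can reach, along a path in $\cX^0$, some vertex $e$ with $\nu(e)<\nu(d)$). The strategy is the standard reduction-to-the-minimal-stratum argument of Mazur--Rubin, carried out by induction on $\nu(d)$.

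\textbf{Key steps.} First I would recall that, by the last assertion of Lemma \ref{lem:core1}, every vertex $d\in\cX^0$ satisfies $\nu(d)\ge\lambda^*(1)$, so $\lambda^*(1)$ really is the minimal size of a vertex and the induction is well-founded. Next, fix an arbitrary vertex $d\in\cX^0$; I claim there is a path in $\cX^0$ from $d$ to some vertex of size exactly $\lambda^*(1)$. This is proved by induction on $\nu(d)-\lambda^*(1)\ge 0$: if $\nu(d)=\lambda^*(1)$ there is nothing to do; if $\nu(d)>\lambda^*(1)$, Lemma \ref{lem:koly-join2} produces a vertex $e\in\cX^0$ with $\nu(e)<\nu(d)$ and a path in $\cX^0$ from $d$ to $e$, and since $e\in\cX^0$ forces $\nu(e)\ge\lambda^*(1)$, the induction hypothesis applied to $e$ gives a path from $e$ to a vertex of size $\lambda^*(1)$; concatenating yields the desired path from $d$. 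Finally, given two arbitrary vertices $d_1,d_2\in\cX^0$, apply the previous claim to each to obtain paths in $\cX^0$ from $d_i$ to vertices $d_i'$ with $\nu(d_i')=\lambda^*(1)$; by Corollary \ref{cor:koly-join3} there is a path in $\cX^0$ from $d_1'$ to $d_2'$; concatenating the three paths gives a path from $d_1$ to $d_2$, proving connectedness.

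\textbf{Main obstacle.} The substantive content is entirely contained in the two cited results (Corollary \ref{cor:koly-join3} and Lemma \ref{lem:koly-join2}), which in turn rest on the Chebotarev input Corollary \ref{cor:chev} replacing \cite[Proposition 3.6.1]{MRkoly}; the assembly into connectedness is purely formal graph-theoretic bookkeeping via the double induction. So there is no real obstacle here — the only thing to be careful about is that ``path in $\cX^0$'' is transitive (paths concatenate) and that the intermediate vertices produced by Lemma \ref{lem:koly-join2} genuinely lie in $\cX^0$, which is exactly what that lemma asserts. I would simply remark that the argument is identical to that of \cite[Theorem 4.3.12]{MRkoly}, now available in the $p=3$ setting because the inputs have been re-proved above.
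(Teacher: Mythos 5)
Your argument is exactly the paper's: the theorem is stated there as an immediate consequence of Corollary \ref{cor:koly-join3} and Lemma \ref{lem:koly-join2} together with the inequality $\lambda^{*}(1)\leq\nu(d)$ for vertices $d\in\cX^{0}$, and your double induction merely spells out the same formal assembly. The proposal is correct and matches the paper's proof.
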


\subsection{Kolyvagin systems}

We use the same notations as in the previous subsection. 
In this subsection, we prove  Theorem \ref{thm:koly0} and Proposition \ref{prop:reg-isom} when $p=3$. 

\begin{definition}
Let $\mathrm{KS}_{1}(T, \cF)$ denote the module of Kolyvagin systems of rank $1$ (for $\cF$), that is, the set of elements in $\prod_{d\in\cN(\cG,r)}H^{1}_{\cF(d)}(G_{\bQ}, T) \otimes_{\bZ} G_{d}$ satisfying the finite-singular relations. 
\end{definition}

\begin{proposition}\label{prop:inj,p=3}
For any integer $d \in \cN(\cG,r)$ with $\lambda^{*}(d) = 0$, the canonical projection 
\[
\mathrm{KS}_{1}(T, \cF) \longrightarrow H^{1}_{\cF(d)}(G_{\bQ}, T) \otimes_{\bZ} G_{d}
\]
is injective. 
\end{proposition}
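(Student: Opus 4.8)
The plan is to run the Mazur--Rubin argument of \cite[\S 4.3--4.4]{MRkoly} essentially verbatim, the point being that the only place the hypothesis $p>3$ enters there is through the Chebotarev input, and Corollary \ref{cor:chev} is a sufficient replacement. Fix a vertex $d$ of $\cX^0$ (so $\lambda^*(d)=0$, hence $\lambda(d)=1$ by Lemma \ref{lem:core1}(1)) and a Kolyvagin system $\kappa=(\kappa_e)_{e}\in\mathrm{KS}_1(T,\cF)$ with $\kappa_d=0$; the goal is to show $\kappa_e=0$ for every $e\in\cN(\cG,r)$. There are two ingredients: an \emph{edge lemma} propagating vanishing among the vertices of $\cX^0$, and a descent reducing a general $e$ to the case of a vertex.

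For the edge lemma I would show: if $d_1$ and $d_1\ell$ are vertices of $\cX^0$ joined by an edge, then $\kappa_{d_1}=0\iff\kappa_{d_1\ell}=0$. By the edge hypothesis the inclusion $H^1_{\cF_\ell(d_1)}(G_{\bQ},\overline{T})\subseteq H^1_{\cF(d_1)}(G_{\bQ},\overline{T})$ is strict, and the right-hand side has dimension $\lambda(d_1)=1$, so $H^1_{\cF_\ell(d_1)}(G_{\bQ},\overline{T})=0$. Since this group is the kernel of the localisation map at $\ell$ on \emph{both} $H^1_{\cF(d_1)}(G_{\bQ},\overline{T})$ and $H^1_{\cF(d_1\ell)}(G_{\bQ},\overline{T})$, both of these maps are injective. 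Lifting this mod-$\fm_R$ statement to $T$ — using the cartesian property of $\cF(d_1)$ and $\cF(d_1\ell)$ (via \cite[Corollary 3.18]{sakamoto}) together with (the argument of) Lemma \ref{lemma:mult} to identify $H^1_{\cF(d_1\ell)}(G_{\bQ},T)[\fm_R]$ with $H^1_{\cF(d_1\ell)}(G_{\bQ},\overline{T})$, plus the elementary injectivity of $H^1_{*}(G_{\bQ_\ell},\overline{T})\hookrightarrow H^1_{*}(G_{\bQ_\ell},T)$ for $*\in\{\mathrm{ur},\mathrm{tr}\}$ — the localisation maps $H^1_{\cF(d_1)}(G_{\bQ},T)\to H^1_{\mathrm{ur}}(G_{\bQ_\ell},T)$ and $H^1_{\cF(d_1\ell)}(G_{\bQ},T)\to H^1_{\mathrm{tr}}(G_{\bQ_\ell},T)$ are injective. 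Hence $\varphi_\ell^{\mathrm{fs}}$ is injective on $H^1_{\cF(d_1)}(G_{\bQ},T)\otimes_{\bZ}G_{d_1}$ and $v_\ell$ is injective on $H^1_{\cF(d_1\ell)}(G_{\bQ},T)\otimes_{\bZ}G_{d_1\ell}$, and the finite--singular relation $v_\ell(\kappa_{d_1\ell})=\varphi_\ell^{\mathrm{fs}}(\kappa_{d_1})$ gives the equivalence. Combining this with the connectedness of $\cX^0$ (Theorem \ref{thm:connected}) shows $\kappa_e=0$ for every vertex $e$ of $\cX^0$, i.e.\ for every $e\in\cN(\cG,r)$ with $\lambda^*(e)=0$.

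For a general $e\in\cN(\cG,r)$ I would induct on $\lambda^*(e)$, the case $\lambda^*(e)=0$ being the previous paragraph. Suppose $\lambda^*(e)=k\ge 1$, that $\kappa_{e'}=0$ whenever $\lambda^*(e')<k$, and, for contradiction, that $\kappa_e\ne 0$ in $H^1_{\cF(e)}(G_{\bQ},T)\otimes_{\bZ}G_{e}$. Taking a generator of the socle of the cyclic submodule generated by $\kappa_e$ and applying Lemma \ref{lemma:mult} produces a nonzero class $\overline{y}\in H^1_{\cF(e)}(G_{\bQ},\overline{T})$ such that $\mathrm{loc}_q(\overline{y})\ne 0$ implies $\mathrm{loc}_q(\kappa_e)\ne 0$ for any prime $q\nmid e$ in $\cP(\cG,r)$. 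Choose also a nonzero $z\in H^1_{\cF^*(e)}(G_{\bQ},\overline{T})$, which exists since $\lambda^*(e)>0$. By Corollary \ref{cor:chev} there are infinitely many primes $q\in\cP$ with $\mathrm{loc}_q(\overline{y})\ne 0$ and $\mathrm{loc}_q(z)\ne 0$; pick one with $q\in\cP(\cG,r)$ and $q\nmid e$. Then $H^1_{\cF^*(e)}(G_{\bQ},\overline{T})\ne H^1_{\cF^*_q(e)}(G_{\bQ},\overline{T})$, so $\lambda^*(eq)=k-1$ by Lemma \ref{lem:core1}(5); hence $\kappa_{eq}=0$ by the inductive hypothesis, so $v_q(\kappa_{eq})=0$, so $\varphi_q^{\mathrm{fs}}(\kappa_e)=0$, and since $\kappa_e$ is unramified at $q$ and $\phi_q^{\mathrm{fs}}$ is an isomorphism this forces $\mathrm{loc}_q(\kappa_e)=0$ — contradicting $\mathrm{loc}_q(\overline{y})\ne 0$. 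Thus $\kappa_e=0$, completing the induction and the proof.

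I expect the main work to be in the edge lemma, specifically the passage between $\overline{T}$ and $T$: one must combine the cartesian property with Lemma \ref{lemma:mult} to identify $H^1_{\cF(d_1\ell)}(G_{\bQ},T)[\fm_R]$ with $H^1_{\cF(d_1\ell)}(G_{\bQ},\overline{T})$ (and similarly for $\cF(d_1)$) and check compatibility of all the localisation maps involved. Everything else — the connectedness and the inductive Chebotarev step — transcribes directly from the $p>3$ case, the key observation being that the primes one needs only have to satisfy \emph{two} non-vanishing conditions simultaneously, which is exactly what Corollary \ref{cor:chev} supplies.
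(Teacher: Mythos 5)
Your proposal is correct and follows essentially the same route as the paper: the case $\lambda^{*}(e)=0$ is handled by the connectedness of $\cX^{0}$ together with the edge lemma (which the paper simply cites as \cite[Lemma 4.3.8]{MRkoly}), and the general case by induction on $\lambda^{*}(e)$ using Corollary \ref{cor:chev} in place of \cite[Proposition 3.6.1]{MRkoly}, exactly as in the paper's inductive step. The only (harmless) difference is organizational: the paper first reduces the whole statement to $R=\bF$, $T=\overline{T}$ via the cartesian property and $\mathrm{KS}_{1}(\overline{T},\cF)\cong\mathrm{KS}_{1}(T,\cF)[\fm_{R}]$, whereas you work over $T$ throughout and pass to $\overline{T}$ locally by socle arguments.
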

\begin{proof}
Let $\fm_{R}$ denote the maximal ideal of $R$. Since $\cF$ is cartesian, so is $\cF(d)$ for any integer $d \in \cN(\cG,r)$ (see \cite[Corollary 3.18]{sakamoto}). 
Hence, by \cite[Lemma 3.13]{sakamoto}, the canonical injection $\overline{T} \longhookrightarrow R$ induces an isomorphism 
$H^{1}_{\cF(d)}(G_{\bQ}, \overline{T}) \stackrel{\sim}{\longrightarrow} H^{1}_{\cF(d)}(G_{\bQ}, T)[\fm_{R}]$ for any integer $d \in \cN(\cG,r)$. 
Therefore, we may assume that $R = \bF$ and $T = \overline{T}$ since $\mathrm{KS}_{1}(\overline{T}, \cF) \stackrel{\sim}{\longrightarrow} \mathrm{KS}_{1}(T, \cF)[\fm_{R}]$. 

Take an integer $d \in \cN(\cG,r)$ with $\lambda^{*}(d) = 0$. 
Let  $(\kappa_{e})_{e \in \cN(\cG,r)} \in \mathrm{KS}_{1}(T, \cF)$ be a Kolyvagin system satisfying $\kappa_{d} = 0$. 
Let us show $\kappa_{e} = 0$ by induction on $\lambda^{*}(e)$. 
When $\lambda^{*}(e) = 0$, there is a path in $\cX^{0}$ from $d$ to $e$ by Theorem \ref{thm:connected}. Hence the finite-singular relation and \cite[Lemma 4.3.8]{MRkoly} imply $\kappa_{e} = 0$. 
Suppose that $\lambda^{*}(e) > 0$, and take a non-zero element $c \in H^{1}_{\cF^{*}(e)}(G_{\bQ}, E[p])$. 
If $\kappa_{e} \neq 0$, then by Corollary \ref{cor:chev}, there is a prime $\ell \in \cP(\cG,r)$ with $\ell \nmid e$ such that 
$\mathrm{loc}_{\ell}(\kappa_{e}) \neq 0$ and $\mathrm{loc}_{\ell}(c) \neq 0$. 
Since $\mathrm{loc}_{\ell}(c) \neq 0$,  we have $\lambda^{*}(e\ell) = \lambda^{*}(e)-1$ by Lemma \ref{lem:core1}. 
Hence the induction hypothesis and the finite-singular relation imply 
\[
0 \neq \varphi^{\mathrm{fs}}_{\ell}(\kappa_{e}) = v_{\ell}(\kappa_{e \ell}) = 0. 
\]
Therefore, we conclude that $\kappa_{e} = 0$. 
\end{proof}

As explained in \S\ref{sec:rank reduction}, for any integer $d \in \cN(\cG,r)$, the exact sequence 
\[
0 \longrightarrow H^{1}_{\cF(d)}(G_{\bQ}, T) 
\longrightarrow H^{1}_{\cF^{d}}(G_{\bQ}, T) 
\longrightarrow \bigoplus_{\ell \mid d}H^{1}_{/{\rm tr}}(G_{\bQ_{\ell}}, T)
\]  
induces a natural homomorphism $\Pi_{d} \colon X_{d}^{1}(T, \cF) \longrightarrow H^{1}_{\cF(d)}(G_{\bQ}, T) \otimes_{\bZ} G_{d}$, 
and we obtain 
\[
\mathrm{Reg}_{1} \colon {\rm SS}_{1}(T, \cF) \longrightarrow {\rm KS}_{1}(T, \cF). 
\]  
By construction, the following diagram commutes: 
\begin{align}
\begin{split}\label{diag:reg}
\xymatrix{
{\rm SS}_{1}(T, \cF) \ar[d]^-{\mathrm{Reg}_{1}} \ar[r] & X_{d}^{1}(T, \cF) \ar[d]^-{(-1)^{\nu(d)}\Pi_{d}}
\\
{\rm KS}_{1}(T, \cF) \ar[r] & H^{1}_{\cF(d)}(G_{\bQ}, T) \otimes_{\bZ} G_{d}
}
\end{split}
\end{align}

\begin{theorem}\label{thm:reg,p=3}
Let $\cG$ be a residually self-dual cartesian Selmer structure on $T$ and let $r$ be a rational prime satisfying 
\begin{itemize}
\item $\dim_{\bF}(H^{1}(G_{\bQ_{r}}, \overline{T})/H^{1}_{\overline{\cG}}(G_{\bQ_{r}}, \overline{T})) = 1$,   
\item $H^{1}(G_{\bQ_{r}}, T) \longrightarrow H^{1}(G_{\bQ_{r}}, \overline{T})$ is surjective. 
\end{itemize}
We set $\cF := \cG^{r}$. 
Then the map $\mathrm{Reg}_{1} \colon {\rm SS}_{1}(T, \cF) \longrightarrow {\rm KS}_{1}(T, \cF)$ is an isomorphism. 
\end{theorem}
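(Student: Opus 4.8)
The plan is to run the $p>3$ argument of \cite[Proposition~12.4]{MRselmer} (see also \cite[Theorem~5.2]{bss}), with the two Chebotarev-dependent steps replaced by the results of this appendix: one compares $\mathrm{SS}_{1}(T,\cF)$ and $\mathrm{KS}_{1}(T,\cF)$ through a single integer $d_{0} \in \cN(\cG,r)$ with vanishing dual Selmer group, using that $\mathrm{KS}_{1}(T,\cF)$ injects into $H^{1}_{\cF(d_{0})}(G_{\bQ}, T) \otimes_{\bZ} G_{d_{0}}$ by Proposition~\ref{prop:inj,p=3}. First I would fix $d_{0} \in \cN(\cG,r)$ with $\lambda^{*}(d_{0}) = 0$; such a $d_{0}$ exists because $\chi(\cF) = 1 \geq 0$: whenever $\lambda^{*}(d) > 0$, one chooses a non-zero $c \in H^{1}_{\cF^{*}(d)}(G_{\bQ}, \overline{T})$, applies Corollary~\ref{cor:chev} to produce a prime $\ell \in \cP(\cG,r)$ with $\ell \nmid d$ and $\mathrm{loc}_{\ell}(c) \neq 0$, and then $\lambda^{*}(d\ell) = \lambda^{*}(d) - 1$ by Lemma~\ref{lem:core1}(5); iterating yields $d_{0}$.

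Next I would show that, for this $d_{0}$, the object $X^{1}_{d_{0}}(T,\cF)$ and the map $\Pi_{d_{0}}$ are as favourable as possible. Since $\cF$ is cartesian, so are $\cF(d_{0})$ and $\cF^{d_{0}}$ by \cite[Corollary~3.18]{sakamoto}, with $\chi(\cF(d_{0})) = \chi(\cF) = 1$ and $\chi(\cF^{d_{0}}) = \chi(\cF) + \nu(d_{0}) = 1 + \nu(d_{0})$ by \cite[Corollary~3.21]{sakamoto}. The hypothesis $\lambda^{*}(d_{0}) = 0$ reads $H^{1}_{(\cF(d_{0}))^{*}}(G_{\bQ}, \overline{T}) = 0$, and since $H^{1}_{(\cF^{d_{0}})^{*}}(G_{\bQ}, \overline{T}) \subset H^{1}_{\cF^{*}(d_{0})}(G_{\bQ}, \overline{T}) = 0$ too, the $\overline{T} \leftrightarrow T$ comparison \cite[\S3.2]{bss} (the analogue of Lemma~\ref{lemma:mult}) gives $H^{1}_{(\cF(d_{0}))^{*}}(G_{\bQ}, T) = H^{1}_{(\cF^{d_{0}})^{*}}(G_{\bQ}, T) = 0$. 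Hence \cite[Lemma~4.6]{sakamoto} shows that $H^{1}_{\cF(d_{0})}(G_{\bQ}, T)$ and $H^{1}_{\cF^{d_{0}}}(G_{\bQ}, T)$ are free of ranks $1$ and $1 + \nu(d_{0})$, so Theorem~\ref{pt} applied with $\cF_{1} = \cF(d_{0})$, $\cF_{2} = \cF^{d_{0}}$ collapses to a short exact sequence
\[
0 \longrightarrow H^{1}_{\cF(d_{0})}(G_{\bQ}, T) \longrightarrow H^{1}_{\cF^{d_{0}}}(G_{\bQ}, T) \longrightarrow \bigoplus_{\ell \mid d_{0}} H^{1}_{/\mathrm{tr}}(G_{\bQ_{\ell}}, T) \longrightarrow 0.
\]
Taking $(1 + \nu(d_{0}))$-th exterior powers and using the isomorphisms $H^{1}_{\mathrm{ur}} \cong H^{1}_{/\mathrm{tr}}$ and $H^{1}_{\mathrm{ur}} \cong H^{1}_{/\mathrm{ur}} \otimes_{\bZ} G_{\ell}$ (as in \cite[Definition~2.3]{sakamoto}) then shows that $X^{1}_{d_{0}}(T,\cF)$ is free of rank $1$ and that $\Pi_{d_{0}} \colon X^{1}_{d_{0}}(T,\cF) \to H^{1}_{\cF(d_{0})}(G_{\bQ}, T) \otimes_{\bZ} G_{d_{0}}$ is an isomorphism.

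Finally I would assemble the pieces through the commutative square \eqref{diag:reg}. By the Stark-system formalism of \cite{sakamoto}, which requires only \cite[Hypothesis~3.12]{sakamoto} and so is available at $p = 3$, the module $\mathrm{SS}_{1}(T,\cF)$ is free of rank $\chi(\cF) = 1$ over $R$, and, because $\lambda^{*}(d_{0}) = 0$, the natural projection $\mathrm{SS}_{1}(T,\cF) \to X^{1}_{d_{0}}(T,\cF)$ is an isomorphism. Write $\gamma \colon \mathrm{KS}_{1}(T,\cF) \to H^{1}_{\cF(d_{0})}(G_{\bQ}, T) \otimes_{\bZ} G_{d_{0}}$ for the projection, which is injective by Proposition~\ref{prop:inj,p=3}. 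Then \eqref{diag:reg} gives $\gamma \circ \mathrm{Reg}_{1} = (-1)^{\nu(d_{0})}\, \Pi_{d_{0}} \circ \bigl(\mathrm{SS}_{1}(T,\cF) \to X^{1}_{d_{0}}(T,\cF)\bigr)$, a composition of isomorphisms. It follows immediately that $\mathrm{Reg}_{1}$ is injective and that $\gamma$ is surjective, hence $\gamma$ is an isomorphism, and therefore $\mathrm{Reg}_{1} = \gamma^{-1} \circ (\gamma \circ \mathrm{Reg}_{1})$ is an isomorphism. The only genuinely $p=3$ inputs above are Proposition~\ref{prop:inj,p=3} and the existence of $d_{0}$, which rest on the connectedness of $\cX^{0}$ (Theorem~\ref{thm:connected}) and on Corollary~\ref{cor:chev}; everything else is routine Stark-system bookkeeping. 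I expect the main point to watch to be precisely this: confirming that every result quoted from \cite{MRkoly} and \cite{sakamoto} along the way has a proof that does not secretly invoke the hypothesis (H.4), the two places in \cite{MRkoly} where (H.4) genuinely mattered — the choice of auxiliary primes and the connectedness of $\cX^{0}$ — having already been dealt with via Corollary~\ref{cor:chev} and Theorem~\ref{thm:connected}.
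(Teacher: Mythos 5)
Your proposal is correct and follows essentially the same route as the paper: fix $d_{0}$ with $\lambda^{*}(d_{0})=0$, show via \cite[Lemma 4.6]{sakamoto} and Theorem \ref{pt} that $\Pi_{d_{0}}$ is an isomorphism, use \cite[Theorem 4.7]{sakamoto} for the projection $\mathrm{SS}_{1}(T,\cF)\to X^{1}_{d_{0}}(T,\cF)$, and conclude from Proposition \ref{prop:inj,p=3} together with the commutative square \eqref{diag:reg}. Your only addition is an explicit (and welcome) justification, via Corollary \ref{cor:chev} and Lemma \ref{lem:core1}(5), that such a $d_{0}$ exists without invoking the (H.4)-dependent \cite[Corollary 4.1.9]{MRkoly}, a point the paper's proof leaves implicit.
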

\begin{remark}
When $\cG = \cF_{\mathrm{cl}}$ and $r = p = 3$, we have $\cF = \cF_{\mathrm{can}}$. Hence Theorem \ref{thm:reg,p=3} shows that Proposition \ref{prop:reg-isom} is valid when $p=3$.  
\end{remark}
\begin{proof}
Let $d \in \cN(\cG,r)$ be an integer with $\lambda^{*}(d) = 0$. 
Then, by \cite[Lemma 4.6]{sakamoto}, we have 
\[
H^{1}_{\cF(d)}(G_{\bQ}, T)  \cong R \,\,\, \textrm{ and } \,\,\, H^{1}_{\cF^{d}}(G_{\bQ}, T) \cong R^{1+\nu(d)}. 
\]
Moreover, by Theorem \ref{pt}, we have a split exact sequence of free $R$-modules: 
\[
0 \longrightarrow H^{1}_{\cF(d)}(G_{\bQ}, T) 
\longrightarrow H^{1}_{\cF^{d}}(G_{\bQ}, T) 
\longrightarrow \bigoplus_{\ell \mid d}H^{1}_{/{\rm tr}}(G_{\bQ_{\ell}}, T) 
\longrightarrow 0. 
\]
These facts shows that $\Pi_{d}$ is an isomorphism. 
By \cite[Theorem 4.7]{sakamoto}, the projection map 
\[
{\rm SS}_{1}(T, \cF)  \longrightarrow X_{d}^{1}(T, \cF)
\] 
is also an isomorphism. 
Hence this theorem follows from Proposition \ref{prop:inj,p=3} and the commutative diagram \eqref{diag:reg}. 
\end{proof}

Next, we prove Theorem \ref{thm:koly0} when $p=3$. 
First, let us show that the regulator map (constructed in \cite[\S5.2]{sakamoto-koly0})
\[
\mathrm{Reg}_{0} \colon \mathrm{SS}_{0}(T, \cG) \longrightarrow \mathrm{KS}_{0}(T, \cG)
\]
is an isomorphism. Recall that we fix an isomorphism $H^{1}_{/\mathrm{ur}}(G_{\bQ_{\ell}}, T) \cong R$ for any prime $\ell \in \cP$ in order to define Kolyvagin systems of rank $0$. 

Suppose that $r \in \cP \setminus S(\cG)$. Note that for any prime $r \in \cP \setminus S(\cG)$, we have  
\begin{itemize}
\item $\dim_{\bF}(H^{1}(G_{\bQ_{r}}, \overline{T})/H^{1}_{\overline{\cG}}(G_{\bQ_{r}}, \overline{T})) = \dim_{\bF}(H^{1}_{/\mathrm{ur}}(G_{\bQ_{r}}, \overline{T}))  =1$,   
\item $H^{1}(G_{\bQ_{r}}, T) \longrightarrow H^{1}(G_{\bQ_{r}}, \overline{T})$ is surjective. 
\end{itemize}
The fixed isomorphism $H^{1}_{/\mathrm{ur}}(G_{\bQ_{r}}, T) \cong R$ induces an isomorphism $W_{d} \cong W_{dr}$ for any  integer $d \in \cN(\cG, r)$  (see Definition \ref{def:stark}).  
Hence we obtain an isomorphism $X_{dr}^{0}(T, \cG) \cong X_{d}^{1}(T, \cF)$ for any integer $d \in \cN(\cG, r)$, and it naturally induces an isomorphism 
\[
\mathrm{SS}_{0}(T, \cG) \stackrel{\sim}{\longrightarrow} \mathrm{SS}_{1}(T, \cF). 
\]
By the definition of Kolyvagin system of rank $0$, we have a homomorphism 
\[
\mathrm{KS}_{0}(T, \cG) \longrightarrow \mathrm{KS}_{1}(T, \cF); \, (\kappa_{d,\ell})_{(d,\ell) \in \cM(\cG)} \mapsto (\kappa_{d, r})_{d \in \cN(\cG,r)}. 
\]
Here $\cM(\cG) := \bigcup_{q \in \cP \setminus S(\cG)}\cN(\cG,q) \times \{q\}$. 
By \cite[Lemma 5.4]{sakamoto-koly0}, we have the following commutative diagram: 
\begin{align}
\begin{split}\label{diag:comm-reg0}
\xymatrix{
\mathrm{SS}_{0}(T, \cG) \ar[r]^-{\cong} \ar[d]^-{\mathrm{Reg}_{0}} & \mathrm{SS}_{1}(T, \cF) \ar[d]^-{\mathrm{Reg}_{1}}
\\
\mathrm{KS}_{0}(T, \cG)  \ar[r] & \mathrm{KS}_{1}(T, \cF). 
}
\end{split}
\end{align}

\begin{proposition}
For any residually self-dual cartesian Selmer structure $\cG$ on $T$, 
the map $\mathrm{Reg}_{0}$ is an isomorphism. 
\end{proposition}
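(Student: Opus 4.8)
The plan is to deduce this from Theorem \ref{thm:reg,p=3} by means of the commutative diagram \eqref{diag:comm-reg0}. First I would fix a prime $r \in \cP \setminus S(\cG)$; such primes exist since $\cP$ is infinite by the Chebotarev density theorem while $S(\cG)$ is finite. As recorded in the text, for any such $r$ one has $\dim_{\bF}\bigl(H^{1}(G_{\bQ_{r}}, \overline{T})/H^{1}_{\overline{\cG}}(G_{\bQ_{r}}, \overline{T})\bigr) = \dim_{\bF} H^{1}_{/\mathrm{ur}}(G_{\bQ_{r}}, \overline{T}) = 1$ and $H^{1}(G_{\bQ_{r}}, T) \to H^{1}(G_{\bQ_{r}}, \overline{T})$ is surjective, so $\cF := \cG^{r}$ satisfies the hypotheses of Theorem \ref{thm:reg,p=3}; hence $\mathrm{Reg}_{1} \colon \mathrm{SS}_{1}(T, \cF) \to \mathrm{KS}_{1}(T, \cF)$ is an isomorphism. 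The top horizontal map $\mathrm{SS}_{0}(T, \cG) \xrightarrow{\sim} \mathrm{SS}_{1}(T, \cF)$ in \eqref{diag:comm-reg0} is an isomorphism by construction (it is induced by the isomorphisms $X^{0}_{dr}(T, \cG) \cong X^{1}_{d}(T, \cF)$). Since \eqref{diag:comm-reg0} commutes, the composite $\beta \circ \mathrm{Reg}_{0}$ equals $\mathrm{Reg}_{1}$ composed with this top isomorphism, hence is an isomorphism. Consequently $\mathrm{Reg}_{0}$ is automatically injective and the bottom map $\beta \colon \mathrm{KS}_{0}(T, \cG) \to \mathrm{KS}_{1}(T, \cF)$ is automatically surjective, and everything reduces to proving that $\beta$ is \emph{injective}: then $\beta$ is an isomorphism and $\mathrm{Reg}_{0} = \beta^{-1} \circ (\beta \circ \mathrm{Reg}_{0})$ is an isomorphism.

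To prove $\beta$ is injective I would follow the pattern of Proposition \ref{prop:inj,p=3}. Since $\cG$ is cartesian, so is $\cG^{\ell}(d)$ for every modulus $d$ by \cite[Corollary 3.18]{sakamoto}, and \cite[Lemma 3.13]{sakamoto} gives functorial identifications $\mathrm{KS}_{0}(\overline{T}, \cG) \cong \mathrm{KS}_{0}(T, \cG)[\fm_{R}]$ and $\mathrm{KS}_{1}(\overline{T}, \cF) \cong \mathrm{KS}_{1}(T, \cF)[\fm_{R}]$ compatible with $\beta$; since $R$ is Artinian local a nonzero submodule of $\mathrm{KS}_{0}(T, \cG)$ meets its $\fm_{R}$-torsion nontrivially, so it suffices to prove injectivity after passing to $R = \bF$ and $T = \overline{T}$. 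So let $\kappa = (\kappa_{d,\ell})_{(d,\ell) \in \cM(\cG)} \in \mathrm{KS}_{0}(\overline{T}, \cG)$ satisfy $\kappa_{d,r} = 0$ for all $d \in \cN(\cG, r)$; the goal is $\kappa_{d,\ell} = 0$ for all $(d,\ell)$. Taking $q = r$ in the reciprocity relation $v_{q}(\kappa_{d\ell, q}) = -\varphi_{\ell}^{\mathrm{fs}}(\kappa_{d,\ell})$ (legitimate whenever $r \nmid d\ell$) and using $\kappa_{d\ell, r} = 0$ gives $\varphi_{\ell}^{\mathrm{fs}}(\kappa_{d,\ell}) = 0$; taking $q = r$ in $v_{\ell}(\kappa_{1,\ell}) = v_{q}(\kappa_{1,q})$ gives $v_{\ell}(\kappa_{1,\ell}) = 0$; combining these, via the decomposition $H^{1}(G_{\bQ_{\ell}}, \overline{T}) = H^{1}_{\mathrm{ur}}(G_{\bQ_{\ell}}, \overline{T}) \oplus H^{1}_{\mathrm{tr}}(G_{\bQ_{\ell}}, \overline{T})$, yields $\mathrm{loc}_{\ell}(\kappa_{1,\ell}) = 0$. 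One then bootstraps: for each auxiliary prime $q$ the family $(\kappa_{d,q})_{d \in \cN(\cG, q)}$ is a Kolyvagin system of rank $1$ for $\cG^{q}$, and one propagates the vanishing from the $r$-layer to a general layer using the connectedness of the graph $\cX^{0}$ (Theorem \ref{thm:connected}) together with Corollary \ref{cor:chev}, running an induction on $\lambda^{*}$ of the modulus, exactly as in Proposition \ref{prop:inj,p=3}, to dispose of the layers on which the relevant Selmer group is nonzero, all the while feeding in the reciprocity relations to pass between different auxiliary primes.

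I expect this last step to be the main obstacle. In the rank-$1$ situation of Proposition \ref{prop:inj,p=3} one has only the finite--singular relations, whereas a rank-$0$ Kolyvagin system carries in addition the two reciprocity relations linking the layers for different auxiliary primes, and the delicate point is to sequence the induction --- on $\lambda^{*}$ and on the number of prime factors of the modulus --- so that the finite--singular relations, the reciprocity relations, and the Chebotarev input of Corollary \ref{cor:chev} are applied in an order under which the vanishing of the single $r$-layer forces the vanishing of the whole system. Once $\beta$ is known to be injective the Proposition follows as above; moreover $\mathrm{SS}_{1}(T, \cF)$, and hence $\mathrm{SS}_{0}(T, \cG)$, is free of rank $1$ by \cite[Theorem 4.7]{sakamoto}, which holds in any residue characteristic, so $\mathrm{Reg}_{0}$ is in fact an isomorphism of free $R$-modules of rank $1$.
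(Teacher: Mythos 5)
Your reduction is exactly the paper's: both the top horizontal map of \eqref{diag:comm-reg0} and $\mathrm{Reg}_{1}$ (by Theorem \ref{thm:reg,p=3}) are isomorphisms, so everything comes down to the injectivity of $\beta \colon \mathrm{KS}_{0}(T,\cG) \to \mathrm{KS}_{1}(T,\cF)$, and you correctly extract the key consequence of the reciprocity relation: $v_{r}(\kappa_{d\ell,r}) = -\varphi_{\ell}^{\mathrm{fs}}(\kappa_{d,\ell})$ together with the vanishing of the whole $r$-layer forces $\varphi_{\ell}^{\mathrm{fs}}(\kappa_{d,\ell}) = 0$ for every $(d,\ell) \in \cM(\cG)$. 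But you stop there and declare the remaining propagation ``the main obstacle,'' proposing an interleaved induction on $\lambda^{*}$ mixing finite--singular relations, reciprocity relations, connectedness of $\cX^{0}$ and Corollary \ref{cor:chev}. That is a genuine gap: the statement you are asked to prove is not established, and the machinery you gesture at is both heavier than necessary and not actually sequenced.

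The missing step is short, and you already hold all the pieces. Fix an auxiliary prime $q \in \cP \setminus S(\cG)$. By the Chebotarev remark following Theorem \ref{thm:koly0} there is an integer $e \in \cN(\cG,q) \cap \cN(\cG,r)$ with $H^{1}_{\overline{\cG}(eq)}(G_{\bQ},\overline{T}) = 0$; since $\overline{\cG}(eq) = \overline{\cG}^{*}(eq)$ and $\cG$ is cartesian, this gives $H^{1}_{\cG(eq)}(G_{\bQ},T) = H^{1}_{\cG^{*}(eq)}(G_{\bQ},T) = 0$. Now observe that $\ker\bigl(\varphi_{q}^{\mathrm{fs}}|_{H^{1}_{\cG^{q}(e)}(G_{\bQ},T)}\bigr) = H^{1}_{\cG(eq)}(G_{\bQ},T) = 0$ (indeed, Theorem \ref{pt} applied with $\cF_{1} = \cG(e)$ and $\cF_{2} = \cG^{q}(e)$ shows $\varphi_{q}^{\mathrm{fs}}$ is an isomorphism onto $H^{1}_{/\mathrm{tr}}(G_{\bQ_{q}},T)$). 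Hence your identity $\varphi_{q}^{\mathrm{fs}}(\kappa_{e,q}) = 0$ already yields $\kappa_{e,q} = 0$ at this \emph{one} modulus. Since $(\kappa_{d,q})_{d \in \cN(\cG,q)}$ is a rank-one Kolyvagin system for $\cG^{q}$, Proposition \ref{prop:inj,p=3} (whose proof is where the connectedness of $\cX^{0}$ and Corollary \ref{cor:chev} have already been spent) says the projection to the $e$-component is injective, so the entire $q$-layer vanishes; letting $q$ range over $\cP \setminus S(\cG)$ kills $\kappa$. In other words, you only need to seed each $q$-layer with a single vanishing at a well-chosen modulus and then invoke Proposition \ref{prop:inj,p=3} as a black box --- no new induction on $\lambda^{*}$ and no further use of reciprocity is required. (Your preliminary reduction to $R = \bF$ is harmless but also unnecessary; the argument above runs directly over $R$.)
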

\begin{proof}
Theorem \ref{thm:reg,p=3} shows that the homomorphism $\mathrm{Reg}_{1}$ in the commutative diagram \eqref{diag:comm-reg0} is an isomorphism. 
Hence, it suffices to show that the map $\mathrm{KS}_{0}(T, \cG)  \longrightarrow  \mathrm{KS}_{1}(T, \cF)$ is injective. 

Let $(\kappa_{d,\ell})_{(d,\ell) \in \cM(\cG)} \in \mathrm{KS}_{0}(T, \cG)$ be a Kolyvagin system satisfying $\kappa_{d,r} = 0$ for any  $d \in \cN(\cG,r)$. 
Take a prime $q \in \cP \setminus S(\cG)$ and an integer $e \in \cN(\cG,q) \cap \cN(\cG,r)$ with $H^{1}_{\overline{\cG}(eq)}(G_{\bQ}, \overline{T}) = 0$. 
Since $\overline{T} = \overline{T}^{\vee}(1)$ and $\overline{\cG}(eq) = \overline{\cG}^{*}(eq)$, by \cite[Lemmas 3.13 and 3.14]{sakamoto}, we have isomorphisms 
\[
H^{1}_{\cG(eq)}(G_{\bQ}, T)[\fm_{R}] \cong H^{1}_{\overline{\cG}(eq)}(G_{\bQ}, \overline{T})  \cong H^{1}_{\cG^{*}(eq)}(G_{\bQ}, T^{\vee}(1))[\fm_{R}]. 
\]
Hence $H^{1}_{\cG(eq)}(G_{\bQ}, T) = H^{1}_{\cG^{*}(eq)}(G_{\bQ}, T) = 0$. 
Applying Theorem \ref{pt} with $\cF_{1} = \cG(e)$ and $\cF_{2} = \cG^{q}(e)$, we obtain an isomorphism 
\[
\varphi_{q}^{\mathrm{fs}} \colon H^{1}_{\cG^{q}(e)}(G_{\bQ}, T) \stackrel{\sim}{\longrightarrow} H^{1}_{/\mathrm{tr}}(G_{\bQ_{q}}, T) \cong R \otimes_{\bZ}G_{q}. 
\]
The definition of Kolyvagin system of rank $0$ implies that 
\[
\varphi_{q}^{\mathrm{fs}}(\kappa_{e,q}) = -v_{r}(\kappa_{e,r}) = 0, 
\]
and hence we have $\kappa_{e,q} = 0$. 
By proposition \ref{prop:inj,p=3}, the map $\mathrm{KS}_{1}(T, \cG^{q}) \longrightarrow H^{1}_{\cG^{q}(e)}(G_{\bQ}, T)$ 
is injective. Therefore, $0 = (\kappa_{d,q})_{d \in \cN(\cG,q)} \in \mathrm{KS}_{1}(T, \cG^{q})$. 
Since $q$ is an arbitrary prime and $\cM(\cG) = \bigcup_{q \in \cP \setminus S(\cG)}\cN(\cG,q) \times \{q\}$, we have $0 = (\kappa_{d,\ell})_{(d,\ell)\in\cM(\cG)} \in \mathrm{KS}_{0}(T, \cG)$. 
\end{proof}

The same argument as in the proof of \cite[Theorem 5.8]{sakamoto-koly0} shows the following Theorem. 

\begin{theorem}
Let $\cG$ be a residually self-dual cartesian Selmer structure on the residually self-dual Galois representation $T$.  
\begin{itemize}
\item[(1)] For any element $(d,\ell) \in \cM(\cG)$ with $H^{1}_{\overline{\cG}_{\ell}(d)}(G_{\bQ}, \overline{T})=0$, the projection map 
\[
{\rm KS}_{0}(T,\cG) \longrightarrow  H^{1}_{\cG^{\ell}(d)}(G_{\bQ},T) \otimes_{\bZ} G_{d}
\]
is an isomorphism. 
In particular, the $R$-module ${\rm KS}_{0}(T,\cG)$ is free of rank $1$. 
\item[(2)] For any basis $\kappa \in {\rm KS}_{0}(T,\cG)$ and any integer $d \in \bigcup_{q \in \cP \setminus S(\cG)}\cN(\cG,q)$, we have 
\[
R \cdot \delta(\kappa)_{d} = \mathrm{Fitt}_{R}^{0}(H^{1}_{\cG^{*}(d)}(G_{\bQ}, T^{\vee}(1))^{\vee}). 
\]
\end{itemize}
\end{theorem}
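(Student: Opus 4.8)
The plan is to follow the proof of \cite[Theorem~5.8]{sakamoto-koly0} essentially verbatim: in that proof the hypothesis $p>3$ (equivalently the hypothesis (H.4)) enters only through applications of \cite[Proposition~3.6.1]{MRkoly}, and each such application can be replaced by Corollary~\ref{cor:chev} together with its consequence, the connectedness of $\cX^{0}$ (Theorem~\ref{thm:connected}), and by Proposition~\ref{prop:inj,p=3}. So the real content is bookkeeping: tracing which of the results assembled in this appendix feed into each step.

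For claim~(1) I would, given a pair $(d,\ell)\in\cM(\cG)$ with $H^{1}_{\overline{\cG}_{\ell}(d)}(G_{\bQ},\overline{T})=0$, take the auxiliary prime of the previous two subsections to be $r:=\ell$ itself. One first checks this is legitimate, i.e.\ that $\dim_{\bF}\bigl(H^{1}(G_{\bQ_{\ell}},\overline{T})/H^{1}_{\overline{\cG}}(G_{\bQ_{\ell}},\overline{T})\bigr)=1$ and $H^{1}(G_{\bQ_{\ell}},T)\to H^{1}(G_{\bQ_{\ell}},\overline{T})$ is surjective; both follow from the decomposition of $H^{1}(G_{\bQ_{\ell}},-)$ into unramified and transversal parts for $\ell\in\cP$, from $H^{1}_{\overline{\cG}}(G_{\bQ_{\ell}},\overline{T})=H^{1}_{\mathrm{ur}}(G_{\bQ_{\ell}},\overline{T})$, and from the freeness of these summands over $R$ (so they reduce surjectively mod $\fm_{R}$). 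Writing $\cF:=\cG^{\ell}$ one has $\cF^{*}=(\cG^{*})_{\ell}$, so the hypothesis says precisely $H^{1}_{\cF^{*}(d)}(G_{\bQ},\overline{T})=0$. Next I would compose: the isomorphism $\mathrm{Reg}_{0}\colon\mathrm{SS}_{0}(T,\cG)\xrightarrow{\sim}\mathrm{KS}_{0}(T,\cG)$ of the preceding proposition; the isomorphism $\mathrm{SS}_{0}(T,\cG)\xrightarrow{\sim}\mathrm{SS}_{1}(T,\cF)$ coming from $W_{e}\cong W_{e\ell}$; the isomorphism $\mathrm{Reg}_{1}$ of Theorem~\ref{thm:reg,p=3}; the rigidity isomorphism $\mathrm{SS}_{1}(T,\cF)\xrightarrow{\sim}X_{d}^{1}(T,\cF)$ of \cite[Theorem~4.7]{sakamoto}; and the isomorphism $\Pi_{d}\colon X_{d}^{1}(T,\cF)\xrightarrow{\sim}H^{1}_{\cF(d)}(G_{\bQ},T)\otimes_{\bZ}G_{d}$, which comes from \cite[Lemma~4.6]{sakamoto} and the splitting of the exact sequence of Theorem~\ref{pt}, exactly as in the proof of Theorem~\ref{thm:reg,p=3}. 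Chasing the commutative square \eqref{diag:comm-reg0}, this composite equals the projection $(\kappa_{e,q})\mapsto\kappa_{d,\ell}$, which is therefore an isomorphism; since $H^{1}_{\cF(d)}(G_{\bQ},T)$ is free of rank~$1$ and $G_{d}$ is free of rank~$1$ over $\bZ$, the module $\mathrm{KS}_{0}(T,\cG)$ is free of rank~$1$.

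For claim~(2) I would fix a basis $\kappa\in\mathrm{KS}_{0}(T,\cG)$ and an integer $d$, and use that $\delta(\kappa)_{d}=v_{\ell}(\kappa_{d,\ell})$ is independent of the admissible prime $\ell$, computing it for a convenient choice. When $\dim_{\bF}H^{1}_{\overline{\cG}(d)}(G_{\bQ},\overline{T})\le 1$ one can, using Corollary~\ref{cor:chev} and Lemma~\ref{lem:core1}, choose $\ell\nmid d$ in $\cP\setminus S(\cG)$ with $H^{1}_{\overline{\cG}_{\ell}(d)}(G_{\bQ},\overline{T})=0$; then claim~(1) makes $\kappa_{d,\ell}$ a generator of the free rank-$1$ module $H^{1}_{\cG^{\ell}(d)}(G_{\bQ},T)\otimes_{\bZ}G_{d}$, and Theorem~\ref{pt} applied to $\cF_{1}=\cG(d)\subset\cF_{2}=\cG^{\ell}(d)$ yields the exact sequence $0\to H^{1}_{\cG(d)}(G_{\bQ},T)\to H^{1}_{\cG^{\ell}(d)}(G_{\bQ},T)\xrightarrow{v_{\ell}}H^{1}_{/\mathrm{ur}}(G_{\bQ_{\ell}},T)\to H^{1}_{\cG^{*}(d)}(G_{\bQ},T^{\vee}(1))^{\vee}\to H^{1}_{(\cG^{*})_{\ell}(d)}(G_{\bQ},T^{\vee}(1))^{\vee}\to 0$, whose last term vanishes since $(\cG^{*})_{\ell}(d)$ is residually $\overline{\cG}_{\ell}(d)$ and one invokes \cite[Lemmas~3.13 and~3.14]{sakamoto} together with residual self-duality; this identifies $H^{1}_{\cG^{*}(d)}(G_{\bQ},T^{\vee}(1))^{\vee}$ with $\mathrm{coker}(v_{\ell})\cong R/R\cdot\delta(\kappa)_{d}$ (under the fixed identifications), and taking $\mathrm{Fitt}^{0}_{R}$ gives the formula. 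For $d$ with $\dim_{\bF}H^{1}_{\overline{\cG}(d)}(G_{\bQ},\overline{T})\ge 2$ no single $\ell$ does the job, and here I would instead transport $\kappa$ through $\mathrm{Reg}_{0}$ to a basis of $\mathrm{SS}_{0}(T,\cG)$ and apply the Fitting-ideal formula for Stark systems of \cite{sakamoto} (which needs no restriction on $p$), using \eqref{diag:comm-reg0} to match $\delta(\kappa)_{d}$ with the image of that basis under the relevant evaluation map, as in \cite[Theorem~5.8]{sakamoto-koly0}.

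The step I expect to be the actual obstacle is not any of the manipulations above but the verification that underlies the first paragraph: that the proofs of \cite[Theorem~5.8]{sakamoto-koly0} and of Proposition~\ref{prop:inj,p=3} genuinely go through with Corollary~\ref{cor:chev} in place of \cite[Proposition~3.6.1]{MRkoly}, i.e.\ that the slightly weaker Chebotarev statement available when $p=3$ (three nonzero classes rather than two) suffices everywhere it is used, and that the connectedness Theorem~\ref{thm:connected} is the only point where the graph-theoretic part of Mazur--Rubin's argument enters. Once this is granted, claims~(1) and~(2) are formal consequences of the results collected in this section, together with the auxiliary bookkeeping about admissible primes in~(2).
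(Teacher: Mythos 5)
Your proposal is correct and takes essentially the same approach as the paper: the paper's proof of this theorem is precisely the one-line observation that the argument of the cited Theorem 5.8 of the author's earlier paper carries over once Corollary \ref{cor:chev}, Theorem \ref{thm:connected}, Proposition \ref{prop:inj,p=3}, Theorem \ref{thm:reg,p=3} and the isomorphy of $\mathrm{Reg}_{0}$ are in place, which is exactly the reduction you carry out. The additional bookkeeping you supply (taking $r=\ell$ as the auxiliary prime, the chain of isomorphisms for claim (1), and the cokernel computation via Theorem \ref{pt} together with the Stark-system Fitting-ideal formula for claim (2)) is a faithful reconstruction of that argument.
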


\end{document}